\documentclass[12pt,reqno]{amsart}
\usepackage{amsmath,amssymb,amsfonts,amsthm,amscd,indentfirst}
\usepackage[a4paper, left=2.5cm, right=2.5cm, top=2cm, bottom=2cm]{geometry}
\usepackage{hyperref}
\usepackage[numbers,sort]{natbib}

\hypersetup{
colorlinks=true,
linkcolor=black
}

\numberwithin{equation}{section}

\newtheorem{proposition}{Proposition}[section]
\newtheorem{theorem}[proposition]{Theorem}
\newtheorem{lemma}[proposition]{Lemma}

\newtheorem{remark}[proposition]{Remark}

\newtheorem{definition}[proposition]{Definition}

\usepackage{mathtools}
\usepackage{comment}
\usepackage{graphicx}

\begin{document}
\title[Viscosity Solutions Of Capillary Parabolic p-Laplacian Equations]{On the Viscosity Solutions of Parabolic p-Laplacian Equations with Capillary-Type Boundary Conditions}
\author[Zhenghuan Gao]{Zhenghuan Gao}
\address{School of Mathematics and Statistics, Xi'an Jiaotong University, Xi'an, China.}
\email{gzhmath@xjtu.edu.cn}
\author[Jin Yan]{Jin Yan}
\address{School of Mathematical Sciences, University of Science and Technology of China, Hefei, China.}
\email{yjoracle@mail.ustc.edu.cn}
\author[Yang Zhou]{Yang Zhou}
\address{School of Mathematical Sciences, University of Science and Technology of China, Hefei, China.}
\email{zy19700816@mail.ustc.edu.cn}
\begin{abstract}
In this paper, we establish the well-posedness and large-time asymptotic behavior of viscosity solutions to singular/degenerate parabolic $p$-Laplacian equations with general capillary-type boundary conditions, including Neumann and prescribed contact angle cases, on strictly convex domains. By establishing a gradient estimate independent of the $C^0$ norm of the solution via the maximum principle, and by analyzing the problem through an approximation procedure together with associated elliptic eigenvalue problems, we prove the existence, uniqueness, and asymptotic behavior of solutions. For the elliptic problem with Neumann boundary conditions, we first focus on flat domains with the zero Neumann condition. By reflecting $u$ across the flat boundary $T_1$ and then using inf- and sup-convolution arguments in the reflected domain, we obtain the $C^{1,\alpha}$ result. For the general elliptic case, we obtain sharp global $C^{1,\alpha}$ regularity by flattening the boundary and employing compactness arguments together with an ``improvement of flatness'' iteration. With an extra condition in the iteration, we can also deal with the singular case $1<p<2$. In the parabolic setting, the spatial H\"older regularity of $Du$ follows from elliptic estimates combined with the Lipschitz continuity of $u$ in time, which in turn yields joint H\"older continuity in $(x,t)$. Extensions to non-convex domains are also discussed by incorporating a suitable forcing term.
\end{abstract}

\keywords{Singular/degenerate viscosity solutions, Parabolic $p$-Laplacian equations, Neumann boundary conditions, Capillary-type boundary conditions, $C^{1,\alpha}$ Regularity, Large time behavior.}
\thanks{2020 Mathematics Subject Classification: 35B40, 35B45, 35B51, 35B65, 35D40, 35K20, 35K92.}
\maketitle

\section{Introduction}
In this paper, we study viscosity solutions of singular/degenerate parabolic $p$-Laplacian equations with capillary-type boundary conditions, which include the Neumann boundary condition and the prescribed contact angle boundary condition. We first consider the parabolic equations on strictly convex bounded domains
\begin{equation}\label{par eq}
\left\{\begin{aligned}
& u_t=\operatorname{div}(|Du|^{p-2}Du)+f(x,u) &&\text{in~}\Omega\times[0,\infty),\\
& u(x,0)=u_0(x)  &&\text{in~}\Omega,\\
& |Du|^{q-1}u_\nu=-\phi(x) &&\text{on~}\partial\Omega\times[0,\infty),
\end{aligned}\right.
\end{equation}
where $\Omega\subset \mathbb{R}^n$ is a bounded strictly convex domain, $\nu$ denotes the inward unit normal vector, $p>1$, $q\geq0$. 

Throughout this paper, $u_0(x)$ is a smooth function satisfying
$$
|Du_0|^{p-2}Du_0 \in C^1(\overline{\Omega}).
$$
If $q>0$, it further satisfies
$$
\|(|Du_0|^2 + \varepsilon^2)^{\frac{q-1}{2}} D_{\nu}u_0\|_{C^2(\partial\Omega)} \leq L 
$$
for all sufficiently small $\varepsilon \ge 0$. Moreover, the following compatibility condition holds:
\begin{equation}\label{CC1}
|Du_0|^{q-1} u_{0,\nu} = -\phi(x) \quad \text{on } \partial\Omega.
\end{equation}
When $q=0$, the boundary condition should be interpreted as the prescribed contact angle condition
\begin{equation}\label{bdrq=0}
u_{\nu}=-\phi(x)|Du| \quad \text{on~} \partial\Omega\times[0,\infty),
\end{equation} 
when $q=1$, it reduces to the Neumann condition, and when $q=p-1$,  it reduces to the conormal condition.

We always assume that $f$ satisfies
\begin{equation}\label{f}
f\in C^{1,\beta}(\overline{\Omega}\times \mathbb{R}),\quad f_{z}(x,z)\leq0, \quad |f|+|D_{x}f|\leq L,
\end{equation}
for some $0<\beta<1$, and $\phi$ satisfies 
\begin{equation}\label{phi0}
\phi\in C^{2}(\partial{\Omega}),\quad \|\phi(x)\|_{C^2(\partial\Omega)}\leq L.
\end{equation}

The capillary-type problem is a classical subject with deep roots in mathematical physics and geometry. Its fundamental importance originates from its role in modeling capillary surfaces, where the contact angle is determined by material properties. This physical interpretation intrinsically links the problem to the analysis of free boundaries and surfaces of prescribed mean curvature. The study of such problems primarily relies on two approaches: the test function method and the maximum principle. For the former, boundary gradient estimates and corresponding existence theorems for mean curvature equations under positive gravity were established by Ural'tseva \cite{U1973}, Simon-Spruck \cite{SS1976ARMA}, and Gerhardt \cite{G1976Pisa}. Lieberman \cite{L2013} later extended these results to more general quasi-linear equations in divergence form. However, these works mainly address conormal boundary conditions and are difficult to generalize to other boundary types. Therefore, for $p$-Laplacian equations, we employ the maximum principle to obtain the crucial gradient estimate. This technique was introduced by Spruck \cite{S1975CPAM} and further developed by Korevaar \cite{K1988CPDE} and Lieberman \cite{L1988CPDE}. In the book \cite{L2013}, Lieberman treated a broader class of quasi-linear elliptic equations with boundary conditions corresponding to $q=0$ and $q>1$ in the zero-gravity case (see also Xu \cite{X2016CPAA} for a new proof in the mean curvature setting). More recently, Ma-Xu \cite{MX2016CPAAAIM} derived boundary gradient estimates for mean curvature equations with Neumann conditions and established an existence result under positive gravity. For the case $0<q<1$, results on strictly convex domains were obtained by Wang-Wei-Xu \cite{WWX2019CPAA}. We refer the reader to \cite{FL2014JDG, FS2016Invention, MWW2025AIM, MWWX2025MathZ, MW2023JGA, SWX2022JDG, WWX2024MA, WWX2024JFA} for more results on elliptic capillary problems.

The parabolic capillary-type problem has also been extensively studied, with particular attention paid to the mean curvature flow and its limiting case, the level-set flow. We refer the reader to \cite{AW1994CVPDE, AC2009IMJ, GLX2024JFA, GMWW2021JMS, G1996, GOS1999JDE, H1984JDG, H1986Invention, H1989JDE, J2023CVPDE, JKMT2011JMPA, MWW2018JFA, MT2017NODEA, WWX2019CPAA} and the references therein for a detailed account of its development. 

For another important quasi-linear operator, the $p$-Laplacian operator, Lieberman \cite{L1990NA} considered the parabolic $p$-Laplacian equation under conormal boundary conditions. However, there are few analogous results for more general capillary-type conditions, or even for the Neumann boundary condition. Motivated by developments in mean curvature and level-set flow, this paper employs the maximum principle to investigate the well-posedness and large-time behavior of solutions to capillary-type boundary value problems. The key to obtaining the asymptotic behavior is to derive a gradient estimate that is independent of the $C^0$ norm of $u$, which we will establish by means of the maximum principle. This is also the reason why we require the domain to be strictly convex.\vspace{4pt}

From the viscosity perspective, such problems also arise naturally. The following singular/degenerate fully nonlinear equation with uniformly elliptic $F$
$$
|Du|^{\gamma}F(D^2u)=f
$$
was first studied by Birindelli-Demengel in the pioneering works \cite{BD2006ADE, BD2007DCDS, BD2010JDE} for the singular case $-1<\gamma<0$. For the degenerate case $\gamma>0$, the groundbreaking work was carried out by Imbert-Silvestre in \cite{IS2013AIM}, where they established an interior $C^{1,\alpha}$ estimate for solutions. Subsequently, Araújo-Ricarte-Teixeira \cite{ART2015CVPDE} proved an optimal interior $C^{1,\alpha}$ estimate under the assumption that $F$ is convex. We refer the reader to \cite{F2021RE, FRZ2021BLMS, APPT2022AIM, BBLL2024, LLY2024JMPA} and the references therein for further interior regularity results.

For the Neumann problem
$$
\left\{\begin{aligned}
& |Du|^{\gamma}F(D^2u)=f &&\text{in~} \Omega,\\
& u_\nu=g &&\text{on~} \partial\Omega,
\end{aligned}\right.
$$
Milakis-Silvestre \cite{MS2006CPDE} proved $C^{1,\alpha}$ regularity up to the boundary for the case $\gamma=0$ on a flat domain. Later, Li-Zhang \cite{LZ2018ARMA} obtained $C^{1,\alpha}$ regularity estimates on $C^1$ domains for oblique boundary conditions, also with $\gamma=0$. See also the generalization to the parabolic oblique problem by Chatzigeorgiou-Milakis \cite{CM2021RMI}. For the degenerate case $\gamma\ge0$, Banerjee-Verma \cite{BV2022PA} established global $C^{1,\alpha}$ regularity under a nonhomogeneous Neumann condition on $C^2$ domains. Ricarte \cite{R2020NA} proved optimal $C^{1,\alpha}$ regularity on $C^2$ domains when $F$ is convex. Recently, Byun-Kim-Oh \cite{BKO2025CVPDE} obtained sharp $C^{1,\alpha}$ regularity on $C^1$ domains for oblique boundary conditions. For the singular case, Patrizi \cite{P2008JMPA} studied a general class of operators and proved global Lipschitz regularity under the homogeneous Neumann condition on $C^2$ domains, see also Birindelli-Demengel-Leoni \cite{BDL2022NA} for the mixed boundary value problems.

In addition, the viscosity $p$-Laplacian type equation 
$$
|Du|^{\gamma}\left(\Delta u+(p-2)|Du|^{-2}u_iu_ju_{ij}\right)=f
$$
has attracted increasing interest in recent years. Attouchi-Parviainen-Ruosteenoja \cite{APR2017JMPA} proved interior $C^{1,\alpha}$ regularity for $\gamma=0$, and later Attouchi-Ruosteenoja \cite{AR2018JDE} extended the result to the range $\gamma>-1$. For related interior regularity results, we refer the reader to \cite{WYJ2025PA, S2022JMAA, PT2021JDE, SR2020CVPDE} and the references therein.

For the parabolic equation
$$
u_t-|Du|^{\gamma}\left(\Delta u+(p-2)|Du|^{-2}u_iu_ju_{ij}\right)=f,
$$
Jin-Silvestre \cite{JS2017JMPA} obtained interior $C^{1+\alpha,\frac{1+\alpha}{2}}$ estimates for $\gamma=0$ and $f\equiv0$, and later Imbert-Jin-Silvestre \cite{IJS2019ANA} extended the result to $\gamma>-1$ and $f\equiv0$. Subsequently, Attouchi-Ruosteenoja \cite{A2020NA, AR2020DCDS} further extended these results to the case $\gamma>-1$ with $f$ bounded and continuous. For further interior regularity results, we refer the reader to \cite{FZ2023CVPDE, AS2022CVPDE, LY2024JDE} and the references therein.

Besides, Attouchi-Barles \cite{AB2015JMPA} studied the Dirichlet problem with $\gamma=p-2$ and derived asymptotic behavior. However, to the best of our knowledge, there are no results for non-homogeneous Neumann or capillary-type boundary conditions.

We now present the main results of this paper. Due to technical reasons, we must treat the cases $q>0$ and $q=0$ separately.

For the case $q>0$, we have the following existence and uniqueness result for \eqref{par eq}.
\begin{theorem}\label{thm1.1}
Assume $\Omega\subset \mathbb{R}^n$ is a bounded strictly convex $C^{2,\beta}$ domain, $q>0$, $f$ satisfies \eqref{f}, and $\phi$ satisfies \eqref{phi0}. Then we have the following existence results
\begin{itemize}
\item when $q=1$, there exists a viscosity solution $u$ of \eqref{par eq}, which is globally Lipschitz in $(x,t)$, and the spatial gradient $Du$ is globally H\"older in $(x,t)$. To be specific, for any $\alpha\in(0,\alpha_0)$, where $\alpha_0$ is the optimal exponent for interior $C^{1,\alpha_0}$ regularity of weak solutions to elliptic $p$-Laplacian equations (see Section 4.2), 
$$
|u(x,t)-u(x,s)|\leq M(|t-s|+|x-y|),\quad |Du(x,t)-Du(y,s)|\leq M(|x-y|^{\alpha}+|t-s|^{\frac{\alpha}{1+\alpha}}),
$$
for all $t,s\in[0,\infty)$, $x,y\in\overline{\Omega}$.\\
\item when $q=p-1$, the above results hold for $\alpha=\alpha_1
$, where $\alpha_1$ is the optimal exponent for the elliptic conormal problem considered in \cite{L1988NA}.\\
\item when $q\neq 1,p-1$, there exists a viscosity solution $u$ of \eqref{par eq}, which is globally Lipschitz in $(x,t)$, and the spatial gradient $Du$ is locally $C^{\alpha_0}$ in $x$ and globally $C^{\frac{\alpha_0}{1+\alpha_0}}$ in $t$. To be specific, 
$$
|u(x,t)-u(x,s)|\leq M(|t-s|+|x-y|),
$$
for all $t,s\in[0,\infty)$,  $x,y\in\overline{\Omega}$, and
$$
|Du(x,t)-Du(y,s)|\leq M(|x-y|^{\alpha_0}+|t-s|^{\frac{\alpha_0}{1+\alpha_0}}),
$$
for all $t,s\in[0,\infty)$ and $x,y\in\Omega'\subset\subset\Omega$.
\end{itemize}
Moreover, for $p \geq 2$, the Lipschitz solution is unique if either $q = 1$ or $q > 0$ with $|\phi(x)| > 0$ on $\partial\Omega$.
\end{theorem}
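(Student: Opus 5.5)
The plan is to argue by regularization and compactness. For $\varepsilon\in(0,1]$ I would replace $|Du|^{p-2}Du$ by $(|Du|^2+\varepsilon^2)^{\frac{p-2}{2}}Du$ and $|Du|^{q-1}u_\nu$ by $(|Du|^2+\varepsilon^2)^{\frac{q-1}{2}}u_\nu$. This produces a uniformly parabolic quasilinear problem with a nonlinear oblique boundary condition. The hypotheses on $u_0$, namely the uniform $C^2$ bound on $(|Du_0|^2+\varepsilon^2)^{\frac{q-1}{2}}D_\nu u_0$, are used to adjust $u_0$ (or $\phi$) by an $O(\varepsilon)$ perturbation so that the compatibility condition \eqref{CC1} holds at the $\varepsilon$-level. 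Classical Schauder/Lieberman theory for oblique parabolic problems then gives a smooth solution $u^\varepsilon$ on each finite time interval, provided we have a priori bounds on $u^\varepsilon$, $u^\varepsilon_t$ and $Du^\varepsilon$ that do not depend on $\varepsilon$ or $T$.

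The a priori estimates come in the following order.

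(i) \emph{Time derivative.} Differentiating the equation in $t$ shows that $w=u^\varepsilon_t$ solves a linear parabolic equation with a zeroth-order coefficient $f_z\le0$ and a homogeneous linear oblique boundary condition. The maximum principle then gives $|u^\varepsilon_t|\le\sup_\Omega|u^\varepsilon_t(\cdot,0)|$. This initial value is controlled by the assumption $|Du_0|^{p-2}Du_0\in C^1$ together with $|f|\le L$.

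(ii) \emph{Spatial gradient.} This is the main obstacle. The bound must hold for $|Du^\varepsilon|$ without using $\sup|u^\varepsilon|$, which may drift linearly in $t$. I would follow the Spruck--Korevaar--Lieberman maximum-principle method. Take an auxiliary function of the form
\[
\Phi=\log|Dw|^2+h(d)\quad\text{or}\quad \Phi=\log\bigl(|Du|^2-(Du\cdot\nu)^2\bigr)+\cdots,
\]
where $w=u+\psi(x,Du)$ is a correction that makes the boundary condition tangential, $d$ is the distance to $\partial\Omega$, and $\nu$ is extended to the interior.

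\emph{Boundary maximum points.} At such a point, strict convexity (a positive second fundamental form) produces a good-signed term that dominates the terms involving $\phi$ and $D\phi$. This is exactly where strict convexity is used, and it is what frees the estimate from the $C^0$ norm.

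\emph{Interior maximum points.} Here we treat the equation as elliptic with right-hand side $u_t-f$, which is bounded by step (i). The standard Bernstein computation for the $p$-Laplacian then gives a bound depending only on $L$, $\Omega$ and the step (i) bound.

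The case $0<q<1$ follows Wang--Wei--Xu, and $q\ge1$ follows Lieberman and Ma--Xu. The uniformity in $\varepsilon$ requires care with the weights $(|Du|^2+\varepsilon^2)^{\frac{q-1}{2}}$.

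(iii) \emph{Passage to the limit.} Steps (i) and (ii) give uniform Lipschitz bounds in $(x,t)$, so Arzel\`a--Ascoli yields a locally uniform limit $u$. Stability of viscosity solutions, including for the boundary condition in the generalized viscosity sense, shows that $u$ solves \eqref{par eq}. This gives the Lipschitz estimate.

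For the Hölder bounds on $Du$, fix $t$. Then $u(\cdot,t)$ solves an elliptic $p$-Laplacian equation with bounded right-hand side $u_t-f$ and the same boundary condition. I would invoke the elliptic $C^{1,\alpha}$ results of the paper. The global estimates for the Neumann case $q=1$ come from flattening the boundary and an improvement-of-flatness argument, for all $\alpha<\alpha_0$. The conormal case $q=p-1$ uses Lieberman's $\alpha_1$. The other values of $q$ use only the interior $\alpha_0$ estimate. This gives $|Du(x,t)-Du(y,t)|\le M|x-y|^\alpha$.

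Time regularity comes from interpolation. Combine $|u(x,t)-u(x,s)|\le M|t-s|$ with the spatial $C^{1,\alpha}$ bound, comparing $u(\cdot,t)-u(\cdot,s)$ on a ball of radius $r$ via a Taylor expansion. This gives
\[
|Du(x,t)-Du(x,s)|\lesssim \frac{|t-s|}{r}+r^\alpha,
\]
and choosing $r=|t-s|^{\frac1{1+\alpha}}$ yields the exponent $\frac{\alpha}{1+\alpha}$. Near the boundary, $r$ is restricted by the distance to $\partial\Omega$ in the interior-only case, which is why that estimate is only local in $x$.

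For uniqueness when $p\ge2$, I would use a comparison principle for Lipschitz sub- and supersolutions via doubling of variables. The test function is
\[
\frac{|x-y|^2}{2\delta}+\text{a boundary correction }\delta'\bigl(d(x)+d(y)\bigr)\ \text{or Barles' oblique test functions}.
\]
When $p\ge2$ the operator is continuous at $Du=0$, so the standard Crandall--Ishii--Lions machinery applies. Monotonicity $f_z\le0$, after an $e^{-t}$ rescaling, handles the zeroth-order term. The boundary condition is a genuine oblique condition whenever it does not degenerate. This holds for $q=1$, or when $\phi\neq0$, since then $|Du|\ge c>0$ on $\partial\Omega$ and the condition is equivalent to $u_\nu=-\phi|Du|^{1-q}$ with a Lipschitz right-hand side. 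In either case, Ishii's technique for oblique derivative problems on smooth domains yields $\sup(u-v)\le\sup(u-v)(\cdot,0)$.
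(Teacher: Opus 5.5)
Your overall architecture matches the paper's. You regularize with a perturbed $\phi_\varepsilon$ fixed by compatibility, bound $u_t$ by the maximum principle and the linearized oblique condition, and prove a time-uniform gradient bound. Then you pass to the limit, freeze $t$ for the spatial $C^{1,\alpha}$ estimates, interpolate in time, and obtain uniqueness from Barles' oblique comparison after truncating the operators using the Lipschitz bound. The gap is in step (ii) at interior maximum points, which is the core of the theorem.

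\textbf{Gap 1: the elliptic framing.} You cannot run a Bernstein argument by regarding $u_t-f$ as a bounded right-hand side. Differentiating the equation produces $Du_t$, which step (i) does not control. The $u_{tk}$ terms have to be cancelled parabolically. Take a space-time maximum of $\Phi$ with $t_0>0$, apply $a^{ij}\partial_{ij}-\partial_t$, and use $\Phi_t\geq 0$ together with the differentiated equation. The $u_t$ bound plays no role in that computation.

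\textbf{Gap 2: no source of positivity.} After this cancellation, the good terms coming from $w$ are quadratic in $D^2u$. Nothing forces $D^2u$ to be large at the maximum point, since $|a^{ij}u_{ij}|\leq C$ is compatible with $D^2u=0$. Meanwhile the errors from $D_xf$ and from $\phi$ are of size $C(v^{-1}+v^{p-2-q})$ after dividing by $w\approx v^{q+1}$, so no contradiction follows. A standard Bernstein argument gets positivity from a term $\psi(u)$ in the auxiliary function. That reintroduces $\sup|u|\sim t$, which is exactly what you must avoid. For the same reason, the Lieberman and Ma--Xu estimates you cite for $q\geq 1$ depend on $|u|_{C^0}$ and do not give a bound uniform in $t$.

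\textbf{How the paper closes it.} The missing positivity comes from strict convexity in the interior as well. The paper uses
$\Phi=\log w+\alpha h$ with $w=v^{q+1}-(q+1)\phi\, Du\cdot Dh$,
where $h$ is a global $C^{2,\beta}$ defining function with $D^2h\geq\kappa_0 I$. This gives
$\alpha a^{ii}h_{ii}-(A+1)\alpha^2a^{ii}|h_i|^2\geq c\,v^{p-2}$.
That term dominates $C(v^{p-2-q}+v^{-1})$ precisely because $q>0$ and $p>1$. So convexity is not used only at boundary points, as you assert. Without it, the paper needs the forcing term of Section 8.

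Your $h(d)$ does not serve this purpose: the distance function is not $C^2$ away from $\partial\Omega$, so you need a globally defined strictly convex $h$. With step (ii) repaired this way, the rest of your proposal goes through along the paper's lines.
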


In order to overcome the degeneracy or singularity of the $p$-Laplacian operator, we denote $v=\sqrt{\varepsilon^2+|Du|^2}$ for $\varepsilon>0$, and consider the following approximate problem
\begin{equation}\label{app par eq}
\left\{\begin{aligned}
& u_t=\operatorname{div}(v^{p-2}Du)+f(x,u) &&\text{in~}\Omega\times[0,\infty),\\
& u(x,0)=u_{0}(x)  &&\text{in~}\Omega,\\
& u_\nu=-\phi_{\varepsilon}(x)v^{1-q} &&\text{on~}\partial\Omega\times[0,\infty),
\end{aligned}\right.
\end{equation}
with the compatible condition which determines $\phi_{\varepsilon}(x)$
\begin{equation}\label{CC2}
D_{\nu}u_{0}=-\phi_{\varepsilon}(x)\Big(\sqrt{\varepsilon^2+|Du_{0}|^2}\Big)^{q-1}\quad\text{on~}\partial\Omega.
\end{equation}

For the approximate problem, once we get the global gradient estimate, the equation becomes uniformly parabolic, and hence higher-order regularity follows.
\begin{theorem}\label{thm1.2}
Assume $\Omega\subset \mathbb{R}^n$ is a bounded strictly convex $C^{2,\beta}$ domain and $q>0$. If $f$ satisfies \eqref{f} and $\phi$ satisfies \eqref{phi0}, then there exists a unique $C^{2,\sigma}(\overline\Omega\times[0,\infty))$ solution $u_{\varepsilon}$ of \eqref{app par eq}.
\end{theorem}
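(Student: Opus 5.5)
The plan is the standard route for quasilinear parabolic oblique problems: a priori estimates in increasing order of regularity, then a continuity (or Leray--Schauder) argument, with uniqueness from the comparison principle. Write the equation in \eqref{app par eq} in nondivergence form,
$$
u_t=v^{p-2}\Big(\delta_{ij}+(p-2)\frac{u_iu_j}{v^2}\Big)u_{ij}+f(x,u)=:a^{ij}(Du)u_{ij}+f(x,u).
$$
For $\varepsilon>0$ this is uniformly parabolic on any set where $|Du|\le M$. Its ellipticity constants are comparable to $\min(1,p-1)v^{p-2}$ and $\max(1,p-1)v^{p-2}$, and $\varepsilon\le v\le\sqrt{\varepsilon^2+M^2}$. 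The boundary operator $G(x,Du)=u_\nu+\phi_\varepsilon(x)v^{1-q}$ is oblique, since
$$
\partial G/\partial u_k=\nu_k+(1-q)\phi_\varepsilon v^{-1-q}u_k .
$$
Obliqueness needs $\langle\partial_{Du}G,\nu\rangle>0$. This follows from the boundary relation $u_\nu=-\phi_\varepsilon v^{1-q}$, which lets us rewrite $(1-q)\phi_\varepsilon v^{-1-q}u_\nu$ as $-(1-q)u_\nu^2/v^2$. For $q>0$ this gives
$$
1-(1-q)\frac{u_\nu^2}{v^2}\ \ge\ \min(1,q)>0 .
$$
This is exactly where $q>0$ enters. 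By \eqref{CC2}, the compatibility condition holds at $t=0$ for the approximate data.

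The a priori estimates are as follows. First, a $C^0$ bound on any finite interval $[0,T]$. Compare with $u_0\pm Ct$, using $f_z\le0$ and $|f|\le L$; the boundary condition for these barriers holds up to sign because of \eqref{CC2}, and the comparison principle for oblique problems applies. Second, a bound on $u_t$. Differentiate the equation in $t$. Then $w=u_t$ satisfies a linear parabolic equation with zero-order coefficient $f_z\le0$ and a homogeneous linear oblique boundary condition $\partial_{Du}G\cdot Dw=0$. The maximum principle together with the Hopf lemma gives $\|u_t\|_\infty\le\sup|u_t(\cdot,0)|$, and this is controlled by the assumption $|Du_0|^{p-2}Du_0\in C^1$. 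Third, and this is the main obstacle, a global gradient bound $\sup|Du|\le M$, uniform in $\varepsilon$ and $t$ and independent of $\|u\|_{C^0}$. With $u_t$ bounded, I treat each time slice as an elliptic problem
$$
\operatorname{div}(v^{p-2}Du)=u_t-f
$$
with bounded right-hand side. I would apply the maximum principle to an auxiliary function of the form
$$
\Phi=\log|Dw|^2+\text{(lower order terms)},\qquad w=u+\phi_\varepsilon v^{1-q}d\ \text{ or similar},
$$
where $d$ is the distance to $\partial\Omega$, in the spirit of Spruck, Lieberman and Ma--Xu. At an interior maximum, the structure of the $p$-Laplacian controls the bad terms. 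At a boundary maximum, strict convexity (the second fundamental form of $\partial\Omega$ is bounded below) provides the good sign that excludes large gradients, using $\|\phi\|_{C^2}\le L$ and the $C^2$ bound on the boundary data. Since only derivatives of $u$ appear, the bound does not depend on $\|u\|_{C^0}$.

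Once $|Du|\le M$, the equation is uniformly parabolic with ellipticity depending on $\varepsilon$. I then apply the H\"older gradient estimates for quasilinear parabolic equations with nonlinear oblique boundary conditions (Lieberman's theory) to get $Du\in C^{\sigma,\sigma/2}$. Schauder theory for the linearized oblique problem then gives
$$
u\in C^{2+\sigma,1+\sigma/2}(\overline\Omega\times[0,T]),
$$
using $f\in C^{1,\beta}$, $\partial\Omega\in C^{2,\beta}$ and compatibility. Because the gradient and $u_t$ bounds are uniform in $T$ and the estimates are local in time, these bounds extend to $[0,\infty)$. Existence then follows by the Leray--Schauder or continuity method, deforming $f$ and $\phi_\varepsilon$ to simple data while keeping compatibility at $t=0$; short-time existence plus continuation also works. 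Uniqueness of classical solutions follows from the comparison principle. Take the difference of two solutions: it solves a linear uniformly parabolic equation with $f_z\le0$ and a linear oblique boundary condition obtained via the mean value theorem, and it vanishes at $t=0$, so it is identically zero.
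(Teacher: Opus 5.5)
Your overall architecture matches the paper's. The paper's proof of this theorem is one sentence: the global gradient bound of Theorem~\ref{thm3.2} makes the approximate problem uniformly parabolic for fixed $\varepsilon>0$, and standard theory does the rest. Your $u_t$ bound is exactly Lemma~\ref{lem3.1}. The gap is your third step, which carries all the content, and the route you sketch would not close.

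First, you cannot run a maximum-principle gradient bound on a single time slice while treating $u_t-f$ as a merely bounded right-hand side. Any Bernstein-type computation differentiates the equation, so $D(u_t-f)$, i.e.\ $Du_t$, appears, and you have no control of it. The paper instead maximizes $\Phi$ over $\overline\Omega\times[0,T]$ and uses $a^{ii}\Phi_{ii}-\Phi_t\le 0$ at a maximum with $x_0\in\Omega$, $t_0>0$. The $u_{tk}$ terms produced by $\Phi_t$ then cancel against $a^{ij}u_{ijk}$ through $u_{tk}=a^{ij}u_{ijk}+a^{ij}_{~,k}u_{ij}+f_k+f_uu_k$, and $f_u\le0$ gives the right sign for the zero-order term.

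Second, and more seriously, the auxiliary function is the missing idea. For $q\neq1$, your $w=u+\phi_\varepsilon v^{1-q}d$ has a coefficient depending on $Du$. So $Dw$ already contains $D^2u$, and $\log|Dw|^2$ is a second-order quantity; the Ma--Xu device relies on the Neumann datum depending on $x$ only.

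Your sketch also does not say what produces a positive term at an interior maximum without reference to $\|u\|_{C^0}$. The $p$-Laplacian structure alone does not do it. After the cancellations there remain error terms, from $D_xf$ and from derivatives of $\phi$, that must be beaten by a term of definite size. In Ma--Xu type arguments that role is typically played by a term in $u$ added to $\Phi$, which brings back dependence on $\sup|u|$.

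The paper takes $\Phi=\log w+\alpha h$, with $w=v^{q+1}-(q+1)\phi\,Du\cdot Dh$ (first order in $u$) and $h$ a strictly convex defining function ($D^2h\ge\kappa_0 I$, with $h=0$ and $|Dh|=1$ on $\partial\Omega$). Strict convexity is used in the interior too. Once the gradient terms are absorbed, the remaining error is $-C(v^{p-2-q}+v^{-1})$. It is beaten only by the convexity term $\alpha a^{ii}h_{ii}\ge c\,v^{p-2}$, and this is where $q>0$ and $p>1$ are needed. At a boundary maximum, $\Phi_\nu\le0$ combined with the tangentially differentiated boundary condition gives $0\ge\frac{(q+1)\kappa_0}{2}-\alpha-Cv^{-q}$. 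For this theorem alone, a bound depending on $T$ through $\sup|u|\le\sup|u_0|+CT$ would suffice, but your sketch establishes neither kind.

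A smaller point concerns obliqueness. Your check uses the boundary relation, so it holds only where $G=0$. The mean-value linearizations in your uniqueness and barrier-comparison arguments evaluate $\partial_pG$ at intermediate gradients, where $1+(1-q)\phi_\varepsilon v^{-1-q}\,p\cdot\nu$ can be negative for small $\varepsilon$. Writing the condition as $v^{q-1}u_\nu+\phi_\varepsilon=0$ fixes this, since $\partial_p\bigl(v^{q-1}\,p\cdot\nu\bigr)\cdot\nu=v^{q-1}\bigl(1+(q-1)(p\cdot\nu)^2v^{-2}\bigr)\ge\min\{1,q\}\,v^{q-1}$ for every $p$.
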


Next, we aim to study the asymptotic behaviors for the approximate solutions
\begin{equation}\label{app par eq2}
\left\{\begin{aligned}
& u_t=\operatorname{div}(v^{p-2}Du)+\varphi(x) \quad&&\text{in~}\Omega\times[0,\infty),\\
& u(x,0)=u_{0}(x) \quad &&\text{in~}\Omega,\\
& u_\nu=-\phi_{\varepsilon}(x)v^{1-q} \quad&&\text{on~}\partial\Omega\times[0,\infty).
\end{aligned}\right.
\end{equation}
We consider the corresponding elliptic eigenvalue problem 
\begin{equation}\label{app ell eq}
\left\{\begin{aligned}
& -\operatorname{div}(v^{p-2}Du)=\lambda_{\varepsilon}+\varphi(x) &&\text{in~}\Omega,\\
& u_\nu=-\phi_{\varepsilon}(x)v^{1-q} &&\text{on~}\partial\Omega,
\end{aligned}\right.
\end{equation}
and obtain the following result by a further approximation (see Section 5).
\begin{theorem}\label{thm1.3}
Assume $\Omega\subset \mathbb{R}^n$ is a bounded strictly convex $C^{2,\beta}$ domain and $q>0$. Then for any $\varphi(x)$, $\phi(x)\in C^{\infty}(\overline\Omega)$, there exists a unique $\lambda_{\varepsilon}\in \mathbb{R}$ and a function $u_{\varepsilon}\in C^{\infty}(\overline\Omega)$ solving \eqref{app ell eq}. Moreover, $u_\varepsilon$ is unique up to a constant.
\end{theorem}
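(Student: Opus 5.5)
We prove Theorem~\ref{thm1.3} by the classical ergodic (vanishing discount) approximation. For $\delta\in(0,1)$ we consider
\begin{equation*}
\left\{\begin{aligned}
& \delta w-\operatorname{div}(v^{p-2}Dw)=\varphi(x) &&\text{in~}\Omega,\\
& w_\nu=-\phi_{\varepsilon}(x)v^{1-q} &&\text{on~}\partial\Omega,
\end{aligned}\right.
\end{equation*}
where $v=\sqrt{\varepsilon^2+|Dw|^2}$. The zeroth-order term $\delta w$ is monotone, which plays the role of $f_z\le0$ in \eqref{f}. The plan is to solve this problem, obtain estimates uniform in $\delta$, and pass to the limit in $w_\delta-w_\delta(x_0)$ and $\delta w_\delta$.

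\textbf{Step 1 (existence for fixed $\delta$ and the $C^0$ bound).} A constant $c$ fails to be a sub- or supersolution only through the boundary term. Instead we use barriers of the form $\pm(C/\delta+\psi)$, with $\psi$ a fixed smooth function satisfying the boundary inequalities. Comparison for the $\delta$-problem then gives $\delta|w_\delta|\le C$, uniformly in $\delta$. This comparison is strict because the coefficient $\delta$ is positive, and on the boundary it is obtained by the usual tangential argument. Existence then follows from the Leray--Schauder method for uniformly elliptic oblique problems, once gradient bounds are known.

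\textbf{Step 2 (the key estimate).} We need $|Dw_\delta|\le C$ independently of $\delta$ and of $\|w_\delta\|_{C^0}$. This is the elliptic version of the maximum-principle gradient estimate behind Theorem~\ref{thm1.2}. We apply the maximum principle to $\log|Dw|^2$ plus a small multiple of the boundary distance function, or to the auxiliary function built from $w_\nu+\phi_\varepsilon v^{1-q}$.
\begin{itemize}
\item At an interior maximum, the term $\delta w$ contributes $\delta|Dw|^2\ge0$, which has a favourable sign, so $w$ itself never enters.
\item At a boundary maximum, strict convexity of $\Omega$ supplies the good term from the second fundamental form, and this bounds $|Dw|$.
\end{itemize}
This is where strict convexity is essential, and I expect this step to be the main obstacle. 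The difficulty is making every constant independent of $\sup|w|$ while handling the nonlinear boundary operator $v^{1-q}$ for all $q>0$. Once $|Dw_\delta|\le C$ holds, the equation is uniformly elliptic with bounded coefficients. Since $\delta w_\delta$ is bounded, we get $C^{1,\gamma}$ bounds from Lieberman's oblique theory, and then $C^{2,\gamma}$ bounds by Schauder estimates. Bootstrapping with smooth data gives $C^k$ bounds for $\tilde w_\delta:=w_\delta-w_\delta(x_0)$ that are uniform in $\delta$; here $\operatorname{osc} w_\delta\le C\operatorname{diam}\Omega$ by Step 2.

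\textbf{Step 3 (limit and uniqueness).} Along a subsequence, $\delta w_\delta(x_0)\to-\lambda_\varepsilon$. Moreover $\delta\,\operatorname{osc} w_\delta\to0$, so $\delta w_\delta\to-\lambda_\varepsilon$ uniformly. Also $\tilde w_\delta\to u_\varepsilon$ in $C^\infty(\overline\Omega)$, and $u_\varepsilon$ solves \eqref{app ell eq}.

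Uniqueness of $\lambda$ is proved by contradiction. Suppose $(\lambda,u)$ and $(\mu,\tilde u)$ both solve \eqref{app ell eq} with $\lambda<\mu$. Adding constants, we may assume $u-\tilde u$ attains its maximum, say $0$, at some point.
\begin{itemize}
\item At an interior maximum point, the linearized operator gives $-\operatorname{div}(v^{p-2}Du)\ge-\operatorname{div}(\tilde v^{p-2}D\tilde u)$ in the viscosity sense. This contradicts $\lambda+\varphi<\mu+\varphi$.
\item At a boundary maximum point, tangential gradients coincide and $(u-\tilde u)_\nu\le0$. The map $s\mapsto s+\phi_\varepsilon(\varepsilon^2+|D_Tu|^2+s^2)^{(1-q)/2}$ is increasing under the compatibility structure, or we argue via the Hopf lemma applied to the linear equation for $u-\tilde u$. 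Either way the boundary case is ruled out.
\end{itemize}
Hence $\lambda=\mu$. In that case $u-\tilde u$ satisfies a linear uniformly elliptic equation with an oblique boundary condition. The strong maximum principle together with Hopf's lemma then forces $u-\tilde u$ to be constant.
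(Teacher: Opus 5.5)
Your proposal is correct and is essentially the paper's proof: the discounted problem \eqref{app ell eq2}, the $C^0$-independent gradient estimate of Theorem~\ref{thm3.2} (with $-\delta$ playing the role of $f_z\le 0$), a barrier bound on $\delta w_\delta$, compactness as $\delta\to0$, and Hopf's lemma plus the strong maximum principle for uniqueness. The main difference is that your generic barrier $\pm(C/\delta+\psi)$ only works for fixed $\varepsilon$. The paper instead builds its barrier from $(\varepsilon_0-d)^b$ with $b>\max\{2,\frac{p}{p-1}\}$, so that $\delta w_\delta$, and hence $\lambda_\varepsilon$, stays bounded uniformly in $\varepsilon$ when $1<p<2$; Section~6 needs this.

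A few minor corrections. At boundary maxima you need the corrected auxiliary function $\log\bigl(v^{q+1}-(q+1)\phi_\varepsilon\,Dw\cdot Dh\bigr)+\alpha h$: by the boundary condition its normal derivative loses the $w_{\nu\nu}$ terms, whereas plain $\log|Dw|^2+\alpha h$ leaves an uncontrolled $w_\nu w_{\nu\nu}$ term. In the uniqueness of $\lambda$, monotonicity alone only gives $u_\nu=\tilde u_\nu$. The boundary case is excluded by Hopf's strict inequality $(u-\tilde u)_\nu<0$ combined with the strict monotonicity of $s\mapsto(\varepsilon^2+|D_Tu|^2+s^2)^{\frac{q-1}{2}}s$; the map $s\mapsto s+\phi_\varepsilon(\cdots)^{\frac{1-q}{2}}$ you wrote need not be monotone. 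Finally, with a $C^{2,\beta}$ boundary the up-to-the-boundary bootstrap gives $C^{2,\gamma}(\overline\Omega)$, not $C^\infty(\overline\Omega)$, which is also all the paper's proof delivers.
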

By Theorem \ref{thm1.3} and a routine argument for uniformly parabolic equations, we get the asymptotic behavior of the solutions to \eqref{app par eq}.
\begin{theorem}\label{thm1.4}
Assume $\Omega\subset \mathbb{R}^n$ is a bounded strictly convex $C^{2,\beta}$ domain and $q>0$. Suppose $\varphi(x)$, $\phi(x)\in C^{\infty}(\overline\Omega)$, then any smooth solutions  $u_1$ and $u_2$ of \eqref{app par eq} with smooth initial data satisfy $\lim_{t\to\infty}|u_1-u_2|_{C^{\infty}(\overline\Omega)}=0$. In particular, the solution $u(x,t)$ of \eqref{app par eq2} satisfies $\lim_{t\to\infty}|u(x,t)-\lambda_{\varepsilon}t-\omega_{\varepsilon}|_{C^{\infty}(\overline\Omega)}=0$, where $(\lambda_{\varepsilon},\omega_{\varepsilon})$ is a solution of \eqref{app ell eq}.
\end{theorem}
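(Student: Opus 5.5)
Let $w=u_1-u_2$ for two smooth solutions of \eqref{app par eq}, where the right-hand side is $\varphi(x)$ as in \eqref{app par eq2}. The argument has two parts: a linear equation for $w$ with uniform coefficient bounds, and then a strong maximum principle argument for its oscillation.

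\textbf{Uniform bounds.} By the gradient estimate behind Theorem \ref{thm1.2}, we have $|Du_i|\le C$ uniformly in $t$, and this bound does not depend on the $C^0$ norm. Since $\varepsilon>0$, the operator $\operatorname{div}(v^{p-2}Du)$ is then uniformly parabolic along both solutions. Parabolic Schauder theory for the oblique problem, applied on unit time windows $[T,T+2]$ with the translation invariance in $t$, gives uniform $C^{k+\sigma,(k+\sigma)/2}$ bounds on $Du_i$, $D^2u_i$, $\partial_tu_i$ for every $k$. These bounds are independent of $T$, because the equation involves only derivatives of $u$ and not $u$ itself.

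\textbf{The linear equation and the oscillation.} Subtracting the two equations and writing differences as integrals along $u_2+s(u_1-u_2)$ gives
\[
w_t=a^{ij}(x,t)w_{ij}+b^i(x,t)w_i \ \text{ in }\Omega,\qquad w_\nu+c^i(x,t)w_i=0 \ \text{ on }\partial\Omega .
\]
Here $a^{ij}$ is uniformly elliptic, and all coefficients are smooth with bounds independent of $t$. The boundary condition involves only tangential/oblique first-order terms in $w$ and no zeroth-order term. The oblique condition $\nu+c$ is transversal, since $c$ comes from differentiating $\phi_\varepsilon v^{1-q}$ in $Du$ and its component along $\nu$ is a coefficient that can be absorbed. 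Constants therefore solve this problem. By the maximum principle together with Hopf's lemma at the boundary, $\max_{\overline\Omega} w(\cdot,t)$ is nonincreasing and $\min_{\overline\Omega} w(\cdot,t)$ is nondecreasing. Hence $\operatorname{osc} w(\cdot,t)$ decreases.

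\textbf{Main obstacle: strict decay of the oscillation.} To show $\operatorname{osc} w\to0$, I would use a Harnack/strong maximum principle argument on each window $[k,k+1]$. Apply the parabolic Harnack inequality for oblique problems to $\max w(\cdot,k)-w\ge0$ and to $w-\min w(\cdot,k)\ge0$. Adding the two inequalities, as in the Krylov–Safonov oscillation lemma, gives $\operatorname{osc} w(\cdot,k+1)\le\theta\operatorname{osc} w(\cdot,k)$ with $\theta<1$ uniform in $k$, since the coefficient bounds are uniform. Alternatively, argue by compactness: if the oscillation stays at some $\delta>0$, translate in time and extract a limit pair of solutions. The limit difference has constant oscillation, so it attains an interior or boundary maximum. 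By the strong maximum principle and Hopf's lemma it is constant, a contradiction. Either route gives $\operatorname{osc} w\to0$ exponentially.

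\textbf{Convergence to a constant, then $C^\infty$.} Because $\max w$ decreases, $\min w$ increases, and their difference tends to $0$, the function $w$ converges uniformly to a constant $c_0$. Absorbing $c_0$ into $u_2$ (the constant shift $u_2+c_0$ is again a solution), the uniform higher-order bounds and interpolation upgrade this to $C^\infty(\overline\Omega)$ convergence.

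\textbf{The "in particular".} Take $(\lambda_\varepsilon,\omega_\varepsilon)$ from Theorem \ref{thm1.3}, and note the sign convention there: \eqref{app ell eq} reads $-\operatorname{div}(v^{p-2}D\omega_\varepsilon)=\lambda_\varepsilon+\varphi$. Then $\tilde u=-\lambda_\varepsilon t+\omega_\varepsilon$, up to the sign matching the statement, is an exact solution of \eqref{app par eq2}. Applying the first part with $u_2=\tilde u$ and absorbing $c_0$ into $\omega_\varepsilon$ gives the claimed limit.
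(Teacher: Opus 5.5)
Your proposal is correct and is essentially the argument the paper relies on when it defers to Theorem 5.2 of \cite{WWX2019CPAA}: write $w=u_1-u_2$ as a solution of a uniformly parabolic linear equation with an oblique boundary condition and no zeroth-order terms, get monotonicity of $\max w$ and $\min w$ from the maximum principle and Hopf's lemma, and force $\operatorname{osc} w\to 0$ via the strong maximum principle and the $t$-independent higher-order bounds (applied to time translates, after subtracting a normalizing constant since the $u_i$ grow linearly), then upgrade to smooth convergence. You are also right that the conclusion holds only modulo an additive constant and that the translating solution is $\omega_\varepsilon-\lambda_\varepsilon t$, so the printed statement needs both corrections; note too that only your Harnack route, not the compactness one, gives an exponential rate.
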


Finally, we pass to the limit of $\varepsilon\to0$ to obtain the asymptotic behavior of the solutions to \eqref{par eq}.

As in Theorem \ref{thm1.3}, we consider the corresponding elliptic eigenvalue problem 
\begin{equation}\label{ell eq}
\left\{\begin{aligned}
& -\operatorname{div}(|Du|^{p-2}Du)=\lambda_{0}+\varphi(x) &&\text{in~}\Omega,\\
& |Du|^{q-1}u_\nu=-\phi(x) &&\text{on~}\partial\Omega,
\end{aligned}\right.
\end{equation}
and obtain the following result by a further approximation (see Sections 6).
\begin{theorem}\label{thm1.5}
Assume $\Omega\subset \mathbb{R}^n$ is a bounded strictly convex $C^{2,\beta}$ domain and $q>0$. Then for any $\varphi(x), \phi(x)\in C^{\infty}(\overline{\Omega})$, we have the following existence results
\begin{itemize}
\item when $q=1$ or $p-1$, there exists a $\lambda_{0}\in \mathbb{R}$ and a $C^{1,\alpha}(\overline{\Omega})$ viscosity solution $u$ of \eqref{ell eq}.\\
\item when $q\neq 1,p-1$, there exists a $\lambda_{0}\in \mathbb{R}$ and a $C^{1,\alpha}(\Omega)$ viscosity solution $u$ of \eqref{ell eq}.
\end{itemize}
Moreover, we have the following uniqueness results
\begin{itemize}
\item when  $q=p-1$, then $\lambda_0$ is unique,  and $u$ is unique up to a constant.\\

\item for $p\geq2$, when $q=1$ or $q>0$ with $|\phi(x)|>0$ on $\partial\Omega$, then $\lambda_0$ is unique.
\end{itemize}
\end{theorem}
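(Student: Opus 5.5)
The plan is to obtain $(\lambda_0,u)$ as a limit of the approximate eigenpairs $(\lambda_\varepsilon,u_\varepsilon)$ from Theorem \ref{thm1.3}, normalized by $u_\varepsilon(x_0)=0$ at a fixed $x_0\in\Omega$.

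\textbf{Step 1: uniform bounds.} First I bound $\lambda_\varepsilon$ and $Du_\varepsilon$ uniformly in $\varepsilon$.
\begin{itemize}
\item For $\lambda_\varepsilon$, integrate the equation in \eqref{app ell eq} over $\Omega$. Using $\nu$ inward, this gives
\[
\lambda_\varepsilon|\Omega|=-\int_\Omega\varphi\,dx+\int_{\partial\Omega}v^{p-2}u_\nu\,d\sigma=-\int_\Omega\varphi\,dx-\int_{\partial\Omega}\phi_\varepsilon v^{p-1-q}\,d\sigma .
\]
So $|\lambda_\varepsilon|$ is controlled once $v=\sqrt{\varepsilon^2+|Du_\varepsilon|^2}$ is bounded on $\partial\Omega$.
\item For the gradient, I reuse the maximum-principle gradient estimate developed for the parabolic problem on strictly convex domains. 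That estimate is independent of $\|u\|_{C^0}$ and of $\varepsilon$, and it applies to $u_\varepsilon$ viewed as the stationary-in-shape solution $u_\varepsilon+\lambda_\varepsilon t$ of \eqref{app par eq2}. The resulting bound must not depend on $\lambda_\varepsilon$; only $\varphi$ and $\phi$ should enter, since the constant $\lambda_\varepsilon$ disappears after differentiation.
\item With the normalization $u_\varepsilon(x_0)=0$, this gives a uniform Lipschitz bound on $u_\varepsilon$, and then a uniform bound on $\lambda_\varepsilon$.
\end{itemize}

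\textbf{Step 2: uniform $C^{1,\alpha}$ estimates.} With $|Du_\varepsilon|\le M$, the equation is a regularized $p$-Laplacian with bounded right-hand side.
\begin{itemize}
\item In the interior, the standard $C^{1,\alpha_0}$ estimates (Section 4.2) hold uniformly in $\varepsilon$.
\item For $q=1$ the boundary condition is the Neumann condition $u_\nu=-\phi_\varepsilon$. Here I use the global $C^{1,\alpha}$ theory announced in the abstract: flatten the boundary, then run compactness and improvement of flatness, with the extra iteration condition when $1<p<2$.
\item For $q=p-1$ the condition is conormal, and I invoke Lieberman \cite{L1988NA}.
\end{itemize}
By Arzel\`a--Ascoli, along a subsequence $\lambda_\varepsilon\to\lambda_0$ and $u_\varepsilon\to u$ in $C^1$, globally for $q=1,p-1$ and locally otherwise.

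\textbf{Step 3: stability.} Next I check that $u$ is a viscosity solution of \eqref{ell eq}.
\begin{itemize}
\item I also need $\phi_\varepsilon\to\phi$. For a fixed smooth $\phi$ one may simply take $\phi_\varepsilon=\phi$, because no compatibility condition is needed in the elliptic problem.
\item Since $u_\varepsilon\to u$ uniformly and the operators converge locally uniformly away from $Du=0$, the standard stability argument applies. At a touching test function with $D\psi=0$, I use the usual degenerate/singular definition, working with semicontinuous envelopes.
\item On $\partial\Omega$ the boundary condition is taken in the generalized viscosity sense, and the limit passes in the same way.
\end{itemize}
This yields existence in both cases of the theorem.

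\textbf{Step 4: uniqueness.}
\begin{itemize}
\item \textbf{Case $q=p-1$.} The problem is variational, with weak formulation
\[
\int_\Omega|Du|^{p-2}Du\cdot D\eta\,dx=\int_\Omega(\lambda_0+\varphi)\eta\,dx-\int_{\partial\Omega}\phi\eta\,d\sigma .
\]
Taking $\eta=1$ determines $\lambda_0$. Taking $\eta=u_1-u_2$ for two solutions and using strict monotonicity of $\xi\mapsto|\xi|^{p-2}\xi$ gives $Du_1=Du_2$. This requires knowing the $C^{1,\alpha}$ viscosity solution is a weak solution, which I expect to follow by passing the weak form to the limit from the approximations; otherwise an equivalence result is needed.
\item \textbf{Case $p\ge2$ with $q=1$, or $q>0$ and $|\phi|>0$.} Suppose $(\lambda_1,u_1)$ and $(\lambda_2,u_2)$ solve \eqref{ell eq} with $\lambda_1<\lambda_2$. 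Then $w_i=u_i+\lambda_i t$ solve the parabolic problem with source $\varphi$. By the comparison principle underlying the uniqueness in Theorem \ref{thm1.1}, $w_1-w_2$ stays bounded, whereas it equals $u_1-u_2+(\lambda_1-\lambda_2)t$, which is unbounded. This contradiction shows $\lambda_1=\lambda_2$.
\end{itemize}

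\textbf{Main obstacle.} I expect the hardest step to be Step 1: getting a gradient bound that is genuinely independent of $\lambda_\varepsilon$, $\|u_\varepsilon\|_{C^0}$ and $\varepsilon$. This is exactly where strict convexity enters. If the parabolic estimate depends on $\sup|u_t|$, I would instead redo the maximum-principle computation directly on the elliptic equation. The test function would be $\log|Du|^2$ plus a multiple of a boundary-adapted distance function $d$ exploiting convexity, with the boundary derivative controlled through the $C^2$ norm of $\phi$.
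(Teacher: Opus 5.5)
Your proposal follows the paper's route. You pass to the limit in the normalized approximate eigenpairs $(\lambda_\varepsilon,u_\varepsilon)$ using the $\varepsilon$-independent gradient estimate and the interior, Neumann ($q=1$) and conormal ($q=p-1$) $C^{1,\alpha}$ theory. Uniqueness then comes from integration by parts for $q=p-1$ and from the comparison argument of Theorem \ref{thm1.6} for $p\ge2$. The one real difference is in bounding $\lambda_\varepsilon$: the paper uses the barrier estimate $|\delta u_{\varepsilon,\delta}|\le C$ of Theorem \ref{thm5.1}, while your divergence-theorem identity works just as well once the gradient is bounded, since $|v^{p-2}u_\nu|\le v^{p-1}$ on $\partial\Omega$.

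A few small corrections:
\begin{itemize}
\item The associated parabolic solutions are $u-\lambda t$, not $u+\lambda t$.
\item The boundary term in your weak form should read $+\int_{\partial\Omega}\phi\eta$, as your own Step 1 identity shows.
\item For $1<p<2$ the operator blows up as $Du\to0$, so semicontinuous envelopes will not carry the stability step. Use instead either the Birindelli--Demengel definition (Lemma \ref{vis-ell-def-equal}) or weak solutions as in Lemma \ref{lemstb}.
\item At the boundary, the elliptic version of Lemma \ref{par-vis-bdy-def-2} removes test functions with vanishing gradient.
\end{itemize}
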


For the parabolic equation \eqref{par eq}, by using the comparison principle, we have the following long-time behavior.
\begin{theorem}\label{thm1.6}
Assume $\Omega\subset \mathbb{R}^n$ is a bounded, strictly convex $C^{2,\beta}$ domain, $p\geq2$, $f$ depends only on $x$ and satisfies \eqref{f}, and $\phi$ satisfies \eqref{phi0}. If either $q=1$ or $q>0$ with $|\phi(x)|>0$ on $\partial\Omega$, then the unique viscosity solution of \eqref{par eq} satisfies
$$
\lim_{t\to\infty}\frac{u(x,t)}{t}=-\lambda_{0},
$$
where $\lambda_{0}$ is the unique constant in Theorem \ref{thm1.5}.
\end{theorem}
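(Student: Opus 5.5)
Take $\varphi:=f(x)$ (for the asymptotics $f$ depends only on $x$) and let $(\lambda_0,w)$ be the pair from Theorem \ref{thm1.5}, so that $w$ is a viscosity solution of \eqref{ell eq}. The plan is to sandwich $u$ between two traveling waves built from $w$, using the comparison principle for \eqref{par eq}. That comparison principle is available for $p\ge2$ when either $q=1$ or $q>0$ with $|\phi|>0$ on $\partial\Omega$; it is exactly the ingredient behind the uniqueness part of Theorem \ref{thm1.1}.

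The two barriers are
$$
W^\pm(x,t):=w(x)-\lambda_0 t\pm C,\qquad C:=\|u_0\|_{L^\infty(\Omega)}+\|w\|_{L^\infty(\Omega)}.
$$
Since $w$ solves $-\operatorname{div}(|Dw|^{p-2}Dw)=\lambda_0+f(x)$ in the viscosity sense, we have $W^\pm_t=-\lambda_0=\operatorname{div}(|DW^\pm|^{p-2}DW^\pm)+f(x)$. On the boundary, $DW^\pm=Dw$, so $W^\pm$ satisfy the same capillary condition $|DW|^{q-1}W_\nu=-\phi$. This should check directly at the level of test functions: a test function touching $W^\pm$ at $(x_0,t_0)$ has the form $\psi(x)-\lambda_0 t$ locally, with $\psi$ touching $w$ at $x_0$, and one may need the standard reduction that time derivatives of test functions are determined at such contact points. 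Hence $W^+$ is a viscosity supersolution and $W^-$ a viscosity subsolution of \eqref{par eq}. At $t=0$ we have $W^-(x,0)\le u_0\le W^+(x,0)$ by the choice of $C$.

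Applying the comparison principle on $\overline\Omega\times[0,T]$ for every $T$ gives
$$
w(x)-\lambda_0 t-C\le u(x,t)\le w(x)-\lambda_0t+C \quad\text{for all } x\in\overline\Omega,\ t\ge0.
$$
Dividing by $t$ and letting $t\to\infty$ yields $u(x,t)/t\to-\lambda_0$ uniformly in $x$, because $w$ is bounded: it is $C^{1,\alpha}$, at least locally, and in fact Lipschitz up to the boundary through the approximation. Uniqueness of $u$ comes from Theorem \ref{thm1.1}, and uniqueness of $\lambda_0$ from Theorem \ref{thm1.5}, which is consistent with the limit being independent of which $w$ is chosen.

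The main obstacle is the comparison principle between the viscosity sub/supersolution $W^\mp$ and $u$ with the capillary boundary condition. The difficulty is handling the boundary when $|Du|$ may vanish, which is why $|\phi|>0$ or $q=1$ is required. I would rely on the same comparison argument used for uniqueness in Theorem \ref{thm1.1}, applied to $W^\pm$, which are merely viscosity solutions; $W^\pm$ are Lipschitz, so that argument should apply. When $q\neq1,p-1$, $w$ is only $C^{1,\alpha}$ locally, but it is still continuous on $\overline\Omega$, which suffices for doubling-variables comparison.
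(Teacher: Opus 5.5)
Your proposal is correct and follows essentially the same route as the paper. The paper also takes a pair $(\omega_0,\lambda_0)$ from Theorem \ref{thm1.5}, notes that $\omega_0(x)-\lambda_0t\mp A$ are a sub- and a supersolution of \eqref{par eq} for large $A$, and sandwiches $u$ between them with Lemma \ref{lem cpp}.

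One small point concerns the uniqueness of $\lambda_0$ for $p\ge2$ with $q=1$ or $|\phi|>0$. The paper does not prove it in Theorem \ref{thm1.5}; that theorem defers it to this proof, so citing it here is circular. You should instead observe that your two-sided bound, applied to any two pairs $(\lambda_0,w)$ and $(\lambda_0',w')$, forces $\lambda_0=\lambda_0'$.
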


If the boundary data are zero, we can derive a more delicate asymptotic behavior as in \cite{JKMT2011JMPA}.
\begin{theorem}\label{thm1.7}
Assume $\Omega\subset \mathbb{R}^n$ is a bounded strictly convex $C^{2,\beta}$ domain, $q>0$ and $p\geq2$. For $f\equiv\phi\equiv 0$, the unique viscosity solution $u$ of  \eqref{par eq} satisfies $\lim_{t\to\infty}u(x,t)=C$, where $C$ is a constant.
\end{theorem}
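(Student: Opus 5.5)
The plan is to prove the statement first for the approximate problems \eqref{app par eq} with $f\equiv\phi\equiv0$, where everything is smooth, and then pass to the limit $\varepsilon\to0$ using the uniqueness of Lipschitz viscosity solutions from Theorem \ref{thm1.1}.

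\emph{Monotonicity of the oscillation.} When $\phi\equiv0$, the compatibility condition \eqref{CC2} forces $\phi_\varepsilon\equiv0$. Hence the boundary condition in \eqref{app par eq} is the homogeneous Neumann condition $u_\nu=0$, and the equation is $u_t=\operatorname{div}(v^{p-2}Du)$. Let $u_\varepsilon$ be the smooth solution from Theorem \ref{thm1.2}. By the strong maximum principle and the Hopf lemma, $\max_{\overline\Omega}u_\varepsilon(\cdot,t)$ is nonincreasing and $\min_{\overline\Omega}u_\varepsilon(\cdot,t)$ is nondecreasing in $t$. The reason is that an interior spatial maximum has $Du=0$ and $D^2u\le0$, and a boundary maximum with $u_\nu=0$ is excluded by Hopf unless $u$ is constant. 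Since the equation is in divergence form with zero flux, we also get mass conservation:
$$\int_\Omega u_\varepsilon(x,t)\,dx=\int_\Omega u_0\,dx.$$

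\emph{Convergence for fixed $\varepsilon$.} Theorem \ref{thm1.4} applies with $\varphi=0$ and $\phi=0$. By Theorem \ref{thm1.3}, the eigenvalue problem has the solution $\lambda_\varepsilon=0$ with $u$ constant, and uniqueness of $\lambda_\varepsilon$ gives that this is the only eigenvalue. Therefore $u_\varepsilon(\cdot,t)\to\omega_\varepsilon$ in $C^\infty$, where $\omega_\varepsilon$ is a constant (solutions are unique up to constants). Mass conservation then identifies the constant as $\omega_\varepsilon=|\Omega|^{-1}\int_\Omega u_0$, which is independent of $\varepsilon$. Call it $C$.

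\emph{Passing to $\varepsilon\to0$.} This is the main obstacle, because the rate of convergence in $t$ need not be uniform in $\varepsilon$, so the two limits cannot be exchanged naively. First, by the uniform Lipschitz and Hölder estimates behind Theorem \ref{thm1.1}, $u_\varepsilon\to u$ locally uniformly on $\overline\Omega\times[0,\infty)$. By uniqueness (here $p\ge2$ and $q=1$, or we use the $q=1$ structure), $u$ is the viscosity solution. Next, define $M(t)=\max u(\cdot,t)$ and $m(t)=\min u(\cdot,t)$. These are monotone as limits of monotone functions, so $M(t)\downarrow M_\infty$ and $m(t)\uparrow m_\infty$. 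Mass conservation passes to the limit, so $\int_\Omega u(\cdot,t)=|\Omega|C$ and $m_\infty\le C\le M_\infty$.

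To show $M_\infty=m_\infty$, I would use the uniform Lipschitz bounds. The family $u(\cdot,\cdot+t_k)$ is precompact in $C(\overline\Omega\times[0,T])$, so along a subsequence it converges to a limit $w$. This $w$ is a viscosity solution (by stability) with $\max w(\cdot,t)\equiv M_\infty$ and $\min w(\cdot,t)\equiv m_\infty$ for all $t$. A strong maximum principle for viscosity solutions of the degenerate problem is delicate, so I would avoid it. Instead, I would use the comparison principle from the uniqueness part of Theorem \ref{thm1.1} together with the smooth approximations: solve \eqref{app par eq} with initial data $w(\cdot,0)$ (after mollification) and apply the $\varepsilon$-level convergence. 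At the $\varepsilon$-level, the maximum equals $M_\infty$ up to $o(1)$ for all $t$, yet it converges to the mean $C$. This forces $M_\infty\le C$, and symmetrically $m_\infty\ge C$.

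A cleaner alternative would be a Lyapunov functional, namely the energy
$$\tfrac1p\int_\Omega|Du|^p,\qquad\text{with}\qquad \frac{d}{dt}\tfrac1p\int_\Omega|Du|^p=-\int_\Omega u_t^2$$
at the $\varepsilon$-level, with $v^p$ in place of $|Du|^p$. This gives $\int_0^\infty\!\int_\Omega u_t^2\le C_0$ uniformly in $\varepsilon$, and the bound passes to the limit. Hence the $\omega$-limit $w$ has $w_t=0$, so $w$ is a stationary solution with zero Neumann data. Stationary solutions are constant: multiply by $w$ and integrate, which is justified since $w\in C^{1,\alpha}$ is a weak solution. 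So every subsequential limit is the constant $C$ fixed by mass conservation. Combined with the monotonicity of $M$ and $m$, this yields $u(\cdot,t)\to C$ uniformly. I would take this energy route as the main argument.
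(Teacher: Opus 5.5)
Your main (energy) route is essentially the paper's proof: the paper likewise differentiates $\int_\Omega v^p\,dx$ for the approximate problem to get $\int_0^\infty\!\int_\Omega u_t^2\le C$ uniformly in $\varepsilon$, extracts time-shifted limits $w$ with $w_t=0$, and uses the global $C^{1,\alpha}$ theory of Section 4 to see that $w$ is a weak, hence constant, stationary solution. The only difference is the last step: the paper uses the comparison principle (Lemma \ref{lem cpp}, i.e.\ monotonicity of $\max u$ and $\min u$) to make the limit independent of the subsequence, whereas your mass conservation does this by itself and also identifies the constant as $|\Omega|^{-1}\int_\Omega u_0\,dx$; your earlier mollified-data comparison sketch is not needed, and its uniform-in-$t$ claim is exactly the unjustified limit interchange you flagged.
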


For the case $q=0$, i.e., the prescribed contact angle problem with the usual condition
\begin{equation}\label{phi}
\|\phi\|_{C^0(\partial\Omega)}<1,
\end{equation}
we remark that the boundary condition should be interpreted as \eqref{bdrq=0}
\begin{equation*}
u_{\nu}=-\phi(x)|Du| \quad \text{on~} \partial\Omega\times[0,\infty).
\end{equation*}
As in \cite{GMWW2021JMS}, we also need an extra condition on $\phi(x)$, which is the near vertical condition
\begin{equation}\label{nvc}
\|\phi\|_{C^2(\partial\Omega)}\leq \varepsilon_1,
\end{equation}
where $\varepsilon_1>0$ is a small constant. In fact, all  the results for $q>0$ can be derived under this condition.
\begin{theorem}\label{thm1.8}
Assume $\Omega\subset \mathbb{R}^n$ is a bounded strictly convex $C^{2,\beta}$ domain, $f$ satisfies \eqref{f}, and  $\phi(x)$ satisfies \eqref{nvc}. Then all the results in Theorem \ref{thm1.1} to Theorem \ref{thm1.7} hold for $q=0$.
\end{theorem}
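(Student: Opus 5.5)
The plan is to rerun the whole chain of arguments behind Theorems \ref{thm1.1}--\ref{thm1.7} and check that only one ingredient really depends on $q>0$: the global gradient estimate, independent of $\|u\|_{C^0}$, for the approximate problem \eqref{app par eq}. With $q=0$ the approximate boundary condition is
$$u_\nu=-\phi_\varepsilon(x)\,v,\qquad v=\sqrt{\varepsilon^2+|Du|^2},$$
which is the regularized contact angle condition used for mean curvature and level-set flows. Once a uniform bound $|Du_\varepsilon|\le M$ is available, with $M$ independent of $\varepsilon$, $t$ and $\sup|u|$, the rest goes through as for $q>0$:
\begin{itemize}
\item the approximate equation is uniformly parabolic and the boundary condition is uniformly oblique, since $|\phi_\varepsilon|\le\varepsilon_1<1$;
\item time-derivative bounds follow by differentiating in $t$ and applying the maximum principle to $u_t$;
\item the solvability and uniqueness (up to constants) of the eigenvalue problem \eqref{app ell eq} follow by the same continuity and approximation argument;
\item the limit $\varepsilon\to0$ uses the same stability of viscosity solutions together with the $C^{1,\alpha}$ estimates;
\item uniqueness and the long-time behavior come from the comparison principle.
\end{itemize}
For these last points I will check that the boundary operator $B(x,Du)=u_\nu+\phi|Du|$ is monotone enough for comparison. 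It is a Lipschitz oblique condition, so the standard Neumann-type doubling-of-variables argument with the test function modified by $\phi$ applies. This plays the role of the assumption $|\phi|>0$ for $q>0$, and $p\ge2$ is used exactly as before.

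\emph{Step 1 (gradient estimate).} Following \cite{GMWW2021JMS} and Ma--Xu, I would consider
$$\Phi=\log|Dw|^2+\alpha_0 d(x),\qquad w=u+\phi(x)\,v\,d(x)\ \text{(suitably extended)},$$
where $d$ is the distance to $\partial\Omega$, or alternatively a tangential-gradient-plus-normal-correction function. The aim is to show that $\Phi$ cannot attain a large maximum.
\begin{itemize}
\item At an interior maximum, differentiate the equation $u_t=\operatorname{div}(v^{p-2}Du)+f$. The linearized operator $a^{ij}=v^{p-2}(\delta_{ij}+(p-2)u_iu_j/v^2)$ is elliptic with bounded ratio of eigenvalues. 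The condition $f_z\le0$ and the bound on $D_xf$ let us close the estimate without using $\sup|u|$.
\item At a boundary maximum, differentiate $u_\nu=-\phi v$ tangentially. Strict convexity then gives a good term $-\kappa_{\min}|D'u|^2$ that dominates the error terms.
\end{itemize}
For $q>0$ the error terms came with the factor $v^{1-q}$, which made them lower order at large gradients. For $q=0$ they are of the same order, namely $|D\phi|\,v$ and $|D^2\phi|\,v$, as the good term. Here the near vertical condition \eqref{nvc} enters: every such error carries a factor $\|\phi\|_{C^2}\le\varepsilon_1$, so it can be absorbed into the convexity term once $\varepsilon_1$ is small depending on $\Omega$, $n$ and $p$.

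\emph{Main obstacle.} I expect the main difficulty to be the boundary case of Step 1, in particular the mixed normal--tangential second derivatives $u_{\nu i}$. These are controlled only through the differentiated boundary condition, which now involves $\phi\,u_{ki}u_k/v$. The term of full Hessian order must be handled by choosing the normal component of the auxiliary function carefully, with the factor $1/(1-\phi^2)$ familiar from capillary problems. I would first try the auxiliary function of \cite{GMWW2021JMS}, adapted to the $p$-Laplacian matrix $a^{ij}$.

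\emph{Step 2 (regularity).} The $C^{1,\alpha}$ theory for $q=0$ is the part I am least sure transfers verbatim, since the boundary condition is nonlinear in $Du$. Under \eqref{nvc} I would argue as follows. Where $|Du|$ is small, the condition is close to a Neumann condition. Where $|Du|$ is large, it is a smooth oblique condition. Either way the flattening and improvement-of-flatness scheme applies, since the limiting profiles satisfy $u_\nu=-\phi(0)|Du|$, a homogeneous oblique condition with smooth, uniformly oblique dependence on $Du$ away from $Du=0$. At worst this yields the interior-type conclusion stated for $q\ne1,p-1$.
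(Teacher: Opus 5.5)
Your overall reduction is the paper's: everything rests on a gradient bound for the approximating problem that is uniform in $\varepsilon$, $t$ and $\sup|u|$. Your boundary-case plan is also the paper's: solve $u_{ni}$ from the critical-point equation together with the tangentially differentiated boundary condition, then absorb the $O(\varepsilon_1)$ errors into the curvature term. The gap is in the interior-maximum case of Step~1. The hypotheses $f_z\le0$ and $|D_xf|\le L$ only remove the dependence on $u$; they produce nothing positive. After dividing by $w\approx v$, the $D_xf$ terms leave an error $-Cv^{-1}$. For $q=0$ the terms generated by $\phi$ leave errors of order $v^{p-2}$ with only a small factor in front ($C\varepsilon_1^{2/3}v^{p-2}$ in the paper). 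Examples are $-u_1a^{ii}(\phi h_1)_{ii}$ and the piece $(\phi_ih_1+\phi h_{1i})u_1$ of $w_i$ inside $a^{ii}|w_i|^2/w^2$. At an interior maximum nothing absorbs these unless the auxiliary function contains a strictly convex zeroth-order term. This is where strict convexity of $\Omega$ is really used: in the interior, not only at the boundary. The paper takes $\Phi=\log w+\alpha h$ with $w=v-\sum_l\phi u_lh_l$ and $h$ the strictly convex defining function ($h<0$ in $\Omega$, $D^2h\ge\kappa_0 I$, $-h_\nu=|Dh|=1$ on $\partial\Omega$). Then $\alpha a^{ii}h_{ii}-(A+1)\alpha^2a^{ii}h_i^2\ge c\alpha v^{p-2}$. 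This beats $Cv^{-1}$ because $p>1$. It also beats $C\varepsilon_1^{2/3}v^{p-2}$ once $A=\varepsilon_1^{-1/3}$ and $\alpha=\varepsilon_1^{1/2}$, since then $\varepsilon_1^{2/3}\ll\alpha$ and $A\alpha^2\ll\alpha$.

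The function you write down, $\log|Dw|^2+\alpha_0 d$, has the opposite convexity. The distance $d$ is concave in a convex domain, so at an interior maximum both $\alpha_0a^{ij}d_{ij}\le0$ and $-\alpha_0^2a^{ij}d_id_j$ have the unfavourable sign, and no contradiction results. No single term $\alpha\eta$ can help both inside and on all of $\partial\Omega$: a convex $\eta$ has $\eta_\nu\le0$ at a boundary point where it attains its maximum over $\overline\Omega$. The paper therefore accepts the cost $-\alpha$ in $\Phi_\nu$ and pays for it with the boundary curvature, $\Phi_\nu\ge\kappa_0/2-\alpha-C\varepsilon_1$. Without the convex term the interior argument can at best close using $u$ itself (a Bernstein-type factor), giving a bound depending on $\sup|u|$. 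That is useless here: $\sup|u|$ is only controlled by $\sup|u_0|+Ct$ (Lemma~\ref{lem3.1}), while Theorems~\ref{thm1.4}, \ref{thm1.6} and \ref{thm1.7} need a time-uniform bound.

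Two smaller points. First, for $q=0$ the paper does not regularize the boundary condition. It keeps $u_\nu=-\phi|Du|$, which is Lipschitz in $Du$ and coincides with \eqref{CC1}. Your compatibility-determined coefficient is $\phi_\varepsilon=\phi|Du_0|/\sqrt{\varepsilon^2+|Du_0|^2}$. Its $C^2$ norm need not stay below $\varepsilon_1$ as $\varepsilon\to0$ near boundary zeros of $Du_0$, so \eqref{nvc} for your approximating problems would be an extra assumption. Second, Step~2 is unnecessary. Since $p>1$, we have $q=0\neq1,p-1$, so Theorem~\ref{thm1.1} only asserts interior $C^{1,\alpha_0}$ regularity in this case. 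That follows from Theorem~\ref{thm-local-C1alpha} once $|Du|$ and $|u_t|$ are bounded.
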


Finally, inspired by \cite{JKMT2011JMPA,J2023CVPDE}, for non-convex $\Omega$, we consider the corresponding equation with a forcing term $a(x,u)|Du|^{\tilde p-1}$, where $\tilde p\coloneqq\max\{2,p\}$,
\begin{equation*}
\left\{\begin{aligned}
& u_t=\operatorname{div}(|Du|^{p-2}Du)+a(x,u)|Du|^{\tilde p-1}+f(x,u) &&\text{in~}\Omega\times[0,\infty),\\
& u(x,0)=u_0(x) &&\text{in~}\Omega,\\
& |Du|^{q-1}u_\nu=-\phi(x) &&\text{on~}\partial\Omega\times[0,\infty),
\end{aligned}\right.
\end{equation*}
where $a(x,u)$ satisfying 
\begin{equation}\label{a1-1}
 a\in C^{1,\beta}(\overline{\Omega}\times \mathbb{R}),\quad a_{z}(x,z)\leq0
\end{equation}
and the following forcing condition holds for some $c_1=c_1(n, p, q, L, \Omega)>0$ small and $C_1=C_1(n, p, q, L, \Omega)>0$ large:
\begin{align}\label{a2-1}
\begin{cases}
c_1a(x,u)^2-|Da(x,u)|-C_1|a(x,u)|\gg 1 \quad &\text{if } p\geq2,\\
|a(x,u)|>0 \quad &\text{if } 1<p<2.
\end{cases}
\end{align}
\begin{theorem}\label{thm1.9}
Assume $\Omega\subset \mathbb R^n$ is a bounded $C^{2,\beta}$ domain, $f$ satisfies \eqref{f}, $\phi$ satisfies \eqref{phi0}. If $q>0$ and the forcing term $a(x,u)$ satisfies \eqref{a1-1} and \eqref{a2-1}, then all the results in Theorem \ref{thm1.1} to Theorem \ref{thm1.7} hold for the corresponding equation with the forcing term. Moreover, if $q=0$, then the same results as in Theorem \ref{thm1.8} hold, and the nearly vertical condition \eqref{nvc} can be removed (replaced by \eqref{phi}).
\end{theorem}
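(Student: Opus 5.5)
The proof of Theorem \ref{thm1.9} follows the same scheme as Theorems \ref{thm1.1}--\ref{thm1.8}; the only new ingredient is a gradient estimate that no longer uses convexity of $\Omega$. Every later step (higher regularity for the approximate problem, eigenvalue problems, passage $\varepsilon\to0$, large-time behavior) uses convexity only through the global gradient bound independent of $\|u\|_{C^0}$. So the plan is to prove that bound for the regularized problem
\[
u_t=\operatorname{div}(v^{p-2}Du)+a(x,u)v^{\tilde p-2}|Du|+f(x,u),\qquad u_\nu=-\phi_\varepsilon v^{1-q},
\]
with a suitable smoothing of the forcing term, and then rerun the earlier arguments verbatim. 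Assumption \eqref{a1-1}, $a_z\le0$, makes the forcing term monotone in $u$ in the right direction, so the comparison principle and the uniqueness statements (for $p\ge2$) carry over unchanged.

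For the gradient estimate, I would apply the maximum principle to an auxiliary function of the form
\[
G=\log v+\alpha\, d(x)+h(u)\quad\text{or}\quad G=\log\bigl(v+\langle Du,\nu\rangle\text{-correction}\bigr)+K d(x),
\]
modified by the boundary term $\phi_\varepsilon v^{1-q}$ as in the convex case. Here $d$ is the distance to $\partial\Omega$, extended smoothly, and $\nu=Dd$. There are two cases.
\begin{itemize}
\item If the maximum is attained on $\partial\Omega$, the tangential computation produces a term involving the second fundamental form of $\partial\Omega$. In the convex case this term has a good sign; now it does not. It can be absorbed by choosing $\alpha$ large, which costs a term $C\alpha v$ in the interior.
\item If the maximum is attained in the interior, differentiating the equation in direction $Du$ gives a term $a\,v^{\tilde p-1}\,\partial_{Du}(\cdot)$ plus $|a|^2$-type contributions. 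Using the first-order condition $DG=0$ to express $D^2u\cdot Du$, one obtains a positive quantity of size $c\,a^2 v^{\tilde p}$ (through a Cauchy--Schwarz/Young step on the forcing against the degenerate diffusion). This quantity must beat $|Da|v^{\tilde p}$ and $C|a|v^{\tilde p}$, the latter coming from $\alpha D^2 d$ and the curvature contributions. This is exactly where the quantitative condition $c_1a^2-|Da|-C_1|a|\gg1$ for $p\ge2$ enters.
\end{itemize}
For $1<p<2$ we have $\tilde p=2$, so the forcing is $a|Du|$ and dominates the diffusion at large gradients. In that case only $|a|>0$ is needed: the term $a\,\partial_{Du}|Du|$ has a fixed sign and is of the leading order.

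The main obstacle is this interior computation. The key difficulty is keeping $C_1$ independent of $\|u\|_{C^0}$ while $h(u)$ cannot be used, since the large-time behavior requires $C^0$-independence. I would therefore take $G$ without any $u$-dependence, so that $f_z\le0$ and $a_z\le0$ only contribute terms of good sign. I would then track exponents carefully in the singular range, where $v^{p-2}$ blows up as $v\to\varepsilon$ small; there I would follow the elliptic estimates of \cite{JKMT2011JMPA,J2023CVPDE}.

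The case $q=0$ works the same way. The boundary condition $u_\nu=-\phi|Du|$ with $\|\phi\|_{C^0}<1$ is handled as in \cite{GMWW2021JMS}. The near-vertical condition \eqref{nvc} was previously used only to control bad boundary terms of size $\|\phi\|_{C^2}v$ produced by the lack of a good sign. Here those terms are absorbed by the same large forcing, so \eqref{phi} suffices.

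Once the gradient bound holds, the equation is uniformly parabolic. Theorems \ref{thm1.2}--\ref{thm1.4} follow by Schauder theory, and the eigenvalue problem follows by the continuity method with $\lambda_\varepsilon$ obtained as in Theorem \ref{thm1.3}. The $C^{1,\alpha}$ and Hölder-in-time estimates, the limit $\varepsilon\to0$, and Theorems \ref{thm1.5}--\ref{thm1.7} then follow as before, since the forcing term is a lower-order, locally Lipschitz perturbation that is compatible with the compactness and improvement-of-flatness arguments.
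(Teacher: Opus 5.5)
Your route to the gradient bound differs from the paper's but works. You use $G=\log w+Kd$, with $w$ the corrected quantity of Theorem~\ref{thm3.2}. Since $d\equiv0$ on $\partial\Omega$, the tangential first-order conditions at a boundary point are unchanged. A boundary maximum therefore gives $0\ge G_\nu\ge K-(q+1)C_0-o(1)$, which is impossible once $K>(q+1)C_0+1$.

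The cost moves to the interior, where now $Dw=-Kw\,Dd\neq0$. After dividing by $w$ this produces three kinds of error:
\begin{itemize}
\item $C(K+K^2)v^{p-2}$, from $Ka^{ij}d_{ij}$ and the squared first-order term;
\item $CK|a|v^{p-2}$, from the differentiated forcing paired with $\sum_k(v^{q-1}u_k-\phi h_k)u_{1k}=O(Kv^{q+1})$;
\item $CKv^{p-3}M$, which Cauchy--Schwarz absorbs into $v^{p-4}M^2$.
\end{itemize}
For $q>0$, $p\ge2$, all of these are beaten by $c\,a^2v^{p-2}$. That term comes directly from $\mathrm{I}\ge c\,v^{p+q-3}|D^2u|^2$ and the trace bound $|D^2u|\gtrsim|a|v$, so your Cauchy--Schwarz mechanism is enough there. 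Since $K$ depends only on $\Omega,q,L$, \eqref{a2-1} with $\Omega$-dependent constants absorbs everything.

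The paper instead keeps $Dw=0$ in the interior case. At a boundary maximum it uses the multiplier $\psi=\rho w$ on the interior tangent ball $B(x_c,K_0)$, which turns $w_\nu\ge-(C_0+\delta_0/4)w$ into $\psi_\nu>0$. This moves the maximum inside $B$, where errors of the same type appear. Yours is a one-step argument; the paper's follows \cite{JKMT2011JMPA,J2023CVPDE} and leaves the first interior case untouched.

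Make two points explicit.
\begin{itemize}
\item For $q=0$, the good term $v^{-1}a^{ii}\sum_{k\neq1}u_{ki}^2$ does not control $u_{11}$. You need $DG=0$ to get $|u_{11}|\le C\sum_{l\neq1}|u_{1l}|+C(1+K)v$, which requires $|a|\gg K$. This is the role played in the paper by Claim~1 of Theorem~\ref{thm8.2}.
\item For $1<p<2$, $|a|>0$ suffices not because of a fixed-sign term. The trace bound gives $|D^2u|\gtrsim|a|v^{3-p}$, so the good term is of order $a^2v^{2-p}$, while all errors are $O(1)$ after dividing by $w$.
\end{itemize}

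The genuine gap is your claim that Theorems~\ref{thm1.5}--\ref{thm1.7} ``follow as before''. The proof of Theorem~\ref{thm1.7} rests on two facts that the forcing term destroys.
\begin{itemize}
\item The energy identity $\frac{d}{dt}\int_\Omega v^p=-p\int_\Omega u_t^2$ now carries the unsigned term $p\int_\Omega a\,v^{\tilde p-1}u_t$. Hence $\int_0^\infty\!\int_\Omega u_t^2$ is no longer controlled.
\item The limit profile no longer solves $\Delta_pw=0$, $w_\nu=0$. It solves $\Delta_pw+a(x,w)|Dw|^{\tilde p-1}=0$, $w_\nu=0$, and its solutions are constant because of the sign of $a$, not by classical weak theory.
\end{itemize}

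The paper adds exactly these missing pieces. First, it gets an $L^\infty$ bound from the maximum principle and Hopf's lemma; constants are solutions when $f\equiv\phi\equiv0$. Second, it integrates the limit equation over $\Omega$. The Neumann condition kills $\int_\Omega\Delta_pw$ and leaves $\int_\Omega a(x,w)|Dw|^{\tilde p-1}=0$. Since $a$ is continuous with $|a|>0$, it has a fixed sign, so $Dw\equiv0$. A natural replacement for the Lyapunov functional along the same lines is $\frac{d}{dt}\int_\Omega u=\int_\Omega a\,v^{\tilde p-1}$. This has a fixed sign, and the $L^\infty$ bound controls its time integral, which yields $\int_0^\infty\!\int_\Omega|Du|^{\tilde p-1}<\infty$. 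So \eqref{a2-1} must be used a second time, beyond the gradient estimate, and your plan omits this step.

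Relatedly, a literal rerun of Theorems~\ref{thm1.3}--\ref{thm1.6} relies on invariance under $u\mapsto u+c$. This enters through normalization by the mean and through the barriers $\omega_0-\lambda_0t\pm A$. It tacitly requires $a$ to be independent of $u$, just as Theorem~\ref{thm1.6} requires $f=f(x)$.
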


We now briefly discuss the main difficulties of this paper. For the gradient estimate, the auxiliary functions are well known; however, by treating $w_i$ as a whole, we avoid the tedious calculations that would come from solving for $u_{1i}$ and then substituting back, thereby simplifying the proof.  We remark here that this method also applies to mean curvature flow or level-set flow. For the uniqueness, we need to verify the conditions for the comparison principle in \cite{B1999JDE}. At first glance, neither the $p$-Laplacian operator nor the boundary condition operator satisfies those conditions. However, with the aid of gradient bounds, we can redefine the equations both in the interior and on the boundary, and prove that the viscosity solutions are always solutions of the new equations, thus obtaining the desired comparison results. For the $C^{1,\alpha}$ regularity under Neumann boundary conditions, we first focus on flat domains with the zero Neumann condition. Since the equation depends on $Du$, the method in \cite{MS2006CPDE} or the doubling variables type argument used in \cite{LZ2018ARMA} does not apply. Nevertheless, thanks to the equivalence between viscosity solutions and weak solutions for the $p$-Laplacian equation, we can ignore the zero-measure set $T_1$ by reflecting $u$ across the flat boundary $T_1$, and then use inf- and sup-convolution arguments in the reflected domain to obtain the result. For the general case, we employ the ``improvement of flatness'' iteration, which has been used in \cite{BV2022PA, R2020NA, BKO2025CVPDE} for degenerate equations of the form $|Du|^{\gamma}F(D^2u)=f$ with $\gamma\geq0$. Moreover, we impose an extra condition that $\frac{\|f\|_{L^{\infty}(\Omega_1)}}{\left(|q|^{2}+|\varepsilon|^{2}\right)^{\frac{p-2}{2}}}$ is sufficiently small in the iteration, and carefully avoid the ``bad range'' of $q$, to extend the results to the singular case $1<p<2$. Finally, for the asymptotic behavior, we need to construct a carefully designed barrier function $w$ (see Theorem~\ref{thm5.1}). For the singular equations, this construction also appears to be new.\vspace{10pt}

The paper is organized as follows. Section 2 collects preliminary materials on viscosity and weak solutions for $p$-Laplacian equations and recalls basic notation. Section 3 is devoted to the crucial gradient estimate and the proofs of existence and Lipschitz regularity, while Section 4 establishes the $C^{1,\alpha}$ regularity for the Neumann case. In Section 5 we study the asymptotic behavior of the approximate problem, while Section 6 passes to the limit $\varepsilon\to0$ and obtains the asymptotic profile for the original equation. Section 7 treats the prescribed contact angle case $q=0$, where the condition \eqref{nvc} from \cite{GMWW2021JMS} plays an important role. Finally, Section 8 extends the results to non-convex domains by means of a carefully designed forcing term, highlighting the parallel with the mean curvature flow analysis in \cite{J2023CVPDE,JKMT2011JMPA}.

\section{Preliminaries}
\subsection{Notions of the p-Laplacian equation}
In this subsection, we recall the notion of viscosity solutions to the $p$-Laplacian equation. The minus $p$-Laplace operator is defined as
$$
-\Delta_p u\coloneqq-\operatorname{div}\big( |Du|^{p-2}Du\big)\coloneqq F(Du,D^2u).
$$
We consider the following general degenerate (or singular) elliptic equations of $p$-Laplacian type for continuous $f$:
\begin{equation}\label{ell}
-\operatorname{div}\big( |Du|^{p-2}Du\big)=f(x, u, Du),
\end{equation}
We first recall the notion of semi-jets, which are the
natural generalizations of the sub- and super-differential, see \cite{BCESS1997,CIL1992,FRZ2024MA} for more details.
\begin{definition} The subjet of $ u : \Omega \to \mathbb{R} $ at $ x $ is given by letting $ (\eta, X) \in J^{2,-}u(x) $ if
$$
u(y) \geq u(x) + \eta \cdot (y - x) + \frac{1}{2} \langle X(y - x), (y - x) \rangle + o(|y - x|^2)
$$
as $y\to x$. If, in addition, we require $y\in O$ where $O$ is a locally compact neighborhood of x, then denote it by $J^{2,-}_O$. The closure of a subjet is defined by $(\eta,X)\in\overline{J}^{2,-}u(x)$ if there exists a sequence $(\eta_{j},X_{j})\in J^{2,-}u(x_{j})$ such that $(x_{j},u(x_{j}),\eta_{j},X_{j})\to(x,r,\eta,X)$ with some $r\in\mathbb{R}$ as $j\to\infty$. Obviously, $r=u(x)$ if $u$ is continuous. The superjet $J^{2,+}$ and its closure $\overline{J}^{2,+}$ are defined in a similar way, with the opposite inequality.  
\end{definition}

Let us now state the different types of solutions to \eqref{ell} that we will consider.

The following definition of viscosity solutions for the $p$-Laplacian equation was introduced by Juutinen-Lindqvist-Manfredi \cite{JLM2001SIAMMA} for the case $f\equiv 0$, extended by Julin-Juutinen \cite{JJ2012CPDE} to $f=f(x)$, and further generalized by Medina-Ochoa \cite{MO2019ANA} to the general form $f=f(x,u,Du)$. For $p \geq 2$, this definition coincides with the usual notion of viscosity solutions for degenerate elliptic equations.

\begin{definition}[Viscosity solution]\label{vis-ell-def-1}
A continuous function $u:\Omega\to (-\infty,+\infty]$ is a viscosity supersolution to \eqref{ell} if whenever $\varphi \in C^2(\Omega)$ is such that
$$
\begin{cases}
\varphi(x_0) = u(x_0), \\
\varphi(x) < u(x) & \text{when } x \neq x_0, \\
D\varphi(x) \neq 0 & \text{when } x \neq x_0,
\end{cases}
$$
we have
$$
\limsup_{\substack{x \to x_0,\\ x\neq x_0}} \bigl( - \Delta_p \varphi(x) - f(x, u, D\varphi(x)) \bigr) \geq 0.
$$

A continuous function $u:\Omega\to (-\infty,+\infty]$ is a viscosity subsolution to \eqref{ell} if whenever $\varphi \in C^2(\Omega)$ is such that
$$
\begin{cases}
\varphi(x_0) = u(x_0), \\
\varphi(x) > u(x) & \text{when } x \neq x_0, \\
D\varphi(x) \neq 0 & \text{when } x \neq x_0,
\end{cases}
$$
we have
$$
\liminf_{\substack{x \to x_0,\\x\neq x_0}} \bigl( - \Delta_p \varphi(x) - f(x, u, D\varphi(x)) \bigr) \leq 0.
$$
A function is a viscosity solution if and only if it is both viscosity sub- and supersolution.
\end{definition}

The definition is natural, but seems hard to understand what happens when $Du=0$. Another definition of solutions was introduced by Birindelli-Demengel in \cite{BD2006ADE, BD2007DCDS, BD2010JDE}. This definition adapts to the elliptic setting the notion of solution originally developed by Chen-Giga-Goto \cite{CGG1991JDG} and Evans-Spruck \cite{ES1991JDG} for parabolic singular problems. It constitutes a variant of the usual viscosity solution concept, avoiding the use of test functions with vanishing gradient at the point of testing.

\begin{definition}[Alternative definition]\label{vis-ell-def-2}
A continuous function $u:\Omega\to (-\infty,+\infty]$ is a viscosity supersolution to \eqref{ell} if 
\begin{itemize}
\item either $u$ is not constant near $x_0\in\Omega$, and for every $\varphi \in C^2(\Omega)$ satisfying
$$
\begin{cases}
\varphi(x_0) = u(x_0),\\
\varphi(x) < u(x) & \text{for } x \neq x_0,\\
D\varphi(x_0) \neq 0,
\end{cases}
$$
we have
$$
-\Delta_p \varphi(x_0) - f(x_0, u(x_0), D\varphi(x_0)) \ge 0;
$$
\item or $u$ is constant near $x_0$, then
$$
-f(x_0, u(x_0), 0) \ge 0.
$$
\end{itemize}
Viscosity subsolutions and solutions are defined similarly.
\end{definition}
The equivalence of the two definitions was proved by \cite{AR2018JDE} (see also \cite{DFQ2010CVPDE}); we will reformulate it for the reader's convenience and for use in the parabolic setting.
\begin{lemma}\label{vis-ell-def-equal}
Definition \ref{vis-ell-def-1} is equivalent to Definition \ref{vis-ell-def-2}.
\end{lemma}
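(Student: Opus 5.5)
We prove the two directions separately, treating supersolutions; the subsolution case follows by the symmetric argument, or by applying the supersolution case to $-u$ with $f$ replaced by $-f(x,-u,-\eta)$. Throughout we use that $f$ is continuous and that $\Delta_p\varphi$ is continuous wherever $D\varphi\neq0$.

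\emph{Definition \ref{vis-ell-def-1} $\Rightarrow$ Definition \ref{vis-ell-def-2}.} Let $x_0\in\Omega$. First suppose $\varphi$ touches $u$ from below strictly at $x_0$ with $D\varphi(x_0)\neq0$. Then $D\varphi\neq0$ in a small ball $B_r(x_0)$. We would like $D\varphi\neq 0$ on all of $\Omega\setminus\{x_0\}$, so we replace $\varphi$ outside $B_r(x_0)$: we modify it there to $\varphi-M$ (glued smoothly) plus a linear function with nonvanishing gradient, and use that Definition \ref{vis-ell-def-1} is local. The modified function is admissible. Since $\Delta_p\varphi$ is continuous near $x_0$, the $\limsup$ equals $-\Delta_p\varphi(x_0)-f(x_0,u(x_0),D\varphi(x_0))$, which gives the required inequality.

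Next suppose $u\equiv c$ near $x_0$. We test with $\varphi(x)=c-|x-x_0|^{\beta}$, where $\beta>p/(p-1)$; this exponent makes $\Delta_p\varphi\to0$ as $x\to x_0$. The gradient vanishes only at $x_0$, and we localise as before. The $\limsup$ then yields $-f(x_0,c,0)\ge0$.

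\emph{Definition \ref{vis-ell-def-2} $\Rightarrow$ Definition \ref{vis-ell-def-1}.} Let $\varphi$ be admissible in Definition \ref{vis-ell-def-1} at $x_0$. If $D\varphi(x_0)\neq0$, continuity gives the conclusion directly from Definition \ref{vis-ell-def-2}. If $u$ is constant near $x_0$, then $\varphi$ has a strict local maximum at $x_0$. Hence $D\varphi(x_0)=0$ and $D^2\varphi(x_0)\le0$. It remains to show that $-\Delta_p\varphi(x)$ is, along a sequence $x\to x_0$, asymptotically at least $0$. We get this from $\limsup_{x\to x_0}(-\Delta_p\varphi)\ge0$, which holds along points where $D\varphi$ is aligned with the nonpositive Hessian directions. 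Combined with $-f(x_0,u(x_0),0)\ge0$, this gives the result.

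The \textbf{main obstacle} is the case $D\varphi(x_0)=0$ with $u$ not constant near $x_0$. Here we follow the perturbation argument of \cite{JLM2001SIAMMA,AR2018JDE}. Consider $\Phi_y(x)=\varphi(x)-|x-y|^{\beta}$ (or the variant $\varphi(x)+\langle \eta, x\rangle$) with $y$ near $x_0$. The function $u-\Phi_y$ attains a local minimum at some $x_y\to x_0$, and the construction should arrange $D\Phi_y(x_y)\neq0$. When no such choice works, one shows that $u$ is locally constant, which contradicts the case assumption. For the good choices, we touch $u$ from below with nonzero gradient, make the minimum strict by adding a small quartic term, and apply Definition \ref{vis-ell-def-2}. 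This gives
\[
-\Delta_p\Phi_y(x_y)\ge f(x_y,u(x_y),D\Phi_y(x_y)).
\]
The exponent $\beta>p/(p-1)$ keeps the correction term $\Delta_p$ of $|x-y|^\beta$ controllable; we may need to split into the cases $p\ge2$ and $1<p<2$. Letting $y\to x_0$ and using continuity of $f$ then yields the required $\limsup$ inequality.
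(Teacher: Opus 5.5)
The proposal has a genuine gap in the main case of Definition~\ref{vis-ell-def-2} $\Rightarrow$ Definition~\ref{vis-ell-def-1}, namely $D\varphi(x_0)=0$ with $u$ not constant near $x_0$. Definition~\ref{vis-ell-def-1} needs information on $-\Delta_p\varphi$ itself at points $x\to x_0$, $x\neq x_0$. Your perturbed test functions $\Phi_y=\varphi-|x-y|^{\beta}$ (or $\varphi+\langle\eta,x\rangle$) only give $-\Delta_p\Phi_y(x_y)\ge f(x_y,u(x_y),D\Phi_y(x_y))$.

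For $1<p<2$, the operator $\Delta_p\Phi=|D\Phi|^{p-2}\operatorname{tr}\bigl((I+(p-2)e\otimes e)D^2\Phi\bigr)$, with $e=D\Phi/|D\Phi|$, is neither additive nor continuous at zero gradient. Near $x_0$, both $D\Phi_y(x_y)$ and $D\varphi(x_y)$ tend to $0$ with unrelated sizes and directions. So $|D\Phi_y(x_y)|^{p-2}$ and $|D\varphi(x_y)|^{p-2}$ can differ by an unbounded factor, and $x_y=x_0$, where $\Delta_p\varphi$ is undefined, is not excluded. There is no separable ``correction term'' to control, so the inequality cannot be transferred back to $\varphi$. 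The difficulty is not only arranging $D\Phi_y(x_y)\neq0$. Likewise, ``when no choice works, $u$ is locally constant'' is asserted without a mechanism.

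The missing idea is to use pure translations $\varphi_y(x)=\varphi(x+y)$. Then $\Delta_p\varphi_y(x_y)=\Delta_p\varphi(x_y+y)$ exactly, and $D\varphi(x_y+y)\neq0$ whenever $x_y+y\neq x_0$. Definition~\ref{vis-ell-def-2} therefore gives $-\Delta_p\varphi(z_k)\ge f(\dots)$ at points $z_k=x_{y_k}+y_k\to x_0$ with $z_k\neq x_0$, which is exactly the $\limsup$ inequality. If instead $x_y+y=x_0$ for all small $y$, minimality gives $u(x_0-y)-u(x)\le\varphi(x_0)-\varphi(x+y)$. Choosing $x=x_0+z$, $y=-z-se$, then $x=x_0+z+se$, $y=-z$, and using $D\varphi(x_0)=0$ yields $|u(x_0+z+se)-u(x_0+z)|\le Cs^2$. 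Hence $u$ is constant near $x_0$, a contradiction.

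The case $u$ constant near $x_0$ in the same direction is also not proved. $D^2\varphi(x_0)\le0$ does not fix the sign of $\Delta_p\varphi(x)$ for $x\neq x_0$. If $D^2\varphi(x_0)$ is degenerate, $D^2\varphi(x)$ may have positive eigenvalues arbitrarily close to $x_0$, while $|D\varphi|^{p-2}$ blows up. Aligning $D\varphi$ with a nonpositive Hessian direction does not control the trace term, and deriving the claim ``from $\limsup(-\Delta_p\varphi)\ge0$'' is circular.

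The paper argues as follows: if $\liminf_{x\to x_0}\Delta_p\varphi>0$, then $\varphi$ is $p$-subharmonic in a punctured ball, hence in the whole ball. This contradicts the strong maximum principle at the strict maximum $x_0$. An elementary alternative also works. Take $\epsilon>0$ and $\eta\neq0$ small relative to $\varphi(x_0)-\max_{\partial B_\rho(x_0)}\varphi$. Then $\varphi(x_0)-\epsilon+\langle\eta,x-x_0\rangle-\varphi$ has a negative interior minimum in $B_\rho(x_0)$ at some $z$. There $D\varphi(z)=\eta\neq0$ (so $z\neq x_0$) and $D^2\varphi(z)\le0$, hence $\Delta_p\varphi(z)\le0$. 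Letting $\rho\to0$ gives the claim.

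A minor point: in the first direction take $\beta>\max\{2,p/(p-1)\}$, so that $c-|x-x_0|^\beta$ is actually $C^2$.
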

\begin{proof}
When $p\geq2$, there is nothing to prove since the singularity does not occur, so we only prove the equivalence for $1<p<2$.\vspace{4pt}

\textbf{Step 1.} Assume $u$ is a viscosity supersolution in the sense of Definition \ref{vis-ell-def-1}. We show it also satisfies Definition \ref{vis-ell-def-2}.

If $u$ is not constant, the result follows easily since $|D\varphi(x_0)|\neq0$. Hence we may assume $u$ is constant near $x_0$.

For any $\varphi\in C^2$ that touches $u$ from below at $x_0$ and $D\varphi(x_0)=0$, consider
$$
\tilde{\varphi}(x)\coloneqq u(x_0)-|x-x_0|^{q},
$$
where $q>\frac{p}{p-1}>2$. Then $u-\tilde{\varphi}$ also attains its strict minimum $x_0$, and $x_0$ is the isolated
critical point. By Definition \ref{vis-ell-def-1}, 
$$
0\leq \limsup_{\substack{x\to x_0,\\x\neq x_0}}  - \Delta_p \tilde{\varphi}(x) - f(x, u, 0)=-f(x_0,u(x_0),0).
$$
Thus $u$ satisfies the constant case condition in Definition \ref{vis-ell-def-2}.\vspace{4pt}

\textbf{Step 2.} Conversely, assume $u$ is a viscosity supersolution in the sense of Definition \ref{vis-ell-def-2}. Let $\varphi\in C^2$ be such that $u-\varphi$ attains a strict local minimum at $x_0$, with $D\varphi(x_0)=0$ and $D\varphi(x)\neq0$ near $x_0$.\vspace{4pt}

\textbf{Case 1:} $u$ is not constant near $x_0$. 

For any $y\in B_r(0)$ ($r>0$ small), set $\phi_y(x)\coloneqq\varphi(x+y)$. Since $x_0$ is a strict local minimum of $u-\varphi$, $u-\phi_y$ attains a local minimum at some $x_y\in B_\eta(x_0)$ with $\eta\to0$ as $r\to0$.  

If there exists a sequence $y_k\to0$ such that $x_{y_k}+y_k\neq x_0$, then $D\phi_{y_k}(x_{y_k})=D\varphi(x_{y_k}+y_k)\neq0$. By subtracting $\varepsilon|x-x_0|^2$ and letting $\varepsilon\to0$, we may assume $u-\phi_{y_k}$ attains a strict local minimum at $x_{y_k}$. By Definition \ref{vis-ell-def-2}, 
\begin{eqnarray*}
&&\limsup_{\substack{x \to x_0,\\x\neq x_0}} \bigl( - \Delta_p \varphi(x) - f(x, u, D\varphi(x)) \bigr) \\
&\geq& \limsup_{k\to\infty}\bigl(-\Delta_p\varphi(x_{y_k}+y_k)-f(x_{y_k}+y_k, u(x_{y_k}+y_k), D\varphi(x_{y_k}+y_k))\bigr)\\[4pt]
&=& \limsup_{k\to\infty}-\Delta_p \varphi_{y_k}(x_{y_k}) - f(x_0, u(x_0), D\varphi(x_0))\\[4pt]
&\geq& \limsup_{k\to\infty}f(x_{y_k},u(x_{y_k}),D\varphi_{y_k}(x_{y_k}))- f(x_0, u(x_0), D\varphi(x_0))=0.
\end{eqnarray*}
It follows that $u$ is a viscosity supersolution in the sense of Definition \ref{vis-ell-def-1}.

If no such sequence exists, then we must have $y+x_y=x_0$ for any $y$ in a perhaps smaller ball (we still denote by $r$). Moreover, for all $y\in B_r(0)$ and any $x\in B_{\eta}(x_0)$, we have
$$
u(x_0-y)-\varphi(x_0)=u(x_y)-\varphi_y(x_y)\leq u(x)-\varphi_{y}(x)=u(x)-\varphi(x+y),
$$
rearranging gives
$$
u(x_0-y)-u(x)\leq\varphi(x_0)-\varphi(x+y).
$$
For any $z\in B_{\min\{r,\eta\}}(0)$ and any unit vector $e$, choosing $x=x_0+z$, $y=-z-se$ with small $s>0$, dividing by $s$ and taking $\limsup$, then taking $x=x_0+z+se$, $y=-z$, dividing by $s$ and taking $\liminf$, we obtain $Du\equiv0$ near $x_0$, which contradicts the assumption that $u$ is not constant.\vspace{4pt}

\textbf{Case 2:}  If $u$ is constant near $x_0$. 

We claim 
$$
\liminf_{\substack{x \to x_0,\\x\neq x_0}}   \Delta_p \varphi(x)   \leq 0.
$$
If not, then $\varphi$ is $p$-subharmonic in the weak sense in $B_r(x_0)\setminus\{x_0\}$ for some $r>0$ small. By a standard approximation argument, $\varphi$ is also $p$-subharmonic in $B_r(x_0)$, so by the strong maximum principle it cannot attain a maximum in the interior unless it is constant, a contradiction. It follows that
$$
0\leq -f(x_0,u(x_0),0)\leq \limsup_{\substack{x \to x_0,\\x\neq x_0}} \bigl( - \Delta_p \varphi(x) - f(x, u, D\varphi(x)) \bigr).
$$
This completes the proof.
\end{proof}

\begin{definition}[Weak solution] A function $u \in W^{1, p}_{\text{loc}}(\Omega)$ is a weak supersolution to \eqref{ell} if
\begin{equation}
\int_\Omega|\nabla u|^{p-2}\nabla u \cdot \nabla \psi \geq \int_\Omega f(x, u, \nabla u)\psi
\end{equation}
for all nonnegative $\psi \in C_0^{\infty}(\Omega)$.  For weak subsolutions, the inequality is reversed. A function that is both weak sup- and subsolution is a weak solution.
\end{definition}

With the above notation, it is natural to ask about the relation between viscosity solutions and weak solutions; in fact, they are equivalent under various conditions on $f$. We list some of these results that will be used in the convergence of $u_{\varepsilon}\to u$. The results are stated for supersolutions, but they hold for subsolutions as well.
\begin{theorem}[Theorem 1.4 in \cite{MO2019ANA}]\label{vis to weak}
Let $1 < p < \infty$. Assume that  $f=f(x,s,\eta)$ is uniformly continuous in $\Omega \times \mathbb{R}\times \mathbb{R}^{n}$, non-increasing in $s$, and satisfies the following growth condition
\begin{eqnarray}\label{growth cond}
|f(x,s, \eta)|\leq \gamma(|s|)|\eta|^{p-1} + \phi(x),\end{eqnarray}  
where $\gamma \geq 0$ is  continuous, and $\phi \in L^{\infty}_{\text{loc}}(\Omega)$. Then, 
if $u \in L^{\infty}_{\text{loc}}(\Omega)$ is a viscosity supersolution to \eqref{ell}, it is also a weak supersolution to \eqref{ell}.
\end{theorem}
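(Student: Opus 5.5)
The plan is to follow the Juutinen--Lindqvist--Manfredi strategy: approximate the viscosity supersolution from below by inf-convolutions, show these are weak supersolutions of a slightly perturbed equation, and pass to the limit using Caccioppoli-type energy estimates. Since the statement is local, I would fix $\Omega'\subset\subset\Omega$ with $u$ bounded on $\Omega'$ (using $u\in L^\infty_{\mathrm{loc}}$). For small $\varepsilon>0$ I would then set
$$
u_\varepsilon(x)\coloneqq\inf_{y\in\Omega}\Big(u(y)+\frac{|x-y|^{r}}{r\varepsilon^{r-1}}\Big),\qquad r>\max\{2,\tfrac{p}{p-1}\}.
$$
The exponent $r$ is chosen so that the penalization is $C^2$ with vanishing second derivatives wherever its gradient vanishes; this is what keeps the singular case $1<p<2$ under control.

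\textbf{Step 1: properties of $u_\varepsilon$.} On $\Omega'$ the infimum is attained at some $y_x$ with $|x-y_x|\le C\varepsilon^{(r-1)/r}$. Moreover, $u_\varepsilon$ is semiconcave (a $C^2$ function minus a bounded multiple of $|x|^2$ is concave locally), $u_\varepsilon\uparrow u$ pointwise, and $u_\varepsilon$ is Lipschitz on $\Omega'$.

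\textbf{Step 2: viscosity inequality for $u_\varepsilon$.} If $\varphi$ touches $u_\varepsilon$ from below at $x$, then $\varphi(\cdot+x-y_x)$ touches $u$ from below at $y_x$, and the two test functions have the same gradient and Hessian. Hence $u_\varepsilon$ is a viscosity supersolution of
$$
-\Delta_p u_\varepsilon\ge f_\varepsilon(x,u_\varepsilon,Du_\varepsilon),\qquad f_\varepsilon(x,s,\eta)\coloneqq\inf_{|z-x|\le C\varepsilon^{(r-1)/r}} f(z,s,\eta).
$$
To transfer the value $u(y_x)$ back to $u_\varepsilon(x)$ I would use that $u(y_x)\ge u_\varepsilon(x)$ and that $f$ is non-increasing in $s$. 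By uniform continuity, $f_\varepsilon\to f$ uniformly. Definition \ref{vis-ell-def-2} together with Lemma \ref{vis-ell-def-equal} handles the points where the gradient of the test function vanishes.

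\textbf{Step 3: from pointwise to weak.} Semiconcavity gives, by Alexandrov, that $u_\varepsilon$ is twice differentiable a.e., with $D^2u_\varepsilon$ bounded above and the singular part of the distributional Hessian nonpositive. At Alexandrov points the viscosity inequality holds pointwise. I would then mollify, $u_{\varepsilon,\delta}=u_\varepsilon*\rho_\delta$, which is smooth and semiconcave with the same constant, and integrate by parts against $\psi\ge0$:
$$
\int|Du_{\varepsilon,\delta}|^{p-2}Du_{\varepsilon,\delta}\cdot D\psi=\int(-\Delta_pu_{\varepsilon,\delta})\psi .
$$
Letting $\delta\to0$ needs Fatou's lemma, using that $-\Delta_p u_{\varepsilon,\delta}$ is bounded below by semiconcavity when $p\ge2$. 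This gives $\int|Du_\varepsilon|^{p-2}Du_\varepsilon\cdot D\psi\ge\int f_\varepsilon\psi$. For $1<p<2$ the term $|Du|^{p-2}$ blows up near critical points, and I would first regularize the operator by $(|Du|^2+\kappa)^{(p-2)/2}$ and send $\kappa\to0$, following Julin--Juutinen and Medina--Ochoa.

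\textbf{Step 4: limit $\varepsilon\to0$.} Testing with $\psi=\zeta^p(M-u_\varepsilon)$ and using the growth condition \eqref{growth cond} (the term $\gamma(|s|)|\eta|^{p-1}$ is absorbed by Young's inequality, since $u_\varepsilon$ is locally bounded) yields a uniform $W^{1,p}_{\mathrm{loc}}$ bound. Hence $u\in W^{1,p}_{\mathrm{loc}}$ and $u_\varepsilon\rightharpoonup u$ weakly. To pass to the limit in the nonlinear flux and in $f_\varepsilon(x,u_\varepsilon,Du_\varepsilon)$ I need $Du_\varepsilon\to Du$ a.e. I would get this from the monotonicity of $\eta\mapsto|\eta|^{p-2}\eta$, testing with $\zeta(u-u_\varepsilon)\ge0$, which gives strong $L^p_{\mathrm{loc}}$ convergence of the gradients.

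I expect the main obstacle to be the gradient dependence of $f$ in Steps 3 and 4: the right-hand side depends on $Du_\varepsilon$, so Fatou's lemma in $\delta$ and the identification of the limit as $\varepsilon\to0$ both require the a.e. gradient convergence. This works for the mollified semiconcave functions because their gradients converge a.e. at differentiability points. In the singular range $1<p<2$ there is a further difficulty at points where $Du_\varepsilon$ vanishes, which is handled by the constant-case alternative of Definition \ref{vis-ell-def-2}.
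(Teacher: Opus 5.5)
The paper does not prove this statement; it quotes it from \cite{MO2019ANA}, whose proof is the Julin--Juutinen inf-convolution scheme you reconstruct. That scheme has three parts: a viscosity inequality for $u_\varepsilon$ with the shrunken right-hand side $f_\varepsilon$, a pointwise-to-weak passage via semiconcavity and Fatou, and a Caccioppoli bound plus monotonicity of $\eta\mapsto|\eta|^{p-2}\eta$ to let $\varepsilon\to0$. Your Steps 1 and 4, and Step 3 for $p\ge 2$, are sound.

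Step 2 contains a sign slip. Since $u_\varepsilon(x)=u(y_x)+\frac{|x-y_x|^r}{r\varepsilon^{r-1}}$, one has $u(y_x)\le u_\varepsilon(x)$, not $u(y_x)\ge u_\varepsilon(x)$. With the inequality as you wrote it, monotonicity of $f$ gives $f(y_x,u(y_x),\eta)\le f(y_x,u_\varepsilon(x),\eta)$, which goes the wrong way. The correct inequality $u(y_x)\le u_\varepsilon(x)$ is what yields $-\Delta_p\varphi(x)\ge f(y_x,u(y_x),D\varphi(x))\ge f_\varepsilon(x,u_\varepsilon(x),D\varphi(x))$.

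The genuine gap is the critical set $Z=\{Du_\varepsilon=0\}$ when $1<p<2$. After the two Fatou passages ($\delta\to0$ at fixed $\kappa$, then $\kappa\to0$) you get
\[
\int|Du_\varepsilon|^{p-2}Du_\varepsilon\cdot D\psi\ \ge\ \int_{Z^c}f_\varepsilon\psi+\int_Z\Big(\liminf_{\kappa\to0}L_\kappa\Big)\psi ,
\]
where $L_\kappa$ is the regularized operator evaluated at the Alexandrov data of $u_\varepsilon$. On $Z$ this liminf is merely nonnegative (it vanishes wherever the Alexandrov Hessian does), so you still need $f_\varepsilon(x,u_\varepsilon(x),0)\le0$ for a.e. $x\in Z$.

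The constant-case alternative of Definition~\ref{vis-ell-def-2} does not give this, for three reasons:
\begin{itemize}
\item $Z$ may have positive measure and empty interior, so the constant case need not apply anywhere on it.
\item At points of differentiability in $Z$, any function touching $u_\varepsilon$ from below has zero gradient there, so no test function admissible in Definition~\ref{vis-ell-def-2} touches $u_\varepsilon$.
\item The paraboloids $u_\varepsilon(x)-\eta|z-x|^2$ that Alexandrov's expansion lets you place underneath satisfy $-\Delta_p\to+\infty$ at $x$ when $p<2$, so the inequality of Definition~\ref{vis-ell-def-1} is void for them.
\end{itemize}

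The missing idea is to use the inf-convolution structure itself. At a point of differentiability, $Du_\varepsilon(x)=|x-y_x|^{r-2}(x-y_x)/\varepsilon^{r-1}$. Hence $Du_\varepsilon(x)=0$ forces $y_x=x$ and $u_\varepsilon(x)=u(x)$, and therefore $u(z)\ge u(x)-\frac{|z-x|^r}{r\varepsilon^{r-1}}$ for all $z$. This cone is admissible in Definition~\ref{vis-ell-def-1}, and its $p$-Laplacian tends to $0$ at $x$ exactly because $r>p/(p-1)$. The viscosity inequality then gives $f(x,u(x),0)\le0$, hence $f_\varepsilon(x,u_\varepsilon(x),0)\le0$ on $Z$. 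If you start from Definition~\ref{vis-ell-def-2}, pass through Lemma~\ref{vis-ell-def-equal}.

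You should also make explicit the $\kappa$-uniform lower bound that the second Fatou step needs. Semiconcavity with a fixed constant only bounds $L_\kappa$ below by $-C\kappa^{(p-2)/2}$, which blows up as $\kappa\to0$. What is actually required is the refined estimate $D^2u_\varepsilon(x)\le\frac{r-1}{\varepsilon}|Du_\varepsilon(x)|^{\frac{r-2}{r-1}}I$ at twice-differentiability points. This is the inf-convolution analogue of the bound used in Step 6 of the proof of Theorem~\ref{thm-global-flat-elliptic}. It gives $L_\kappa\ge -C_\varepsilon|Du_\varepsilon|^{p-1-\frac{1}{r-1}}$, which is bounded below uniformly in $\kappa$ once $r\ge p/(p-1)$.
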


A converse of Theorem \ref{vis to weak} is stated below.

\begin{theorem}[Theorem 1.8 in \cite{MO2019ANA}]\label{weak to vis}
Let $1 < p < \infty$. Suppose that $f=f(x, s)$ is continuous in $\Omega \times \mathbb{R}$ and non-increasing in $s$.   If $u \in  W^{1, p}_{\text{loc}}(\Omega) \cap C(\Omega)$ is a weak supersolution to \eqref{ell}, then it is a viscosity supersolution to \eqref{ell}.
\end{theorem}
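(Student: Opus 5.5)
The statement to prove is Theorem \ref{weak to vis}: a continuous weak supersolution $u\in W^{1,p}_{\text{loc}}(\Omega)\cap C(\Omega)$ of $-\Delta_p u = f(x,u)$, with $f$ continuous and non-increasing in $s$, is a viscosity supersolution in the sense of Definition \ref{vis-ell-def-1}. I would argue by contradiction, using a comparison principle for weak solutions.

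\textbf{Setting up the contradiction.} Suppose $u$ is not a viscosity supersolution. Then there are $x_0\in\Omega$ and $\varphi\in C^2$ with $\varphi(x_0)=u(x_0)$, $\varphi<u$ for $x\neq x_0$, and $D\varphi\neq0$ away from $x_0$, such that
$$
\limsup_{x\to x_0,\,x\neq x_0}\bigl(-\Delta_p\varphi(x)-f(x,u(x),D\varphi(x))\bigr)<0 .
$$
By continuity of $f$ and $u$, there are $r>0$ and $\delta>0$ such that $-\Delta_p\varphi\le f(x,u(x))-\delta$ pointwise on $B_r(x_0)\setminus\{x_0\}$.

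\textbf{Step 1: $\varphi$ is a weak subsolution on the full ball.} On the punctured ball, $D\varphi\neq0$, so $|D\varphi|^{p-2}D\varphi\in C^1$ and integrating by parts gives the weak inequality there. To include the point $x_0$, I would use a cutoff $\eta_\epsilon$ that vanishes near $x_0$. The error term is
$$
\int|D\varphi|^{p-1}\psi\,|D\eta_\epsilon| .
$$
It tends to zero because $|D\varphi|^{p-1}$ is bounded and a point has zero $1$-capacity for $n\ge2$. For $n=1$, I would use instead that $|D\varphi|^{p-2}D\varphi$ is continuous across $x_0$. The conclusion is that $\varphi$ is a weak subsolution of $-\Delta_p\varphi\le f(x,u(x))-\delta$ on $B_r(x_0)$. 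This is the step I expect to be the main obstacle, since it is where the singular case $p<2$ enters.

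\textbf{Step 2: the comparison.} Since $\varphi<u$ on the compact set $\partial B_r(x_0)$, there is $m>0$ with $\varphi+m\le u$ on $\partial B_r$. Set $\tilde\varphi=\varphi+m$. Then
$$
-\Delta_p\tilde\varphi\le f(x,u)-\delta\le f(x,u).
$$
Here the monotonicity of $f$ in $s$ is not even needed, because the right-hand side is frozen at $u$. Test the difference of the weak inequalities for $u$ and $\tilde\varphi$ with $\psi=(\tilde\varphi-u)^+\in W^{1,p}_0(B_r)$; it vanishes on $\partial B_r$. This gives
$$
\int_{\{\tilde\varphi>u\}}\bigl(|D\tilde\varphi|^{p-2}D\tilde\varphi-|Du|^{p-2}Du\bigr)\cdot D(\tilde\varphi-u)\le -\delta\int\psi\le0 .
$$
By strict monotonicity of $\xi\mapsto|\xi|^{p-2}\xi$, we get $D\psi=0$, hence $\psi\equiv0$. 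So $\tilde\varphi\le u$ in $B_r$.

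\textbf{Conclusion.} Evaluating at $x_0$ gives $u(x_0)+m\le u(x_0)$, contradicting $m>0$. Hence $u$ is a viscosity supersolution. The statement for subsolutions follows by the symmetric argument.
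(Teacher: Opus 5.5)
The paper gives no proof of this statement; it is imported from Medina--Ochoa. Your argument is correct. It follows the standard Juutinen--Lindqvist--Manfredi scheme: contradiction, removability of the point $x_0$ for the test function, then comparison on a small ball. There is one genuine difference from the usual route.

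The usual route is the viewpoint of Remark~\ref{rmk1} and Definition~\ref{CPP}. It treats $\varphi+m$ and $u$ as weak sub- and supersolution of the \emph{same} equation $-\Delta_p w=f(x,w)$ and invokes a comparison principle for it. That is where the hypothesis that $f$ is non-increasing in $s$ enters. One must also take $m$ small enough that $f(x,\varphi+m)\ge f(x,\varphi)-\delta$ on $B_r$.

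You instead freeze the right-hand side at $g(x)=f(x,u(x))$, which is continuous because $u$ is. Then $u$ is a weak supersolution of $-\Delta_p w=g$, and the comparison reduces to the monotonicity of $\xi\mapsto|\xi|^{p-2}\xi$. Testing with $\psi=(\tilde\varphi-u)^+$ even gives $0\le-\delta\int\psi$ directly. This is legitimate because the viscosity inequality only sees $f$ along $s=u(x)$. It shows that the monotonicity hypothesis is superfluous when $f=f(x,s)$.

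What the comparison-principle route buys is that it extends to gradient-dependent $f(x,s,\eta)$. There freezing fails: the viscosity inequality involves $f(x,u,D\varphi)$ while the weak one involves $f(x,u,Du)$, so one genuinely needs (CPP).

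Two small remarks.
\begin{itemize}
\item Your Step 1 is routine rather than the main obstacle. $|D\varphi|^{p-1}$ is bounded for every $p>1$, so the cutoff error is $O(\epsilon^{n-1})$.
\item $(\tilde\varphi-u)^+$ must be made an admissible test function in the $C_0^\infty$-formulations. Take $\tilde\varphi=\varphi+m/2$ with $\overline{B_r(x_0)}\subset\Omega$, so that $\psi$ has compact support in $B_r$. Then pass to the limit along mollifications, using $|Du|^{p-1}\in L^{p/(p-1)}(B_r)$ and the boundedness of $g$ on $\overline{B_r}$.
\end{itemize}
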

\begin{remark}\label{rmk1}
For general $f(x,s,\eta)$, the question of whether a weak solution is necessarily a viscosity solution is more involved. In fact, as noted in \cite{FRZ2024MA}, this implication holds if the pair $(u,f)$ fulfills the comparison principle property (CPP, Definition \ref{CPP}).  We refer to \cite{MO2019ANA, MV2023BMS, MV2024BMS, PS2007} for more information on this topic. In particular, by Theorem 3.5.1 in \cite{PS2007}, if $1<p<2$ and $f(x,s,\eta)$ is Lipschitz in $\eta$ and non-increasing in $s$, then $(u,f)$ satisfies (CPP), and hence a weak solution is a viscosity solution.
\end{remark}
\begin{definition}\label{CPP}
 Suppose that $u$ is a weak supersolution to \eqref{ell} in $\Omega' \subset \Omega$. If for any weak subsolution $v$ of \eqref{ell} such that $v \le u$ a.e. in $\partial \Omega'$ there holds that $v \le u$ a.e. in $\Omega'$, then we say that $(u, f)$ fulfills the comparison principle property (CPP) in $\Omega'$.   
\end{definition}

\subsection{Parabolic capillary-type p-Laplacian problem}
In this subsection, we indicate how to extend the notion of elliptic problems to parabolic problems.
\begin{equation}\label{par}
u_t  -\Delta_p u= f(x,u,Du),
\end{equation}
where now $u$ is a function of $(x,t)$ and $Du$, $D^2u$ mean $D_xu(x,t)$ and $D_x^2u(x,t)$.  Let $T > 0$, and $\Omega_T = \Omega\times(0,T)$. We denote by $P^{2,+}$, $P^{2,-}$ the parabolic variants of the semijets $J^{2,+}$, $J^{2,-}$; to be specific, we say that $(a,\eta,X) \in \mathbb{R} \times \mathbb{R}^N \times S(N)$ belongs to $P^{2,+}u(s,z)$ for $(s,z) \in \Omega_T$ if
$$
u(t,x) \leq u(s,z) + a(t-s) + \langle \eta,x-z \rangle + \frac{1}{2}\langle X(x-z),x-z \rangle + o(|t-s| + |x-z|^2) 
$$
as $(t,x) \to (s,z)$. Similarly, we define $P^{2,-}u = -P^{2,+}(-u)$. The corresponding definitions of $\overline{P}^{2,+}$, $\overline{P}^{2,-}$ are then clear.
 
 The following definition of the viscosity solutions for parabolic $p$-Laplacian equations was also studied by Juutinen-Lindqvist-Manfredi \cite{JLM2001SIAMMA} for $f\equiv 0$, and later extended by Siltakoski \cite{S2021JEE} to the case $f=f(Du)$, and by Fang-Zhang \cite{FZ2022MMAS} for general $f=f(x,u,Du)$.
\begin{definition}[Parabolic viscosity solution]
A continuous function $u:\Omega\times[0,\infty)\to (-\infty,+\infty]$ is a viscosity supersolution to \eqref{par}, if whenever $ \varphi \in C^2(\Omega\times[0,\infty)) $ and $(x_0, t_0) \in \Omega\times [0,\infty) $ are such that
$$
\begin{cases}
\varphi(x_0, t_0) = u(x_0, t_0), \\
\varphi(x, t) < u(x, t) & \text{when } (x, t) \neq (x_0, t_0), \\
D\varphi(x, t) \neq 0 & \text{when } x \neq x_0,
\end{cases}
$$
we have
$$
\limsup_{\substack{(x,t) \to (x_0, t_0),\\ x\neq x_0}} \bigl( \partial_t \varphi(x, t) - \Delta_p \varphi(x, t) - f(x, u, D\varphi(x, t)) \bigr) \geq 0.
$$
A continuous function $u:\Omega_T\to (-\infty,+\infty]$ is a viscosity subsolution to \eqref{par}, if whenever $ \varphi \in C^2(\Omega\times[0,\infty)) $ and $(x_0, t_0) \in \Omega\times [0,\infty)$ are such that
$$
\begin{cases}
\varphi(x_0, t_0) = u(x_0, t_0), \\
\varphi(x, t) > u(x, t) & \text{when } (x, t) \neq (x_0, t_0), \\
D\varphi(x, t) \neq 0 & \text{when } x \neq x_0,
\end{cases}
$$
we have
$$
\liminf_{\substack{(x,t) \to (x_0, t_0),\\ x\neq x_0}} \bigl( \partial_t \varphi(x, t) - \Delta_p \varphi(x, t) - f(x, u, D\varphi(x, t)) \bigr) \leq 0.
$$
A function is a viscosity solution if and only if it is both viscosity sub- and supersolution.
\end{definition}
\begin{remark}
There are at least three types of definitions for singular parabolic problems, and it seems that a simple description of the singular case $D\varphi = 0$ as in Definition \ref{vis-ell-def-2} cannot be expected. See \cite{OS1997CPDE, D2011PA} and the references therein for further details. Moreover, it was proved in \cite{JLM2001SIAMMA, D2011PA} that these definitions are equivalent when $f \equiv 0$. 
\end{remark}

\begin{definition}[Parabolic weak solution]
A function $u:\Omega\times[0,\infty)\to\mathbb{R}$ is a weak supersolution to \eqref{par} provided $u\in L^{q}(t_{1},t_{2};W^{1,q}(\Omega))$ whenever $\Omega'\times(t_{1},t_{2})\subset\subset\Omega\times[0,\infty)$, and
$$
\int_{\Omega\times[0,\infty)}-u\cdot\partial_{t}\varphi+|Du|^{p-2}Du\cdot D\varphi \,dz\geq\int_{\Omega\times[0,\infty)}\varphi f(x,u,Du)\,dz
$$
for all nonnegative functions $\varphi\in C^{\infty}_{c}(\Omega\times(t_{1},t_{2}))$. For weak subsolutions, the inequality is reversed. A function that is both weak sup- and subsolution is a weak solution.
\end{definition}
Parabolic viscosity solutions and parabolic weak solutions are also equivalent under certain conditions on $f$, and the equivalence always holds if we have a priori bounds on $u_t$ and $|Du|$.
\begin{theorem}[Theorem 3.2, Lemma 3.4 in \cite{FZ2022MMAS}]\label{thm par vis equ weak}
If $f$ satisfies the following assumptions 
\begin{itemize}
\item[(1)] $ f(x, s, \eta) $ is non-increasing with respect to $ s $ and is uniformly continuous.
\item[(2)] $ f(x, s, \eta) $ satisfies the growth condition that
$$
|f(x, s, \eta)| \leq \gamma(|s|)(1 + |\eta|^\beta) + \phi(x, t),
$$
where $ 1 \leq \beta < p, \phi \in L_{\text{loc}}^{\infty}$ and $ \gamma(\cdot) \geq 0 $ is continuous in $ \mathbb{R}_+ $.
\item[(3)] $ f(x, s, \eta) $ is locally Lipschitz continuous with respect to $ \eta $,
\end{itemize}
then the bounded viscosity solutions are the same as the bounded weak solutions with continuity to \eqref{par}.
\end{theorem}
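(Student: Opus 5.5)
Since the statement is quoted from Theorem 3.2 and Lemma 3.4 of \cite{FZ2022MMAS}, the plan is to reproduce the standard two-directional argument, mirroring Theorems \ref{vis to weak} and \ref{weak to vis} in the elliptic case. We work locally in a cylinder $Q=\Omega'\times(t_1,t_2)\subset\subset\Omega\times[0,\infty)$ and prove the two implications separately. In each direction we use comparison in the class of bounded functions, together with the growth condition (2). That condition makes $f(x,u,Du)$ a lower-order perturbation: it is integrable against $|Du|^p$ through Young's inequality, because $\beta<p$.

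\textbf{Viscosity $\Rightarrow$ weak.} Let $u$ be a bounded viscosity supersolution. Following Juutinen--Lindqvist--Manfredi, we first show that $u$ is locally in $L^p(W^{1,p})$ and satisfies the weak supersolution inequality.

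1. Form the inf-convolutions
$$u_\delta(x,t)=\inf_{(y,s)}\Big\{u(y,s)+\frac{|x-y|^{q}}{q\delta^{q-1}}+\frac{|t-s|^2}{2\delta}\Big\},$$
with $q>\frac{p}{p-1}$ chosen as in Lemma \ref{vis-ell-def-equal} so that singular points are handled correctly. Each $u_\delta$ is semiconcave, converges to $u$ locally uniformly, and is a viscosity supersolution of a perturbed equation with $f$ replaced by $f_\delta$. The error $f_\delta-f$ tends to $0$ by the uniform continuity in (1); the monotonicity in $s$ controls the shift in the $u$-variable.

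2. Semiconcavity lets us apply Alexandrov's theorem, so $u_\delta$ is twice differentiable a.e. and satisfies the equation pointwise a.e. Integrating by parts against a nonnegative test function is justified after a mollification argument on semiconcave functions. This shows $u_\delta$ is a weak supersolution.

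3. Test with $\psi=\zeta^p(M-u_\delta)$ to obtain a Caccioppoli estimate on $\|Du_\delta\|_{L^p}$, uniform in $\delta$. The gradient term from (2) is absorbed by Young's inequality since $\beta<p$.

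4. Pass to the limit $\delta\to0$. This needs a.e. convergence of $Du_\delta$, which follows from the standard monotonicity trick $\int(|a|^{p-2}a-|b|^{p-2}b)\cdot(a-b)$ applied with truncated test functions. The local Lipschitz continuity in $\eta$ from (3) then gives $f(x,u_\delta,Du_\delta)\to f(x,u,Du)$ in $L^1_{loc}$.

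\textbf{Weak $\Rightarrow$ viscosity.} Argue by contradiction. Suppose a test function $\varphi$ touches $u$ from below at $(x_0,t_0)$ while the limsup in the definition is negative.

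1. Since $D\varphi\neq0$ for $x\neq x_0$, the function $\varphi$ is a strict classical subsolution near $(x_0,t_0)$. At $x=x_0$ itself, the singular set is handled as in Case 2 of Lemma \ref{vis-ell-def-equal}.

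2. Lower $\varphi$ slightly: set $\tilde\varphi=\varphi+m$, where $m>0$ is small enough that $\tilde\varphi\le u$ on the parabolic boundary of a small cylinder, while $\tilde\varphi(x_0,t_0)>u(x_0,t_0)$.

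3. Apply the weak comparison principle in the small cylinder. It is available because $f$ is nonincreasing in $s$ and Lipschitz in $\eta$ (assumptions (1) and (3)), which is the analogue of the (CPP) in Remark \ref{rmk1}. Comparison gives $\tilde\varphi\le u$ throughout the cylinder, contradicting $\tilde\varphi(x_0,t_0)>u(x_0,t_0)$.

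\textbf{Main obstacle.} The delicate points are two. The first is the comparison principle for weak solutions with gradient-dependent $f$ in the singular range $1<p<2$. There we would exploit the Lipschitz property (3) through a Gronwall-type estimate on $(\tilde\varphi-u)_+$. The second is controlling $\partial_t u$ when testing, in the absence of time regularity; Steklov averages handle this.

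When a priori bounds on $u_t$ and $|Du|$ are available, as in our application, the nonlinearity is effectively bounded. In that setting both directions reduce to the $f=f(x,t)$ case, and it suffices to invoke \cite{FZ2022MMAS} directly.
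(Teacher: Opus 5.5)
The paper gives no proof of this statement; it imports it from \cite{FZ2022MMAS}. Your architecture (inf-convolution for viscosity $\Rightarrow$ weak, contradiction plus comparison for weak $\Rightarrow$ viscosity) is the standard route to it. The viscosity $\Rightarrow$ weak half is acceptable as a sketch, with one correction. What gives $f(x,u_\delta,Du_\delta)\to f(x,u,Du)$ in $L^1_{loc}$ is continuity of $f$ together with (2) and $\beta<p$, which yields uniform integrability of $|Du_\delta|^\beta$; hypothesis (3) plays no role there.

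\textbf{The gap is in step 3 of weak $\Rightarrow$ viscosity.} You invoke a general weak comparison principle, ``available because $f$ is nonincreasing in $s$ and Lipschitz in $\eta$'', to be proved by Gronwall on $w=(\tilde\varphi-u)_+$. No such principle is at your disposal, for three reasons. First, Remark~\ref{rmk1} concerns only the elliptic equation with $1<p<2$. Second, hypothesis (3) is only local while $Du$ is merely in $L^p$, so there is no uniform Lipschitz constant. Third, for $p>2$ Gronwall fails even with a uniform constant: testing gives $\frac12\frac{d}{dt}\int w^2+c\int|Dw|^p\le L\int|Dw|\,w$, and Young only yields $\frac{d}{dt}\int w^2\le C\int w^{p'}$ with $p'<2$. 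That inequality is sublinear in $\int w^2$ and does not force $w\equiv0$. Your closing remark does not rescue this. Bounds on $|Du|$ do not remove the $p>2$ obstruction, and they do not let you replace $f(x,u,D\varphi)$ in the viscosity inequality by the a.e.\ defined $f(x,u,Du)$.

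\textbf{The missing idea} is that you need no general comparison principle. You only compare the \emph{strict} classical subsolution $\tilde\varphi=\varphi+m$ against $u$; its margin $\eta>0$ comes from the negated limsup. Moreover the gap is small: $0\le w\le m$, because $\varphi\le u$ in the cylinder. The argument then runs as follows.
\begin{itemize}
\item On $\{w>0\}$, reduce to $f(x,u,D\tilde\varphi)-f(x,u,Du)$, at an $o(1)$ cost absorbed into $\eta$ (by uniform continuity in $s$), and split according to the size of $|Du|$.
\item Where $|Du|\le R$, use (3) with constant $L_R$ and Young. For $p\ge2$ this gives $L_R|Dw|\,w\le\epsilon|Dw|^p+C_\epsilon m^{p'-1}w$. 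For $1<p<2$ it gives $\le\epsilon|Dw|^2+C_\epsilon m\,w$, and there the monotone term dominates $c_R|Dw|^2$.
\item Where $|Du|>R\ge2\sup|D\tilde\varphi|$, use (2): $C|Du|^\beta w\le\epsilon|Du|^p+C_\epsilon m^{\beta/(p-\beta)}w$. Here $|Du|^p$ is controlled by the monotone term, since $|Du-D\tilde\varphi|\ge|Du|/2$.
\item Fix $\epsilon$ small and then $m$ small. The gradient terms are absorbed by the monotone term and the rest by $\eta\int w$. Hence $\int w^2(\cdot,\tau)\,dx\le0$ for all $\tau$ (justified with Steklov averages).
\item This gives $\tilde\varphi\le u$, contradicting $\tilde\varphi(x_0,t_0)>u(x_0,t_0)$.
\end{itemize}
So this direction closes because of the strict margin, the smallness of the gap, and (2) for large gradients. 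As written, your step 3 instead rests on a comparison principle that is not available.
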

\begin{remark}
Consider $f=\varphi(x,u)+g(x,u)|Du|^{\alpha}$, where both $\varphi,g$ are non-increasing in $u$ and $|Du|\leq C$. To ensure condition (3) in Theorem \ref{thm par vis equ weak}, we require $\alpha\geq1$.
\end{remark}

We now give the precise definition of a viscosity solution to problem \eqref{par eq}, which incorporates the capillary-type boundary condition.
\begin{definition}[Viscosity solution of the parabolic boundary problem]\label{vis-par-bdy-def-1}
A continuous function $u:\overline{\Omega}\times[0,\infty)\to (-\infty,+\infty]$ is said to be a viscosity supersolution of \eqref{par eq} if $ u(\cdot, 0) \geq u_0 $ on $\overline{\Omega}$, and whenever $ \varphi \in C^2(\overline\Omega\times[0,\infty)) $ and $(x_0, t_0) \in \Omega\times[0,\infty) $ are such that 
$$
\begin{cases}
\varphi(x_0, t_0) = u(x_0, t_0), \\
\varphi(x, t) < u(x, t) & \text{when } \overline\Omega\times[0,\infty)\ni(x, t) \neq (x_0, t_0), \\
D\varphi(x, t) \neq 0 & \text{when } x \neq x_0,
\end{cases}
$$
then
\begin{itemize}
\item if $x_0\in\Omega$,
$$
\limsup_{\substack{(x,t)\to(x_0,t_0),\\ x\neq x_0}} \bigl( \partial_t\varphi(x,t)-\Delta_p\varphi(x,t)-f(x,u,D\varphi(x,t)) \bigr) \ge 0;
$$
\item if $x_0\in\partial\Omega$,
\begin{equation}\label{sup-vis-par-bdy-cond}
\limsup_{\substack{(x,t)\to(x_0,t_0),\\ x\neq x_0}} \max\Bigl\{ \partial_t\varphi(x,t)-\Delta_p\varphi(x,t)-f(x,u,D\varphi(x,t)),\; -|D\varphi|^{q-1}\varphi_\nu-\phi(x) \Bigr\} \ge 0.
\end{equation}
\end{itemize}

A continuous function $u:\overline{\Omega}\times[0,\infty)\to (-\infty,+\infty]$ is said to be a viscosity subsolution of \eqref{par eq} if $ u(\cdot, 0) \leq u_0 $ on $\overline{\Omega}$, and whenever $ \varphi \in C^2(\overline\Omega\times[0,\infty)) $ and $(x_0, t_0) \in \Omega\times[0,\infty) $ are such that 
$$
\begin{cases}
\varphi(x_0, t_0) = u(x_0, t_0), \\
\varphi(x, t) > u(x, t) & \text{when } \overline\Omega\times[0,\infty)\ni(x, t) \neq (x_0, t_0), \\
D\varphi(x, t) \neq 0 & \text{when } x \neq x_0,
\end{cases}
$$
then
\begin{itemize}
\item if $x_0\in\Omega$,
$$
\liminf_{\substack{(x,t)\to(x_0,t_0),\\ x\neq x_0}} \bigl( \partial_t\varphi(x,t)-\Delta_p\varphi(x,t)-f(x,u,D\varphi(x,t)) \bigr) \le 0;
$$
\item if $x_0\in\partial\Omega$,
\begin{equation}\label{sub-vis-par-bdy-cond}
\liminf_{\substack{(x,t)\to(x_0,t_0),\\ x\neq x_0}} \min\Bigl\{ \partial_t\varphi(x,t)-\Delta_p\varphi(x,t)-f(x,u,D\varphi(x,t)),\; -|D\varphi|^{q-1}\varphi_\nu-\phi(x) \Bigr\} \le 0.
\end{equation}
\end{itemize}

A function is a viscosity solution if and only if it is both a viscosity subsolution and a viscosity supersolution.
\end{definition}
\begin{remark}
The boundary condition in the viscosity problem is different from the classical boundary value problems, however, it was shown in \cite{CIL1992} Proposition 7.11 that if $F$ is nondegenerate, then every classical solution is also a viscosity solution.
\end{remark}
In fact, when $x_0\in\partial\Omega$, we can remove the singularity in the definition (the same conclusion also holds for elliptic boundary problems).
\begin{lemma}\label{par-vis-bdy-def-2}
The boundary conditions \eqref{sup-vis-par-bdy-cond} need only hold for all $ \varphi \in C^2(\overline\Omega\times[0,\infty)) $ and $(x_0, t_0) \in \partial\Omega\times[0,\infty)$ such that 
$$
\begin{cases}
\varphi(x_0, t_0) = u(x_0, t_0), \\
\varphi(x, t) < u(x, t) & \text{when } \overline\Omega\times[0,\infty)\ni(x, t) \neq (x_0, t_0), \\
D\varphi(x_0, t_0) \neq 0,
\end{cases}
$$
\end{lemma}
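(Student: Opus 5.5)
We argue as in Step 2 of the proof of Lemma \ref{vis-ell-def-equal}. We must show that the restricted boundary condition already implies \eqref{sup-vis-par-bdy-cond} for test functions whose spatial gradient vanishes at $(x_0,t_0)$. So fix $x_0\in\partial\Omega$ and $\varphi$ touching $u$ strictly from below at $(x_0,t_0)$, with $D\varphi(x_0,t_0)=0$ and $D\varphi(x,t)\neq0$ for $x\neq x_0$. We may assume that the first entry of the max satisfies
$$
\limsup_{(x,t)\to(x_0,t_0),\,x\neq x_0}\bigl(\partial_t\varphi-\Delta_p\varphi-f\bigr)<0,
$$
since otherwise there is nothing to prove. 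Under this assumption we aim either to show that the second entry $-|D\varphi|^{q-1}\varphi_\nu-\phi$ has nonnegative $\limsup$, or to reach a contradiction.

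\textbf{Perturbation step.} Following Case 1 of Lemma \ref{vis-ell-def-equal}, we perturb the test function. The natural choice is $\varphi_\delta(x,t)=\varphi(x,t)+\delta\langle \nu(x_0),x-x_0\rangle$ for small $\delta>0$, or alternatively the translates $\varphi(x+y,t)$ with $y$ small.
\begin{itemize}
\item The function $u-\varphi_\delta$ attains a local minimum at some $(x_\delta,t_\delta)\to(x_0,t_0)$, by the strictness of the original minimum. Subtracting a small quadratic penalty makes this minimum strict.
\item At these points the gradient $D\varphi_\delta(x_\delta,t_\delta)=D\varphi(x_\delta,t_\delta)+\delta\nu(x_0)$ is nonzero for $\delta$ small. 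This holds unless $D\varphi(x_\delta,t_\delta)=-\delta\nu(x_0)$, which we rule out by choosing $\delta$ along a suitable sequence.
\item If $x_\delta\in\Omega$, the interior viscosity inequality holds at $(x_\delta,t_\delta)$. If $x_\delta\in\partial\Omega$, the hypothesis of the lemma applies and gives
$$
\max\bigl\{\partial_t\varphi_\delta-\Delta_p\varphi_\delta-f,\;-|D\varphi_\delta|^{q-1}(\varphi_\delta)_\nu-\phi\bigr\}\ge0 \quad\text{at }(x_\delta,t_\delta).
$$
\end{itemize}

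\textbf{Passage to the limit.} We then let $\delta\to0$, separating two cases according to whether $x_\delta=x_0$ or not.
\begin{itemize}
\item If $x_\delta\ne x_0$, the quantities at $(x_\delta,t_\delta)$ are evaluated at points where $x\ne x_0$. Hence they converge into the $\limsup$ in \eqref{sup-vis-par-bdy-cond}, up to errors of order $\delta$ in the gradient.
\item The case $x_\delta=x_0$ infinitely often is handled by the rigidity argument of Case 1 of Lemma \ref{vis-ell-def-equal}, now using translations. The minimum point then stays pinned at $x_0$ for all small shifts, and this forces $u$ to be locally constant in $x$ near $x_0$. In that case we can instead test with the function $u(x_0,t)-|x-x_0|^{m}-\psi(t)$, $m>p/(p-1)$, whose gradient vanishes only at $x_0$.
\end{itemize}

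\textbf{Main obstacle.} The hardest point is controlling the boundary term when $0<q<1$. There $|D\varphi_\delta|^{q-1}$ blows up as the gradient tends to $0$, so the boundary expression does not pass to the limit continuously. We resolve this using the choice of $\delta\nu(x_0)$. Since $\nu$ is the inward normal, this term makes $(\varphi_\delta)_\nu\approx\delta>0$, so $-|D\varphi_\delta|^{q-1}(\varphi_\delta)_\nu-\phi<0$ for large blow-up. Consequently it is always the interior expression that achieves the max. The conclusion therefore reduces to the interior inequality, and that inequality passes to the limit by continuity of $\partial_t\varphi-\Delta_p\varphi$ away from $x_0$; this contradicts the assumption made at the start. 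For $q\ge1$ the boundary term is continuous and the limit argument is direct.
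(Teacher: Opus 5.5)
\textbf{There is a genuine gap: the tilt is the wrong perturbation.} The lemma is only nontrivial for $1<p<2$. There, $\varphi_\delta=\varphi+\delta\langle\nu(x_0),x-x_0\rangle$ changes the gradient at exactly the scale where $\Delta_p$ is singular.

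Write $\Delta_p\psi=|D\psi|^{p-2}G(D\psi,D^2\psi)$ with $G(\xi,X)=\operatorname{tr}X+(p-2)\langle X\xi,\xi\rangle/|\xi|^2$. The inequality you obtain at $(x_\delta,t_\delta)$ involves $|\xi_\delta|^{p-2}G(\xi_\delta,D^2\varphi)$, where $\xi_\delta=D\varphi(x_\delta,t_\delta)+\delta\nu(x_0)$. You only know $D\varphi(x_\delta,t_\delta)=o(1)$, so it may be of size $\delta$. Then neither $|\xi_\delta|^{p-2}$ nor the direction $\xi_\delta/|\xi_\delta|$ is close to the corresponding quantity for $\varphi$. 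An ``error of order $\delta$ in the gradient'' is therefore not small, and continuity of $x\mapsto\partial_t\varphi-\Delta_p\varphi$ away from $x_0$ does not relate $\Delta_p\varphi_\delta$ to $\Delta_p\varphi$.

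Even nonvanishing of the gradient fails. Take $\partial\Omega$ flat near $x_0$, $u\equiv0$ and $\varphi=-|x-x_0|^2-(t-t_0)^2$. Then $u-\varphi_\delta$ is uniquely minimized at $x_\delta=x_0+\frac{\delta}{2}\nu(x_0)\in\Omega$, $t_\delta=t_0$, where $D\varphi_\delta=0$, and this holds for every $\delta$, so no choice of sequence avoids it. Your fallback for the pinned case (rigidity forcing $u$ to be locally constant in $x$, then testing with $|x-x_0|^m$) also has no footing. It belongs to a translation argument, and, as the paper remarks, the parabolic setting has no simple counterpart of the ``constant case'' of Definition \ref{vis-ell-def-2}.

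\textbf{The missing idea is the paper's: translate rather than tilt, in the inward normal direction.} Set $\phi_k(x,t)=\varphi(x+\frac1k\nu(x_0),t)$. Then $u-\phi_k$ has a minimum at some $(x_k,t_k)\to(x_0,t_0)$ with $x_k\in\overline\Omega$. Automatically $x_k+\frac1k\nu(x_0)\neq x_0$, because $x_0-\frac1k\nu(x_0)\notin\overline\Omega$. Hence:
\begin{itemize}
\item $D\phi_k(x_k,t_k)=D\varphi(x_k+\frac1k\nu(x_0),t_k)\neq0$, and there is no pinned case at all;
\item $\partial_t\phi_k-\Delta_p\phi_k$ at $(x_k,t_k)$ equals $\partial_t\varphi-\Delta_p\varphi$ exactly at $(x_k+\frac1k\nu(x_0),t_k)$, a point with $x\neq x_0$;
\item only the continuous $x$-dependence of $f$, $\phi$ and $\nu$ has to be absorbed in the limit.
\end{itemize}

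\textbf{Your ``main obstacle'' is not one.} For $q>0$ one has $\bigl||\xi|^{q-1}\xi\cdot\nu\bigr|\le|\xi|^q\to0$, so the boundary expression is continuous at $\xi=0$ and simply tends to $-\phi(x_0)$. Your sign argument is also unjustified: $(\varphi_\delta)_\nu\approx\delta$ would require $D\varphi(x_\delta,t_\delta)=o(\delta)$. Finally, the claim that the interior term always realizes the max is false whenever $\phi(x_0)<0$.
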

\begin{proof}
When $p\geq2$, there is nothing to prove since the singularity does not occur, so we only prove the result for $1<p<2$. 
    
For any $\varphi\in C^2$ and $x_0\in\partial\Omega$ such that $u-\varphi$ attains a strict local minimum (among $\overline{\Omega}\times[0,\infty)$) at $(x_0,t_0)$ with $D\varphi(x_0,t_0)=0$ and $D\varphi(x,t)\neq0$ for $x\neq x_0$, we aim to prove
$$
\limsup_{\substack{\overline\Omega\times[0,\infty)\ni(x,t) \to (x_0, t_0),\\ x\neq x_0}} \max\Big\{ \partial_t \varphi(x, t) - \Delta_p \varphi(x, t) - f(x, u, D\varphi(x, t)), -|D\varphi|^{q-1}\varphi_{\nu}-\phi(x) \Big\} \geq 0;
$$

We denote $\phi_y(x,t)\coloneqq\varphi((x,t)+y)$, and choose $y_k=(\frac{1}{k}\nu(x_0),0)$ for $k>0$ large enough. We have $u(x,t)-\phi_{y_k}(x,t)$ attains its local minimum at some $(x_y,t_y)\in (B_{\eta}(x_0)\cap\overline\Omega)\times (t_0-\eta,t_0+\eta)$ with $\eta\to0$ as $r\to0$. Clearly $x_{y_k}+\frac{1}{k}\nu(x_0)\neq x_0$ for $k$ sufficiently large. Therefore,  $D\phi_{y_k}(x_{y_k},t_{y_k})=D\varphi((x_{y_k}+\frac{1}{k}\nu(x_0)),t_k)\neq0$. By a similar argument to that in Lemma \ref{vis-ell-def-equal}, we obtain the result.
\end{proof}

\subsection{Stability and comparison principle}
We next discuss the stability of viscosity solutions to $p$-Laplacian equations. The stability results for degenerate equations are well understood; see for instance \cite{BCESS1997,CIL1992}. For the $p$-Laplacian equations, an unbounded singularity occurs when $1<p<2$, and it is not known whether stability results hold for general $f$. However, we can pass to the limit using the equivalence of weak and viscosity solutions together with gradient estimates, as in the following lemma. When $p\ge 2$, the exclusion of $Du=0$ in the definition of viscosity solutions does not affect stability, as in the degenerate case. We only need to note that for any $p\neq0$, we can choose $p_n\neq0$ such that $(x_n, u_n(x_n), p_n, X_n) \to (z, u(z), p, X)$. Then, following the proof of Theorem 8.3 in \cite{BCESS1997} (page 21), we obtain stability.

\begin{lemma}[Stability]\label{lemstb} For $1<p<2$, define $F_{\varepsilon}[u]\coloneqq F_{\varepsilon}(Du,D^2u)\coloneqq\operatorname{div}((\varepsilon^2+|Du|^2)^{\frac{p-2}{2}}Du)$ in $\Omega$, and let $u_\varepsilon\in C^2$ be the solutions of $F_{\varepsilon}[u]=f(x,u,Du)$. Suppose $u_0\coloneqq\lim_{\varepsilon\to 0} u_\varepsilon$ is finite, and additionally, $|Du_{\varepsilon}|\leq C$ and $(u,f)$ satisfies (CPP). Then $u_0$ is a viscosity solution of $F[u]=f(x,u,Du)$ in $\Omega$. Moreover, stability for the parabolic $p$-Laplacian equations can be proved similarly provided that $Du$ and $u_t$ are bounded.
\end{lemma}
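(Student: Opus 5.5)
The plan is to pass from the smooth approximations to weak solutions of the limit equation, and then use (CPP) to upgrade weak solutions to viscosity solutions.

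\textbf{Step 1 (compactness).} The uniform bound $|Du_\varepsilon|\le C$ together with the pointwise finiteness of the limit gives equi-Lipschitz functions. By Arzel\`a--Ascoli, along a subsequence $u_\varepsilon\to u_0$ locally uniformly in $\Omega$, and $Du_\varepsilon\rightharpoonup Du_0$ weakly-$*$ in $L^\infty$. Each $u_\varepsilon\in C^2$ is a classical, hence weak, solution:
$$
-\int_\Omega (\varepsilon^2+|Du_\varepsilon|^2)^{\frac{p-2}{2}}Du_\varepsilon\cdot D\psi=\int_\Omega f(x,u_\varepsilon,Du_\varepsilon)\psi ,\qquad \psi\in C_0^\infty(\Omega).
$$

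\textbf{Step 2 (strong gradient convergence).} To pass to the limit in the nonlinear flux and in $f(x,u,Du)$, we need $Du_\varepsilon\to Du_0$ a.e. This is the main obstacle. I would first test the difference of the equations for $u_\varepsilon$ and $u_{\varepsilon'}$ with $(u_\varepsilon-u_{\varepsilon'})\eta$, where $\eta\in C_0^\infty$ is a cutoff. The right-hand side and the cutoff terms are $o(1)$, because the gradients are bounded, $f$ is bounded on bounded sets, and $u_\varepsilon\to u_0$ uniformly. The left-hand side is controlled from below by the standard monotonicity inequality for $1<p<2$:
$$
\big(A_\varepsilon(\xi)-A_\varepsilon(\zeta)\big)\cdot(\xi-\zeta)\ge c\,(\varepsilon^2+|\xi|^2+|\zeta|^2)^{\frac{p-2}{2}}|\xi-\zeta|^2 ,
$$
where $A_\varepsilon(\xi)=(\varepsilon^2+|\xi|^2)^{\frac{p-2}{2}}\xi$. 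Since $|\xi|,|\zeta|\le C$ and $p<2$, the weight is bounded below by $c(1+2C^2)^{\frac{p-2}{2}}>0$. The mismatch between the regularizations $\varepsilon$ and $\varepsilon'$ is $O(|\varepsilon-\varepsilon'|^{\theta})$ uniformly on bounded gradients. Together these give $Du_\varepsilon\to Du_0$ in $L^2_{loc}$, hence a.e. along a subsequence. Dominated convergence then shows that $u_0\in W^{1,\infty}_{loc}$ is a weak solution of $-\Delta_p u_0=f(x,u_0,Du_0)$, with the sign convention matched to $F$.

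\textbf{Step 3 (weak $\Rightarrow$ viscosity).} Since $(u_0,f)$ satisfies (CPP), Remark~\ref{rmk1} (following \cite{FRZ2024MA}) shows that the continuous weak solution $u_0$ is a viscosity solution in the sense of Definition~\ref{vis-ell-def-1}. Concretely, suppose a test function $\varphi$ touches $u_0$ from below at $x_0$ and the viscosity supersolution inequality fails. Then a small perturbation $\varphi+\delta-\rho|x-x_0|^4$ is a strict weak subsolution near $x_0$ (the isolated critical point is handled via Lemma~\ref{vis-ell-def-equal}). This perturbation lies below $u_0$ on $\partial B_r(x_0)$ but exceeds $u_0$ at $x_0$, contradicting (CPP).

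\textbf{Step 4 (parabolic case).} The same three steps apply when $Du_\varepsilon$ and $\partial_t u_\varepsilon$ are bounded. Ascoli's theorem now works in $(x,t)$. In the energy argument I would test with $(u_\varepsilon-u_{\varepsilon'})\eta(x,t)$; the time-derivative term is harmless thanks to the $u_t$ bound and uniform convergence. Finally, Theorem~\ref{thm par vis equ weak} identifies bounded weak solutions with viscosity solutions.
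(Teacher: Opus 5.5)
Your proof is correct. It follows the paper's overall scheme: show the limit is a weak solution, then use (CPP) via Remark~\ref{rmk1} (or Theorem~\ref{thm par vis equ weak} in the parabolic case) to upgrade it to a viscosity solution. Where you differ is in how you obtain strong convergence of the gradients.

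\textbf{Comparison of the two routes.} The paper invokes $\varepsilon$-independent interior $C^{1,\alpha_0}$ estimates from the classical regularity theory for divergence-form quasilinear equations; in the parabolic case it would need gradient H\"older estimates of the type in Theorem~\ref{thm-local-C1alpha}. This gives locally uniform convergence of $Du_\varepsilon$ and H\"older regularity of $Du_0$ in one line. The cost is reliance on deep estimates whose structural hypotheses on $f(x,u,Du)$ must hold uniformly in $\varepsilon$.

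Your Minty-type energy argument is elementary and self-contained. It uses only three ingredients:
\begin{itemize}
\item the Lipschitz bound on $u_\varepsilon$;
\item the fact that for $1<p<2$ the monotonicity weight $(\varepsilon^2+|\xi|^2+|\zeta|^2)^{\frac{p-2}{2}}$ is bounded below on bounded gradients;
\item the uniform bound $|A_\varepsilon(\zeta)-A_{\varepsilon'}(\zeta)|\le C|\varepsilon-\varepsilon'|^{p-1}$, which follows from $|\partial_\varepsilon A_\varepsilon(\zeta)|\le (2-p)\varepsilon^{p-2}$.
\end{itemize}
So your route needs no gradient H\"older theory, even in the parabolic setting. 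The trade-off is that you only get $L^2_{\mathrm{loc}}$ and a.e.\ convergence of gradients and no regularity of $u_0$. That is exactly what the weak formulation requires, so nothing is lost for this lemma.

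\textbf{A minor point in Step~3.} The quartic perturbation is unnecessary. For $1<p<2$ it could also create new critical points of $\varphi-\rho|x-x_0|^4$, where the singular operator is not controlled. The test functions in Definition~\ref{vis-ell-def-1} already touch strictly with $D\varphi\neq 0$ away from $x_0$. So it suffices to lift $\varphi$ by half of $\min_{\partial B_r(x_0)}(u_0-\varphi)$ and use continuity of $f$. The point $x_0$ is removable for the weak inequality because the field $|D\varphi|^{p-2}D\varphi$ is continuous there, and this already contradicts (CPP).
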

\begin{proof}
By classical regularity theory for divergence-type quasilinear equations, for any $\Omega'\subset\subset\Omega$ we have $|u_{\varepsilon}|_{C^{1,\alpha_0}(\Omega')}\le C$ for some $0<\alpha_0<1$ and some $C$ independent of $\varepsilon$. Consequently, $u_0$ is a weak solution of $F[u]=f(x,u,Du)$, and hence a viscosity solution by Remark \ref{rmk1} or Theorem \ref{thm par vis equ weak}.
\end{proof}

Finally, for $p\ge 2$, we state the comparison principle for viscosity solutions of degenerate parabolic $p$-Laplacian equations; see \cite{B1993JDE, B1999JDE, BC2026NODEA, GS1993DIE, IL1990JDE} for more details.
\begin{lemma}[Comparison Principle]\label{lem cpp}
Assume $\Omega\subset \mathbb{R}^n$ is a bounded $C^{2,\beta}$ domain, $p\geq2$, $f$ satisfies \eqref{f}, and $\phi$ satisfies \eqref{phi0}. Let continuous functions $u_1,u_2$ be a viscosity subsolution and a viscosity supersolution to \eqref{par eq}, respectively, with $\operatorname{Lip}u_1(\cdot,t)$ and $\operatorname{Lip}u_2(\cdot,t)$ bounded (independently of $t$).
\begin{itemize}
\item If $q=0$ or $q=1$, and $u_1(x,0)\leq u_2(x,0)$ on $\overline{\Omega}$, then $u_1(x,t)\leq u_2(x,t)$ on $\overline{\Omega}\times [0,T)$ for any $T>0$.
\item If $q>0$ and $|\phi(x)|>0$ on $\partial\Omega$, then the above comparison result holds.
\end{itemize}
\end{lemma}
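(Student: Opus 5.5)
The plan is to deduce Lemma \ref{lem cpp} from the general comparison theorem of Barles \cite{B1999JDE} for nonlinear Neumann/oblique problems. As stated in the introduction, the operators do not satisfy Barles' structural hypotheses as they stand, so we first modify them using the uniform Lipschitz bound. Let $K$ bound $\operatorname{Lip}u_1(\cdot,t)$ and $\operatorname{Lip}u_2(\cdot,t)$ for all $t$. Any test function touching $u_i$ at $(x_0,t_0)$ then satisfies $|D\varphi(x_0,t_0)|\le K$; at boundary points this holds at least for the tangential part, and we arrange the normal part as described below.

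\textbf{Interior operator.} For $p\ge2$ the map $(\eta,X)\mapsto-|\eta|^{p-2}\operatorname{tr}\big((I+(p-2)\hat\eta\otimes\hat\eta)X\big)$ is continuous and degenerate elliptic. We truncate it to
\[
\tilde F(\eta,X)=-\min\{|\eta|,2K\}^{p-2}\operatorname{tr}\big((I+(p-2)\hat\eta\otimes\hat\eta)X\big).
\]
Its coefficient matrix is $\sigma(\eta)\sigma(\eta)^T$ with $\sigma$ Lipschitz in $\eta$ for $|\eta|\le 2K$. This is the standard structure needed for the Ishii--Lions doubling argument with an $\alpha|x-y|^2$ penalization. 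The dependence on $u$ is handled by $f_z\le0$. If we use the usual substitution $u\mapsto e^{\lambda t}u$, the term $\lambda u$ gives strict monotonicity, and $|D_xf|\le L$ gives the needed modulus in $x$.

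The viscosity sub/supersolution properties transfer to $\tilde F$. For test functions with $|D\varphi|\le K$ at the contact point the two operators agree. The exclusion of vanishing gradients in the definition is harmless because $p\ge2$ makes the operator continuous at $\eta=0$.

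\textbf{Boundary operator.} Write $B(x,\eta)=-|\eta|^{q-1}\eta\cdot\nu-\phi(x)$. Barles requires $B$ to be strictly monotone in the normal direction, $B(x,\eta+\mu\nu)-B(x,\eta)\le -c\mu$ for $\mu\ge0$ (locally), and Lipschitz in $(x,\eta)$.

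For $q=1$ the operator is $-\eta\cdot\nu-\phi$, which is linear, so this holds directly. For $q=0$ the operator is $-\eta\cdot\nu-\phi|\eta|$ (interpreted via \eqref{bdrq=0}); with $\|\phi\|<1$, in the setting of this lemma, $\partial_\mu$ gives $-1+|\phi|<0$, which is fine away from $\eta=0$, and it is Lipschitz.

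For general $q>0$, $\partial_\mu B=-|\eta|^{q-1}\big(1+(q-1)(\hat\eta\cdot\nu)^2\big)\le-\min\{1,q\}|\eta|^{q-1}$. This degenerates or blows up at $\eta=0$. Here the hypothesis $|\phi|>0$ enters. If $B=0$ then $|\eta|^{q}\ge|\eta|^{q-1}|\eta\cdot\nu|=|\phi|\ge c_0>0$, so solutions of the boundary equation stay in the annulus $c_0^{1/q}\le|\eta|\le K'$. We therefore replace $B$ by $\tilde B$, obtained by modifying $|\eta|^{q-1}$ outside this annulus to a smooth positive function bounded above and below. We then verify, as in the interior case, that $u_1,u_2$ remain sub/supersolutions for $(\tilde F,\tilde B)$.

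This verification is the main obstacle. At a boundary contact point the relaxed condition (max/min with the equation) only gives information when the boundary inequality is the active one. The normal derivative of the test function is not a priori bounded by $K$. The strategy is to show that if $|D\varphi|$ lies outside the annulus, the boundary inequality for $B$ and for $\tilde B$ has the same sign. Concretely, $\tilde B$ is built to have the same sign as $B$ on every $\eta$, for instance $\tilde B=\rho(\eta)B$-type with $\rho>0$ near the zero set. Alternatively, we subtract $\mu d(x)$ to absorb large normal components, using monotonicity in $\mu$.

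\textbf{Conclusion.} With $(\tilde F,\tilde B)$ satisfying Barles' hypotheses on the $C^{2,\beta}$ domain, his comparison theorem gives $u_1\le u_2$ on $\overline\Omega\times[0,T)$ from $u_1(\cdot,0)\le u_2(\cdot,0)$, since the initial data are ordered. If the reduction is problematic, the fallback is a direct doubling-of-variables proof with Barles' test function $\frac{|x-y|^2}{\alpha}$ corrected by terms involving $d(x)$ to handle the oblique condition.
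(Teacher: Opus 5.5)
Your overall route is the paper's. You truncate $-\Delta_p$ above the Lipschitz bound. For general $q$ you replace $B$ by a strictly oblique, Lipschitz $\tilde B$ that agrees with $B$ on an annulus $\{A^{-1}\le|\xi|\le A\}$. You then check that $u_1,u_2$ remain sub/supersolutions of the modified problem and invoke Barles' Theorem 3.1.

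The gap is that the step you yourself call the main obstacle is left open, and it is where the content of the proof lies. That step is transferring the \emph{relaxed} boundary condition to the modified problem. You also misplace the danger. It is not the case where the boundary alternative is active. It is the case where, at a boundary point, the \emph{equation} alternative is the active one for a test function with large normal derivative, since there $F\neq\tilde F$. This concerns $q=0,1$ too: $B$ is admissible as it stands there, but you still replace $F$ by $\tilde F$ at boundary points. Your sentence ``we arrange the normal part as described below'' is never carried out.

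The missing observation is that the Lipschitz bound controls the normal component of $D\varphi$ one-sidedly, and obliqueness handles the other side. Suppose $\operatorname{Lip}u\le A/2$, $\varphi$ touches a supersolution from below at $(x_0,t_0)$ with $x_0\in\partial\Omega$, and $D\varphi=a\nu+\mathbf b$ with $\mathbf b$ tangential. Then $|\mathbf b|\le A/2$ and $a\le A/2$. The paper's Step 4 then splits into three cases.
\begin{itemize}
\item If $a\le -\tfrac{A}{2}$, strict obliqueness gives $\tilde B(x_0,D\varphi)\ge\tilde B(x_0,-\tfrac{A}{2}\nu+\mathbf b)=B(x_0,-\tfrac{A}{2}\nu+\mathbf b)\ge0$. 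So the equation is not needed at all.
\item Otherwise $|D\varphi|\le A$, and the equation alternative transfers because $F=\tilde F$ there.
\item If the boundary alternative holds with $|D\varphi|<A^{-1}$, then $|\phi|\ge c$ forces $\phi(x_0)<-c$. Moving along $+\nu$ to the point with $|D\varphi+a_0\nu|=A^{-1}$ then yields $\tilde B(x_0,D\varphi)\ge B(x_0,D\varphi+a_0\nu)\ge0$.
\end{itemize}

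For $q=0,1$ the paper instead proves first (Step 1) that the boundary condition holds in the strong viscosity sense. Suppose, say, $-\varphi_\nu-\phi>0$ at a touching point of a subsolution. Replace $\varphi$ by $\varphi+a\,d-b\,d^2$ with $d=\operatorname{dist}(\cdot,\partial\Omega)$. Small $a$ preserves this strict inequality and keeps $D\tilde\varphi(x_0,t_0)\neq0$. Large $b$ then contradicts the equation by uniform ellipticity of $G$. After that, the interior operator never enters at boundary points.

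Your ``subtract $\mu d(x)$'' fallback is in this spirit. Your idea of a $\tilde B$ with the same sign as $B$ can also be made to work, but only together with the one-sided bound, which you do not state.

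Separately, your claim that $\sigma$ is Lipschitz on $\{|\eta|\le 2K\}$ is false for $2<p<4$. Any $\sigma$ with $\sigma\sigma^T=|\eta|^{p-2}(I+(p-2)\hat\eta\otimes\hat\eta)$ has Frobenius norm $\sqrt{n+p-2}\,|\eta|^{(p-2)/2}$, which is not Lipschitz at $0$. In that range, (H5-1) as written in the paper fails near $\xi=0$. Take $x=y$, $\xi_1=0$, $|\xi_2|=K\eta\varepsilon$, $X=-\tfrac{K}{\varepsilon^2}I$, $Y=-\tfrac{K}{2\varepsilon^2}I$, with $K$ the constant in (H5-1). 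This gives a left-hand side of order $\eta^{p-2}\varepsilon^{p-4}$, which blows up as $\varepsilon\to0$. The paper's Step 2 passes over this with ``it is not hard to see'', so this part needs a different argument in either version.
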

\begin{proof}
We may assume $p>2$. The proof is divided into 4 steps. The first three steps are for $q=0$ or $q=1$, and the final step handles the general case $q>0$ with the condition $|\phi(x)|>0$ on $\partial\Omega$.

\textbf{Step 1.} For $q=0$ or $q=1$, let $u$ be a continuous viscosity subsolution or supersolution to \eqref{par eq}. We aim to prove that it satisfies the boundary condition in the strong viscosity sense; i.e., $u$ is a viscosity subsolution or supersolution of the boundary condition.

We only prove the case $q=1$ for a subsolution $u$, the other cases are similar. We argue by contradiction: suppose $u$ is not a viscosity subsolution of $-u_\nu-\phi(x)=0$ at some $(x_0,t_0)\in\partial\Omega\times[0,\infty)$. Then there exists a $\varphi\in C^2$ such that $u-\varphi$ has a local maximum at $(x_0,t_0)$, and 
$$
-\varphi_{\nu}(x_0,t_0)-\phi(x_0,t_0)>0.
$$  
Consider 
$$
\tilde{\varphi}(x,t)\coloneqq\varphi(x,t)+a\operatorname{d}(x)-b\operatorname{d}(x)^2,
$$
where $\operatorname{d}(x)\coloneqq\operatorname{dist}(x,\partial\Omega)$ and $a,b>0$.

Then $u-\tilde{\varphi}$ also has a local maximum point at $x_0$, and $-\tilde{\varphi}_{\nu}(x_0)-\phi(x_0)>0$ provided that $a>0$ is sufficiently small. Moreover, we can choose $a$ appropriately so that $|D\tilde{\varphi}(x_0,t_0)|=|D\varphi(x_0,t_0)+a\nu(x_0)|\geq C_a>0$ (from now on $a$ is fixed). Since $u$ is a viscosity subsolution, writing $\Delta_p u=|Du|^{p-2}G(Du,D^2u)$, we have at $(x_0,t_0)$,
$$
\partial_t{\varphi}-|D\tilde{\varphi}|^{p-2}G\Bigl(D\tilde{\varphi}, D^2\varphi+aD^2\operatorname{d}-2bD\operatorname{d}\otimes D\operatorname{d}\Bigr)\leq f\leq L.
$$
Because $G$ is uniformly elliptic, this cannot happen if $b$ is sufficiently large. Therefore, $u$ is the viscosity subsolution of the boundary condition.\vspace{4pt}

\textbf{Step 2.} For $q=0$ or $q=1$, according to Theorem 3.1 in \cite{B1999JDE}, we first verify condition (H5-1) for bounded $\xi$. That is, for any $R,K>0$, there exists a function $m_{R,K}:\mathbb{R}^+\to\mathbb{R}$ such that $m_{R,K}(t)\to0$ as $t\to0$ and, for all $\eta>0$,
\begin{align}
&\quad F(y,u,\xi_2,Y)-F(x,u,\xi_1,X) \notag\\
&\qquad \leq m_{R,K}\left( \eta + |x-y|(1+\max\{|\xi_1|,|\xi_2|\}) + \frac{|x-y|^2}{\varepsilon^2} \right)\label{B1999JDE-1}
\end{align}
for any $x,y\in\overline{\Omega}$, $\xi_1,\xi_2\in\mathbb{R}^N$, $|\xi_1|+|\xi_2|\leq A$ for some large $A$, $|u|\leq R$, and for any matrices $X,Y\in\mathcal{S}^N$ satisfying the following properties:
\begin{align}
-\frac{K}{\varepsilon^2}\mathrm{Id} &\leq 
\begin{pmatrix}
X & 0 \\
0 & -Y
\end{pmatrix}
\leq \frac{K}{\varepsilon^2} 
\begin{pmatrix}
\mathrm{Id} & -\mathrm{Id} \\
-\mathrm{Id} & \mathrm{Id}
\end{pmatrix}
+ K\eta\,\mathrm{Id}, \label{B1999JDE-2}\\[4pt]
|\xi_1-\xi_2| &\leq K\eta\varepsilon(1+\min\{|\xi_1|,|\xi_2|\}), \quad |x-y| \leq K\eta\varepsilon,\label{B1999JDE-3}
\end{align}
where $F=-\Delta_pu$.

Rewrite $F$ as 
$$
F(\xi,X)=-\operatorname{Tr}\left(B(\xi)B^{T}(\xi)X\right),
$$\vspace{4pt}
where $B(\xi)\coloneqq|\xi|^{\frac{p-2}{2}}\left(\mathrm{Id}+(\sqrt{p-1}-1)\frac{\xi\xi^T}{|\xi|^2}\right)$.\vspace{4pt}

Multiplying the rightmost inequality in \eqref{B1999JDE-2} by the positive semidefinite matrix
\begin{align}
\begin{pmatrix}
B^T(\xi_1)B(\xi_1) & B^T(\xi_2)B(\xi_1) \\[10pt]
B^T(\xi_1)B(\xi_2) & B^T(\xi_2)B(\xi_2)
\end{pmatrix},
\end{align}
we have
\begin{align}
&\begin{pmatrix}
B^T(\xi_1)B(\xi_1)X & -B^T(\xi_2)B(\xi_1)Y\\[10pt]
B^T(\xi_1)B(\xi_2)X & -B^T(\xi_2)B(\xi_2)Y
\end{pmatrix} \notag \\[10pt]
&\qquad \leq \frac{K}{\varepsilon^2}
\begin{pmatrix}
B^T(\xi_1)B(\xi_1)-B^T(\xi_2)B(\xi_1) & -B^T(\xi_1)B(\xi_1)+B^T(\xi_2)B(\xi_1)\\[10pt]
B^T(\xi_1)B(\xi_2)-B^T(\xi_2)B(\xi_2) & -B^T(\xi_1)B(\xi_2)+B^T(\xi_2)B(\xi_2)
\end{pmatrix} \notag \\[10pt]
&\qquad\quad + K\eta
\begin{pmatrix}
B^T(\xi_1)B(\xi_1) & B^T(\xi_2)B(\xi_1) \\[10pt]
B^T(\xi_1)B(\xi_2) & B^T(\xi_2)B(\xi_2)
\end{pmatrix}.
\end{align}
Taking traces yields
\begin{align}
F(\xi_2,Y)-F(\xi_1,X)\leq& \frac{K}{\varepsilon^2}\operatorname{Tr}\Bigl(\left(B^T(\xi_1)-B^T(\xi_2)\right)\left(B(\xi_1)-B(\xi_2)\right)\Bigr)\notag\\[2pt]
&+K\eta\operatorname{Tr}\Bigl(B(\xi_1)^T B(\xi_1)-B(\xi_2)^T B(\xi_2)\Bigr).\label{Fxi2-Fxi1}
\end{align}
For $p>2$, since $|\xi_1|+|\xi_2|\leq A$, it is not hard to see that the right‑hand side of \eqref{Fxi2-Fxi1} tends to $0$ as $\eta\to0$. Hence (H5-1) is satisfied for bounded $\xi_1,\xi_2$.\vspace{4pt}

\textbf{Step 3.}
For $q=0$ or $q=1$, define $\tilde{F}(\xi,X)$ such that $\tilde{F}(\xi,X)=F(\xi,X)$ for $|\xi|\leq A$, and such that $\tilde{F}$ satisfies condition (H5-1) and is degenerate elliptic for all $\xi\in\mathbb{R}^n$.

We claim that if $\operatorname{Lip}(u)\leq\frac{A}{2}$ and $u$ is a viscosity subsolution or supersolution of 
$$
\left\{\begin{aligned}
& u_t=F(Du,D^2u)+f(x,u,Du) &&\text{in~}\Omega\times[0,\infty),\\
& -u_\nu-\phi(x)=0 &&\text{on~}\partial\Omega\times[0,\infty),
\end{aligned}\right.
$$
then $u$ is also a viscosity subsolution or supersolution of 
$$
\left\{\begin{aligned}
& u_t=\tilde{F}(Du,D^2u)+f(x,u,Du) &&\text{in~}\Omega\times[0,\infty),\\
& -u_\nu-\phi(x)=0 &&\text{on~}\partial\Omega\times[0,\infty).
\end{aligned}\right.
$$
Indeed, for any $x_0\in\Omega$ and any $\varphi\in C^2$ touching $u$ from above or below at $(x_0,t_0)$, we have $|D\varphi(x_0)|\le A/2$. Hence $F(D\varphi(x_0),D^2\varphi(x_0))=\tilde{F}(D\varphi(x_0),D^2\varphi(x_0))$. Together with the fact that $u$ is a strong viscosity subsolution or supersolution of the boundary condition, the claim follows.

Therefore, under the assumptions of the lemma, we may apply the comparison principle for $\tilde{F}$ (see Theorem 3.1 and the discussion on page 203 of \cite{B1999JDE}) to conclude $u_1(x,t)\le u_2(x,t)$.\vspace{4pt}

\textbf{Step 4.} For general $q>0$, denote $B(x,\xi)\coloneqq-|\xi|^{q-1}\nu(x)\cdot \xi-\phi(x)$, and define a new boundary operator $\tilde{B}(x,\xi)$ such that $\tilde{B}(x,\xi)=B(x,\xi)$ for $x\in\partial\Omega$ and for $\xi$ satisfying $A^{-1}\leq|\xi|\leq A$ . Moreover, $\tilde{B}$ satisfies the strictly oblique condition (H1) in \cite{B1999JDE}:
$$
\tilde{B}(x,\xi-\lambda\nu)-\tilde{B}(x,\xi)\geq c_1\lambda \quad\text{for all } \lambda>0,\ \xi\in\mathbb R^n,
$$
for some $c_1=c_1(A)>0$, and is Lipschitz in $x$ and $\xi$ (condition (H2-1) in \cite{B1999JDE}):
$$
|\tilde{B}(x,\xi_1)-\tilde{B}(y,\xi_2)|\leq C_2(|x-y|+|\xi_1-\xi_2|) \quad\text{for all } x,y\in\partial\Omega,  ~\xi_1,\xi_2\in\mathbb R^n,
$$
for some $C_2=C_2(A)>0$.

We claim that if $|\phi(x)|\geq c>0$ on $\partial\Omega$, $A$ is sufficiently large, $\operatorname{Lip}(u)\leq\frac{A}{2}$, and $u$ is a viscosity subsolution or supersolution of 
\begin{equation}\label{F-B}
\left\{\begin{aligned}
& u_t=F(Du,D^2u)+f(x,u,Du) &&\text{in~}\Omega\times[0,\infty),\\
& B(x,Du)=0 &&\text{on~}\partial\Omega\times[0,\infty),
\end{aligned}\right.
\end{equation}
then $u$ is also a viscosity subsolution or supersolution of 
\begin{equation}\label{tildeF-B}
\left\{\begin{aligned}
& u_t=\tilde{F}(Du,D^2u)+f(x,u,Du) &&\text{in~}\Omega\times[0,\infty),\\
& \tilde{B}(x,Du)=0 &&\text{on~}\partial\Omega\times[0,\infty).
\end{aligned}\right.
\end{equation}
In view of \textbf{Step 3}, we only need to consider the boundary case. Suppose $u$ is a viscosity supersolution of \eqref{F-B}. For any $\varphi\in C^2$ touching $u$ from below at $(x_0,t_0)\in\partial\Omega\times[0,\infty)$, write $D\varphi(x_0,t_0)=a\nu(x_0)+\mathbf{b}$, where $\mathbf{b}$ is tangential. Then clearly $|\mathbf{b}|\leq\frac{A}{2}$ and $a\leq\frac{A}{2}$.

On the other hand, since $\tilde{B}$ is strictly oblique, we have
$$
\tilde{B}(x_0,D\varphi(x_0,t_0))\geq \tilde{B}(x_0,-\frac{A}{2}\nu(x_0)+\mathbf{b})=B(x_0,-\frac{A}{2}\nu(x_0)+\mathbf{b})\geq0
$$
if $a\leq -\frac{A}{2}$ and $A$ is sufficiently large. Hence we may assume $|D\varphi(x_0,t_0)|\leq A$.

Since $u$ is a viscosity supersolution of \eqref{F-B}, we have
$$
\limsup_{\substack{(x,t)\to(x_0,t_0),\\ x\neq x_0}} \max\Bigl\{ \partial_t\varphi(x,t)-F(D\varphi,D^2\varphi)-f(x,u,D\varphi(x,t)),\; B(x,D\varphi) \Bigr\} \ge 0.
$$
\begin{itemize}
\item If
$$
\limsup_{\substack{(x,t)\to(x_0,t_0),\\ x\neq x_0}} \partial_t\varphi(x,t)-\Delta_p\varphi(x,t)-f(x,u,D\varphi(x,t))\geq0,
$$
then since $\tilde{F}(\xi,X)=F(\xi,X)$ for $|\xi|\leq A$, we conclude that $u$ is a viscosity supersolution of \eqref{tildeF-B}.\\
\item If $B(x,D\varphi)\geq0$, we may assume $|D\varphi|\leq A^{-1}$; otherwise 
$$
\tilde{B}(x,D\varphi)=B(x,D\varphi)\geq0,
$$
hence $u$ is a viscosity supersolution of \eqref{tildeF-B}. Since $|\phi(x)|\geq c>0$, $|D\varphi|\leq A^{-1}$, and $B(x,D\varphi)>0$, we have $\phi(x_0)<-c<0$. 

Let $a_0>0$ be the small constant such that $|D\varphi(x_0)+a_0\nu(x_0)|=A^{-1}$. Therefore,
$$
\tilde{B}(x_0,D\varphi)\geq\tilde{B}(x_0,D\varphi+a_0\nu)=B(x_0,D\varphi+a_0\nu)\geq0,
$$
and $u$ is also a viscosity supersolution of \eqref{tildeF-B}.
\end{itemize}
Thus, applying the comparison principle for $\tilde{F}$ and $\tilde{B}$ yields $u_1(x,t)\le u_2(x,t)$.
\end{proof}

\subsection{Basic calculations}
At the end of this section, we introduce some notation and perform preliminary calculations that will be used repeatedly in the following sections.

$\bullet$ Defining functions.

When $\Omega$ is a strictly convex domain with $C^{2,\beta}$ boundary, there exists a $C^{2,\beta}$ defining function $h$ for $\Omega$ satisfying
$$
h < 0,\quad \{h_{ij}\} \ge \kappa_0 \{\delta_{ij}\} \quad \text{in } \Omega,
$$
and
$$
h = 0,\quad -h_\nu = |Dh| = 1 \quad \text{on } \partial\Omega.
$$
Moreover, the principal curvature matrix on $\partial\Omega$ satisfies $\{\kappa_{ij}\} \ge \kappa_0 \{\delta_{ij}\}_{1\le i,j\le n-1}$ for some positive constant $\kappa_0$.

$\bullet$ Coordinate selection.

Throughout this paper, we denote
$$
v(Du) \coloneqq \sqrt{\varepsilon^2 + |Du|^2}, \qquad 
\tilde{v}(Du) \coloneqq \sqrt{\varepsilon^2 + (p-1)|Du|^2},
$$
$$
a^{ij}(Du) \coloneqq v^{p-2}\delta_{ij} + (p-2)v^{p-4}u_i u_j.
$$
When no ambiguity arises, we omit the vector in parentheses. Under this notation, the approximate $p$-Laplacian operator reduces to $a^{ij}u_{ij}$.

In the following sections, when we compute at a fixed point $(x_0,t_0)$ (or $x_0$) lying in the interior of $\Omega\times[0,\infty)$ (or $\Omega$), we always choose coordinates such that $|Du| = u_1$ and the matrix $\{u_{\alpha\beta}\}_{\alpha,\beta>1}$ is diagonal at that point. Consequently,
$$
a^{11}(x_0,t_0) = v^{p-4}\tilde{v}^2,\qquad a^{1\alpha}(x_0,t_0) = 0 \quad (\alpha \geq 2), \qquad 
a^{\alpha\alpha}(x_0,t_0) = v^{p-2} \quad (\alpha\geq2).
$$
Furthermore, 
\begin{equation}\label{aijkuij}
a^{ij}_{~,k}u_{ij}=(p-2)v^{p-4}u_{1}u_{1k}\Delta u+(p-2)(p-4)v^{p-6}u_{1}^{3}u_{11}u_{1k}+2(p-2)v^{p-4}u_{1}\sum_{i}u_{1i}u_{ik}.
\end{equation}

\section{Lipschitz regularity}

In this section, we study the Lipschitz regularity of solutions to the approximate problem \eqref{app par eq} for fixed $\varepsilon>0$. We first focus on the estimate for $u_t$.
\begin{lemma}\label{lem3.1}
Let $u$ be a solution of \eqref{app par eq}. Then for any finite $T>0$, we have
$$
\sup_{\overline{\Omega}\times[0,T]}|u_t|=\sup_{\overline{\Omega}\times\{0\}}|u_t|,
$$
and hence 
$$
\sup_{\overline{\Omega}\times[0,T]}|u(x,t)|\leq \sup_{\overline{\Omega}}|u_0(x)| + CT,
$$
where $C$ is a constant independent of $T$ and $\varepsilon$.
\end{lemma}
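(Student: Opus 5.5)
We differentiate the approximate equation \eqref{app par eq} in $t$ and apply the maximum principle to $w\coloneqq u_t$. Since $u$ is a $C^{2,\sigma}$ solution (smooth enough after standard parabolic regularity for fixed $\varepsilon>0$, or by using difference quotients in $t$), writing the equation as $u_t=a^{ij}(Du)u_{ij}+f(x,u)$ and differentiating gives the linear equation
$$
w_t=a^{ij}w_{ij}+b^k w_k+f_z(x,u)\,w,\qquad b^k\coloneqq a^{ij}_{~,k}u_{ij},
$$
where $a^{ij}_{~,k}$ denotes the derivative of $a^{ij}$ with respect to $u_k$ (the expression in \eqref{aijkuij}). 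The matrix $a^{ij}$ is positive definite for $\varepsilon>0$, and $f_z\le0$ by \eqref{f}. Therefore $w$ satisfies a linear parabolic equation with a nonpositive zeroth-order coefficient. For any $\delta>0$ the function $e^{-\delta t}w$ (or just $w$, with the usual perturbation) then cannot attain a positive interior maximum or a negative interior minimum on $\Omega\times(0,T]$ unless it is constant.

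Next we handle the boundary. Differentiating the boundary condition $u_\nu=-\phi_\varepsilon(x)v^{1-q}$ in $t$ gives, on $\partial\Omega\times(0,T]$,
$$
w_\nu=-(1-q)\phi_\varepsilon(x)v^{-1-q}u_k w_k .
$$
Writing $w_k=w_\nu\nu_k+(\text{tangential part})$, this becomes $\bigl(1+(1-q)\phi_\varepsilon v^{-1-q}u_\nu\bigr)w_\nu=(\text{tangential derivatives of }w)$. Suppose $w$ attains a positive maximum at a boundary point $(x_0,t_0)$ with $t_0>0$. The tangential derivatives vanish there, so the right-hand side is zero. The Hopf lemma gives $w_\nu(x_0,t_0)<0$ strictly, since $\nu$ is the inward normal and the point is a strict maximum unless $w$ is constant. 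This is a contradiction provided the coefficient $1+(1-q)\phi_\varepsilon v^{-1-q}u_\nu$ does not vanish. Substituting $u_\nu=-\phi_\varepsilon v^{1-q}$, the coefficient equals $1-(1-q)\phi_\varepsilon^2v^{-2q}$. This is the obliqueness of the boundary operator $u_\nu+\phi_\varepsilon v^{1-q}$ in the $\nu$ direction. More invariantly, the derivative of $G(Du)=\nu\cdot Du+\phi_\varepsilon v^{1-q}$ with respect to $Du$, dotted with $\nu$, is $1+(1-q)\phi_\varepsilon v^{-1-q}u_\nu$. The same argument applies to the minimum.

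The main obstacle is checking that this obliqueness coefficient is positive along the solution. We would argue as follows. Since $|u_\nu|\le|Du|<v$, we have $\phi_\varepsilon^2v^{-2q}=u_\nu^2/v^2<1$.
\begin{itemize}
\item For $q\le1$: the coefficient is $1-(1-q)u_\nu^2/v^2\ge q>0$.
\item For $q>1$: the coefficient is $1+(q-1)u_\nu^2/v^2\ge1$.
\end{itemize}
Thus the coefficient is positive for every $q>0$, with a lower bound independent of $\varepsilon$ (and for $q=0$ it stays positive as long as the inequality $|u_\nu|<v$ holds). So the Hopf argument goes through. We conclude that $\max|w|$ over $\overline\Omega\times[0,T]$ is attained at $t=0$, which is the first claim.

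Finally, at $t=0$ the equation gives $u_t=\operatorname{div}(v^{p-2}Du_0)+f(x,u_0)$, and compatibility \eqref{CC2} makes this consistent up to the boundary. This is bounded independently of $\varepsilon$ because $|Du_0|^{p-2}Du_0\in C^1(\overline\Omega)$ and $|f|\le L$. A small check is needed that $\operatorname{div}((\varepsilon^2+|Du_0|^2)^{\frac{p-2}{2}}Du_0)$ is uniformly bounded as $\varepsilon\to0$, using the smoothness of $u_0$. Integrating $|u_t|\le C$ in time then gives $|u(x,t)|\le\sup|u_0|+CT$.
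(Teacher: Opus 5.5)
Your proposal is correct and follows the paper's proof essentially step for step. You differentiate in $t$ to get a linear equation for $\pm u_t$ with zeroth-order coefficient $f_z\le 0$, and then rule out a lateral-boundary maximum by combining Hopf's lemma with the $t$-differentiated boundary condition; your coefficient $1-(1-q)u_\nu^2v^{-2}\ge\min\{q,1\}>0$ is exactly the content of the paper's identity $u_{t\nu}=(1-q)u_\nu^2v^{-2}u_{t\nu}$.

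Two small remarks. Identifying your $b^k$ with \eqref{aijkuij} is a slip, since that formula is the $x_k$-derivative of $a^{ij}(Du)$, not the $Du$-derivative; this is harmless because your $b^k$ is correct as you defined it. Your flagged check that $u_t(\cdot,0)$ is bounded uniformly in $\varepsilon$ does go through for $1<p<2$, thanks to the standing hypothesis $|Du_0|^{p-2}Du_0\in C^1(\overline{\Omega})$; the paper leaves this step implicit.
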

\begin{proof}
Rewrite \eqref{app par eq} as  
$$u_t=a^{ij}u_{ij}+f(x,u).$$
Then
$$u_{tt}=a^{ij}u_{ijt}+a^{ij}_{~,t}u_{ij}+f_u u_t,$$
where
\begin{eqnarray*}
a^{ij}_{~,t}&=&(p-2)v^{p-4}u_mu_{mt}\delta_{ij}+(p-2)v^{p-4}(u_{it}u_j+u_iu_{jt})+(p-2)(p-4)v^{p-6}u_mu_{ml}u_iu_j\\
&=&(p-2)\frac{u_m u_{mt}}{v^2}a^{ij}-2(p-2)\frac{u_mu_{mt}}{v^2}v^{p-2}\frac{u_iu_j}{v^2}+(p-2)\frac{u_{it}u_j+u_iu_{jt}}{v^2}v^{p-2}.
\end{eqnarray*}
Let $\psi=\pm u_t$; then we have
\begin{eqnarray*}
\psi_t&=&a^{ij}\psi_{ij}+(p-2)v^{-2}(a^{ij}-2v^{p-4}u_iu_j)u_{ij}u_m\psi_m+2(p-2)v^{p-4}u_{ij}u_j\psi_i+f_u \psi\\
&=&a^{ij}\psi_{ij}+b_i\psi_i+f_u \psi
\end{eqnarray*}
for some bounded coefficients $b_i$. Since $f_u\leq 0$, the parabolic maximum principle tells us that the maximum of $u_t$ is attained on the parabolic boundary $\Omega\times\{0\}\cup \partial\Omega\times[0,T]$.

If the maximum occurs at $(x_0,t_0) \in \partial\Omega \times [0,T]$, then by Hopf's lemma we have $u_{t\nu}(x_0,t_0) < 0$. 
Choose coordinates in $\mathbb{R}^n$ such that the positive $x_n$-axis is the interior normal direction to $\partial\Omega$ at $(x_0,t_0)$. 
Differentiating the boundary condition $u_\nu = -\phi_{\varepsilon}(x) v^{1-q}$ with respect to $t$ yields
$$
u_{t\nu} = (1-q) u_\nu^2 v^{-2} u_{t\nu},
$$
which is a contradiction since $q>0$. 
Hence the maximum of $u_t$ occurs at $t_0 = 0$, and therefore $|u_t| \leq C$.

\end{proof}

We now give the crucial gradient estimate. In the following theorem, the small constant $c>0$ and the large constant $C>0$ are independent of $\varepsilon$, $T$, $|u|_{C^0}$, and may change from line to line. 
\begin{theorem}\label{thm3.2}
Suppose $\Omega\subset \mathbb{R}^n$ is a bounded strictly convex $C^{2,\beta}$ domain and $q>0$. If $f_{u}(x,u)\leq 0$, then the solution of \eqref{app par eq} satisfies
$$
\sup_{\overline{\Omega}\times[0,T]}|Du|\leq C(n,p,q,L,\Omega).
$$
\end{theorem}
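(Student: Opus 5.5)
We apply the maximum principle to an auxiliary function built from the boundary condition, in the spirit of Spruck, Korevaar and Lieberman (and the mean-curvature-flow versions in \cite{GMWW2021JMS, MWW2018JFA}). The estimate must not depend on $|u|_{C^0}$, so only derivatives of $u$ may enter. Set
$$
w \coloneqq u + \phi_\varepsilon(x)\, v^{1-q}\, h(x)\quad(\text{extended smoothly near }\partial\Omega),
$$
so that, with $h$ the strictly convex defining function from Section 2, the boundary condition becomes $w_\nu = 0$ to leading order on $\partial\Omega$. The auxiliary function is
$$
\Phi(x,t) \coloneqq \log |Dw|^2 + \alpha_0 h(x)\quad\text{or}\quad \Phi=\log|Dw|^2+g(h),
$$
with $\alpha_0>0$ large depending on $n,p,q,L,\kappa_0$. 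Terms involving $\phi_\varepsilon v^{1-q}$ are controlled via $|\phi_\varepsilon|\lesssim L$ and the assumed $C^2$ bound on $(|Du_0|^2+\varepsilon^2)^{(q-1)/2}D_\nu u_0$, since $q>0$ makes $v^{1-q}/|Du|$ bounded for large $|Du|$. We assume $|Du|$ is large at the maximum point $(x_0,t_0)$ of $\Phi$ over $\overline\Omega\times[0,T]$, so that $|Dw|\sim|Du|$. If $t_0=0$, the bound follows from the data.

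\textbf{Boundary case.} Suppose $x_0\in\partial\Omega$. Choose coordinates with $e_n=\nu$ and the tangential part of $Dw$ along $e_1$. Then $\Phi_\nu\le 0$ at $x_0$. We compute $\Phi_\nu = 2w_kw_{k\nu}/|Dw|^2+\alpha_0h_\nu$. Differentiating the identity $w_\nu=O(\cdot)$ tangentially gives $w_{\nu k} = -\kappa_{kl}w_l+\text{lower order}$, and strict convexity ($\kappa\ge\kappa_0$) yields $w_kw_{k\nu}\le -\kappa_0|D'w|^2+C|Dw|$. Because $|w_\nu|$ is small relative to $|Dw|$ (it is $\le L v^{1-q}$-type, and the $q$-dependence is handled by the cutoff), $|D'w|^2\ge c|Dw|^2$. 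Combined with $\alpha_0 h_\nu=-\alpha_0$, this forces $\Phi_\nu>0$ for large $|Dw|$ once the constants are fixed suitably. Either this gives a contradiction or we choose the sign of $\alpha_0$ to make it one; this is exactly where strict convexity is used.

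\textbf{Interior case.} Suppose $x_0\in\Omega$ and $t_0>0$. Use the coordinates of Section 2 with $u_1=|Du|$. From $D\Phi=0$ we express $w_{1i}$ through $\alpha_0 h_i$ and lower-order terms, treating $w_i$ as a whole as the authors indicate. Then $0\le \Phi_t-a^{ij}\Phi_{ij}$. Differentiate the equation $u_t=a^{ij}u_{ij}+f$ in $x_k$; the terms $a^{ij}_{,k}u_{ij}$ from \eqref{aijkuij} cancel against the linearized operator acting on $u_k$. The term $f_u|Du|^2\le0$ has the good sign, and $|D_xf|\le L$ gives only $C|Du|$. What remains is
$$
0\le -\alpha_0 a^{ij}h_{ij} + \frac{C(a^{ij}w_{ki}w_{kj})}{|Dw|^2}\text{-type terms}+\dots,
$$
where $-\alpha_0 a^{ij}h_{ij}\le -\alpha_0\kappa_0\min(1,p-1)v^{p-2}$ is the strongly negative term. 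The good term $-2a^{ij}w_{ki}w_{kj}/|Dw|^2$ must absorb the cross terms $4(a^{ij}w_kw_{ki}w_lw_{lj})/|Dw|^4$ after using $D\Phi=0$, which leaves $\alpha_0^2 a^{ij}h_ih_j$. This last term must be beaten by $\alpha_0 a^{ij}h_{ij}$, which is impossible for a linear multiple of $h$ with large coefficient. To handle it, replace $\alpha_0h$ by $g(h)$ with $g''$ chosen to dominate (e.g.\ $g=\alpha_0 h+\beta_0 h^2$ or $-\log(1-\alpha_0 h)$-type). Alternatively, split into the cases $u_{11}$ large versus small, as in Korevaar.

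The main obstacle is the interior algebra: one must handle the anisotropy $a^{11}=v^{p-4}\tilde v^2$ versus $a^{\alpha\alpha}=v^{p-2}$, which is uniform only after normalizing by $v^{p-2}$, together with the extra derivatives of $\phi_\varepsilon v^{1-q}h$ inside $w$. These produce terms $\phi_\varepsilon v^{-q}a^{ij}u_{ki}\dots$. Since $q>0$, they carry a factor $v^{-q}\to0$ relative to the principal terms, so they can be absorbed for large $|Du|$, and this is where $q>0$ enters. All constants depend only on $n,p,q,L,\Omega$, never on $\varepsilon$, $T$ or $|u|_{C^0}$, because $u$ itself never appears undifferentiated.
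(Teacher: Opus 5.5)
Your interior case breaks because of the auxiliary function. The device $w=u-\psi(x)h$ works for a Neumann datum $u_\nu=\psi(x)$. Here, however, the datum $-\phi_\varepsilon v^{1-q}$ depends on $Du$, so $w$ is a function of $(x,Du)$ and $w_k$ contains the term $(1-q)\,h\,\phi_\varepsilon v^{-q-1}u_lu_{lk}$. Thus $\log|Dw|^2$ is a second-order quantity in $u$.

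On $\partial\Omega$ this is harmless, because $h=0$ there. In the interior $h\neq0$, and $\Phi_t-a^{ij}\Phi_{ij}$ contains $h\,\phi_\varepsilon v^{-q-1}u_l w_k(\partial_t-a^{ij}\partial_{ij})u_{lk}/|Dw|^2$. Evaluating $(\partial_t-a^{ij}\partial_{ij})u_{lk}$ means differentiating the equation twice, which already needs $D_x^2f$ and is not provided by \eqref{f}. It also produces $a^{ij}_{~,l}u_{ijk}$ and $a^{ij}_{~,lk}u_{ij}$, i.e.\ products of third and second derivatives and cubic expressions in $D^2u$.

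Your only good term, $a^{ij}w_{ki}w_{kj}/|Dw|^2$, is quadratic and mixes $D^2u$ with $h v^{-q}D^3u$ inside a single square. So these terms cannot be absorbed without an a priori bound on $|D^2u|$ in terms of $v$, and the factor $v^{-q}$ you appeal to does not beat a cubic term. Likewise, the ``cancellation'' of $a^{ij}_{~,k}u_{ij}$ against the linearized operator is a statement about $u_k$, not about $w_k$.

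The remedy is to encode the boundary operator in a first-order quantity. The paper uses $\Phi=\log w+\alpha h$ with $w=v^{q+1}-(q+1)\phi_\varepsilon\langle Du,Dh\rangle$, a function of $(x,Du)$ only.

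\emph{Boundary maximum.} The $D_{nn}u$ contributions to $w_\nu$ cancel exactly because $u_n=-\phi_\varepsilon v^{1-q}$. The tangential conditions $\Phi_i=0$, combined with the tangentially differentiated boundary condition, determine $u_{ni}$. Strict convexity then gives $0\ge\Phi_\nu\ge\frac{(q+1)\kappa_0}{2}-\alpha-Cv^{-q}$.

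\emph{Interior maximum.} Using $\Phi_i=0$, write $-a^{ii}|w_i/w|^2=A\,a^{ii}|w_i/w|^2-(A+1)\alpha^2a^{ii}|h_i|^2$. The large-$A$ term absorbs the errors of size $v^{p-4}\max_i|u_{1i}|^2$ coming from $a^{ij}_{~,1}u_{ij}$, leaving $0\ge cv^{p-2}-C(v^{p-2-q}+v^{-1})$.

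Note that $\alpha$ must be \emph{small} in both cases. Since the leftover errors are $o(v^{p-2})$, a small multiple of $h$ suffices in the interior. A large $\alpha_0$, or a $g(h)$ with large slope, would destroy the boundary case.

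Your boundary case also has sign errors. With $\nu$ inward one has $h_\nu=-1$, so $w=u+\phi_\varepsilon v^{1-q}h$ gives $w_\nu=-2\phi_\varepsilon v^{1-q}$; you want $u-\phi_\varepsilon v^{1-q}h$, for which $w_\nu\equiv0$. Then $D_{k\nu}w=\kappa_{kj}w_j$, so $w_kw_{k\nu}\ge\kappa_0|D'w|^2$ rather than $\le-\kappa_0|D'w|^2$. The contradiction with $\Phi_\nu\le0$ comes from $2\kappa_0-\alpha>0$, which is another reason $\alpha$ must be small.
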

\begin{proof}
By abuse of notation, we write $\phi$ in place of $\phi_{\varepsilon}$, since they enjoy similar properties. Following the ideas of \cite{MWW2018JFA} and \cite{WWX2019CPAA}, we choose auxiliary function
$$
\Phi(x,t)=\log w +\alpha h,
$$
where
$$
w=v^{q+1}-(q+1)\sum_{l=1}^n \phi u_l h_l.
$$

Suppose $\Phi(x,t)$ attains its maximum at $(x_0,t_0)\in \overline{\Omega}\times[0,T]$.

Without loss of generality, we may assume $t_0>0$ and $|Du(x_0,t_0)|$ is large enough. We further use the notation $A\sim B$ if $c\leq \frac{A}{B}\leq C$ for $|Du|$ large enough, and $A\approx B$ if $\frac{A}{B}\to 1$ as $|Du|\to \infty$. With these notation, we have
$$
|Du|\approx v,\quad w\approx v^{q+1},\quad a^{11}\sim a^{ii}\sim v^{p-2}. 
$$

\textbf{Case 1:} $x_0\in \partial\Omega$.

The proof for the boundary case follows a similar approach to Theorem 2.2 in \cite{WWX2019CPAA}, so we only sketch the argument here.

We choose coordinates in $\mathbb{R}^n$ such that the positive $x_n$-axis is the interior normal direction to $\partial \Omega$ at $x_0$. More specifically, $u_n$ denotes the unit inner normal derivative and $u_i$, $1 \leq i \leq n-1$ denote the $n-1$ tangential derivatives of $u$ on the boundary.  $D$ denotes the derivative in $\mathbb{R}^n$ and $\nabla$ denotes the tangential derivative on the boundary. We temporarily write $\nabla_i(u_n) \coloneqq u_{ni}$ for $1 \leq i \leq n-1$. Thus $|D'u|^2 = \sum_{k=1}^{n-1} u_k^2$. By the Gauss-Weingarten formula,  
$$
D_{kn}u = u_{nk} + \kappa_{kj}u_j,
$$
where $\kappa_{kj}$ is the curvature matrix of the boundary.

For $i\neq n$, from 
$$0=\frac{w\Phi_i}{1+q}=v^{q}v_{i}+\phi_{i}u_{n}+\phi u_{ni}-\phi\sum_{l=1}^{n}u_{l}h_{li}$$
and the tangential derivative of boundary condition
$$
u_{ni}=-\phi_iv^{1-q}-(1-q)\phi v^{-q}v_i,
$$
we can solve $u_{ni}$ as
$$u_{ni}=\frac{1}{1+(q-1)v^{-2q}\phi^{2}}\bigg{[}\phi(q-1)v^{-2q}(-\phi_{i}u_{n}+\phi\sum_{l=1}^{n}u_{l}h_{li})-v^{1-q}\phi_{i}\bigg{]}.$$
Hence
\begin{eqnarray}
0&\geq& \Phi_{\nu}(x_{0},t_{0})
= \frac{q+1}{w}v^{q-1}\sum_{k=1}^{n-1}u_{k}(u_{nk}+\sum_{i=1}^{n-1}u_{i}{\kappa}_{ik})+\frac{q+1}{w}[\phi_{n}u_{n}-\phi\sum_{l=1}^{n}u_{l}h_{ln}]-\alpha\notag\\
&=& \frac{q+1}{w}\cdot\frac{v^{q-1}}{1+(q-1)v^{-2q}\phi^{2}}\sum_{k=1}^{n-1}u_{k}\big{[}\phi(q-1)v^{-2q}(\phi\sum_{l=1}^{n}u_{l}h_{lk}-\phi_{k}u_{n})-v^{1-q}\phi_{k}\big{]}\notag\\
&&+\frac{q+1}{w}v^{q-1}\sum_{k,i=1}^{n-1}u_{k}u_{i}\kappa_{ik}+\frac{q+1}{w}[\phi_{n}u_{n}-\phi\sum_{l=1}^{n}u_{l}h_{ln}]-\alpha\label{bdy est}\\
&\geq&\frac{(q+1)\kappa_0}{2}-\alpha-Cv^{-q}.\notag
\end{eqnarray}
It follows that $|Du(x_0,t_0)|\leq C$ if we select $\alpha>0$ small enough.\vspace{4pt}

\textbf{Case 2:} $x_0\in\Omega$.

We choose coordinates such that $|Du|=u_1$ and the matrix $\{u_{\alpha\beta}\}_{\alpha,\beta>1}$ is diagonal at $(x_0,t_0)$. Since $\Phi$ attains its maximum at this point, we have
\begin{eqnarray*}
0&\leq& \Phi_t(x_0,t_0)=(q+1)w^{-1}\sum_{k=1}^n(v^{q-1}u_k-\phi h_k)u_{tk},\\ 
0&=&\Phi_i(x_0,t_0)=w^{-1}w_i+\alpha h_i,
\end{eqnarray*}
and
\begin{eqnarray*}
0\geq \Phi_{ii}(x_0,t_0)=w^{-1}w_{ii}-\left|\frac{w_i}{w}\right|^2+ \alpha h_{ii}.
\end{eqnarray*}
Hence
\begin{eqnarray*}
0&\geq&a^{ii}\Phi_{ii}(x_0,t_0)-\Phi_t(x_0,t_0)\\
&=&w^{-1}a^{ii}w_{ii}-a^{ii}\left|\frac{w_i}{w}\right|^2+\alpha a^{ii}h_{ii}-(q+1)w^{-1}\sum_{k=1}^n(v^{q-1}u_k-\phi h_k)u_{tk}\\
&=&w^{-1}a^{ii}w_{ii}+A a^{ii}\left|\frac{w_i}{w}\right|^2-(q+1)w^{-1}\sum_{k=1}^n(v^{q-1}u_k-\phi h_k)u_{tk}\\
&&+\alpha a^{ii}h_{ii}-(A+1)\alpha^{2}a^{ii}|h_i|^2\\[4pt]
&=& \mathrm{I}+\mathrm{II},
\end{eqnarray*}
where $\mathrm{I}$ denotes the three terms in the third line, and $\mathrm{II}$ denotes the two terms in the fourth line. The large constant $A$ is independent of $\varepsilon$, $T$, $|u|_{C^0}$, and will be chosen later.

By direct computation, we have
$$
w_i=(q+1)v^{q-1}\sum_k u_k u_{ki}-(q+1)\phi_i \sum_k u_k h_k-(q+1)\phi \sum_k u_{ki} h_k-(q+1)\phi \sum_k u_k h_{ki},
$$
and
\begin{eqnarray*}
w_{ii}&=&(q+1)(q-1)v^{q-3}\left(\sum_k u_k u_{ki}\right)^2+(q+1)v^{q-1}\sum_{k} u_{ki}^2+(q+1)v^{q-1}\sum_k u_k u_{kii}\\
&&-(q+1)\phi_{ii}\sum_k u_k h_k-2(q+1)\phi_i\sum_k u_{ki} h_k-2(q+1)\phi_{i}\sum_k u_k h_{ki}\\
&&-(q+1)\phi \sum_k u_{kii} h_k-2(q+1)\phi \sum_k u_{ki} h_{ki}-(q+1)\phi \sum_k u_k h_{kii}.
\end{eqnarray*}
Denote $M\coloneqq\max|D^2u(x_0,t_0)|$ and $M_1\coloneqq\max_i|u_{1i}(x_0,t_0)|$. Note that 
\begin{eqnarray*}
w_i&=&(q+1)\sum_k \left(v^{q-1}u_k-\phi h_k\right)u_{ki}-(q+1)\left(\phi_i h_1+\phi h_{1i}\right)u_1\\
&=&(q+1) \left(v^{q-1}u_1-\phi h_1\right)u_{1i}-(q+1)\sum_{k\neq1}\phi h_k u_{ki}-(q+1)\left(\phi_i h_1+\phi h_{1i}\right)u_1,
\end{eqnarray*}
hence 
$$|w_i|^2\geq \frac{1}{2}v^{2q}u_{1i}^2-C\sum_k u_{ki}^{2}-Cv^2,$$
and
\begin{equation}\label{w-2aiiwi2}
w^{-2}a^{ii}|w_i|^2\geq cv^{p-4}M_1^{2} -Cv^{p-2q-4}M^2-Cv^{p-2q-2}.
\end{equation}
Now compute the remaining part of $\mathrm{I}$.
\begin{eqnarray*}
&&a^{ii}w_{ii}-(q+1)\sum_k v^{q-1}\left(u_k-\phi h_k\right)u_{tk}\\
&\geq&c v^{q+p-3}M^2+(q+1)\sum_k\left(v^{q-1}u_k-\phi h_k\right)\left(a^{ij}u_{ijk}-u_{tk}\right)-Cv^{p-2}M-Cv^{p-1}\\
&\geq& c v^{q+p-3}M^2-Cv^{p-1}+(q+1)\sum_k\left(v^{q-1}u_k-\phi h_k\right)\left(-f_k-f_u u_k-a^{ij}_{~,k}u_{ij}\right),
\end{eqnarray*}
where we have used differentiating equation \eqref{app par eq}.Recall that $f_u \leq 0$; from the computation \eqref{aijkuij} and the Cauchy inequality, we further have
\begin{eqnarray*}
&&a^{ii}w_{ii}-(q+1)\sum_k v^{q-1}\left(u_k-\phi h_k\right)u_{tk}\\
&\geq& c v^{q+p-3}M^2-C\left(v^{p-1}+v^{q}\right)-Cv^q\left|a^{ij}_{~,1}u_{ij}\right|-C\sum_{k\neq1}\left|a^{ij}_{~,k}u_{ij}\right|\\
&\geq& c v^{q+p-3}M^2-C\left(v^{p-1}+v^{q}\right)-Cv^{q+p-3}M_1^2-Cv^{p-3}M_1M\\[8pt]
&\geq& c v^{q+p-3}M^2-C\left(v^{p-1}+v^{q}\right)-Cv^{q+p-3}M_1^2.\label{aiiwii}
\end{eqnarray*}
It follows that
\begin{equation}\label{w-1aiiwii}
w^{-1}a^{ii}w_{ii}-(q+1)w^{-1}\sum_k v^{q-1}\left(u_k-\phi h_k\right)u_{tk}\geq c v^{p-4}M^2-Cv^{p-4}M_1^2-C\left(v^{p-q-2}+v^{-1}\right).
\end{equation}
Recalling \eqref{w-2aiiwi2}, for $A$ sufficiently large we obtain
\begin{equation}\label{I}
    \mathrm{I}\geq -C\left(v^{p-q-2}+v^{-1}\right).
\end{equation}
Finally, for $\alpha>0$ small enough, the strict convexity of $h$ implies
\begin{equation}\label{II}
    \mathrm{II}\geq cv^{p-2}.
\end{equation}
Combining \eqref{I} and \eqref{II} yields
$$
0\geq a^{ii}\Phi_{ii}(x_0,t_0)-\Phi_t(x_0,t_0)\geq cv^{p-2}-C\left(v^{p-2-q}+v^{-1}\right),
$$
Since $q>0$ and $p>1$, we obtain $|Du(x_0,t_0)|\leq C$.

Combining with \textbf{Case 1}, we conclude
$$
\sup_{\overline{\Omega}\times[0,T]}|Du|\leq C(n,p,q,L,\Omega).
$$
\end{proof}
The proof of Theorem \ref{thm1.2} is standard.
\begin{proof}[\textbf{Proof of Theorem \ref{thm1.2}.}] 
By the crucial gradient estimate in Theorem \ref{thm3.2}, equation \eqref{app par eq} is uniformly parabolic for any fixed $T>0$. Hence long time existence and uniqueness follow from the standard parabolic theory.
\end{proof}

\section{Global H\"{o}lder regularity of $Du$}
In this section, we prove the $\varepsilon$-independent global H\"{o}lder regularity of the spatial gradient $Du$ for the solution of \eqref{app par eq} with $q=1$, i.e., the Neumann boundary condition.

We first discuss the interior regularity (these results are already known) and then turn to the boundary. The proof is inspired by \cite{JJ2012CPDE} and \cite{BKO2025CVPDE}, see also \cite{BRS2026NA, BV2022PA, CC1995, IS2013AIM, LZ2018ARMA, MS2006CPDE, R2020NA} and the references therein for related results.

\subsection{Local H\"{o}lder regularity of $Du$}~

In this subsection, we consider the local $\varepsilon$-independent H\"older regularity of the spatial gradient $Du$ for the approximate equation
\begin{equation}\label{app-int-par-eq}
u_{t}=\operatorname{div}(v^{p-2}Du)+f(x,u,Du)\quad \text{in}~\Omega\times[0,\infty).
\end{equation}
 
We now proceed with some discussion. As noted before, this equation can be studied from the perspectives of both weak solutions and viscosity solutions. From the weak viewpoint, DiBenedetto-Friedman \cite{DF1984JRAM,DF1985JRAM,D1993} established the local H\"older regularity of $Du$ under the condition $\frac{2n}{n+2}<p<\infty$ and certain structural assumptions on $f$. As they pointed out, the restriction $\frac{2n}{n+2}<p$ is used in a Moser-DeGiorgi-type iteration (which is significantly more involved due to the presence of the time variable)  to obtain the local boundedness of $Du$.

From the viscosity perspective, Imbert-Jin-Silvestre \cite{JS2017JMPA,IJS2019ANA} proved the local H\"older regularity of $Du$ for $1<p<\infty$ in the case $f\equiv0$. Subsequently, Attouchi-Ruosteenoja \cite{A2020NA,AR2020DCDS} extended this result to merely continuous and bounded $f$. Their main tool is the so-called ``improvement of flatness" method, which will also be employed in the next subsection. We also refer the reader to \cite{A2016DIE, FZ2021JDE, AS2022CVPDE} and the references therein for related results. 

In summary, we have the following conclusion:

\begin{theorem}[Theorem 1.1 in \cite{AR2020DCDS}]\label{thm-local-C1alpha}
Let $u_{\varepsilon}$ be a solution of \eqref{app-int-par-eq} and let $f$ be continuous and bounded. Then for any $ \Omega' \subset\subset \Omega $ and $ \delta > 0 $, there exists $0<\alpha<1$ such that
\begin{equation}\label{local-C1alpha-inx}
\|Du_{\varepsilon}\|_{C^{\alpha, \frac{\alpha}{2}}(\Omega' \times (\delta, T-\delta))} \leq C(\underset{\Omega\times(0,T)}{\operatorname{osc}} u_{\varepsilon} + \|f\|_{L^{\infty}(\Omega\times(0,T))}),
\end{equation}
where $ C = C(p, n, d, \delta, d') $, $ d' = \mathrm{dist}(\Omega', \partial\Omega) $ and $ d = \mathrm{diam}(\Omega) $.
\end{theorem}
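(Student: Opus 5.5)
This statement is quoted from Attouchi--Ruosteenoja \cite{AR2020DCDS}, so I do not intend to reprove it from scratch. The plan is to check that the approximate equation \eqref{app-int-par-eq} fits their framework uniformly in $\varepsilon$, and to recall the structure of their argument.

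\textbf{Step 1 (reduction to non-divergence form).} Since $u_\varepsilon$ is smooth, I would expand the divergence, writing \eqref{app-int-par-eq} as $u_t = v^{p-2}\bigl(\Delta u + (p-2)v^{-2}u_iu_ju_{ij}\bigr) + f$. This is the viscosity form $u_t - v^{\gamma}(\delta_{ij}+(p-2)v^{-2}u_iu_j)u_{ij} = f$ with $\gamma = p-2 > -1$. The matrix $\delta_{ij}+(p-2)v^{-2}u_iu_j$ has eigenvalues between $\min\{1,p-1\}$ and $\max\{1,p-1\}$, independently of $\varepsilon$. This uniform ellipticity is exactly the structure required by \cite{AR2020DCDS,IJS2019ANA}. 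The right-hand side is handled by the earlier results: $f(x,u,Du)$ is bounded and continuous for a fixed smooth solution, since $|Du|$ is bounded by Theorem \ref{thm3.2}.

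\textbf{Step 2 (scaling and improvement of flatness).} Using the intrinsic scaling $u_r(x,t) = u(rx, r^{2-\gamma\cdot}t)/r$, adapted to the gradient size, one normalizes $\operatorname{osc} u \le 1$ and makes $\|f\|_\infty$ small. The key lemma is then a dichotomy. Either $|Du|$ is bounded below on a smaller cylinder, so the equation is uniformly parabolic there and classical $C^{1,\alpha}$ theory of Krylov--Safonov/Wang type applies. Or $u$ is close, in a smaller cylinder, to an affine function $q\cdot x$ with an improved constant. Iterating the dichotomy at geometric scales gives the decay $\operatorname{osc}_{Q_{\rho^k}} (u - q_k\cdot x) \le \rho^{k(1+\alpha)}$, which yields Hölder continuity of $Du$ in space and $C^{\alpha/2}$ in time. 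All constants are uniform in $\varepsilon$ because only the ellipticity bounds enter. The resulting estimate is then made uniform over the interior subdomain: one covers $\Omega'\times(\delta,T-\delta)$ by cylinders of size comparable to $\min\{d',\sqrt\delta\}$.

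\textbf{Main obstacle.} The delicate step is the compactness lemma behind the flatness improvement. It needs equicontinuity (Lipschitz and Hölder estimates) for the perturbed equations $w_t = |Dw+q|^{\gamma}\dots$ uniformly in $q$. Its degenerate/singular case $|q|$ large versus small is where \cite{IJS2019ANA,AR2020DCDS} do the real work, via Ishii--Lions doubling. Since this is taken as given, the only genuine check here is the $\varepsilon$-uniformity of ellipticity in Step 1.
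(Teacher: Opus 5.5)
Your route is the paper's. The paper gives no argument here and simply quotes \cite{AR2020DCDS}; for $p>2$ with $f\not\equiv 0$ the relevant result is \cite{A2020NA}, and \cite{IJS2019ANA} only treats $f\equiv 0$. However, the one check you single out does not justify the citation. Those theorems are stated and proved for the exact weight $|Du|^{\gamma}$, and its homogeneity is used in every rescaling and in the equation satisfied by $u-q\cdot x$. With $v^{\gamma}=(\varepsilon^2+|Du|^2)^{\gamma/2}$, each rescaling also changes $\varepsilon$ (compare $\bar\varepsilon=s\varepsilon/K$ in the proof of Theorem \ref{thm-global-general-elliptic}). So uniform ellipticity of $\delta_{ij}+(p-2)v^{-2}u_iu_j$ does not by itself put $u_\varepsilon$ in their framework, and ``only the ellipticity bounds enter'' is not accurate.

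The clean way to make the citation literally applicable, uniformly in $\varepsilon$, is to add a variable. The function $\tilde u(x,x_{n+1},t)\coloneqq u_\varepsilon(x,t)+\varepsilon x_{n+1}$ satisfies $|D\tilde u|=v$ and has no new second derivatives. Hence it is a classical, and therefore viscosity, solution of $\tilde u_t=\operatorname{div}(|D\tilde u|^{p-2}D\tilde u)+\tilde f$ in $\Omega\times(-1,1)\times(0,T)$, with $\tilde f(x,x_{n+1},t)\coloneqq f(x,u_\varepsilon,Du_\varepsilon)$. Since $D\tilde u=(Du_\varepsilon,\varepsilon)$, the $(n+1)$-dimensional estimate for $D\tilde u$ is exactly the estimate for $Du_\varepsilon$. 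The only cost is $\operatorname{osc}\tilde u\le \operatorname{osc}u_\varepsilon+2\varepsilon$.

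Second, for $p\neq 2$ neither your Step 2 nor the cited results give the bound in the displayed form, linear in $\operatorname{osc}u_\varepsilon+\|f\|_{L^\infty}$ with $C=C(p,n,d,\delta,d')$. Dividing a solution by $K$ gives a solution only after time is rescaled by $K^{2-p}$. So normalizing $\operatorname{osc}u\le 1$ makes the cylinders, and hence the constant, depend on $\operatorname{osc}u$, and similarly on $\|f\|_{L^\infty}$.

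In fact the linear form cannot hold. The map $u\mapsto \lambda u(x,\lambda^{p-2}t)$ sends solutions with $f\equiv0$ to solutions, with $\varepsilon$ replaced by $\lambda\varepsilon$. Letting $\lambda^{p-2}\to 0$ pulls $(\delta,T-\delta)$ back to times near $0$. The estimate would then force $\sup_{\Omega'}|Du(\cdot,0)|\le C\operatorname{osc}_{\Omega}u(\cdot,0)$, which fails for data like $\eta\sin(x_1/\eta)$ with $\eta$ small.

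What is actually available is a constant that also depends on the size of $u_\varepsilon$ and on $\|f\|_{L^\infty}$. That suffices for the paper's later use, because the oscillation of $u_\varepsilon$ on bounded time windows and $|Du_\varepsilon|$ are bounded independently of $\varepsilon$ by Lemma \ref{lem3.1} and Theorem \ref{thm3.2}.
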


\subsection{Boundary regularity for elliptic problem}~

For the boundary regularity when $q=1$, we first introduce some notation.

For $ r > 0 $, we denote by $ B_r = \{ |x| < r \} $ the ball centered at $ 0 $ with radius $ r $, by $ B_r^+ = B_r \cap \{ x_n > 0 \} $ the upper half-ball, by $ B_r^- = B_r \cap \{ x_n < 0 \} $ the lower half-ball, and by $ T_r = B_r \cap \{ x_n = 0 \} $ the flat boundary of $ B_r^+ $. For $ x_0 \in \mathbb{R}^n $, we write $ B_r(x_0) = B_r + x_0 $ for the ball centered at $ x_0 $. For a domain $ \Omega \subset \mathbb{R}^n $ and a point $ x_0 \in \mathbb{R}^n $, we set $ \Omega_r = \Omega \cap B_r $, $ \partial \Omega_r = \partial \Omega \cap B_r $, and $ \Omega_r(x_0) = \Omega \cap B_r(x_0) $, $ \partial \Omega_r(x_0) = \partial \Omega \cap B_r(x_0) $. We denote by $ S(n) $ the space of real symmetric $ n \times n $ matrices and by $ I $ the identity matrix. 

Furthermore, according to the classical weak theory of $p$-Laplacian equations (see, e.g., \cite{D1983NA, L1983IMJ, T1984JDE, L1988NA}; 
see also \cite{ACF2023MA, A2026arXiv} for modern approaches to more general divergence-type operators),  we may define the following constant.

Let $\alpha_0$ be the optimal exponent such that for all sufficiently small $\varepsilon\geq0$ and for all weak solutions of 
$$
\operatorname{div}(v^{p-2}Du)=f \quad \text{in }B_1,
$$
where $f$ is bounded, we have
$$
\|u\|_{C^{1,\alpha_0}(B_{1/2})}\leq C\left(\|u\|_{L^{\infty}(B_1)}+\|f\|_{L^{\infty}(B_1)}\right).
$$
Since $u(x)=|x|^{\frac{p}{p-1}}$ is a solution of $\Delta_p u=C_{n,p}$, we know $\alpha_0\leq \frac{1}{p-1}$.

We first consider the elliptic problem in flat domains. We remark here that it is nontrivial to show that a viscosity solution to the homogeneous Neumann boundary problem is also a solution in the weak sense.

\begin{theorem}\label{thm-global-flat-elliptic}
 Let $p >1$ and let $u$ be a viscosity solution of  
$$\begin{cases} 
\operatorname{div}\left(|Du|^{p-2}Du\right)=0   &\text{in }B_1^+,\\
u_{\nu}=0 &\text{on } T_1. 
\end{cases}$$
Then $u \in C^{1,\alpha_0}(\overline{B_{1/2}^+})$, and  
$$\|u\|_{C^{1,\alpha_0}(\overline{B_{1/2}^+})} \leq C \left(\|u\|_{L^{\infty}(\overline{B_1^+})} + 1\right),$$
where $C$ depends only on $n$, $p$ and $\alpha_0$.   
\end{theorem}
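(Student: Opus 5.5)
The plan is to reduce the boundary problem to an interior problem by even reflection, as announced in the introduction. Define $\tilde u(x',x_n)=u(x',|x_n|)$ on $B_1$. If we can show that $\tilde u$ is a viscosity (equivalently, by Theorem \ref{weak to vis} and Theorem \ref{vis to weak} with $f\equiv0$, a weak) solution of $\operatorname{div}(|D\tilde u|^{p-2}D\tilde u)=0$ in all of $B_1$, then the interior estimate defining $\alpha_0$ (with $\varepsilon=0$, $f=0$) applied on $B_1$ gives
$$
\|\tilde u\|_{C^{1,\alpha_0}(B_{1/2})}\le C\|\tilde u\|_{L^\infty(B_1)}=C\|u\|_{L^\infty(B_1^+)},
$$
which restricts to the claimed estimate on $\overline{B_{1/2}^+}$. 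By reflection symmetry of the $p$-Laplacian, $\tilde u$ is a viscosity solution in $B_1^+\cup B_1^-$. So everything reduces to handling points of $T_1$.

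To handle $T_1$ I would work with weak solutions and show $\tilde u\in W^{1,p}_{\rm loc}(B_1)$ with
$$
\int_{B_1}|D\tilde u|^{p-2}D\tilde u\cdot D\psi=0
$$
for all $\psi\in C_c^\infty(B_1)$. Since $T_1$ has measure zero, it suffices to control the boundary flux. I would use inf- and sup-convolutions in the reflected domain. Let $u^\delta(x)=\sup_{y}\bigl(\tilde u(y)-|x-y|^2/(2\delta)\bigr)$ (semiconvex) and $u_\delta=\inf_y\bigl(\tilde u(y)+|x-y|^2/(2\delta)\bigr)$ (semiconcave). The point is to show that $u^\delta$ is a viscosity subsolution, and $u_\delta$ a viscosity supersolution, of the $p$-Laplace equation on a slightly smaller ball \emph{including} points of $T_1$.

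At a point $x_0$ where the supremum is attained at $y_0\notin T_1$, this is the standard translation argument. When $y_0\in T_1$, I use evenness: $\tilde u$ even in $x_n$ means $u^\delta$ is even in $x_n$ too. A test function touching $u^\delta$ from above at $x_0$ yields, after translation, a test function touching $\tilde u$ at $y_0\in T_1$. Restricting it to $\overline{B_1^+}$ and applying Definition \ref{vis-par-bdy-def-1} (elliptic version) with the Neumann condition, either the equation inequality holds, giving what we want, or $-\varphi_\nu\le0$. In the latter case the boundary alternative cannot hold strictly for a test function touching the even function from above at a symmetric point. Indeed, touching from above on both sides forces $\varphi_{x_n}(y_0)=0$, and then I would perturb by $a x_n - b x_n^2$ as in Step 1 of Lemma \ref{lem cpp} to rule out the pure boundary alternative. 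Lemma \ref{par-vis-bdy-def-2} lets me assume $D\varphi(y_0)\neq0$ for $1<p<2$.

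Once $u^\delta$ is a semiconvex viscosity subsolution in $B_{1-\eta}$, it is twice differentiable a.e. with a nonnegative singular part of its distributional Hessian. Hence it is a weak subsolution, the singular part only helping the inequality; alternatively one can cite Theorem \ref{vis to weak} directly on the whole ball. Similarly $u_\delta$ is a weak supersolution. Uniform Lipschitz bounds on $u^\delta,u_\delta$ give $W^{1,p}$ bounds, and Caccioppoli plus monotonicity of $\xi\mapsto|\xi|^{p-2}\xi$ give $D u^\delta\to D\tilde u$ in $L^p_{\rm loc}$ as $\delta\to0$. So $\tilde u$ is both a weak sub- and supersolution, hence a weak solution, in $B_{1-\eta}$ for every $\eta$.

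The main obstacle is the boundary step: proving that $u^\delta$ remains a subsolution across $T_1$ when the maximizer lies on $T_1$. Here the viscosity Neumann condition only gives a disjunction, and the test function in the reflected picture is not even. If the symmetric-perturbation argument fails, my fallback is to symmetrize the test function, $\tfrac12(\varphi(x',x_n)+\varphi(x',-x_n))$, which still touches the even function from above and has zero normal derivative. This forces the equation alternative, at the cost of controlling the Hessian change via the $x_n\mapsto -x_n$ symmetry of the $p$-Laplacian.
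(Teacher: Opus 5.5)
There is a genuine gap, and it sits exactly at the step you flag as the main obstacle.

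\textbf{Why your handling of $T_1$ fails.} At a contact point $y_0\in T_1$ you already know that $\varphi_{x_n}(y_0)=0$. This comes from the strong form of the Neumann condition applied to $\varphi$ and to $\varphi(x',-x_n)$. So the boundary alternative $-\varphi_\nu\le 0$ holds, and it carries no information.

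No perturbation that keeps the contact at $y_0$ can remove it. If $\tilde\varphi\ge\varphi$ on $\overline{B_1^+}$ near $y_0$ with equality at $y_0$, then $\tilde\varphi_{x_n}(y_0)\ge 0$. Concretely, $\varphi+ax_n-bx_n^2$ stays above $u$ only for $a>0$, and then $-\tilde\varphi_\nu(y_0)=-a<0$.

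The device in Step 1 of Lemma~\ref{lem cpp} runs the other way. It destroys the \emph{equation} alternative when $-\varphi_\nu>0$ strictly; that is how one proves the strong Neumann condition. It can never destroy the boundary alternative.

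Your fallback has the same defect. $\Phi=\tfrac12(\varphi(x',x_n)+\varphi(x',-x_n))$ has $\Phi_{x_n}\equiv 0$ on $T_1$, which \emph{satisfies} the boundary alternative rather than excluding it, so nothing is forced. You also never exclude maximizers $y_0\in T_1$ for $x_0\notin T_1$. As a result, the subsolution property of $u^\delta$ is left unproved even at points off $T_1$.

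\textbf{What is missing: a mechanism that moves the contact point off $T_1$.} The paper builds this into the convolution. For $x_0\notin T_1$, a maximizer $y_0\in T_1$ is impossible: $y\mapsto u(y_0)+\frac{|x_0-y|^q}{q\varepsilon^{q-1}}-\frac{|x_0-y_0|^q}{q\varepsilon^{q-1}}$ touches the reflected $u$ from above at $y_0$, yet has nonzero $x_n$-derivative there. Hence $u^\varepsilon$ is a subsolution on $B_{1-r(\varepsilon)}\setminus T_1$. Nothing is ever checked on $T_1$: it is Lebesgue-null, and semiconvexity turns the a.e.\ inequality into the weak one.

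For $1<p<2$, this last step is why the paper takes $q=\max\{p/(p-1),2\}$ rather than a quadratic kernel. That choice gives $D^2u^\varepsilon\ge -C_\varepsilon|Du^\varepsilon|^{2-p}I$, which offsets $|Du^\varepsilon|^{p-2}$ near $\{Du^\varepsilon=0\}$. The bound $D^2u^\delta\ge-\delta^{-1}I$ does not, so ``the singular part only helps'' is not sufficient there.

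\textbf{A direct repair of your fallback.} Alternatively, you can fix the symmetrization with a tilt.
\begin{enumerate}
\item Make the contact strict and replace $\Phi$ by $\Phi-\eta x_n$, whose $x_n$-derivative equals $-\eta<0$ on all of $T_1$.
\item By the strong Neumann condition, the maximum of $u-(\Phi-\eta x_n)$ over $\overline{B_r^+(y_0)}$ cannot lie on $T_1$. It therefore occurs at interior points $y_\eta\to y_0$, where the interior inequality applies.
\item Let $\eta\to 0$. Using $D\varphi(y_0)\perp e_n$ and the invariance of $\Delta_p$ under $x_n\mapsto -x_n$, you get $-\Delta_p\varphi(y_0)\le 0$.
\end{enumerate}
With that fix, $\tilde u$ is a viscosity solution in all of $B_1$. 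Theorem~\ref{vis to weak} then applies to it directly, and the convolutions become unnecessary.
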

\begin{proof}
\textbf{Step 1.} Reflect $u$ over $T_1$ and denote the reflected function still by $u$:
$$
u(x)=
\begin{cases}
u(x) & \text{if } x_n \ge 0,\\[2pt]
u(x',-x_n) & \text{if } x_n < 0,
\end{cases}
$$
where $x=(x',x_n)$. It is then easy to verify that
$$
\Delta_p u = 0 \quad \text{in } B_1^+ \cup B_1^- \text{ in the viscosity sense}.
$$

Define the sup- and inf-convolutions, which are slightly different from the usual ones:
\begin{align}
u^{\varepsilon}(x) &\coloneqq \sup_{y\in \overline{B_{1}}}\left\{ u(y)-\frac{|x-y|^q}{q\varepsilon^{q-1}} \right\},\label{sup-convolution}\\
u_{\varepsilon}(x) &\coloneqq \inf_{y\in \overline{B_{1}}}\left\{ u(y)+\frac{|x-y|^q}{q\varepsilon^{q-1}} \right\},\label{inf-convolution}
\end{align}
where $q = \max\{p/(p-1),2\}$.

The above sup- and inf-convolutions are well studied; see e.g.,  \cite{CC1995, MS2006CPDE, APR2017JMPA}. They satisfy the following properties:
\begin{enumerate}
\item There exists $r(\varepsilon) > 0$ such that
$$
u^\varepsilon(x) = \sup_{y \in B_{r(\varepsilon)}(x)\cap B_1} \left( u(y) - \frac{|x-y|^q}{q\varepsilon^{q-1}} \right).
$$
Moreover, $r(\varepsilon) \to 0$ as $\varepsilon \to 0$.
\item $u^\varepsilon \in C(B_{1-r(\varepsilon)})$ and is locally Lipschitz in $B_{1-r(\varepsilon)}$.
\item The sequence $(u^\varepsilon)$ is decreasing and $u^\varepsilon \to u$ pointwise in $B_1$.
\item For any $x_0 \in B_{1-r(\varepsilon)}$, there exists a concave paraboloid touching $u^\varepsilon$ from below. Hence $u^\varepsilon$ is $C^{1,1}$ from below.
\item Denote by $Y^\varepsilon(x)$ the set of points $y$ such that $u^{\varepsilon}(x)=u(y)-\frac{|x-y|^q}{q\varepsilon^{q-1}}$. The function $x \mapsto \max_{y \in Y^\varepsilon(x)} |y - x|$ is upper semicontinuous.
\item If the gradient $Du^\varepsilon(x)$ exists, then for every $y \in Y^\varepsilon(x)$ we have
$$
\left( \frac{|x - y|}{\varepsilon} \right)^{q-1} \le |Du^\varepsilon(x)|.
$$
\end{enumerate}

\textbf{Step 2.} In this step we claim that if $\varphi \in C^2(B_1)$ touches $u$ from above at some $x_0 \in T_1$, then $\varphi_\nu(x_0)=0$.

Indeed, by definition we have $\varphi_\nu(x_0) \ge 0$. Conversely, consider $\tilde{\varphi}(x) \coloneqq \varphi(x',-x_n)$. This function also touches $u$ from above at $x_0$, so $\tilde{\varphi}_\nu(x_0) \ge 0$. But $\tilde{\varphi}_\nu(x_0) = -\varphi_\nu(x_0)$, which gives $\varphi_\nu(x_0) \le 0$. Hence $\varphi_\nu(x_0)=0$.

\textbf{Step 3.} We now prove that for any $x_0 \in B_{1-r(\varepsilon)}^+ \cup B_{1-r(\varepsilon)}^-$ and any $y_0$ satisfying
$$
u^{\varepsilon}(x_0) = u(y_0) - \frac{|x_0-y_0|^q}{q\varepsilon^{q-1}},
$$
we necessarily have $y_0 \notin T_1$.

Suppose, for contradiction, that $y_0 \in T_1$. Then for any $x_0$ as above, define
$$
v(y) \coloneqq u(y_0) + \frac{|x_0-y|^q}{q\varepsilon^{q-1}} - \frac{|x_0-y_0|^q}{q\varepsilon^{q-1}}.
$$
One checks that $v(y_0)=u(y_0)$ and $v(y) \ge u(y)$ for all $y \in B_{1-r(\varepsilon)}$, i.e., $v$ touches $u$ from above at $y_0$. By \textbf{Step 2}, we must have $v_\nu(y_0)=0$. However, a direct computation gives
$$
v_\nu(y_0) = \varepsilon^{1-q} |x_0-y_0|^{q-2} \langle y_0 - x_0, \nu \rangle,
$$
where $\nu$ is the inward unit normal to $T_1$. Since $x_0 \notin T_1$ and $y_0 \in T_1$, we have $\langle y_0 - x_0, \nu \rangle \neq 0$, and therefore $v_\nu(y_0) \neq 0$, a contradiction. Hence $y_0 \notin T_1$.

\textbf{Step 4.} In this step we prove that $-\Delta_p u^\varepsilon \le 0$ in $B_{1-r(\varepsilon)}^+ \cup B_{1-r(\varepsilon)}^-$ in the viscosity sense.

Let $x_0 \in B_{1-r(\varepsilon)}^+ \cup B_{1-r(\varepsilon)}^-$ and assume that $u^\varepsilon$ is not constant near $x_0$. Suppose $\varphi \in C^2(B_1)$ touches $u^\varepsilon$ from above at $x_0$ and $D\varphi(x_0) \neq 0$. By the properties of the sup-convolution, there exists $y_0 \in B_{r(\varepsilon)}(x_0)$ such that
$$
u^\varepsilon(x_0) = u(y_0) - \frac{|x_0-y_0|^q}{q\varepsilon^{q-1}}.
$$
Moreover, by \textbf{Step 3} we have $y_0 \notin T_1$. Therefore, 
$$
u^{\varepsilon}(x)\geq u(x+y_0-x_0)-\frac{|x_0-y_0|^q}{q\varepsilon^{q-1}}
$$
in a neighborhood of $x_0$. It follows that $v(x)\coloneqq\varphi(x+x_0-y_0)+\frac{|x_0-y_0|^q}{q\varepsilon^{q-1}}$ touches $u(x)$ from above at $y_0$, hence $-\Delta_p \varphi(x_0)=-\Delta_p v(y_0)\leq0$.

\textbf{Step 5.} We now conclude the $C^{1,\alpha_0}$ regularity for $p \ge 2$.

By property (4) in \textbf{Step 1}, $u^{\varepsilon}$ is semi-convex; therefore it is twice differentiable a.e. in $B_{1-r(\varepsilon)}^+ \cup B_{1-r(\varepsilon)}^-$. Consequently, $-\Delta_p u^{\varepsilon}(x) \le 0$ holds pointwise a.e. in $B_{1-r(\varepsilon)}$. Since $u_{\varepsilon}$ is Lipschitz, it follows that $-\Delta_p u^{\varepsilon} \le 0$ in the weak sense in $B_{1-r(\varepsilon)}$, and hence also in the viscosity sense. Taking the limit, we obtain from Definition \ref{vis-ell-def-2} that $-\Delta_p u \le 0$ in $B_{1}$ in the viscosity sense. Applying the same argument to $u_{\varepsilon}$ yields $\Delta_p u = 0$ in $B_{1}$. Therefore, by standard regularity theory for weak solutions or viscosity solutions, we conclude $u \in C^{1,\alpha_0}(\overline{B_{1/2}^+})$.

\textbf{Step 6.} In this step, we conclude the $C^{1,\alpha_0}$ regularity for $1 < p < 2$.

As in \textbf{Step 5}, we have $-\Delta_p u^{\varepsilon}(x) \le 0$ pointwise a.e. in $B_{1-r(\varepsilon)} \setminus \{Du^{\varepsilon} = 0\}$. Moreover, for any $\delta > 0$ and any nonnegative $\psi \in C_c^{\infty}(B_{1-r(\varepsilon)})$,
\begin{eqnarray}\label{delta-ineq-1<p<2}
\int_{B_{1}} \left(|Du^{\varepsilon}|^2 + \delta^2\right)^{\frac{p-2}{2}} Du^{\varepsilon} \cdot D\psi \, dx
\le \int_{B_1} -\operatorname{div}\left(\left(|Du^{\varepsilon}|^2 + \delta^2\right)^{\frac{p-2}{2}} Du^{\varepsilon}\right) \psi \, dx.
\end{eqnarray}

Clearly,
$$
\lim_{\delta \to 0} \int_{B_{1}} \left(|Du^{\varepsilon}|^2 + \delta^2\right)^{\frac{p-2}{2}} Du^{\varepsilon} \cdot D\psi \, dx
= \int_{B_1} |Du^{\varepsilon}|^{p-2} Du^{\varepsilon} \cdot D\psi \, dx,
$$
so it suffices to prove
\begin{eqnarray}\label{delta-limsup-eq-1<p<2}
\limsup_{\delta \to 0} \int_{B_1} -\operatorname{div}\left(\left(|Du^{\varepsilon}|^2 + \delta^2\right)^{\frac{p-2}{2}} Du^{\varepsilon}\right) \psi \, dx \le 0.
\end{eqnarray}

Consider a point $x_0$ where both $Du^\varepsilon(x_0)$ and $D^2u^\varepsilon(x_0)$ exist. By property (6), we have $|y - x_0| \le |Du^\varepsilon(x_0)|^{\frac{1}{q-1}} \varepsilon$ for every $y \in Y^\varepsilon(x_0)$. Moreover, by the upper semicontinuity result in property (5), for every $n$ there exists a small radius $\rho_n$ such that for all $x \in B_{\rho_n}(x_0)$ and all $y \in Y^\varepsilon(x)$,
$$
|y - x| \le |Du^\varepsilon(x_0)|^{\frac{1}{q-1}} \varepsilon + \frac{1}{n} =: r_n.
$$
This implies that for every $x \in B_{\rho_n}(x_0)$,
$$
u^\varepsilon(x) = \sup_{y \in B_{r_n}(x)} \left( u(y) - \frac{|x - y|^q}{q\varepsilon^{q-1}} \right).
$$
For each $y \in B_{r_n}(x)$, the function $\varphi_y(x) = u(y) - \frac{|x - y|^q}{q\varepsilon^{q-1}}$ is smooth and satisfies
$$
D^2\varphi_y(x) \ge -\frac{q - 1}{\varepsilon^{q-1}} r_n^{q-2} I.
$$
Since $u^\varepsilon$ is the supremum of the family $\{\varphi_y\}_{y \in B_{r_n}(x)}$, we conclude that $u^\varepsilon$ is semi-convex and
$$
D^2u^\varepsilon(x) \ge -\frac{q - 1}{\varepsilon^{q-1}} r_n^{q-2} I
$$
a.e. in $B_{\rho_n}(x_0)$. Letting $n \to \infty$ yields the estimate
$$
D^2u^\varepsilon(x_0) \ge -\frac{q - 1}{\varepsilon} |Du^\varepsilon(x_0)|^{\frac{q-2}{q-1}} I.
$$
It follows that $D^2u^\varepsilon(x) \ge 0$ a.e. on the set $\{Du^{\varepsilon} = 0\}$, and that
$$
-\operatorname{div}\left(\left(|Du^{\varepsilon}|^2 + \delta^2\right)^{\frac{p-2}{2}} Du^{\varepsilon}\right) \le C_{\varepsilon}
$$
a.e. on $\{Du^{\varepsilon} \neq 0\}$ for some constant $C_{\varepsilon}$ independent of $\delta$.

Denote
$$
N^{\varepsilon} \coloneqq \{ x \in B_{1-r(\varepsilon)} : Du^{\varepsilon}(x) \neq 0,\; D^2u^{\varepsilon}(x) \text{ exists} \}.
$$
By Fatou's lemma,
\begin{eqnarray*}
&&\limsup_{\delta \to 0} \int_{B_1} -\operatorname{div}\left(\left(|Du^{\varepsilon}|^2 + \delta^2\right)^{\frac{p-2}{2}} Du^{\varepsilon}\right) \psi \, dx \\
&\le& \int_{B_1} \limsup_{\delta \to 0} \left( -\operatorname{div}\left(\left(|Du^{\varepsilon}|^2 + \delta^2\right)^{\frac{p-2}{2}} Du^{\varepsilon}\right) \psi \right) dx \\
&=& \int_{N^{\varepsilon}} \limsup_{\delta \to 0} \left( -\operatorname{div}\left(\left(|Du^{\varepsilon}|^2 + \delta^2\right)^{\frac{p-2}{2}} Du^{\varepsilon}\right) \psi \right) dx \\
&=& \int_{N^{\varepsilon}} -\Delta_p u^{\varepsilon} \cdot \psi \, dx \le 0.
\end{eqnarray*}
Letting $\delta \to 0$, we obtain
$$
\int_{B_1} |Du^{\varepsilon}|^{p-2} Du^{\varepsilon} \cdot D\psi \, dx \le 0,
$$
hence $-\Delta_p u^{\varepsilon} \le 0$ in the weak sense in $B_{1-r(\varepsilon)}$. The same argument applied to $u_{\varepsilon}$ yields $\Delta_p u = 0$ in $B_{1}$, and therefore $u \in C^{1,\alpha_0}(\overline{B_{1/2}^+})$.
\end{proof}

We now consider the general case:
\begin{theorem}\label{thm-global-general-elliptic}
 Let $\Omega$ be a $C^1$ domain with $0 \in \partial \Omega_1$, $\varepsilon\geq0$, $p >1$, and $u$ be a solution of  
$$\begin{cases} 
\operatorname{div}(v^{p-2}Du) = f & \text{in } \Omega_1, \\ 
u_{\nu} = g & \text{on } \partial \Omega_1, 
\end{cases}$$
where $f \in C(\overline{\Omega})$, and $g\in C^{\alpha}(\partial\Omega)$ for some $\alpha\in(0,\alpha_0)$. Then $u \in C^{1,\alpha}(\overline{\Omega_{1/2}})$ and  
$$\|u\|_{C^{1,\alpha}(\overline{\Omega_{1/2}})} \leq C \left(\|u\|_{L^{\infty}(\overline{\Omega_1})} + \|f\|_{L^{\infty}(\overline{\Omega_1})}^{\frac{1}{p-1}} + \|g\|_{C^{\alpha}(\partial \Omega_1)}\right),$$
where $C$ depends only on $n, p, \alpha$, $\|g\|_{C^{\alpha}(\partial \Omega_1)}$ and the $C^1$ modulus of $\partial \Omega_1$.   
\end{theorem}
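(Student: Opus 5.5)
We prove Theorem~\ref{thm-global-general-elliptic} by the standard compactness plus ``improvement of flatness'' scheme, with Theorem~\ref{thm-global-flat-elliptic} supplying the regularity of the limiting problems.

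\textbf{Step 1: normalization.} Replace $u$ by $(u-u(0))/K$ with
\[
K=\|u\|_{L^\infty}+\|f\|_{L^\infty}^{1/(p-1)}+\|g\|_{C^\alpha}.
\]
After this scaling the equation becomes $\operatorname{div}(v_{\varepsilon/K}^{p-2}Du)=f/K^{p-1}$, which has the same structure. We may therefore assume $\|u\|_\infty\le1$, $\|f\|_\infty\le\delta$ and $[g]_{C^\alpha}\le\delta$. We also assume $\partial\Omega_1$ is a graph $x_n=\psi(x')$ with $\|D\psi\|_{L^\infty}\le\delta$, which can be arranged by a preliminary dilation using the $C^1$ modulus.

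Subtracting the affine function $g(0)\,x_n$ reduces to $g(0)=0$. This changes the operator into $\operatorname{div}(v^{p-2}(Du+\xi))=f$ with $\xi$ a fixed vector. So throughout we work with the family
\[
\operatorname{div}\big((\varepsilon^2+|Du+\xi|^2)^{\frac{p-2}{2}}(Du+\xi)\big)=f,\qquad \xi\in\mathbb{R}^n,
\]
and all estimates must be uniform in $\xi$ and $\varepsilon$.

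\textbf{Step 2: approximation lemma.} For every $\eta>0$ there is $\delta>0$ such that, if $u$ solves the shifted problem above with $\|u\|_\infty\le1$, $\|f\|_\infty\le\delta$, $\|g\|_{C^\alpha}\le\delta$ and flatness $\le\delta$, then there exists $h$ with $|u-h|\le\eta$ in $\Omega_{3/4}$. Here $h$ solves a homogeneous limit problem with zero Neumann data on $T_{3/4}$.

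The proof is by contradiction. Uniform boundary H\"older estimates for oblique problems (Krylov--Safonov type, as in \cite{LZ2018ARMA, BKO2025CVPDE}) give equicontinuity. Stability then shows the limit solves one of the following:
\begin{itemize}
\item the $p$-Laplacian shifted by $\xi_\infty$, if $\xi_k$ stays bounded;
\item a uniformly elliptic linear equation with constant coefficients, if $|\xi_k|\to\infty$ after dividing by $|\xi_k|^{p-2}$ (or if $\varepsilon$ dominates);
\end{itemize}
in each case with $h_\nu=0$ on $T_{3/4}$. Theorem~\ref{thm-global-flat-elliptic} yields $C^{1,\alpha_0}$ bounds for the first limit, since $h+\xi_\infty\cdot x$ is $p$-harmonic with a constant Neumann value and reflection still applies after subtracting the normal part. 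The linear case is covered by the Milakis--Silvestre theory \cite{MS2006CPDE}.

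\textbf{Step 3: iteration.} Since $\alpha<\alpha_0$, we choose $\rho$ so that $C\rho^{1+\alpha_0}\le\frac12\rho^{1+\alpha}$, and then choose $\eta$. This gives an affine $\ell_1=a+b\cdot x$ with $|b|\le C$, $b\cdot\nu(0)=0$ (up to the Neumann datum), and $|u-\ell_1|\le\rho^{1+\alpha}$ in $\Omega_\rho$.

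Rescaling $u_1(x)=(u-\ell_1)(\rho x)/\rho^{1+\alpha}$ gives again a solution of the shifted family. The new data are:
\begin{itemize}
\item slope $\xi+b\rho^{-\alpha}$, which is allowed to be arbitrary;
\item right-hand side $\rho^{1-\alpha(p-1)}\cdots f$ with a weight involving $|\xi|^{p-2}$;
\item boundary datum $\rho^{-\alpha}(g(\rho x)-b\cdot\nu)$ with small $C^\alpha$ seminorm;
\item a flatter boundary.
\end{itemize}
Induction produces $\ell_k$ converging geometrically, which gives $C^{1,\alpha}$ at $0$. Running this at every boundary point and combining with the interior estimate Theorem~\ref{thm-local-C1alpha} yields the global estimate.

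\textbf{Main obstacle.} The hardest part is keeping the right-hand side small under rescaling, uniformly in the slope. For $p\ge2$ one uses $\alpha\le 1/(p-1)$ so that $\rho^{1-\alpha(p-1)}\le1$. For $1<p<2$ the factor $(|\xi|^2+\varepsilon^2)^{(2-p)/2}$ blows up when the accumulated slope is large.

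To handle this, I would add the extra hypothesis from the introduction: require at each step that $\|f\|_\infty/(|\xi|^2+\varepsilon^2)^{(p-2)/2}$ be small, and split into two regimes. If the slope stays below a threshold, the iteration continues as above. Once $|\xi|$ exceeds a large threshold, the equation is uniformly elliptic in a neighborhood, so we stop iterating and invoke linear oblique $C^{1,\alpha}$ theory directly. This case split is the ``bad range'' of slopes mentioned in the introduction. A secondary difficulty is the uniform boundary H\"older compactness needed in Step 2 for the degenerate/singular operator; for this I would rely on the equivalence with weak solutions together with the reflection argument from Theorem~\ref{thm-global-flat-elliptic}.
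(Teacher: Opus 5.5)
Your scheme has the right overall shape, but the bounded-slope branch of your approximation lemma fails as stated, and this is where the paper's proof needs an extra idea. First, the rescaled slope is not $\xi+b\rho^{-\alpha}$. From $Du(y)=\rho^{\alpha}Du_1(y/\rho)+b$, the new slope is $\rho^{-\alpha}(\xi+b)$, and $\varepsilon$ becomes $\rho^{-\alpha}\varepsilon$. Your initial $\xi$ is a multiple of $g(0)e_n$, hence purely normal, and every correction $b$ is tangential. So the normal component of the slope at step $k$ is $\rho^{-k\alpha}\xi_n$: it grows geometrically and passes through every intermediate value.

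In a contradiction sequence with bounded slopes you therefore get, in general, $\xi_\infty\cdot e_n\neq0$. Then $\tilde h=h+\xi_\infty\cdot x$ is $p$-harmonic with Neumann datum $\tilde h_\nu=\xi_\infty\cdot e_n\neq0$ on $T_{3/4}$. Theorem \ref{thm-global-flat-elliptic} covers only zero Neumann data, and reflection genuinely fails here. The even reflection of $\tilde h$ has a corner along $T_1$. ``Subtracting the normal part'' just gives back $w$ solving $\operatorname{div}\bigl((\varepsilon^2+|Dw+ae_n|^2)^{\frac{p-2}{2}}(Dw+ae_n)\bigr)=0$ with $a\neq0$, and this equation is not invariant under $x_n\mapsto-x_n$. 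So you have no $C^{1,\alpha_0}$ bound for the limit. Improvement of flatness is then unavailable at the scales where the normal slope is neither small nor large. That intermediate range is the ``bad range'' of the introduction, not the large-slope regime you point to.

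The paper handles this with two separate compactness lemmas. The first works under $|\langle q,e_n\rangle|\le\varepsilon_0$: the limit slope $q_0$ is tangential, so $u_0-q_0\cdot x$ solves the zero-Neumann problem and Theorem \ref{thm-global-flat-elliptic} applies. The second works under $|q|^2+\varepsilon^2\ge\varepsilon_0^{-1}$ together with $\|f\|_{L^\infty}/(|q|^2+\varepsilon^2)^{\frac{p-2}{2}}\le\varepsilon_0$: the limit is $\Delta u+(p-2)E_iE_ju_{ij}=0$ with zero Neumann data, handled by Milakis--Silvestre. Since $\langle q_k,e_n\rangle=r_0^{-k\alpha}\langle\bar q,e_n\rangle$ evolves deterministically, $s/K$ is chosen so that it lands in $[\varepsilon_0/2,\varepsilon_0]$ at some step $k_0$. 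From then on the iteration rescales by $r_0^{k+A}$ instead of $r_0^{k}$, with $r_0^A$ small relative to $\varepsilon_0$. The normal slope thus jumps over the intermediate range at once, and the large-slope lemma applies afterwards.

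Two further remarks.
\begin{itemize}
\item Stopping the iteration at large slope and invoking oblique theory directly would need $|Dv|\ll|\xi|$ to make the operator nondegenerate. You have no such gradient bound at that stage, since you only know $|v|\le1$. The paper instead keeps iterating through the linear-limit compactness lemma.
\item The ratio $\|f\|_{L^\infty}/(|\xi|^2+\varepsilon^2)^{\frac{p-2}{2}}$ is not an obstruction for $1<p<2$. Under the correct rescaling it equals $\rho^{k(1-\alpha)}\|\bar f\|_{L^\infty}(|\bar\xi+b_k|^2+\bar\varepsilon^2)^{\frac{2-p}{2}}$, which stays small because $|\bar\xi+b_k|$ is bounded.
\end{itemize}
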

\begin{proof}
Rewrite $\operatorname{div}(v^{p-2}Du)$ as
$$
v^{p-2}\left(\Delta u+(p-2)\frac{D^2uDu\otimes Du}{v^2}\right)\coloneqq v^{p-2}G_{\varepsilon}\left(Du,D^2u\right).
$$
It is easy to verify that $G_{\varepsilon}$ is uniformly elliptic with $\varepsilon$-independent elliptic coefficients $(\lambda,\Lambda)$, where $\lambda=\min\{1,p-1\}$, $\Lambda=\max\{1, p-1\}$.

We only prove for $1<p<2$. For the case $p>2$, the condition 
$$
\frac{\|f\|_{L^{\infty}(\Omega_1)}}{\left(|q|^{2}+|\varepsilon|^{2}\right)^{\frac{p-2}{2}}}\leq \varepsilon_0
$$ 
in \textbf{Step 2} and \textbf{Step 3} can be omitted, and the proof proceeds analogously.

\textbf{Step 1.} 
We first define $u_1(x)\coloneqq u(x)-g(0)x_n-u(0)$ to flatten the boundary condition at $0$. Set $q_1\coloneqq-g(0)e_n$, then $u_1$ satisfies
$$
\begin{cases}
\left(|Du_1-q_1|^2+\varepsilon^2\right)^{\frac{p-2}{2}}G_{\varepsilon}\left(Du_1-q_1,D^2u_1\right)=f_1  &\text{in } \Omega_1,\\[2pt]
\nu\cdot Du_1=g_1 &\text{on } \partial\Omega_1,
\end{cases}
$$
where $f_1=f$, $g_1(x)=g(x)-g(0)\nu_n$, and satisfies $u_1(0)=g_1(0)=0$.

Next, by scaling
$$
\bar{u}(x)\coloneqq\frac{u_1(sx)}{K}
$$
with $s>0$ small and $K>0$ large, we find that $\bar{u}$ satisfies
$$
\begin{cases}
\left(|D\bar u-\bar q|^2+\bar\varepsilon^2\right)^{\frac{p-2}{2}}G_{\bar\varepsilon}(D\bar u-\bar q ,D^2\bar u)=\bar f  & \text{in } (\frac{\Omega}{s})\cap B_1,\\[4pt]
\nu\cdot D\bar u=\bar g  & \text{on } \partial(\frac{\Omega}{s})\cap B_1,
\end{cases}
$$
where $\bar q\coloneqq\frac{sq_1}{K}$, $\bar \varepsilon\coloneqq\frac{s\varepsilon}{K}$, $\bar f(x)=\frac{s^p}{K^{p-1}}f(x)$, and $\bar g(x)=\frac{s}{K}g_1(sx)$.

After choosing suitable coordinates, we may assume that $\Omega_1$ is given by the graph of a $C^1$ function $\varphi_\Omega$ with $\varphi_\Omega(0) = 0$ and $D\varphi_\Omega(0) = 0$. Since $\varphi_\Omega \in C^1$, for any given $\varepsilon_0>0$, there exists a small $s > 0$ depending on the $C^1$ modulus of $\varphi_\Omega$ such that $|D\varphi_\Omega(x)| \leq \varepsilon_0$ for all $|x| \leq s$. Define $\tilde{\varphi}(x) \coloneqq \varphi_{\frac{1}{s}\Omega}(x) = \frac{\varphi(sx)}{s}$, then $\|\tilde{\varphi}\|_{C^1} \leq \varepsilon_0$.

\textbf{Step 2.} Suppose $\varepsilon>0$ and let $u$ be a solution of 
$$\begin{cases}
\left(|Du-q|^2+\varepsilon^2\right)^{\frac{p-2}{2}} G_{\varepsilon}\left(Du-q,D^2u\right) = f &\text{ in } \Omega_1, \\[2pt]
\nu \cdot Du = g &\text{ on } \partial \Omega_1,
\end{cases}$$
with $q \in \mathbb{R}^n$ and $\|u\|_{C^0} \leq 1$. In this step, we claim that for any given $\delta > 0$, there exists $\varepsilon_0 = \varepsilon_0(\delta, n, p) > 0$ such that if
$$\|f\|_{L^\infty(\Omega_1)} , \|g\|_{L^\infty(T_1)} ,  \|\varphi\|_{C^1(T_1)} \leq \varepsilon_0,\quad |q|^2+|\varepsilon|^2\geq \varepsilon_0^{-1},\quad \text{and} \quad \frac{\|f\|_{L^{\infty}(\Omega_1)}}{\left(|q|^{2}+|\varepsilon|^{2}\right)^{\frac{p-2}{2}}}\leq \varepsilon_0,$$
then there exists a function $h \in C^{1, \alpha_0}(\overline{B_{3/4}^+})$ such that $\|h\|_{C^{1,\alpha_0}(\overline{B_{3/4}^+})}\leq C $ (independent of $q$ and $\varepsilon$), $\langle Dh(0),e_n\rangle=0$, and $\|u - h\|_{L^\infty(\Omega_{1/2})} \leq \delta$.

The proof of \textbf{Step 2} is based on a compact argument, and we can also deal with the singular case $1<p<2$.

Assume the assertion in \textbf{Step 2} is false. Then there exists a $\delta>0$ and sequences $u_k$, $f_k$, $g_k$, $\Omega_k$, $q_k$, $\varepsilon_k$ such that $u_k$ satisfies $|u_k|\leq 1$ and
$$\begin{cases}
\left(|Du_k-q_k|^2+\varepsilon_k^2\right)^{\frac{p-2}{2}} G_{\varepsilon_k}(Du_k-q_k,D^2u_k) = f_k & \text{in } (\Omega_k)_1, \\[4pt]
\nu \cdot Du_k = g_k & \text{on } \partial (\Omega_k)_1,
\end{cases}$$
with $\|f_k\|_{L^\infty}, \|g_k\|_{L^\infty}, \|\varphi\|_{C^1}\leq \frac{1}{k}$, $|q_k|^2+\varepsilon_k^2\geq k$, $\frac{\|f_k\|_{L^{\infty}}}{(|q_k|^2+\varepsilon_k^2)^{\frac{p-2}{2}}}\leq \frac{1}{k}$,  but $\|u_k-h_k\|_{L^{\infty}((\Omega_k)_{1/2})}\geq \delta$ for $h_k$ satisfying the corresponding conditions. We now show that $u_k$ satisfies the conditions in Theorem 3.1 of \cite{BKO2025CVPDE}, that is, let
$$
P_{\Lambda, \lambda}^+ (Du, D^2u) = \Lambda \, \operatorname{tr} D^2 u^+ - \lambda \, \operatorname{tr} D^2 u^- + \Lambda |Du|,
$$
$$
P_{\Lambda, \lambda}^- (Du, D^2u) = \lambda \, \operatorname{tr} D^2 u^+ - \Lambda \, \operatorname{tr} D^2 u^- - \Lambda |Du|.
$$
We aim to show that there exists a constant $C_0$ such that
$$
P^{-}(D^2 u_k, Du_k) \leq C_0, \quad P^{+}(D^2 u_k, Du_k) \geq -C_0 \quad \text{in } \{ |Du_k - q_k| > 1 \} \cap (\Omega_k)_1.
$$
We only prove for $P^+$. For any $\varphi$ that touches $u_k$ from below at $x_0\in \{ |Du_k - q_k| > 1 \} \cap (\Omega_k)_1$, we have 
\begin{eqnarray*}
P^+(D\varphi,D^2\varphi)&\geq& G_{\varepsilon_k}(D\varphi-q_k,D^2\varphi)+\Lambda|D\varphi|=f_k\left(|D\varphi-q_k|^2+\varepsilon_k^2\right)^{\frac{2-p}{2}}+\Lambda|D\varphi|\\
&\geq&-2^{\frac{2-p}{2}}\|f_k\|_{L^{\infty}}\left(|D\varphi|^2+|q_k|^2+\varepsilon_k^2\right)^{\frac{2-p}{2}}+\Lambda|D\varphi|\geq-C_p,
\end{eqnarray*}
where we have used the condition $\frac{\|f\|_{L^{\infty}(\Omega_1)}}{\left(|q|^{2}+|\varepsilon|^{2}\right)^{\frac{p-2}{2}}}\leq \varepsilon_0$ and the Cauchy inequality in the last inequality.

Therefore, by Theorem 3.1 of \cite{BKO2025CVPDE}, a subsequence of $u_k$ converges locally uniformly to a function $u_{\infty}\in C(\overline{B_1^+})$ locally uniformly, and we may assume $\frac{q_k}{\sqrt{|q_k|^2+|\varepsilon_k|^2}}\to E\in B_1$. Rewriting the equation satisfied by $u_k$ as
$$
\left(\frac{|Du_k-q_k|^2+\varepsilon_k^2}{|q_k|^2+\varepsilon_k^2}\right)^{\frac{p-2}{2}}\left(\Delta u_k+(p-2)\frac{\frac{u_{k,i}-q_{k,i}}{\sqrt{|q_k|^2+\varepsilon_k^2}}\cdot\frac{u_{k,j}-q_{k,j}}{\sqrt{|q_k|^2+\varepsilon_k^2}}}{\frac{|Du_k-q_k|^2+\varepsilon_k^2}{|q_k|^2+\varepsilon_k^2}}u_{k,ij}\right)=\frac{f_k}{\left(|q_k|^2+\varepsilon_k^2\right)^{\frac{p-2}{2}}},
$$
by a standard stability argument we deduce that $u_{\infty}$ is a viscosity solution of
$$
\begin{cases}
\Delta u_{\infty}+(p-2)E_iE_ju_{\infty,ij}=0\quad &\text{in } B_1^+,\\[2pt]
\nu\cdot Du_{\infty}=0\quad &\text{on } T_1.
\end{cases}
$$
From the regularity results in \cite{MS2006CPDE}, it follows that $u_{\infty}\in C^{1,\alpha_0}(\overline{B_{3/4}^+})$ and $\langle Du_{\infty}(0),e_n\rangle=0$, which leads to a contradiction. Hence \textbf{Step 2} is proved.

\textbf{Step 3.} In this step, we will prove for (maybe smaller) $\varepsilon_0>0$, the assertions in \textbf{Step 2} remain valid under the restriction $|\langle q,e_n\rangle|\leq \varepsilon_0$ instead of $|q|^2+\varepsilon^2\geq\varepsilon_0^{-1}$.

In view of \textbf{Step 2}, we may also assume that $|q|^2+\varepsilon^2\leq \varepsilon_0^{-1}$. Assume the assertion in \textbf{Step 3} is false. Then there exist $\delta>0$ and sequences $u_k$, $f_k$, $g_k$, $\Omega_k$, $q_k$, $\varepsilon_k$ such that $u_k$ satisfies $|u_k|\leq 1$ and
$$\begin{cases}
\left(|Du_k-q_k|^2+\varepsilon_k^2\right)^{\frac{p-2}{2}} G_{\varepsilon_k}(Du_k-q_k,D^2u_k) = f_k & \text{in } (\Omega_k)_1, \\[4pt]
\nu \cdot Du_k = g_k & \text{on } \partial (\Omega_k)_1,
\end{cases}$$
with $\|f_k\|_{L^\infty}, \|g_k\|_{L^\infty}, \|\varphi\|_{C^1}, |\langle q,e_n\rangle|\leq \frac{1}{k}$, $|q_k|^2+\varepsilon_k^2\leq \varepsilon_0^{-1}$, $\frac{\|f_k\|_{L^{\infty}}}{(|q_k|^2+\varepsilon_k^2)^{\frac{p-2}{2}}}\leq \frac{1}{k}$, but\\ $\|u_k-h_k\|_{L^{\infty}((\Omega_k)_{1/2})}\geq \delta$ for $h_k$ satisfying the corresponding conditions. By Theorem 3.1 of \cite{BKO2025CVPDE}, a subsequence of $u_k$ converges locally uniformly to a function $u_{0}\in C(\overline{B_1^+})$ locally uniformly, and we may assume $q_k\to q_0$ for some $q_0$ with $\langle q_0,e_n\rangle=0,  |q_0|^2\leq \varepsilon_0^{-1}$ and $\varepsilon_k\to \varepsilon_1\geq 0$. 

By Definition \ref{vis-ell-def-2} and a standard stability argument, $u_0$ is a viscosity solution of
$$
\begin{cases}
\operatorname{div}\left((|Du_0-q_0|^2+\varepsilon_1^2)^{\frac{p-2}{2}}(Du-q_0)\right)=0    &\text{in } B_1^+,\\[2pt]
\nu\cdot Du_0=0  &\text{on } T_1.
\end{cases}
$$
Denote $\tilde{u}(x)\coloneqq u_0(x)-q_0\cdot x$; then $\tilde{u}$ satisfies
$$
\begin{cases}
\operatorname{div}\left((|D\tilde{u}|^2+\varepsilon_1^2)^{\frac{p-2}{2}}D\tilde{u}\right)=0    &\text{in } B_1^+,\\[2pt]
\nu\cdot D\tilde{u}=0  &\text{on } T_1,
\end{cases}
$$
in the viscosity sense. By Theorem \ref{thm-global-flat-elliptic} (which remains valid for $\varepsilon_1>0$), we obtain $\tilde{u}\in C^{1,\alpha_0}(\overline{B_{3/4}^+})$, $\langle D\tilde{u}(0),e_n\rangle=0$. Consequently, $u_0\in C^{1,\alpha_0}(\overline{B_{3/4}^+})$ and satisfies 
$\|u_0\|_{C^{1,\alpha_0}(\overline{B_{3/4}^+})}\leq C(\varepsilon_0,p,n)$ together with 
$\langle Du_0(0),e_n\rangle=0$, which yields a contradiction. 
Thus, \textbf{Step~3} is proved.

\textbf{Step 4.}  We claim that under the assumptions in \textbf{Step 2} or \textbf{Step 3}, there exist $0<r_0<\frac{1}{2}$ and constant $\varepsilon_0>0$ such that if
$$\|f\|_{L^\infty(\Omega_1)} , \|g\|_{L^\infty(T_1)} ,  \|\varphi\|_{C^1(T_1)} \leq \varepsilon_0,~ |\langle q,e_n\rangle|\leq \varepsilon_0 \text{ or }|q|^2+\varepsilon^2\geq\varepsilon_0^{-1},~ \frac{\|f\|_{L^{\infty}(\Omega_1)}}{(|q|^2+\varepsilon^2)^{\frac{p-2}{2}}}\leq \varepsilon_0,$$
then there exists an affine function $l=u(0)+b\cdot x$ such that
$$
\|u-l\|_{L^{\infty}(\Omega_{r_0})}\leq r_0^{1+\alpha},\quad \langle e_n,b\rangle=0,\quad \text{and} \quad |b|\leq C_1.
$$

For given $\delta>0$, let $\varepsilon_0>0$ be the constant in \textbf{Step 2} and \textbf{Step 3}. Then there exists a function $h\in C^{1,\alpha_0}$ satisfying $\|u-h\|_{L^{\infty}}\leq \delta$. From the $C^{1,\alpha_0}$ estimates for $h$, letting $\tilde{l}(x)=h(0)+Dh(0)\cdot x$, we obtain
$$
|h(x)-\tilde{l}(x)|\leq C_1|x|^{1+\alpha_0}, \quad \nu\cdot Dh(0)=0,\quad \text{and} \quad |Dh(0)|\leq C_1
$$
for some $C_1$ independent of $q$.

Choose $0<r_0<\frac{1}{2}$ sufficiently small such that $C_1 r_0^{1+\alpha_0}\leq \frac{1}{3}r_0^{1+\alpha}$, and set $\delta=\frac{1}{3}r_0^{1+\alpha_0}$. Define $l(x)=u(0)+Dh(0)\cdot x$; we have
\begin{eqnarray*}
\|u-l\|_{L^{\infty}(\Omega_{r_0})}&\leq& \|u-h\|_{L^{\infty}(\Omega_{r_0})}+\|h-\tilde{l}\|_{L^{\infty}(\Omega_{r_0})}+|u(0)-h(0)|\\
&\leq& 2\delta+C_1 r_0^{1+\alpha_0}\leq r_0^{1+\alpha}.
\end{eqnarray*}
Hence \textbf{Step 4} is proved, and from now on, $\varepsilon_0$, $\delta$, $r_0$ and $C_1$ are fixed. Moreover, we can define $A=A(r_0, \varepsilon_0)=A(n, p, \alpha)$ be a positive integer such that 
\begin{eqnarray}\label{def-A}
r_0^A\leq\frac{1}{2}\varepsilon_0^{\frac{3}{2}}.
\end{eqnarray}

\textbf{Step 5.} The case $|q_1| = 0$ can be handled by a similar but simpler argument; we therefore assume $|q_1| > 0$. Let $\varepsilon_0$, $\delta$, $r_0$, $C_1$ and $A$ be the constants in \textbf{Step 4}. We can choose $s$ in \textbf{Step 1} sufficiently small and $K$ in \textbf{Step 1} sufficiently large such that $\frac{s}{K}$, $\frac{s^p}{K^{p-1}}$, $\frac{s^{1+\alpha}}{K}$ and $\frac{s^{p-1}}{K^{p-2}}$ are small enough. Consequently, we have $\|\bar u\|_{C^0} \leq \frac{1}{4}$,  $\|\bar f\|_{L^\infty} \leq \frac{\varepsilon_0}{2(C_1+1)}$, $\|\bar g\|_{C^\alpha} \leq \frac{1}{4} \varepsilon_0$, and $\frac{\|\bar f\|_{L^{\infty}}}{\left(|\bar q|^2+\bar\varepsilon^2\right)^{\frac{p-2}{2}}}\leq \varepsilon_0$.
Moreover, we can choose $s,K$ appropriately such that 
\begin{eqnarray}\label{def-k0}
|\bar q|=\frac{s}{K}|q_1|=r_0^{k_0\alpha}a_0 
\end{eqnarray}
for some $\frac{\varepsilon_0}{2}\leq a_0\leq \varepsilon_0$ and some integer $k_0\geq0$. From now on $s$ and $K$ are fixed.

For such $\bar u$, we \textbf{claim} that with the above $\varepsilon_0$, $\delta$, $r_0$, $C_1$, $A$ and $k_0$, for each $k=0,1,...$ there exists an affine function $l_k(x)=b_k\cdot x$ such that
$$
\|\bar u-l_k\|_{L^{\infty}(\Omega_{r_0^k})}\leq r_0^{k(1+\alpha)},~ \langle e_n,b_k\rangle=0, \text{ and } |b_k-b_{k+1}|\leq C_1 r_0^{k\alpha}, \text{ if } k\leq k_0+1,
$$
and
$$
\|\bar u-l_k\|_{L^{\infty}(\Omega_{r_0^{k+A}})}\leq r_0^{(k+A)(1+\alpha)},~ \langle e_n,b_k\rangle=0,\text{ and } |b_k-b_{k+1}|\leq C_1 r_0^{(k+A)\alpha}, \text{ if } k\geq k_0+2.
$$

For $k=0$, the assertions clearly hold by choosing $l_0=0$ and by \textbf{Step 1}. We now proceed by induction. Assume the claim holds with some $k\leq k_0$, and consider
$$
v(x)\coloneqq\frac{(\bar u-l_k)(r_0^kx)}{r_0^{k(1+\alpha)}}.
$$
Then $|v|\leq 1$ in $(\frac{1}{r_0^k}\Omega)_1$, $v(0)=0$, and $v$ satisfies
$$
\begin{cases}
\left(|Dv-q_k|^2+|\varepsilon_k|^2\right)^{\frac{p-2}{2}}G_{\varepsilon_k}\left(Dv-q_k,D^2v\right)=f_k  &\text{in } (\frac{1}{r_0^k}\Omega)_1,\\[4pt]
\nu\cdot Dv=g_k
&\text{on } \partial(\frac{1}{r_0^k}\Omega)_1, 
\end{cases}
$$
where $q_k=r_0^{-k\alpha}(\bar q-b_k)$, $\varepsilon_k=r_0^{-k\alpha}\bar\varepsilon$, $f_k(x)=r_0^{k(1-\alpha(p-1))}\bar f(x)$ and $g_k(x)=r_0^{-k\alpha}(\bar g(r_0^kx)-b_k\cdot \nu(r_0^kx))$.

For $q_k$, since $k\leq k_0$, then we have $|\langle q_{k}, e_n\rangle|\leq\varepsilon_0$, so $q_k$ satisfies the assumption of \textbf{Step 3}. We next check that $v$ satisfies the assumptions of \textbf{Step 4}. Since $\alpha(p-1)\leq 1$, we have $\|f_k\|_{L^{\infty}}\leq \|\bar f\|_{L^{\infty}}\leq \varepsilon_0$. Because $\bar g\in C^{1,\alpha}$, we obtain $\|g_k\|_{C^\alpha}\leq \varepsilon_0$. Moreover, define $\varphi_k(x)\coloneqq\varphi_{\frac{1}{r_0^k}\Omega}(x)=\frac{\varphi(r_0^kx)}{r_0^k}$, then $\varphi_k(0)=0$, $D\varphi_k(0)=0$, and $\|\varphi_k\|_{C^1}=\|\varphi\|_{C^1}\leq \varepsilon_0$. For the last assumption, we note that
$$
\frac{f_k}{\left(|q_k|^2+\varepsilon_k^2\right)^{\frac{p-2}{2}}}=r_0^{k(1-\alpha)}\frac{\bar f}{\left(|\bar q-b_k|^2+\bar\varepsilon^2\right)^{\frac{p-2}{2}}}\leq \varepsilon_0.
$$
Therefore, we may apply the result of \textbf{Step 4} to obtain an affine function $\tilde{l}(x)=\tilde{b}\cdot x$ such that
$$
\|v-\tilde{l}\|_{L^{\infty}\left((\frac{1}{r_0^k}\Omega)_{r_0}\right)}\leq r_0^{1+\alpha},\quad \langle e_n,\tilde{b}\rangle=0,\quad \text{and}\quad |\tilde{b}|\leq C_1.
$$
Now set $l_{k+1}(x)=l_k(x)+r_0^{k(1+\alpha)}\tilde{l}(r_0^{-k}x)$. Then $l_{k+1}$ satisfies the required properties for index $k+1$, and  the induction holds for $k\leq k_0+1$. 

\

Next, suppose the induction holds for some $k\geq k_0+1$, consider
$$
v(x)\coloneqq\frac{(\bar u-l_k)(r_0^{k+A}x)}{r_0^{(k+A)(1+\alpha)}},
$$
where $A$ is defined in \eqref{def-A}.

Then $|v|\leq 1$ in $(\frac{1}{r_0^{k+A}}\Omega)_{r_0^{-A}}$, $v(0)=0$, and $v$ satisfies
$$
\begin{cases}
\left(|Dv-q_k|^2+|\varepsilon_k|^2\right)^{\frac{p-2}{2}}G_{\varepsilon_k}\left(Dv-q_k,D^2v\right)=f_k  &\text{in } (\frac{1}{r_0^{k+A}}\Omega)_1,\\[4pt]
\nu\cdot Dv=g_k
&\text{on } \partial(\frac{1}{r_0^{k+A}}\Omega)_1, 
\end{cases}
$$
where $q_k=r_0^{-(k+A)\alpha}(\bar q-b_k)$, $\varepsilon_k=r_0^{-(k+A)\alpha}\bar\varepsilon$, $f_k(x)=r_0^{(k+A)(1-\alpha(p-1))}\bar f(x)$ and $g_k(x)=r_0^{-(k+A)\alpha}(\bar g(r_0^{k+A}x)-b_k\cdot \nu(r_0^{k+A}x))$.

For $q_k$, by \eqref{def-k0}, we have $\frac{\varepsilon_0}{2}\leq|\langle q_{k_0}, e_n\rangle|\leq \varepsilon_0$. Together with \eqref{def-A}, it follows that $|q_k|\geq |\langle q_k, e_n\rangle|\geq |\langle q_{k_0+1},e_n\rangle|\geq  \varepsilon_0^{-\frac{1}{2}}$, and $q_k$ satisfies the assumption of \textbf{Step 2}. With a similar argument as the case $k\leq k_0$, we can check that $v_k$, $g_k$, $\varphi_k$ satisfy the assumptions of \textbf{Step 4}, so there exists an affine function $\tilde{l}(x)=\tilde{b}\cdot x$ such that
$$
\|v-\tilde{l}\|_{L^{\infty}\left((\frac{1}{r_0^{k+A}}\Omega)_{r_0}\right)}\leq r_0^{1+\alpha},\quad \langle e_n,\tilde{b}\rangle=0,\quad \text{and}\quad |\tilde{b}|\leq C_1.
$$
Now set $l_{k+1}(x)=l_k(x)+r_0^{(k+A)(1+\alpha)}\tilde{l}(r_0^{-(k+A)}x)$. Recall that $k\geq k_0+1$, we have $l_{k+1}$ satisfies the required properties for index $k+1\geq k_0+2$, and the induction holds for $k+1$. Thus \textbf{Step 5} is proved.

\textbf{Step 6.} Based on \textbf{Step 5}, a standard argument shows that $b_k$ converges to some limit $b$, and the corresponding affine function $l(x)=b\cdot x$ is the affine approximation of $u$ at $0$. We now prove that
\begin{eqnarray}\label{C1alpha}
\sup_{x\in\Omega_r}|u(x)-l(x)|\leq C r^{1+\alpha}
\end{eqnarray}
for all sufficiently small $r>0$, where $C$ is a constant depending only on $n$, $p$, $\alpha$.

For any fixed $0<r<r_0$, there exists a unique $B\in\mathbb N$ such that
\begin{eqnarray*}
r_0^{B+1}< r \leq r_0^B.
\end{eqnarray*}
We distinguish three cases.

\textbf{Case 1.} $B\leq k_0+1$. Then
\begin{eqnarray*}
\sup_{x\in\Omega_r}|u(x)-l(x)|&\leq& \sup_{x\in\Omega_{r_0^B}}|u(x)-l(x)|\\
&\leq& \sup_{x\in\Omega_{r_0^B}}|u(x)-l_B(x)|+\sup_{x\in\Omega_{r_0^B}}|l_B(x)-l(x)|\\
&\leq& r_0^{B(1+\alpha)}+\frac{C_1}{1-r_0^\alpha}r_0^{B(1+\alpha)}~ \leq~ C r^{1+\alpha}. 
\end{eqnarray*}

\textbf{Case 2.} $k_0+2\leq B \leq k_0+A+1$. Then
\begin{eqnarray*}
\sup_{x\in\Omega_r}|u(x)-l(x)|&\leq& \sup_{x\in\Omega_{r_0^B}}|u(x)-l(x)|\\
&\leq& \sup_{x\in\Omega_{r_0^B}}|u(x)-l_{k_0+1}(x)|+\sup_{x\in\Omega_{r_0^B}}|l_{k_0+1}(x)-l(x)|\\
&\leq& r_0^{(k_0+1)(1+\alpha)}+\frac{C_1}{1-r_0^\alpha}r_0^{B+(k_0+1)\alpha}~ \leq~ C r^{1+\alpha}. 
\end{eqnarray*}

\textbf{Case 3.} $k_0+A+2\leq B$. Then
\begin{eqnarray*}
\sup_{x\in\Omega_r}|u(x)-l(x)|&\leq& \sup_{x\in\Omega_{r_0^B}}|u(x)-l(x)|\\
&\leq& \sup_{x\in\Omega_{r_0^B}}|u(x)-l_{B-A}(x)|+\sup_{x\in\Omega_{r_0^B}}|l_{B-A}(x)-l(x)|\\
&\leq& r_0^{B(1+\alpha)}+\frac{C_1}{1-r_0^\alpha}r_0^{B(1+\alpha)}~ \leq~ C r^{1+\alpha}. 
\end{eqnarray*}

Thus $u$ is $C^{1,\alpha}$ at the origin. By a standard covering argument, we conclude that $u\in C^{1,\alpha}(\overline{\Omega_{1/2}})$.

\end{proof}

\subsection{Parabolic problem}
We now turn to the parabolic problem \eqref{app par eq}.
\begin{equation*}
\left\{\begin{aligned}
& u_t=\operatorname{div}(v^{p-2}Du)+f(x,u) &&\text{in~}\Omega\times[0,\infty),\\
& u(x,0)=u_{0}(x)  &&\text{in~}\Omega,\\
& u_\nu=-\phi_{\varepsilon}(x)v^{1-q} &&\text{on~}\partial\Omega\times[0,\infty),
\end{aligned}\right.
\end{equation*}
where $q=1$ or $q=p-1$. When $q=1$, as mentioned before, since $u_t$ is continuous and bounded, we can treat it as an elliptic problem to derive the global H\"{o}lder regularity of  $Du$ in space for each fixed $t\geq0$. When $q=p-1$, the same result holds by applying the weak methods in \cite{L1988NA}.

For the H\"{o}lder regularity of $Du$ in $t$, according to the appendix of \cite{AP2018CCM} (with a slight modification), it suffices to show that for any $r>0$, $\operatorname{dist}(x_0,\partial\Omega)\geq 2r$ and $t_0\geq 2r^{1+\alpha}$, there exists a vector $q(x_0,t_0)$ such that 
\begin{equation}
\sup\limits_{\overline{Q_r(x_0,t_0)}}|u(x,t)-u(x_0,t_0)-q(x_0,t_0)\cdot(x-x_0)|\leq Cr^{1+\alpha},
\end{equation}
where $Q_r(x_0,t_0)\coloneqq B_r(x_0)\times (t_0-r^{1+\alpha},t_0]$.

Since $u$ is $C^{1,\alpha}$ in $x$ and Lipschitz in $t$, we have
\begin{eqnarray*}
&&\quad|u(x,t)-u(x_0,t_0)-q(x_0,t_0)\cdot(x-x_0)|\\
&&\leq |u(x,t)-u(x,t_0)|+|u(x,t_0)-u(x_0,t_0)-q(x_0,t_0)\cdot (x-x_0)|\\
&&\leq C|t-t_0|+C|x-x_0|^{1+\alpha}\leq Cr^{1+\alpha},
\end{eqnarray*}
hence $Du$ is also globally H\"{o}lder continuous in $t$.\vspace{4pt}

\begin{proof}[\textbf{Proof of Theorem \ref{thm1.1}}]~

$\bullet$ Existence.

Consider the following problem
\begin{equation}\label{eqDuneq0}
\left\{\begin{aligned}
& u_t=\operatorname{div}(v^{p-2}Du)+f(x,u) &&\text{in~}\Omega\times[0,\infty),\\
& u(x,0)=u_0(x)  &&\text{in~}\Omega,\\
& u_\nu=-\phi_{\varepsilon}(x)v^{1-q} &&\text{on~}\partial\Omega\times[0,\infty),
\end{aligned}\right.
\end{equation}
where 
$$
\phi_{\varepsilon}(x)\coloneqq-(|Du_0|^2+\varepsilon^2)^{\frac{q-1}{2}}D_{\nu}u_0(x).
$$
Theorem \ref{thm3.2} yields the existence, uniqueness and global $C^{1}$ estimate for the solution $u_{\varepsilon}$ of \eqref{eqDuneq0}. We now demonstrate the convergence $u_{\varepsilon}\to u$ as $ \varepsilon\to0$.

When $(x_0,t_0)\in\Omega\times[0,\infty)$, by Lemma \ref{lemstb}, $u$ is a viscosity solution of \eqref{par eq} in the interior.

When $(x_0,t_0)\in\partial\Omega\times[0,\infty)$, by Lemma \ref{par-vis-bdy-def-2}, we only need to consider test function $\varphi\in C^2$ with $|D\varphi(x_0,t_0)|\neq0$, therefore, the equation is non-singular. By the standard stability theory for boundary problems for degenerate equations \cite{CIL1992}, $u$ is a viscosity solution of \eqref{par eq} on the boundary.

Together with the $C^{1,\alpha}$ arguments in this and the previous subsections, we have proved that
\begin{itemize}
\item when $q=1$, there exists a viscosity solution $u$ of \eqref{par eq}, which is globally Lipschitz in $(x,t)$, and the spatial gradient $Du$ is globally H\"older in $(x,t)$. To be specific, for any $\alpha\in(0,\alpha_0)$, where $\alpha_0$ is the optimal exponent for interior $C^{1,\alpha_0}$ regularity of weak solutions to elliptic $p$-Laplacian equations (see Section 4.2), 
$$
|u(x,t)-u(x,s)|\leq M(|t-s|+|x-y|),\quad |Du(x,t)-Du(y,s)|\leq M(|x-y|^{\alpha}+|t-s|^{\frac{\alpha}{1+\alpha}}),
$$
for all $t,s\in[0,\infty)$, $x,y\in\overline{\Omega}$.\\
\item when $q=p-1$, the above results hold for $\alpha=\alpha_1
$, where $\alpha_1$ is the optimal exponent for the elliptic conormal problem considered in \cite{L1988NA}.\\
\item when $q\neq 1,p-1$, there exists a viscosity solution $u$ of \eqref{par eq}, which is globally Lipschitz in $(x,t)$, and the spatial gradient $Du$ is locally $C^{\alpha_0}$ in $x$ and globally $C^{\frac{\alpha_0}{1+\alpha_0}}$ in $t$. To be specific, 
$$
|u(x,t)-u(x,s)|\leq M(|t-s|+|x-y|),
$$
for all $t,s\in[0,\infty)$,  $x,y\in\overline{\Omega}$, and
$$
|Du(x,t)-Du(y,s)|\leq M(|x-y|^{\alpha_0}+|t-s|^{\frac{\alpha_0}{1+\alpha_0}}),
$$
for all $t,s\in[0,\infty)$ and $x,y\in\Omega'\subset\subset\Omega$.
\end{itemize}

$\bullet$ Uniqueness.

The uniqueness follows directly from the comparison principle Lemma \ref{lem cpp}.
\end{proof}

\section{Asymptotic behavior of solutions to the approximate equation \eqref{app par eq}}
In this section, for fixed small $\varepsilon>0$, we study the asymptotic behavior of the solutions to equation \eqref{app par eq}. Since all estimates are independent of $\varepsilon$, we can take the limit $\varepsilon\to0$ in the next section. We begin with the elliptic equation
\begin{equation}\label{app ell eq2}
\left\{\begin{aligned}
& -\operatorname{div}(v^{p-2}Du)=-\delta u +\varphi(x) &&\text{in~}\Omega,\\
&v^{q-1} u_\nu=-\phi_{\varepsilon}(x) &&\text{on~}\partial\Omega,
\end{aligned}\right.
\end{equation} 
where $\delta>0$ is another small constant. We have the following existence and uniqueness result, with estimates independent of both $\varepsilon$ and $\delta$.
\begin{theorem}\label{thm5.1}
Assume $\Omega\subset \mathbb R^n$ is a bounded strictly convex $C^{2,\beta}$ domain and $q>0$. Then for any $\varphi(x)$, $\phi_{\varepsilon}(x)\in C^{\infty}(\overline\Omega)$ and $\|\phi_{\varepsilon}\|_{C^2(\overline{\Omega})}\leq L$ , there exists a unique solution $u_{\varepsilon,\delta}$ of \eqref{app ell eq2}. Moreover, 
$$
\sup_{\Omega}|Du_{\varepsilon,\delta}|+\sup_{\Omega}|\delta u_{\varepsilon,\delta}|\leq C,
$$
where $C=C(n,p,q,L,\Omega)$ is independent of both $\varepsilon$ and $\delta$.
\end{theorem}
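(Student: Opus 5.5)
The proof proceeds in three steps: an $\varepsilon,\delta$-independent bound on $\delta u$, the gradient bound of Theorem~\ref{thm3.2} redone in the elliptic setting, and then existence and uniqueness from the method of continuity and the comparison principle.

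\textbf{Step 1: bound on $\delta u$ via a barrier.} Let $h$ be the strictly convex defining function from Section 2 and $w=Ah$ with $A$ large. At an interior maximum point of $u-w$ we have $Du=Dw$ and $D^2u\le D^2w$. Ellipticity then gives
\[
\delta u-\varphi\le \operatorname{div}(v^{p-2}Dw).
\]
The right-hand side is bounded by a constant depending on $A$, uniformly in $\varepsilon$: for $p\ge2$ this is clear, and for $1<p<2$ it holds because $|Dh|$ is bounded and the operator is homogeneous of degree $p-2$ in the relevant quantities. On the boundary, $v^{q-1}w_\nu=-A^{q}$ up to $\varepsilon$-corrections, which is strictly less than $-\phi_\varepsilon$ once $A^q>L+1$. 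Hence $u-w$ cannot attain its maximum on $\partial\Omega$, because there $u_\nu\le w_\nu$ would contradict the boundary condition by monotonicity of $s\mapsto |s|^{q-1}s$. It follows that $\delta\max u\le C+\delta\max|w|\le C$. The lower bound is symmetric, using $-w$. The delicate point for the singular range is the choice $w=A h$, or if needed $w=A(h+h^2)$, so that $|Dw|$ stays comparable to $A$ near $\partial\Omega$.

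\textbf{Step 2: gradient bound.} Repeat the proof of Theorem~\ref{thm3.2} with the auxiliary function $\Phi=\log w+\alpha h$, where $w=v^{q+1}-(q+1)\phi u_lh_l$. The boundary case is literally the same, since it only uses the boundary condition. In the interior case the $\Phi_t$ term disappears. Differentiating the equation produces the terms
\[
(v^{q-1}u_k-\phi h_k)(\delta u_k-\varphi_k).
\]
The term $\delta v^{q-1}|Du|^2$ has the good sign. The remaining terms are bounded by $C v^q+C\,\delta|u|$, which by Step 1 are $O(v^q)$. This is exactly what the $-f_u u_k$, $f_k$ terms contributed before, so $|Du|\le C$ independently of $\varepsilon$ and $\delta$.

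\textbf{Step 3: existence and uniqueness.} With the a priori bounds from Steps 1 and 2 the problem is uniformly elliptic for fixed $\varepsilon$, and the boundary condition is strictly oblique. Then $C^{1,\gamma}$ and Schauder estimates for oblique problems (Lieberman) together with the method of continuity, or Leray--Schauder, give a smooth solution. For uniqueness, subtract two solutions and apply the comparison/maximum principle: the term $-\delta u$ is strictly monotone, and the boundary operator $v^{q-1}u_\nu$ is strictly increasing in $u_\nu$ at fixed tangential gradient. This rules out both a positive interior maximum and a boundary maximum of the difference.

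I expect the main obstacle to be the barrier in Step 1 when $1<p<2$ and $0<q<1$, where both the $\varepsilon$-uniformity of $\operatorname{div}(v^{p-2}Dw)$ and the boundary comparison need care. If $w=Ah$ fails there, I would try $w=A\,\mathrm{d}(x)-B\,\mathrm{d}(x)^2$ instead.
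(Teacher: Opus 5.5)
Your Step 1 does not give an $\varepsilon$-independent bound in the singular range $1<p<2$, and that is precisely the case the paper's barrier is built for.

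Since $h<0$ in $\Omega$ and $h=0$ on $\partial\Omega$, $h$ has an interior minimum point $x_*$ with $Dh(x_*)=0$ and $D^2h\ge\kappa_0 I$. The smallest eigenvalue of $a^{ij}(\xi)$ is at least $(p-1)v^{p-2}$, so for $w=Ah$ you get
\[
\operatorname{div}\bigl(v^{p-2}Dw\bigr)\ \ge\ (p-1)\,n\kappa_0 A\,\bigl(\varepsilon^2+A^2|Dh|^2\bigr)^{\frac{p-2}{2}},
\qquad
\operatorname{div}\bigl(v^{p-2}Dw\bigr)(x_*)=A\,\varepsilon^{p-2}\Delta h(x_*).
\]
Homogeneity of negative degree $p-2$ works against you. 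Nothing prevents the maximum point $x_0$ of $u-w$ from lying near $x_*$, so the bound you obtain on $\delta u$ degenerates like $\varepsilon^{p-2}$. That is useless for the limit $\varepsilon\to0$ in Section 6, which is the reason the theorem insists on $\varepsilon$-independence. The trouble is in the interior, where $Dw$ is small, not near $\partial\Omega$, where $|Dw|\sim A$ is harmless.

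Your fallbacks have the same defect. Any $C^2$ barrier with $w_\nu<0$ on $\partial\Omega$ attains an interior minimum $y$ with $Dw(y)=0$, and there $\operatorname{div}(v^{p-2}Dw)(y)=\varepsilon^{p-2}\Delta w(y)$. For $A(h+h^2)$, or for $-A\,\mathrm{d}+B\,\mathrm{d}^2$, this Hessian is (generically) nonzero; for the latter it equals $2B\,D\mathrm{d}\otimes D\mathrm{d}$ on the critical level. Note also that $A\,\mathrm{d}-B\,\mathrm{d}^2$ as you wrote it has $w_\nu=A>0$ for the inner normal, which is the wrong sign, and $\mathrm{d}$ is not $C^2$ deep inside $\Omega$.

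The missing idea is that the barrier must be completely flat where its gradient vanishes, with $|D^2w|\lesssim|Dw|^{2-p}$ as $Dw\to0$. The paper takes $w$ constant on $\{\mathrm{d}\ge\varepsilon_0\}$ and
\[
w=-\bigl(\varepsilon_0^b-(\varepsilon_0-\mathrm{d})^b+\gamma\bigr)^{1/2}\quad\text{on }\{\mathrm{d}<\varepsilon_0\}.
\]
Then $|Dw|\sim(\varepsilon_0-\mathrm{d})^{b-1}$ and $|D^2w|\lesssim(\varepsilon_0-\mathrm{d})^{b-2}$. Hence $v^{p-2}|D^2w|\lesssim(\varepsilon_0-\mathrm{d})^{(b-1)(p-1)-1}$, which is bounded once $b\ge p/(p-1)$, and $b>2$ gives $w\in C^2$. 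Taking $\gamma$ small makes $|Dw|$ on $\partial\Omega$ as large as needed.

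With this $w$ your comparison argument goes through unchanged: an interior extremum, plus strict monotonicity of $s\mapsto(\varepsilon^2+|\xi'|^2+s^2)^{\frac{q-1}{2}}s$ for $q>0$ at fixed tangential gradient $\xi'$. For $p\ge2$ your $w=Ah$ is fine as stated.

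Two side remarks.
\begin{itemize}
\item Your Step 2 does not actually need Step 1. Differentiating $-\delta u$ produces only $\delta v^{q-1}|Du|^2-\delta\phi h_ku_k$, and the second term is absorbed by the first. No $\delta|u|$ term appears, so the gradient bound survives the gap.
\item In the uniqueness argument, strict monotonicity of the boundary operator at a boundary maximum of $u_1-u_2$ only yields $(u_1-u_2)_\nu=0$. You need Hopf's lemma for the linearized equation to get a contradiction, as the paper does.
\end{itemize}
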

\begin{proof}
It is straightforward to verify that the boundary condition is oblique. Hence, by the result of Lieberman-Trudinger \cite{LT1986TAMS}, we may apply the Leray–Schauder fixed point theorem to the following family of boundary value problems, parametrized by $\tau\in[0,1]$,
$$\begin{cases}
\tau \left( -\operatorname{div}(v^{p-2}Du) +\delta u-\varphi(x) \right) \\
+ (1 - \tau) \left( -\operatorname{div}(v^{p-2}Du)  + \delta u \right) = 0 & \text{in } \Omega, \\[2pt]
v^{q-1}u_{\nu} = \tau \phi_{\varepsilon}(x) & \text{on } \partial \Omega,
\end{cases}$$
When $\tau = 0$, the function $u \equiv 0$ is the unique solution; we need to find a solution for $\tau = 1$. According to the Leray–Schauder fixed point theorem, the existence of a solution for $\tau=1$ follows once we establish a priori $C^0$ and $C^1$ estimates that are uniform in $\tau\in[0,1]$:
$$\sup_{\Omega}|Du_{\varepsilon,\delta}|+\sup_{\Omega}|\delta u_{\varepsilon,\delta}|\leq C,$$

Since $\delta>0$, the gradient estimate is the same as Theorem \ref{thm3.2}, so we omit it.
    
To obtain the $C^0$ estimate, we must carefully construct a barrier function due to the singularity that arises when $1 < p < 2$.

Define
$$w(x)\coloneqq\begin{cases}
-\left(\varepsilon_0^b-(\varepsilon_0-d)^b+\gamma\right)^\frac{1}{2}\quad & \text{if}\quad 0<d<\varepsilon_0,\\[4pt]
-\left(\varepsilon_0^b+\gamma\right)^\frac{1}{2}\quad& \text{if}\quad \varepsilon_0\leq d,
\end{cases}$$
where $d=d(x,\partial\Omega)$ is the distance function, which is therefore $C^2$ near $\partial\Omega$, and $\varepsilon_0,\gamma>0$ are small constants, $b>0$ is a large constant to be determined later, and all parameters are independent of $\varepsilon$ and $\delta$. 

We claim that for appropriate parameters, $w$ satisfies
\begin{itemize}
\item $w\in C^2(\overline{\Omega})$,
\item $F_{\varepsilon}[w]\leq C$ on $\overline{\Omega}$,
\item $(\varepsilon^2+|Dw|^2)^\frac{q-1}{2}D_{\nu}w< -\sup_{\partial\Omega}|\phi_{\varepsilon}(x)|$ on $\partial\Omega$,
\end{itemize}
where $F_{\varepsilon}[u]\coloneqq\operatorname{div}\left((\varepsilon^2+|Du|^2)^\frac{p-2}{2}Du\right)$, and the constant $C$ is independent of $\varepsilon$ and $\delta$.

Denote $A\coloneqq\varepsilon_0^b-(\varepsilon_0-d)^b+\gamma$, $B\coloneqq\varepsilon_0-d$, $\Omega_{\varepsilon_0}\coloneqq\{x\in\Omega~|~d(x)<\varepsilon_0\}$. In $\Omega_{\varepsilon_0}$, by direct computation we have
$$
w_i=-\frac{b}{2}A^{-\frac{1}{2}}B^{b-1}d_i,\qquad |Dw|=\frac{b}{2}A^{-\frac{1}{2}}B^{b-1},
$$
$$w_{ij}=\frac{1}{4}b^2A^{-\frac{1}{2}}B^{2b-2}d_id_j-\frac{1}{4}bA^{-\frac{1}{2}}B^{b-2}d_id_j-\frac{b}{2}A^{-\frac{1}{2}}B^{b-1}d_{ij}.$$
Hence, for $b > 2$, we have $w \in C^{2}(\overline{\Omega})$ since the second-order derivative of $w$ remains bounded. Furthermore, in $\Omega_{\varepsilon_0}$,
\begin{equation*}
\begin{aligned}
F_\varepsilon[w]=&\Big(\varepsilon^2+\frac{b^2}{4}A^{-1}B^{2b-2}\Big)^{\frac{p-2}{2}}\Big(\frac{b^2}{4}A^{-\frac{3}{2}}B^{2b-2}+\frac{b(b-1)}{2}A^{-\frac{1}{2}}B^{b-2}-\frac{b}{2}A^{-\frac{1}{2}}B^{b-1}\Delta d\Big)\\[2pt]
&+(p-2)\Big(\varepsilon^2+\frac{b^2}{4}A^{-1}B^{2b-2}\Big)^{\frac{p-4}{2}}\Big(\frac{b^4}{16}A^{-\frac{5}{2}}B^{4b-4}-\frac{b^3}{16}A^{-\frac{3}{2}}B^{3b-4}\Big),
\end{aligned}
\end{equation*}
where we have used the fact that $\sum_{i,j}d_{ij}d_id_j = d_{nn} = 0$ near $\partial\Omega$. Thus, with the choice $b > \frac{p}{p-1}$, the expression $F_{\varepsilon}[w]$ remains bounded by some constant $C$ independent of $\varepsilon$. Together with the fact that $F_{\varepsilon}[w] \equiv 0$ in $\Omega\setminus\Omega_{\varepsilon_0}$, this completes the proof of the second assertion.

To verify the last assertion, we note that on $\partial\Omega$, $A=\gamma$ and $B=\varepsilon_0$, whence
$D_{\nu}w = -|Dw| = -\frac{b}{2}\gamma^{-\frac{1}{2}}\varepsilon_0^{b-1}$.
Choosing $\gamma>0$ sufficiently small makes $D_{\nu}w$ negative enough so that, since $q>0$,
$$
\left(\varepsilon^2+|Dw|^2\right)^\frac{q-1}{2}D_{\nu}w< -\sup_{\partial\Omega}|\phi_{\varepsilon}(x)|.
$$
In summary, if we choose $b>\max\{2,\frac{p}{p-1}\}$, take $\varepsilon_0>0$ sufficiently small, and finally take $\gamma>0$ much smaller (depending on $\varepsilon_0$ and $b$), then $w$ satisfies the desired properties.

Based on the barrier function $w$, it follows easily that the minimum of $w-u_{\varepsilon,\delta}$ over $\overline\Omega$ occurs in $\Omega$ at some point $x_0$. Then $Dw(x_0)=Du_{\varepsilon,\delta}(x_0)$ and $D^2w(x_0)\geq D^2u_{\varepsilon,\delta}(x_0)$, hence
$$
C\geq F_{\varepsilon}[w](x_0)\geq F_{\varepsilon}[u_{\varepsilon,\delta}](x_0)=\delta u_{\varepsilon,\delta}(x_0)-\tau\varphi(x_0).
$$
Combining this with the boundedness of $Du_{\varepsilon,\delta}$, we obtain that $\delta u_{\varepsilon,\delta}$ is bounded from above, independently of $\varepsilon$, $\delta$ and $\tau$. The lower bound can be derived in a similar way.

Finally, uniqueness follows by applying Hopf’s lemma and the strong maximum principle to the difference $u_1-u_2$ of any two solutions; see Theorem 4.2 in \cite{WWX2019CPAA} for details.

\end{proof}
Next, we study the elliptic eigenvalue problem \eqref{app ell eq}.

\begin{proof}[\textbf{Proof of Theorem \ref{thm1.3}}]
Let $u_{\varepsilon,\delta}$ be the unique solution of \eqref{app ell eq2} as in Theorem \ref{thm5.1}. We have proved that
$$\sup_{\Omega}|Du_{\varepsilon,\delta}|+\sup_{\Omega}|\delta u_{\varepsilon,\delta}|\leq C.$$
Consider $\omega_{\varepsilon,\delta}=u_{\varepsilon,\delta}-\frac{\int_\Omega u_{\varepsilon,\delta}}{|\Omega|}$; then $\omega_{\varepsilon,\delta}$ satisfies 
\begin{equation}\label{app ell eq3}
\left\{\begin{aligned}
& \operatorname{div}(v^{p-2}D\omega_{\varepsilon,\delta})=\delta \omega_{\varepsilon,\delta} +\delta\frac{\int_\Omega u_{\varepsilon,\delta}}{|\Omega|}+\varphi(x) &&\text{in~}\Omega,\\[2pt]
& v^{q-1}D_{\nu}\omega_{\varepsilon,\delta}=-\phi_{\varepsilon}(x) &&\text{on~}\partial\Omega.
\end{aligned}\right.
\end{equation} 
Since 
$$\sup_{\Omega}|D\omega_{\varepsilon,\delta}|=\sup_{\Omega}|Du_{\varepsilon,\delta}|\leq C,$$
and $\omega_{\varepsilon,\delta}$ has a zero in $\Omega$, we obtain $|\omega_{\varepsilon,\delta}| \leq C$. Together with the previously proved estimates $|\delta u_{\varepsilon,\delta}| \leq C$, $\bigl|\delta\frac{\int_\Omega u_{\varepsilon,\delta}}{|\Omega|}\bigr| \leq C$, and $|D\omega_{\varepsilon,\delta}|\leq C$, the classical Schauder theory yields $|\omega_{\varepsilon,\delta}|_{C^{2,\alpha'}(\overline{\Omega})} \leq C$ for some $\alpha' \in (0,1)$. Passing to the limit $\delta \to 0$, a subsequence of $\omega_{\varepsilon,\delta}$ converges to $u_{\varepsilon}$ in $C^{2,\alpha}$ for any $0 < \alpha < \alpha'$, while $\delta \omega_{\varepsilon,\delta} + \delta\frac{\int_\Omega u_{\varepsilon,\delta}}{|\Omega|} \to \lambda_{\varepsilon}$. The pair $(\lambda_{\varepsilon}, u_{\varepsilon})$ thus solves \eqref{app ell eq}.  Uniqueness follows in the same way as in Theorem \ref{thm5.1}.

\end{proof}

\begin{proof}[\textbf{Proof of Theorem \ref{thm1.4}}]
The proof is the same as in Theorem 5.2 of \cite{WWX2019CPAA}, so we omit it.
    
\end{proof}

\section{Asymptotic behavior of solutions to the equation \eqref{par eq}}
As in Section 5, we first consider the elliptic eigenvalue problem \eqref{ell eq}, which can be viewed as the limit of \eqref{app ell eq}.\vspace{4pt}

\begin{proof}[\textbf{Proof of Theorem \ref{thm1.5}}]~

$\bullet$ Existence.

By the proof of Theorem \ref{thm1.3}, we know that $\lambda_{\varepsilon}$ is uniformly bounded in $\varepsilon$. Hence, by taking the limit of $u_{\varepsilon}$ and using the stability Lemma \ref{lemstb}, we obtain the existence of $u$ in $\Omega$. Moreover, the same arguments as in the proof of Theorem \ref{thm1.1} yield the convergence on $\partial\Omega$ and the global $C^{1,\alpha}$ regularity when $q=1$ or $p-1$. 

$\bullet$ Uniqueness.

When $q=p-1$, integration by parts yields the uniqueness of $\lambda_0$. Moreover, suppose $u$ and $v$ are two solutions of \eqref{ell eq}. Testing the equation 
$$
\operatorname{div}(|Du|^{p-2}Du)-\operatorname{div}(|Dv|^{p-2}Dv)=0
$$  
by $u-v$ and integrating by parts, we obtain $Du=Dv$.

For $p\geq2$, when $q=1$ or $q>0$ with $|\phi(x)|>0$ on $\partial\Omega$, the uniqueness of $\lambda_0$ will be shown in the proof of Theorem \ref{thm1.6} via the comparison principle Lemma \ref{lem cpp}.

\end{proof}

\begin{proof}[\textbf{Proof of Theorem \ref{thm1.6}}] Let $(\omega_0,\lambda_0)$ be a pair of solution in Theorem \ref{thm1.5}. Then for sufficiently large $A>0$,  $u_1(x,t)\coloneqq\omega_0(x)-\lambda_0t-A$ is a subsolution of \eqref{par eq}, and $u_2(x,t)\coloneqq\omega_0(x)-\lambda_0t+A$ is a supersolution of \eqref{par eq}. By Lemma \ref{lem cpp}, we have 
$$
u_2(x,t)=\omega_0(x)-\lambda_0 t+A \geq u(x,t)\geq \omega_0(x)-\lambda_0 t-A=u_1(x,t),
$$
hence
$$
\lim_{t\to\infty}\frac{u(x,t)}{t}=-\lambda_0,
$$
and $\lambda_0$ is unique.
\end{proof}

If the boundary data are zero, we can derive a more delicate asymptotic behavior as in \cite{JKMT2011JMPA, HTZ2019GF}.

\begin{proof}[\textbf{Proof of Theorem \ref{thm1.7}}]

Let $u_{\varepsilon}$ be the solution of \eqref{app par eq2} and denote $v=\sqrt{\varepsilon^2+|Du_\varepsilon|^2}$. We consider the following Lyapunov function
$$
I^{\varepsilon}(t)\coloneqq\int_{\Omega}v^{\frac{p}{2}}\,dx.
$$

By direct calculation,
$$
\frac{\mathrm{d}}{\mathrm{d}t}\int_{\Omega}v^{p}\,dx=p\int_{\Omega}v^{p-2} Du_\varepsilon\cdot Du_{\varepsilon,t}\,dx\\
=-p\int_{\Omega}u_{\varepsilon,t}\,\mathrm{div}(v^{p-2}Du_\varepsilon )\,dx\\
=-p\int_{\Omega}|u_{\varepsilon,t}|^2\, dx.
$$

Integrating in $t$, it follows that
$$
\int_{0}^{T}\int_{\Omega}|u_{\varepsilon,t}|^{2}\,dxdt =\frac{1}{p}\int_{\Omega}(v(x,0)^p-v(x,T)^p)\, dx.
$$

Therefore,
$$
\limsup\limits_{\varepsilon\to 0}\int_{0}^{T}\int_{\Omega}|u_{t}^{\varepsilon}|^{2}\,dxdt\leqslant C,
$$
where $C$ is a constant independent of $\varepsilon\in(0,1)$ and $T>0$. Hence, we know $u_{t}^{\varepsilon}\rightharpoonup u_{t}$ weakly in $L^{2}(\overline{\Omega}\times[0,T])$ as $\varepsilon\to 0$ for each fixed $T>0$.

By lower semi-continuity of weak convergence,
\begin{eqnarray}\label{utbdd}
\int_0^{\infty}\int_{\Omega}|u_t|^2\, dxdt=\lim_{T\to\infty}\int_0^{T}\int_{\Omega}|u_t|^2\, dxdt\leq \lim_{T\to\infty}\liminf_{\varepsilon\to0}\int_0^{T}\int_{\Omega}|u_t^{\varepsilon}|^2\, dxdt\leq C,
\end{eqnarray}
where the constant $C$ is independent of $\varepsilon$ and $T$.

For any sequence $\{t_k\} \to \infty$, by the Arzelà-Ascoli theorem, there exist a subsequence $\{t_{k_j}\}$ and a Lipschitz continuous function $w$ such that
\begin{eqnarray}\label{ukj}
u_{k_j}(x,t) \coloneqq u(x, t + t_{k_j}) \to w(x,t),
\end{eqnarray}
locally uniformly on $\overline{\Omega} \times [0,\infty)$, hence uniformly on $\overline{\Omega} \times [0,T]$ for every $T > 0$. By the arguments in the proof of Theorem \ref{thm1.1}, we know $w$ is a viscosity solution of 
\begin{equation*}
\left\{\begin{aligned}
& w_t=\operatorname{div}(|Dw|^{p-2}Dw) &&\text{in~}\Omega\times[0,\infty),\\
& w_\nu=0 &&\text{on~}\partial\Omega\times[0,\infty).
\end{aligned}\right.
\end{equation*}
Moreover, by \eqref{utbdd}, it follows that
$$
\int_{0}^{1} \int_{\Omega} (u_{k_j})_t^2 \, dx \, dt = \int_{t_{k_j}}^{1 + t_{k_j}} \int_{\Omega} (u_t)^2 \, dx \, dt \to 0,
$$
as $ j \to \infty $, which implies
$$
(u_{k_j})_t \rightharpoonup0,
$$
weakly in $ L^2(\overline{\Omega} \times [0,1]) $ as $ j \to \infty $. Recall that \eqref{ukj} implies
$$
(u_{k_j})_t \rightharpoonup w_t,
$$
weakly in $ L^2(\overline{\Omega} \times [0,1]) $ as $ j \to \infty $. Therefore, $ w_t = 0 $ in the weak sense, and $ w $ is constant in $ t $. Consequently, $ w $ is a viscosity solution of the following elliptic equation
\begin{equation*}
\left\{\begin{aligned}
& \operatorname{div}(|Dw|^{p-2}Dw)=0 &&\text{in~}\Omega,\\
& w_\nu=0 &&\text{on~}\partial\Omega.
\end{aligned}\right.
\end{equation*}
By Theorem \ref{thm-global-general-elliptic}, $w$ is also a weak solution, and thus constant by classical weak theory.

Finally, by Lemma \ref{lem cpp}, it follows easily that $w$ is independent of the choice of the subsequence.
\end{proof}

\section{The prescribed contact angle problem}
In this section, we consider the prescribed contact angle problem, which corresponds to $q=0$ in \eqref{par eq}. Specifically, we consider the following problem
\begin{equation}\label{q=01}
\left\{\begin{aligned}
& u_t=\operatorname{div}(|Du|^{p-2}Du)+f(x,u) &&\text{in~}\Omega\times[0,\infty),\\
& u(x,0)=u_{0}(x)  &&\text{in~}\Omega,\\
& u_\nu=-\phi(x)|Du| &&\text{on~}\partial\Omega\times[0,\infty),
\end{aligned}\right.
\end{equation}
and its approximate problems
\begin{equation}\label{q=02}
\left\{\begin{aligned}
& u_t=\operatorname{div}(v^{p-2}Du)+f(x,u) &&\text{in~}\Omega\times[0,\infty),\\
& u(x,0)=u_{0}(x)  &&\text{in~}\Omega,\\
& u_\nu=-\phi(x)|Du| &&\text{on~}\partial\Omega\times[0,\infty).
\end{aligned}\right.
\end{equation}
However, we can only derive results corresponding to the case $q>0$ under the nearly vertical condition
\begin{equation*}
\|\phi\|_{C^2(\partial\Omega)}\leq \varepsilon_1,
\end{equation*}
where $\varepsilon_1>0$ is a small constant depending only on $n,p,L,\Omega.$

Since the approach is very similar, all results established in the previous sections for the case $q>0$ extend to equations satisfying the nearly vertical condition \eqref{nvc} when $q=0$. Therefore, we only present the proof of the following crucial gradient estimate. The small constant $c>0$ and the large constant $C>0$ are independent of $\varepsilon$, $\varepsilon_1$, $T$, and $|u|_{C^0}$, and may change from line to line. 
\begin{theorem}\label{thm7.1}
Assume $\Omega\subset \mathbb{R}^n$ is a bounded strictly convex $C^{2,\beta}$ domain. If $f$ satisfies \eqref{f} and $\phi$ satisfies \eqref{nvc}, then the solution of \eqref{q=02} satisfies 
$$
\sup_{\overline{\Omega}\times[0,T]}|Du|\leq C(n,p,L,\Omega).
$$
\end{theorem}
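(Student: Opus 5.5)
I will run the maximum-principle argument of Theorem \ref{thm3.2} with the exponent $q=0$. I take the auxiliary function
$$
\Phi(x,t)=\log w+\alpha h,\qquad w=v-\sum_{l=1}^n\phi\, u_l h_l ,
$$
which is the $q=0$ specialization of the function used before. Here I extend $\phi$ to $\overline\Omega$ with $\|\phi\|_{C^2}\le C\varepsilon_1$ and use the defining function $h$ from Section 2. Since $|\phi|\le\varepsilon_1$ is small, $w\approx v$ and $|Dw|$ is comparable to $|Du|$. Suppose $\Phi$ attains its maximum at $(x_0,t_0)$ with $t_0>0$ and $|Du(x_0,t_0)|$ large; I will derive a contradiction in two cases.

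\textbf{Boundary case} $x_0\in\partial\Omega$. I repeat the computation \eqref{bdy est}, using the tangential derivative of the boundary condition, $u_{ni}=-\phi_i v-\phi v_i$ (the $q=0$ version), and the vanishing of the tangential derivatives of $\Phi$. This solves for $u_{ni}$ as before, with denominator $1-v^{0}\phi^2\ge 1-\varepsilon_1^2$. The curvature term $v^{-1}\sum_{k,i}u_ku_i\kappa_{ik}\ge\kappa_0|D'u|^2/v$ is now the only good term. The boundary condition gives $u_n^2=\phi^2|Du|^2\le\varepsilon_1^2|Du|^2$, so $|D'u|^2\ge(1-\varepsilon_1^2)|Du|^2$ and this term is at least $\kappa_0/2$.

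Unlike the case $q>0$, the error terms are no longer $O(v^{-q})$. They are $O(\|\phi\|_{C^2})=O(\varepsilon_1)$ uniformly in $v$; an example is $\phi_n u_n/w$. Hence I obtain
$$
0\ge\Phi_\nu\ge \frac{\kappa_0}{2}-\alpha-C\varepsilon_1-Cv^{-1},
$$
and I choose $\alpha$ and $\varepsilon_1$ small to conclude.

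\textbf{Interior case}, which I expect to be the main obstacle. I follow Case~2 of Theorem \ref{thm3.2} with the same coordinates and the same large constant $A$. The lower bound \eqref{w-2aiiwi2} becomes
$$
w^{-2}a^{ii}|w_i|^2\ge c v^{p-4}M_1^2-C\varepsilon_1^2v^{p-4}M^2-C\varepsilon_1^2v^{p-2}.
$$
The third-order terms cancel via the differentiated equation and $f_u\le0$ exactly as before. This yields
$$
\mathrm{I}\ge c v^{p-4}M^2-Cv^{p-4}M_1^2-C\varepsilon_1^2v^{p-4}M^2-C\varepsilon_1 v^{p-2}-Cv^{-1},
$$
where the $M^2$ error is absorbed since $\varepsilon_1$ is small and $M_1^2$ is controlled by $A$.

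The delicate point is the term $\varepsilon_1 v^{p-2}$. With $q=0$ it is of the same order as the good term $\mathrm{II}\ge c\alpha\kappa_0 v^{p-2}$, so there is no decay in $v$ to exploit. I handle it by fixing $\alpha$ first, using only the boundary case and $\kappa_0$. Then I take $\varepsilon_1\ll\alpha\kappa_0$, so that $c\alpha v^{p-2}-C\varepsilon_1v^{p-2}\ge \tfrac c2\alpha v^{p-2}$. I then need to check that the remaining error, of order $v^{p-2-q}$ with $q=0$, is in fact carried by a factor $\|\phi\|_{C^2}$. It comes from $\phi_{ii}$, $\phi_i$ and $\phi h_{kii}$ in $w_{ii}$, and from the cross term $\phi h_k a^{ij}_{~,k}u_{ij}$. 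This is the step I would verify most carefully, tracking every occurrence of $\phi$ in $w_i$ and $w_{ii}$ so that each $O(v^{p-2})$ error carries $\varepsilon_1$. Once this holds, $0\ge c\alpha v^{p-2}-Cv^{-1}$, which bounds $|Du(x_0,t_0)|$ and hence $\sup|Du|$ by $C(n,p,L,\Omega)$.
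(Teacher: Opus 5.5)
Your proposal is correct and is essentially the paper's proof. It uses the same $\Phi=\log w+\alpha h$ with $w=v-\sum_l\phi u_lh_l$, the same boundary bound $0\ge\Phi_\nu\ge\kappa_0/2-\alpha-C\varepsilon_1$, and the same $A$-splitting in the interior, where every $O(v^{p-2})$ error carries a factor of $\varepsilon_1$. The only difference is bookkeeping: the paper couples $A=\varepsilon_1^{-1/3}$ and $\alpha=\varepsilon_1^{1/2}$. Your order works equally well: fix $A$, then take $\alpha$ small relative to both $\kappa_0$ and $1/(A+1)$ so that $\mathrm{II}\ge c\alpha v^{p-2}$, then take $\varepsilon_1\ll\alpha$.

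One correction: for $q=0$ the positive term from $w_{ii}$ degenerates in the $u_{11}$ direction, since its coefficient there is $a^{11}\varepsilon^2v^{-3}$. This is why the paper tracks $M_2$ instead of $M$. Your $c\,v^{p-4}M^2$ therefore only appears after adding the $A$-term $cA\,v^{p-4}M_1^2$, which controls $u_{11}^2$, so your conclusion is unaffected.
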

\begin{proof}
We choose auxiliary function
$$
\Phi(x,t)=\log w +\alpha h,
$$
where
$$
w=v-\sum_{l=1}^n \phi u_l h_l.
$$

Suppose $\Phi(x,t)$ attains its maximum at $(x_0,t_0)\in \overline{\Omega}\times[0,T]$. Without loss of generality, we may assume $t_0>0$ and $|Du(x_0,t_0)|$ is large enough. \vspace{4pt}

\textbf{Case 1:} $x_0\in \partial\Omega$.

Using the same notation and following an argument similar to that in \textbf{Case 1} of Theorem \ref{thm3.2}, we solve $u_{ni}$ as
\begin{eqnarray*}
u_{ni}=\frac{1}{1-\phi^2|Du|^{-1}v}\left[\phi |Du|^{-1}v\left(\phi_i u_n-\phi\sum_{l}u_l h_{li}\right)-\phi_i |Du|\right].   
\end{eqnarray*}
It follows that
\begin{eqnarray}
0&\geq& \Phi_{\nu}(x_{0},t_{0})
= w^{-1}v^{-1}\sum_{k=1}^{n-1}u_{k}(u_{nk}+\sum_{i=1}^{n-1}u_{i}\tilde{\kappa}_{ik})+\frac{1}{w}[\phi_{n}u_{n}-\phi\sum_{l=1}^{n}u_{l}h_{ln}]-\alpha\notag\\
&=& \frac{w^{-1}v^{-1}}{1-\phi^{2}|Du|^{-1}v}\sum_{k=1}^{n-1}u_{k}\big{[}\phi|Du|^{-1}v(-\phi\sum_{l=1}^{n}u_{l}h_{lk}+\phi_{k}u_{n})-|Du|\phi_{k}\big{]}\notag\\
&&+\frac{1}{w}v^{-1}\sum_{k,i=1}^{n-1}u_{k}u_{i}\kappa_{ik}+\frac{1}{w}[\phi_{n}u_{n}-\phi\sum_{l=1}^{n}u_{l}h_{ln}]-\alpha\label{q=0bdy est}\\
&\geq&\frac{\kappa_0}{2}-\alpha-C\varepsilon_1.\notag
\end{eqnarray}
It follows that $|Du(x_0,t_0)|\leq C$ provided $\varepsilon_1>0$ is small enough and selecting $\alpha>0$ sufficiently small.\vspace{4pt}

\textbf{Case 2:} $x_0\in\Omega$.

 We choose coordinates such that $|Du|=u_1$, and the matrix $\{u_{\alpha\beta}\}_{\alpha,\beta>1}$ is diagonal at that point. Since $\Phi$ attains its maximum at $(x_0,t_0)$, we have
\begin{eqnarray*}
0&\leq& \Phi_t(x_0,t_0)=w^{-1}\sum_{k=1}^n(v^{-1}u_k-\phi h_k)u_{tk},\\ 
0&=&\Phi_i(x_0,t_0)=w^{-1}w_i+\alpha h_i,
\end{eqnarray*}
and
\begin{eqnarray*}
0\geq \Phi_{ii}(x_0,t_0)=w^{-1}w_{ii}-\left|\frac{w_i}{w}\right|^2+ \alpha h_{ii}.
\end{eqnarray*}
Hence
\begin{eqnarray*}
0&\geq&a^{ii}\Phi_{ii}(x_0,t_0)-\Phi_t(x_0,t_0)\\
&=&w^{-1}a^{ii}w_{ii}-a^{ii}\left|\frac{w_i}{w}\right|^2+\alpha a^{ii}h_{ii}-w^{-1}\sum_{k=1}^n(v^{-1}u_k-\phi h_k)u_{tk}\\
&=&w^{-1}a^{ii}w_{ii}+A a^{ii}\left|\frac{w_i}{w}\right|^2-w^{-1}\sum_{k=1}^n(v^{-1}u_k-\phi h_k)u_{tk}\\[2pt]
&&+\alpha a^{ii}h_{ii}-(A+1)\alpha^{2}a^{ii}|h_i|^2\\[4pt]
&=& \mathrm{I}+\mathrm{II},
\end{eqnarray*}
where $\mathrm{I}$ denotes the three terms in the third line, and $\mathrm{II}$ denotes the two terms in the fourth line. The large constant $A$ is independent of $\varepsilon$, $T$, $|u|_{C^0}$ and will be chosen later.

By direct computation, we have
\begin{eqnarray*}
w_i=v^{-1}\sum_ku_ku_{ki}-\phi_i\sum_ku_kh_k-\phi \sum_k u_{ki}h_k-\phi \sum_ku_kh_{ki},
\end{eqnarray*}
and
\begin{eqnarray*}
w_{ii}&=&-v^{-3}\left(\sum_k u_ku_{ki}\right)^2+v^{-1}\sum_{k}u_{ki}^2+v^{-1}\sum_ku_ku_{kii}\\
&&-\phi_{ii}\sum_k u_kh_k-2\phi_i\sum_k u_{ki}h_k-2\phi_{i}\sum_ku_kh_{ki}\\
&&-\phi \sum_k u_{kii}h_k-2\phi \sum_k u_{ki}h_{ki}-\phi \sum_k u_k h_{kii}.
\end{eqnarray*}
Denote $M=\max|D^2u(x_0,t_0)|$, $M_1=\max_i|u_{1i}(x_0,t_0)|$, $M_2=\max_{\alpha\neq1}|u_{\alpha\alpha}(x_0,t_0)|$. Note that
\begin{eqnarray*}
w_i&=&\sum_k(v^{-1}u_k-\phi h_k)u_{ki}-(\phi_i h_1+\phi h_{1i})u_1\\
&=&(v^{-1}u_1-\phi h_1)u_{1i}-\sum_{k\neq1}\phi h_k u_{ki}-(\phi_i h_1+\phi h_{1i})u_1,
\end{eqnarray*}
hence 
$$
|w_i|^2\geq \frac{1}{2}u_{1i}^2-C\varepsilon_1M^2-C\varepsilon_1v^2,
$$
and
\begin{eqnarray}\label{q=0w-2aiiwi2}
w^{-2}a^{ii}|w_i|^2\geq cv^{p-4}M_1^{2} -C\varepsilon_1v^{p-4}M_2^2-C\varepsilon_1v^{p-2}.
\end{eqnarray}

Now compute the remaining part of $\mathrm{I}$.
\begin{eqnarray*}
&&a^{ii}w_{ii}-\sum_k v^{-1}(u_k-\phi h_k)u_{tk}\\
&\geq&c v^{p-3}M_2^2+\sum_k(v^{-1}u_k-\phi h_k)(a^{ij}u_{ijk}-u_{tk})-C\varepsilon_1v^{p-2}M-C\varepsilon_1v^{p-1}\\
&\geq& c v^{p-3}M_2^2-C\varepsilon_1v^{p-2}M-C\varepsilon_1v^{p-1}+\sum_k (v^{-1}u_k-\phi h_k)(-f_k-f_u u_k-a^{ij}_{~,k}u_{ij}),
\end{eqnarray*}
where we have used differentiating equation \eqref{app par eq}. Recall that $f_u \leq 0$; from the computation \eqref{aijkuij} and the Cauchy inequality, we further have
\begin{eqnarray*}
&&a^{ii}w_{ii}-\sum_k v^{-1}(u_k-\phi h_k)u_{tk}\\
&\geq& c v^{p-3}M_2^2-C\varepsilon_1v^{p-2}M-C\varepsilon_1v^{p-1}-C-C\varepsilon_1\sum_{k\neq1}\left|a^{ij}_{~, k}u_{ij}\right|-C\left|a^{ij}_{~,1}u_{ij}\right|\\[2pt]
&\geq& c v^{p-3}M_2^2-C\varepsilon_1v^{p-2}M-C\varepsilon_1v^{p-1}-C-C\varepsilon_1v^{p-3}M_1^2-Cv^{p-3}M_1M.\label{q=0aiiwii}
\end{eqnarray*}

Hence
\begin{eqnarray}
&&w^{-1}a^{ii}w_{ii}-\sum_k w^{-1}v^{-1}(u_k-\phi h_k)u_{tk}\notag\\
&\geq& c v^{p-4}M_2^2-C\varepsilon_1v^{p-3}M-C\varepsilon_1v^{p-2}-Cv^{-1}-C\varepsilon_1v^{p-4}M_1^2-Cv^{p-4}M_1M.\label{q=0w-1aiiwii}
\end{eqnarray}
Recalling \eqref{q=0w-2aiiwi2} and choosing $A=\varepsilon_1^{-\frac{1}{3}}$ with $\varepsilon_1>0$ sufficiently small, we have
\begin{eqnarray}\label{q=0I}
\mathrm{I}\geq -C\varepsilon_1^{\frac{2}{3}}v^{p-2}-Cv^{-1}.
\end{eqnarray}
Finally, we select $\alpha=\varepsilon_1^{\frac{1}{2}}>0$ small enough. It follows from the strict convexity of $h$ and the fact that $v$ is sufficiently large that
\begin{eqnarray}\label{q=0II}
\mathrm{II}\geq c\varepsilon_1^{\frac{1}{2}}v^{p-2}.
\end{eqnarray}
Combining \eqref{q=0I} and \eqref{q=0II} yields
$$
0\geq a^{ii}\Phi_{ii}(x_0,t_0)-\Phi_t(x_0,t_0)\geq (c\varepsilon_1^{\frac{1}{2}}-C\varepsilon_1^{\frac{2}{3}})v^{p-2}-Cv^{-1}.
$$
Therefore, $|Du(x_0,t_0)|\leq C$ provided $\varepsilon_1>0$ is sufficiently small.

Together with \textbf{Case 1}, we obtain
$$\sup_{\overline{\Omega}\times[0,T]}|Du|\leq C(n,p,L,\Omega).$$
\end{proof}

\begin{proof}[\textbf{Proof of Theorem \ref{thm1.8}}]

We only show the approximating procedure, since, based on the crucial gradient estimate in Theorem \ref{thm7.1}, the other results are very similar to those for the case $q>0$. 

Since the boundary condition is already Lipschitz in $Du$, we do not need to modify the boundary condition. Consider approximate problems \eqref{q=02}, Theorem \ref{thm7.1} yields the existence, uniqueness and global gradient estimate for the solution $u_{\varepsilon}$ of \eqref{q=02}. Then, by the same arguments as in the proof of Theorem \ref{thm1.1}, we know that, up to a subsequence, $u_{\varepsilon}\to u$ and $u$ is a viscosity solution of \eqref{q=01}. Moreover, uniqueness follows directly from Lemma \ref{lem cpp}.
    
\end{proof}

\section{Non-convex case}
In this section, we consider the parabolic $p$-Laplacian equations on non-convex domains. We first introduce some constants which describe the geometric properties of $\Omega$.

For a given point $y \in \partial \Omega$, we define $K_0(y)$ as the supremum of the radii of interior tangent balls at $y$, and define $C_0(y)$ as the maximum eigenvalue of the matrix $-\kappa_{ij}(y)$, where $\kappa_{ij}$ is the second fundamental form of $\partial\Omega$. We further denote
\begin{equation}\label{C0K0}
\begin{cases}
C_0 = \sup\{C_0(y) : y \in \partial \Omega\}, \\[2pt]
K_0 = \inf\{K_0(y) : y \in \partial \Omega\}.
\end{cases}
\end{equation}
In view of the strictly convex case, we may assume $C_{0}\geq 0$.
\subsection{The case $q>0$}

Following the ideas of \cite{JKMT2011JMPA,J2023CVPDE}, and in order to satisfy the Lipschitz regularity of right-hand side in $Du$, we denote $\tilde p\coloneqq\max\{2,p\}$ and consider the following capillary-type problem
\begin{equation}\label{nc q>01}
\left\{\begin{aligned}
& u_t=\operatorname{div}(|Du|^{p-2}Du)+a(x,u)|Du|^{\tilde p-1}+f(x,u) &&\text{in~}\Omega\times[0,\infty),\\
& u(x,0)=u_0(x)  &&\text{in~}\Omega,\\
& |Du|^{q-1}u_\nu=-\phi(x) &&\text{on~}\partial\Omega\times[0,\infty),
\end{aligned}\right.
\end{equation}
and the approximate problems
\begin{equation}\label{nc q>02}
\left\{\begin{aligned}
& u_t=\operatorname{div}(v^{p-2}Du)+a(x,u)v^{\tilde p-1}+f(x,u) &&\text{in~}\Omega\times[0,\infty),\\
& u(x,0)=u_{0}(x) &&\text{in~}\Omega,\\
& u_\nu=-\phi_{\varepsilon}(x)v^{1-q} &&\text{on~}\partial\Omega\times[0,\infty),
\end{aligned}\right.
\end{equation}
where $a(x,u)$ is a forcing function satisfying 
\begin{equation}\label{a1}
 a\in C^{1,\beta}(\overline{\Omega}\times \mathbb{R}),\quad a_{z}(x,z)\leq0
\end{equation}
and the following forcing condition holds for some $c_1=c_1(n, p, q, L, \Omega)>0$ small and $C_1=C_1(n, p, q, L, \Omega)>0$ large:
\begin{align}\label{a2}
\begin{cases}
c_1a(x,u)^2-|Da(x,u)|-C_1|a(x,u)|\gg 1 \quad &\text{if } p\geq2,\\
|a(x,u)|>0 \quad &\text{if } 1<p<2.
\end{cases}
\end{align}
\begin{remark}\label{rmk2}
Since the proofs in previous sections extend analogously to equations with a forcing term $a(x,u)$, all preceding results for strictly convex domains remain valid provided the global gradient estimate holds. Hence, we focus on establishing the following crucial gradient estimate. Moreover, as the case $1<p<2$ is more straightforward, we present the proof solely for $p\geq2$. 
\end{remark}
In the following, the small constant $c>0$ and the large constant $C>0$ are independent of $\varepsilon$, $T$, and $|u|_{C^0}$, and may change from line to line. 
\begin{theorem}\label{thm8.1}
Assume $\Omega\subset \mathbb{R}^n$ is a bounded $C^{2,\beta}$ domain, $p>1$, $q>0$. If $f$ satisfies \eqref{f} and the forcing term $a(x,u)$ satisfies \eqref{a1} and \eqref{a2}, then the solution of \eqref{nc q>02} satisfies 
$$
\sup_{\overline{\Omega}\times[0,T]}|Du|\leq C(n,p,q,L,\Omega).
$$
\end{theorem}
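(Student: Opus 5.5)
Following the structure of the proofs of Theorem \ref{thm3.2} and Theorem \ref{thm7.1}, I take the auxiliary function
$$
\Phi(x,t)=\log w+\alpha h,\qquad w=v^{q+1}-(q+1)\sum_{l}\phi\, u_l h_l .
$$
Here $h$ is no longer convex, since $\Omega$ is not convex. I choose $h$ to be a $C^{2,\beta}$ function with $h=0$, $-h_\nu=1$ on $\partial\Omega$ and $|D^2h|\le C(C_0,K_0)$; a modification of the distance function near $\partial\Omega$ will do. The main change is in the constant $\alpha$, which I now take \emph{large}, $\alpha=\alpha(C_0,n,p,q,L)$. The curvature is then controlled at the boundary by $\alpha$, and in the interior by the forcing term instead of by convexity. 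Lemma \ref{lem3.1} carries over, because differentiating the equation in $t$ produces the extra term $a_u|Du|^{\tilde p-1}u_t+\tilde a\,Du\cdot Du_t$. The first part is a zero-order coefficient with good sign by \eqref{a1}, and the second is a bounded drift. So $|u_t|\le C$.

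\textbf{Boundary case.} Suppose $x_0\in\partial\Omega$. I redo the computation \eqref{bdy est}. The term $\frac{q+1}{w}v^{q-1}\sum u_ku_i\kappa_{ik}$ is now only bounded below by $-(q+1)C_0 v^{q+1}/w\approx-(q+1)C_0$, while $\Phi_\nu$ contains $+\alpha$ (note $h_\nu=-1$ and $\nu$ points inward). This gives
$$
0\ge\Phi_\nu\ge \alpha-(q+1)C_0-Cv^{-q}.
$$
We have $0\ge \Phi_\nu$ at a boundary maximum because $\nu$ is inward. Taking $\alpha>(q+1)C_0+1$ yields a contradiction once $v$ is large. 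I should double-check the sign convention here, since in the convex case the authors get the inequality with $-\alpha$. If the sign comes out reversed, I would use $\Phi=\log w-\alpha h$ instead. Either way, $\alpha$ is then a fixed large constant.

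\textbf{Interior case.} For $x_0\in\Omega$, I follow Case 2 of Theorem \ref{thm3.2}. The term $\mathrm{I}$ acquires new contributions from differentiating $a(x,u)v^{\tilde p-1}$ in $x_k$ and contracting with $(q+1)(v^{q-1}u_k-\phi h_k)$. These are
$$
(q+1)v^{q-1}u_1\bigl(a_{x_1}v^{\tilde p-1}+a_u u_1 v^{\tilde p-1}+(\tilde p-1)a v^{\tilde p-3}u_1u_{11}\bigr)+\dots
$$
Here $a_u\le0$ enters with the wrong sign for a lower bound, so I would instead move it through $f_u$-type monotonicity. More precisely, since $\Phi_t\ge0$ and we evaluate $a^{ii}\Phi_{ii}-\Phi_t$, the zero-order term $a_u v^{\tilde p+q}$ appears with the sign that helps, just as $f_u$ did. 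Next I eliminate $u_{11}$ using the critical point equation $\Phi_i=0$, i.e. $w_1=-\alpha w h_1$. This gives $v^{q}u_{11}\approx-\alpha v^{q+1}h_1+O(v^{q})$, so $u_{11}=O(\alpha v)$. The first-order terms from the drift $\tilde a Du\cdot Du_k$ also contribute $(\tilde p-1)a v^{\tilde p-2}\cdot(-\alpha h_1 v)$-type terms.

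Dividing by $w\approx v^{q+1}$, the key new terms in $a^{ii}\Phi_{ii}-\Phi_t$ should take the form
$$
-|Da|\,v^{\tilde p-2}-C|a|\,\alpha v^{\tilde p-2}-C v^{p-2},
$$
while the good term $\mathrm{II}$ loses convexity and becomes $-C\alpha v^{p-2}+(A+1)\alpha^2 a^{ii}h_i^2$ with the wrong sign. The positive quadratic term in $a$ must come from the gradient term: I would use $w_i/w=-\alpha h_i$ and keep the $A a^{ii}|w_i/w|^2$ piece in the form $A\alpha^2 a^{ii}h_i^2$. Alternatively, and I think this is what the hypothesis $c_1a^2$ signals, I would express $u_{11}$ through the equation itself: $u_t-a v^{\tilde p-1}-f=a^{ij}u_{ij}$ contains $a^{11}u_{11}$, so $\sum_\alpha a^{\alpha\alpha}u_{\alpha\alpha}\approx-a v^{\tilde p-1}+O(v^{p-1})$. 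Together with $|u_{11}|\le C\alpha v$, this forces $M_2\gtrsim |a|v^{\tilde p-p+1}$. The good term $c v^{p-4}M^2$ in \eqref{w-1aiiwii} then yields $c\,a^2 v^{2\tilde p-p-2}\ge c\,a^2v^{\tilde p-2}$ when $p\ge2$, which dominates the bad terms precisely under \eqref{a2}.

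\textbf{Main obstacle.} The hardest step is this last bookkeeping: verifying that the $c\,a^2$ gain from $M_2^2$ really beats $|Da|v^{\tilde p-2}$, $C|a|v^{\tilde p-2}$ and the loss of convexity $C\alpha v^{p-2}$, with $c_1,C_1$ depending only on $n,p,q,L,\Omega$. This requires choosing the constant $A$ first, then $\alpha$ from the boundary step, and then reading off $c_1,C_1$. For $1<p<2$ we have $\tilde p=2$, and the forcing term $a v$ dominates $v^{p-1}$. By Remark \ref{rmk2}, I would treat this case by the same argument, where only $|a|>0$ is needed, since the $M_2$ gain $a^2v^{4-p-2}$ already beats $v^{p-2}$ terms.
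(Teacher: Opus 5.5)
Your argument is correct in outline, but it handles the boundary differently from the paper.

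\textbf{The paper's route.} The paper never adds $\alpha h$; it maximizes $w$ itself. At an interior maximum, $w_i=0$ makes every $u_{1i}$ small. Claim 1 then combines the equation with the critical-point relations to get $\sum_{\alpha\neq1}u_{\alpha\alpha}^2\ge c\,a^2v^2$, and this produces the gain $c\,a^2v^{p+q-1}$. At a boundary maximum, the paper uses $w_\nu\ge -(C_0+\cdots)w$ together with the multiplier $\rho$ on the interior tangent ball $B(x_0+K_0\nu,K_0)$. This $\rho$ satisfies $\rho(x_0)=1$, $\rho\le1$ off $B$ and $\rho_\nu(x_0)=L_0>C_0$, so the maximum of $\psi=\rho w$ moves to an interior point $x_1\in B$. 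The interior computation is then redone there, and the extra $\rho$-terms have the same order as the forcing gain, so \eqref{a2} absorbs them.

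\textbf{Your route.} Your $\Phi=\log w-\alpha h$ with $\alpha>(q+1)C_0+1$ is the global version of the same device: $-\alpha h$ replaces $\log\rho$. The boundary case becomes a one-line contradiction that uses only $C_0$, not $K_0$, and there is a single interior computation. The cost is that $w_i/w=\alpha h_i\neq0$, so $u_{1i}=O(\alpha v)+O(v^{-q}M)$ is no longer small. After dividing by $w$, you therefore pick up extra terms:
\begin{itemize}
\item $-C\alpha v^{p-2}$, from $\alpha a^{ii}h_{ii}$;
\item $-C\alpha^2v^{p-2}$, from $-a^{ii}|w_i/w|^2$, and from Cauchy's inequality applied to the $O(\alpha v^{p-3}M)$ contribution of $a^{ij}_{~,k}u_{ij}$ once all critical relations are used;
\item $-C\alpha|a|v^{p-2}$, from the drift part of the forcing.
\end{itemize}
All of these are of order $v^{p-2}$, as in the paper.

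\textbf{Two things you need to state correctly for the argument to close.}

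First, the sign. With $h_\nu=-1$ and $\nu$ inward, $\log w+\alpha h$ gives $\Phi_\nu=w_\nu/w-\alpha$. So the sign is indeed reversed, and $\log w-\alpha h$ is the function you must use.

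Second, the source of the $a^2$ gain. It cannot come from the $A$-splitting, because
$$
A a^{ii}|w_i/w|^2-(A+1)\alpha^2a^{ii}h_i^2=-\alpha^2a^{ii}h_i^2\le 0 .
$$
It comes only from your alternative argument. There the equation gives
$$
\Bigl|\sum_{\beta>1}u_{\beta\beta}\Bigr|\ge(|a|-C\alpha)v-C\bigl(Mv^{-q}+v^{1-q}+1\bigr),
$$
hence $M\ge c|a|v$ provided $|a|\ge 2C\alpha$. Since $u_{11}$ is now of size $\alpha v$ rather than small, this lower bound on $|a|$ is genuinely needed. It follows from \eqref{a2} once $C_1/c_1$ is taken large relative to $\alpha$. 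The remaining $C\alpha^2$ constants are then absorbed by the ``$\gg1$'' in \eqref{a2}.
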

\begin{proof}
By abuse of notation, we write $\phi$ in place of $\phi_{\varepsilon}$, since they enjoy similar properties. Following the ideas of \cite{JKMT2011JMPA} and \cite{J2023CVPDE}, we choose auxiliary function
$$
w=v^{q+1}-(q+1)\sum_{l=1}^n \phi u_l h_l.
$$
Suppose $w(x,t)$ attains its maximum at $(x_0,t_0)\in \overline{\Omega}\times[0,T]$.

Without loss of generality, we may assume $t_0>0$ and $|Du(x_0,t_0)|$ is large enough. We use the same notation as in the proof of Theorem \ref{thm3.2}.\vspace{4pt}

\textbf{Case 1:} $x_0\in\Omega$.

 We choose coordinates such that $|Du|=u_1$ and the matrix $\{u_{\alpha\beta}\}_{\alpha,\beta>1}$ is diagonal at that point. Since $w$ attains its maximum at $(x_0,t_0)$, we have
\begin{eqnarray*}
0&\leq& \frac{w_t(x_0,t_0)}{q+1}=\sum_{k=1}^n(v^{q-1}u_k-\phi h_k)u_{tk},\\
0&=&\frac{w_i(x_0,t_0)}{q+1}=v^{q-1}\sum_k u_ku_{ki}-\phi_i\sum_k u_kh_k-\phi \sum_k u_{ki}h_k-\phi \sum_k u_kh_{ki},
\end{eqnarray*}
and
\begin{eqnarray*}
0~\geq~ \frac{w_{ii}(x_0,t_0)}{q+1}&=&(q-1)v^{q-3}(u_1u_{1i})^2+v^{q-1}\sum_k u_{ki}^2+v^{q-1}\sum_k u_ku_{kii}\\
&&-\phi_{ii}\sum_k u_kh_k-2\phi_i\sum_k u_{ki}h_k-2\phi_{i}\sum_k u_kh_{ki}\\
&&-\phi \sum_k u_{kii}h_k-2\phi \sum_k u_{ki}h_{ki}-\phi \sum_k u_k h_{kii}.
\end{eqnarray*}
Hence
\begin{eqnarray}
0&\geq&(q+1)^{-1}\left(a^{ii}w_{ii}(x_0,t_0)-w_t(x_0,t_0)\right)\notag\\[8pt]
&=&(q-1)v^{q-3}u_1^2a^{ii}u_{1i}^2+v^{q-1}a^{ii}\sum_k u_{ki}^{2}\notag\\
&&+\sum_k(v^{q-1}u_k-\phi h_k)(a^{ij}u_{ijk}-u_{kt})\notag\\
&&-u_1 a^{ii}(\phi h_1)_{ii}-2a^{ii}\sum_k(\phi h_k)_i u_{ki}\label{nc-q>0-case1-I+II+III}\\[-4pt]
&=& \mathrm{I} +\mathrm{II} +\mathrm{III},\notag
\end{eqnarray}
where $\mathrm{I}$ denotes the two terms in the second line, $\mathrm{II}$ denotes the term in the third line, and $\mathrm{III}$ denotes the two terms in the fourth line.

Denote $M=\max|D^2u(x_0,t_0)|$, $M_1=\max_{\alpha\neq1} |u_{1\alpha}(x_0,t_0)|$, $M_2=\max_{\alpha\neq1}|u_{\alpha\alpha}(x_0,t_0)|$. Before handling these terms, we claim the following assertion:

\textbf{Claim 1.} $\sum_{\alpha\neq1}|u_{\alpha\alpha}|^{2}\geq ca(x,u)^2v^2$.

Suppose \textbf{Claim 1} is false. From the equation  
$$u_t=\operatorname{div}(|Du|^{p-2}Du)+a(x,u)|Du|^{p-1}+f(x,u)$$
and the boundedness of $|u_{t}|$, we have 
$$
|u_{11}|\geq cv.
$$
Then, using $0=w_1$, there exists $l>1$ such that
$$
|u_{1l}|\geq cv^{1+q}.
$$
Furthermore, from $0=w_l$, we obtain
$$
|u_{ll}|\geq cv^{1+2q},
$$
which yields a contradiction. Hence, \textbf{Claim 1} holds. 

Now returning to \eqref{nc-q>0-case1-I+II+III}, by direct computation, we have
\begin{eqnarray*}
\mathrm{I}~\geq~ c v^{p+q-3}M^2, \quad \mathrm{III}~\geq~ -Cv^{p-1}-Cv^{p-2}M,
\end{eqnarray*}
and
\begin{eqnarray*}
\mathrm{II}&=&\sum_k \left(v^{q-1}u_k-\phi h_k\right)\bigl(-f_k-f_u u_k-a_k v^{p-1}-a_u u_k v^{p-1}\\
&&\hspace{100pt}-(p-1)a(x,u)v^{p-3}u_1 u_{1k}-a^{ij}_{~,k}u_{ij}\bigr)\\[6pt]
&\geq& -a_u v^{p+q}-|Da|v^{p+q-1}-C\left(v^{q}+v^{p-1}\right)\\[6pt]
&&-\sum_k(v^{q-1}u_k-\phi h_k)\left((p-1)a(x,u)v^{p-3}u_1u_{1k}+a^{ij}_{~,k}u_{ij}\right).
\end{eqnarray*}

Recalling the critical equation $0=w_i$ and the computation \eqref{aijkuij}, we have
\begin{eqnarray*}
(p-1)a(x,u)v^{p-3}u_1\sum_k(v^{q-1}u_k-\phi h_k)u_{1k}~\leq~ C|a(x,u)|v^{p-1},
\end{eqnarray*}
and
\begin{eqnarray}
\sum_k(v^{q-1}u_k-\phi h_k)a^{ij}_{~,k}u_{ij}&=&(p-2)v^{p-4}u_1^2\Delta u(\phi h_1)_1+(p-2)(p-4)v^{p-6}u_1^4u_{11}(\phi h_1)_1\notag\\
&&+2(p-2)v^{p-4}u_1^2\sum_i u_{1i}(\phi h_1)_i~\leq~ Cv^{p-2}M.\label{nc-q>0-aijkuij}
\end{eqnarray}
Therefore, we have
\begin{eqnarray*}
\mathrm{II}\geq-|Da|v^{p+q-1}-C(v^q+v^{p-1})-C|a(x,u)|v^{p-1}-Cv^{p-2}M.
\end{eqnarray*}

Hence, by \textbf{Claim 1} and the Cauchy inequality, we derive
\begin{eqnarray*}
0&\geq& (q+1)^{-1}\left(a^{ii}w_{ii}(x_0,t_0)-w_t(x_0,t_0)\right)=\mathrm{I}+\mathrm{II}+\mathrm{III}\\[4pt]
&\geq& cv^{p+q-3}M^{2}-|Da|v^{p+q-1}-C|a(x,u)|v^{p-1}-Cv^{p-2}M-C(v^q+v^{p-1})\\[4pt]
&\geq&\left[c\cdot a(x,u)^2-|Da|-C\right]v^{p+q-1}-C\left(v^{q}+v^{p-1}\right).
\end{eqnarray*}
It follows from condition \eqref{a2} that $|Du(x_0,t_0)|\leq C.$\vspace{4pt}

\textbf{Case 2:} $x_0\in \partial\Omega$.

By the same argument as in \textbf{Case 1} of the proof of Theorem \ref{thm3.2}, we obtain
\begin{equation}\label{nc-q>0-case2-wn}
    w_{\nu}\geq -\left(C_{0}+\frac{\delta_0}{4}\right)w.
\end{equation}

Let $B = B(x_c, K_0)$ be the open ball with center $x_c \coloneqq x_0 + K_0 \vec{\nu}(x_0)$ such that $B \subseteq \Omega$ and $\overline{B} \cap (\mathbb{R}^n \setminus \Omega) = \{x_0\}$. Denote $\psi=\rho w$, where $\rho$ is a multiplier as in \cite{JKMT2011JMPA,J2023CVPDE}
$$
\rho(x) \coloneqq -\frac{L_0}{2K_0} \left|x - x_c\right|^2 + \frac{L_0K_0}{2} + 1.
$$
If we choose $L_0>C_0$, then clearly
$$
\psi_{\nu}(x_0,t_0)>0.
$$
Moreover, by the choice of $\rho$, we have
$$
\psi(z,t)\leq w(z,t)\leq w(z_0,t_0)=\psi(x_0,t_0) \quad \text{for } (z,t)\in(\overline\Omega\backslash B)\times[0,T],
$$
and therefore,
$$
\max_{\overline\Omega\times[0,T]}\psi= \max_{\overline B\times[0,T]}\psi>\psi(x_0,t_0)=w(x_0,t_0).
$$
It follows that the maximum of $\psi$ in $\overline{B}\times [0,T]$ must occur at a point $(x_1,t_1)\in B\times(0,T]$. 

We may assume that $t_1>0$, and choose coordinates such that $|Du|=u_1$ and the matrix $\{u_{\alpha\beta}\}_{\alpha,\beta>1}$ is diagonal at $(x_1,t_1)$. Since $\psi$ attains its local maximum, we have
\begin{eqnarray*}
0\leq \psi_t(x_1,t_1)&=&(q+1)\rho(x_1)\sum_{k=1}^n(v^{q-1}u_k-\phi h_k)u_{tk},\\
0=\psi_i(x_1,t_1)&=&(q+1)\rho(x_1) \Big[v^{q-1}\sum_k u_ku_{ki}-\phi_i\sum_k u_kh_k\\
&&\hspace{70pt}-\phi \sum_k u_{ki}h_k-\phi \sum_k u_kh_{ki}\Big]-\frac{L_0}{K_0}(x_{1,i}-x_{c,i})w,
\end{eqnarray*}
and
\begin{eqnarray*}
0\geq \psi_{ii}(x_1,t_1)&=&(q+1)\rho(x_1)\Big[(q-1)v^{q-3}(u_1u_{1i})^2+v^{q-1}\sum_{k}u_{ki}^2+v^{q-1}\sum_ku_ku_{kii}\\
&&-\phi_{ii}\sum_k u_kh_k-2\phi_i\sum_k u_{ki}h_k-2\phi_{i}\sum_k u_kh_{ki}-\phi \sum_k u_{kii}h_k\\
&&-2\phi \sum_k u_{ki}h_{ki}-\phi \sum_k u_k h_{kii}\Big]-\frac{2L_0}{K_0}w_i\left(x_{1,i}-x_{c,i}\right)-\frac{L_0}{K_0}w.
\end{eqnarray*}
Hence
\begin{eqnarray}
0&\geq& \frac{1}{(q+1)\rho}\left(a^{ii}\psi_{ii}(x_1,t_1)-\psi_{t}(x_1,t_1)\right)\notag\\[4pt]
&=&\Big[(q-1)v^{q-3}u_1^2a^{ii}u_{1i}^2+v^{q-1}a^{ii}\sum_ku_{ki}^2+\sum_k \left(v^{q-1}u_k-\phi h_k\right)\left(a^{ij}u_{ijk}-u_{kt}\right)\notag\\
&&-u_1a^{ii}(\phi h_1)_{ii}-2a^{ii}\sum_k (\phi h_k)_iu_{ki} \Big]-\frac{L_0}{(q+1)K_0\rho}\left(v^{q+1}-(q+1)\phi u_1h_1\right)\sum_ia^{ii}\notag\\
&&-\frac{2(q+1)L_0}{(q+1)K_0\rho}\left(x_{1,i}-x_{c,i}\right)a^{ii}\left(v^{q-1}u_{1}u_{1i}-(\phi h_1)_iu_1-\phi \sum_k  h_k u_{ki}\right)\label{nc-q>0-case2-I+II+III}\\[4pt]
&=&\mathrm{I}+\mathrm{II}+\mathrm{III},\notag
\end{eqnarray}
where
$$
\mathrm{I}=(q-1)v^{q-3}u_1^2a^{ii}u_{1i}^2+v^{q-1}a^{ii}\sum_ku_{ki}^2,
$$
$$
\mathrm{II}=\sum_k \left(v^{q-1}u_k-\phi h_k\right)\left(a^{ij}u_{ijk}-u_{kt}\right),
$$
and $\mathrm{III}$ denotes all the other terms.

Denote $M=\max|D^2u(x_1,t_1)|$, $M_1=\max_{\alpha\neq1} |u_{1\alpha}(x_1,t_1)|$, $M_2=\max_{\alpha\neq1}|u_{\alpha\alpha}(x_1,t_1)|$. Similar to \textbf{Case 1}, we can prove 
\begin{equation}\label{nc-q>0-case2-uaa}
\sum_{\alpha\neq1}|u_{\alpha\alpha}|^2\geq ca(x,u)^2v^2.
\end{equation}

Now returning to \eqref{nc-q>0-case2-I+II+III}, by direct computation, we have
$$
\mathrm{I}\geq cv^{p+q-3}M^2,\quad \mathrm{III}\geq -Cv^{p+q-1}-Cv^{p+q-2}M,
$$
and 
\begin{eqnarray*}
\mathrm{II}&=&\sum_k \left(v^{q-1}u_k-\phi h_k\right)\bigl(-f_k-f_u u_k-a_k v^{p-1}-a_u u_k v^{p-1}\\
&&\hspace{100pt}-(p-1)a(x,u)v^{p-3}u_1 u_{1k}-a^{ij}_{~,k}u_{ij}\bigr)\\[6pt]
&\geq& -a_u v^{p+q}-|Da|v^{p+q-1}-C\left(v^{q}+v^{p-1}\right)\\[6pt]
&&-\sum_k(v^{q-1}u_k-\phi h_k)\left((p-1)a(x,u)v^{p-3}u_1u_{1k}+a^{ij}_{~,k}u_{ij}\right).
\end{eqnarray*}

Recalling the critical equation $0=\psi_i$ and the computation \eqref{aijkuij}, by a computation similar to that in \textbf{Case 1}, we have
\begin{eqnarray*}
(p-1)a(x,u)v^{p-3}u_1\sum_k(v^{q-1}u_k-\phi h_k)u_{1k}~\leq~ C|a(x,u)|v^{p+q-1},
\end{eqnarray*}
and
\begin{eqnarray*}
\sum_k(v^{q-1}u_k-\phi h_k)a^{ij}_{~,k}u_{ij}\leq~ Cv^{p+q-2}M.
\end{eqnarray*}
Therefore, we have
\begin{eqnarray*}
\mathrm{II}\geq-|Da|v^{p+q-1}-C(v^q+v^{p-1})-C|a(x,u)|v^{p+q-1}-Cv^{p+q-2}M.
\end{eqnarray*}

Hence, by \eqref{nc-q>0-case2-uaa} and the Cauchy inequality, we derive
\begin{eqnarray*}
0&\geq& \frac{1}{(q+1)\rho}\left(a^{ii}\psi_{ii}(x_1,t_1)-\psi_t(x_1,t_1)\right)=\mathrm{I}+\mathrm{II}+\mathrm{III}\\[4pt]
&\geq& cv^{p+q-3}M^{2}-|Da|v^{p+q-1}-C|a(x,u)|v^{p+q-1}-Cv^{p+q-2}M-Cv^{p+q-1}\\[4pt]
&\geq&\left[c\cdot a(x,u)^2-|Da|-C|a(x,u)|\right]v^{p+q-1}.
\end{eqnarray*}
It follows from condition \eqref{a2} that $|Du(x_1,t_1)|\leq C$, and therefore $|Du(x_0,t_0)|\leq C$.

\end{proof}

\subsection{The case $q=0$} 
We consider the following prescribed contact angle problem
\begin{equation}\label{nc q=01}
\left\{\begin{aligned}
& u_t=\operatorname{div}(|Du|^{p-2}Du)+a(x,u)|Du|^{\tilde p-1}+f(x,u) &&\text{in~}\Omega\times[0,\infty),\\
& u(x,0)=u_0(x)  &&\text{in~}\Omega,\\
& |Du|^{-1}u_\nu=-\phi(x) &&\text{on~}\partial\Omega\times[0,\infty),
\end{aligned}\right.
\end{equation}
and the approximate problems
\begin{equation}\label{nc q=02}
\left\{\begin{aligned}
& u_t=\operatorname{div}(v^{p-2}Du)+a(x,u)v^{\tilde p-1}+f(x,u) &&\text{in~}\Omega\times[0,\infty),\\
& u(x,0)=u_{0}(x) &&\text{in~}\Omega,\\
& u_\nu=-\phi(x)|Du| &&\text{on~}\partial\Omega\times[0,\infty),
\end{aligned}\right.
\end{equation}
where the forcing function $a(x,u)$ satisfies \eqref{a1} and \eqref{a2}.
\begin{remark}
As in Remark~\ref{rmk2}, we prove the crucial gradient estimate only for $p \geq 2$. Moreover, thanks to the presence of the forcing term $a(x,u)$, the nearly vertical condition \eqref{nvc} is not required.
\end{remark}
In the following, the small constant $c>0$ and the large constant $C>0$ are independent of $\varepsilon$, $T$, and $|u|_{C^0}$, and may change from line to line. 
\begin{theorem}\label{thm8.2}
Assume $\Omega\subset \mathbb{R}^n$ is a bounded $C^{2,\beta}$ domain and  $p>1$. If $f$ satisfies \eqref{f}, $\phi$ satisfies \eqref{phi}, the forcing term $a(x,u)$ satisfies \eqref{a1} and \eqref{a2}, then the solution of \eqref{nc q=02} satisfies 
$$
\sup_{\overline{\Omega}\times[0,T]}|Du|\leq C(n,p,L,\Omega).
$$
\end{theorem}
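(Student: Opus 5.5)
We combine the auxiliary function of Theorem \ref{thm7.1} with the multiplier technique of Theorem \ref{thm8.1}. The convexity of $\Omega$ is replaced by the forcing term, and the smallness of $\phi$ by \eqref{phi}. Following Remark~\ref{rmk2}, we treat only $p\geq 2$ and work with the approximate problem \eqref{nc q=02}, with $v=\sqrt{\varepsilon^2+|Du|^2}$. We set
$$
w=v-\sum_{l=1}^n \phi u_l h_l ,
$$
where $\phi$ is extended to $\overline\Omega$ with $\|\phi\|_{C^0}\le 1-\theta<1$, and consider a maximum point $(x_0,t_0)$ of $w$ with $t_0>0$ and $|Du(x_0,t_0)|$ large. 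By \eqref{phi}, $|\phi h_l|$ stays uniformly below $1$, so $w\sim v$ and $v^{-1}u_k-\phi h_k$ has length comparable to $1$. Lemma~\ref{lem3.1} carries over (for $q=0$ the boundary is treated by differentiating $u_\nu=-\phi|Du|$ in $t$, which is a strictly oblique relation by \eqref{phi}), so $u_t$ is bounded.

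\textbf{Interior case.} We repeat Case 1 of Theorem~\ref{thm8.1} with $q=0$. The quantity
$$
a^{ii}w_{ii}-w_t=\mathrm{I}+\mathrm{II}+\mathrm{III}
$$
splits into three groups. The group $\mathrm{I}=-v^{-3}u_1^2a^{ii}u_{1i}^2+v^{-1}a^{ii}\sum_k u_{ki}^2$ is bounded below by $c\,v^{p-3}\sum_{\alpha\neq1}u_{\alpha\alpha}^2$ plus the off-diagonal squares; the $u_{11}$ contribution cancels. We therefore first prove the analogue of \textbf{Claim 1}, namely $\sum_{\alpha\neq1}u_{\alpha\alpha}^2\ge c\,a^2v^2$. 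If it fails, the equation together with the bound on $u_t$ forces $|u_{11}|\ge c|a|v$. The relation $w_1=0$ then forces some $|u_{1l}|\gtrsim v$, and $w_l=0$ forces $|u_{ll}|$ large, which is a contradiction. This step is where $|\phi|<1$ is needed, to invert $(v^{-1}u_1-\phi h_1)$.

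Next, $\mathrm{II}$ is expanded by differentiating the equation, exactly as in Theorem~\ref{thm8.1}. The term with $(p-1)a\,v^{p-3}u_1u_{1k}$ is controlled by $w_k=0$, and the term $a^{ij}_{~,k}u_{ij}$ by \eqref{aijkuij}, giving a bound $Cv^{p-2}M$. The group $\mathrm{III}$ is bounded below by $-Cv^{p-1}-Cv^{p-2}M$. Applying the Cauchy inequality and the Claim yields
$$
0\ge\bigl[c\,a^2-|Da|-C|a|\bigr]v^{p-1}-C,
$$
so \eqref{a2} bounds $v$.

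\textbf{Boundary case.} We first derive $w_\nu\ge-(C_0+\delta_0/4)w$ from the boundary computation \eqref{q=0bdy est}, with no smallness of $\phi$. The point is that we no longer need a positive lower bound: curvature terms of indefinite sign contribute at most $C_0 w$. The remaining terms involve $\phi_k$ and $\phi h_{lk}$, are $O(1)$, and after division by $w\sim v$ they are small. The solvability of $u_{ni}$ requires $1-\phi^2|Du|^{-1}v>0$, which follows from \eqref{phi} for large $|Du|$.

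We then take the multiplier $\rho$ of Theorem~\ref{thm8.1} with $L_0>C_0+\delta_0$ and set $\psi=\rho w$. This gives $\psi_\nu(x_0)>0$, so the maximum of $\psi$ is attained at an interior point $x_1\in B$. There we repeat the interior computation; the extra terms coming from $D\rho$ and $D^2\rho$ are of order $v^{p-1}$ and $v^{p-2}M$. These are absorbed once more by $c\,a^2v^{p-1}$ through \eqref{a2}, using the large constant $C_1$.

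\textbf{Main obstacle.} We expect the main difficulty to be the boundary estimate $w_\nu\ge -(C_0+\delta_0/4)w$ together with the interior Claim when $\phi$ is not small. The cross terms $\phi^2 u_l h_{lk}$ divided by $w$ are only bounded, not small, so the constant multiplying $w$ in the boundary estimate becomes $C_0+C$ rather than $C_0+\delta_0/4$. Our fix is to enlarge $L_0$ to dominate this constant as well. This is harmless, since \eqref{a2} is allowed to depend on $\Omega$ and $L$ through $c_1$ and $C_1$.
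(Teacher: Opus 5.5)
Your strategy matches the paper's: the same $w=v-\sum_l\phi u_lh_l$, the same Claim $\sum_{\alpha\neq1}u_{\alpha\alpha}^2\ge c\,a^2v^2$ proved via $w_1=0$ and $w_l=0$, and on the boundary $w_\nu\ge-(C_0+C)w$ followed by the multiplier $\rho$ with $L_0$ taken larger than that constant. Your corrected version in the final paragraph, with $C_0+C$ rather than $C_0+\delta_0/4$, is what the paper uses.

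There is one missing step in the interior computation, and it is exactly where $q=0$ differs from Theorem~\ref{thm8.1}. You correctly note that the $u_{11}^2$ contribution to $\mathrm{I}$ cancels; only a factor $\varepsilon^2v^{-3}$ survives. So $\mathrm{I}\ge c\,v^{p-3}\bigl(\sum_{\alpha\ne1}u_{\alpha\alpha}^2+\sum_{\alpha\ne1}u_{1\alpha}^2\bigr)$, with no uniform control of $u_{11}$. The error terms you then absorb ``by the Cauchy inequality'' do contain $u_{11}$:
\begin{itemize}
\item $-2a^{11}(\phi h_1)_1u_{11}$ in $\mathrm{III}$;
\item the $\Delta u$ and $u_{11}$ terms in $\sum_k(v^{-1}u_k-\phi h_k)a^{ij}_{~,k}u_{ij}$.
\end{itemize}
Hence $-Cv^{p-2}M$ cannot be absorbed by $\mathrm{I}$ as you describe it, and your final inequality does not yet follow.

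The paper closes this with \eqref{nc-q=0-case1-uaa}. The equation gives $\sum_{\alpha\neq1}u_{\alpha\alpha}=(u_t-f)v^{2-p}-\frac{\tilde v^2}{v^2}u_{11}-a v$, hence $\sum_{\alpha\ne1}u_{\alpha\alpha}^2\ge c\,u_{11}^2-Ca^2v^2$. A convex combination with the Claim yields $\sum_{\alpha\ne1}u_{\alpha\alpha}^2\ge c\,u_{11}^2+c\,a^2v^2$, so $\mathrm{I}\ge c\,v^{p-3}M^2+c\,a^2v^{p-1}$. Only then does Cauchy give $[c\,a^2-|Da|-C|a|-C]v^{p-1}\le0$.

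Alternatively, you can use tools you already have. The relation $w_1=0$ (or $\psi_1=0$ at $(x_1,t_1)$), together with $|v^{-1}u_1-\phi h_1|\ge c(\theta)$, gives $u_{11}^2\le C\sum_{\alpha\ne1}u_{1\alpha}^2+Cv^2$. Then $\mathrm{I}\ge c\,v^{p-3}M^2-Cv^{p-1}$, and the loss $Cv^{p-1}$ is paid by the $c\,a^2v^{p-1}$ coming from the Claim.

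A smaller point concerns the proof of the Claim itself. For the contradiction with $\sum_{\alpha\ne1}u_{\alpha\alpha}^2<c\,a^2v^2$ when $|a|$ is large, you need $|u_{1l}|\gtrsim|a|v$ and $|u_{ll}|\gtrsim|a|v$, not merely $\gtrsim v$. The factor $|a|$ propagates from $|u_{11}|\ge c|a|v$ through $w_1=0$ and $w_l=0$, with constants depending on $\theta$, and the $c$ in the Claim must then be chosen small. With these repairs your argument is complete at the level of detail of the paper.
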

\begin{proof}
We choose auxiliary function
$$w=v-\sum_{k=1}^n \phi u_k h_k.$$
Suppose $w(x,t)$ attains its maximum at $(x_0,t_0)\in \overline{\Omega}\times[0,T]$.

Without loss of generality, we may assume $t_0>0$ and $|Du(x_0,t_0)|$ is large enough. We use the same notation as in the proof of Theorem \ref{thm3.2}.\vspace{4pt}

\textbf{Case 1:} $x_0\in\Omega$.

 We choose coordinates such that $|Du|=u_1$ and the matrix $\{u_{\alpha\beta}\}_{\alpha,\beta>1}$ is diagonal at that point. Since $w$ attains its maximum at $(x_0,t_0)$, we have
\begin{eqnarray*}
0&\leq& w_t(x_0,t_0)=\sum_{k=1}^n(v^{-1}u_k-\phi h_k)u_{tk},\\
0&=&w_i(x_0,t_0)=v^{-1}\sum_k u_ku_{ki}-\phi_i\sum_k u_kh_k-\phi \sum_k u_{ki}h_k-\phi \sum_k u_kh_{ki},
\end{eqnarray*}
and
\begin{eqnarray*}
0&\geq& w_{ii}(x_0,t_0)=-v^{-3}\left(u_1u_{1i}\right)^2+v^{-1}\sum_{k}u_{ki}^2+v^{-1}\sum_k u_ku_{kii}-\phi_{ii}\sum_k u_kh_k\\
&&-2\phi_i\sum_k u_{ki}h_k-2\phi_{i}\sum_k u_kh_{ki}-\phi \sum_k u_{kii}h_k-2\phi \sum_k u_{ki}h_{ki}-\phi\sum_k  u_k h_{kii}.
\end{eqnarray*}
Hence
\begin{eqnarray}
0&\geq&a^{ii}w_{ii}(x_0,t_0)-w_t(x_0,t_0)\notag\\[4pt]
&=&-v^{-3}u_1^2a^{ii}u_{1i}^2+v^{-1}a^{ii}\sum_k u_{ki}^{2}\notag\\
&&+\sum_k \left(v^{-1}u_k-\phi h_k\right)\left(a^{ij}u_{ijk}-u_{kt}\right)\notag\\
&&-u_1 a^{ii}(\phi h_1)_{ii}-2a^{ii}\sum_k (\phi h_k)_i u_{ki}\label{nc-q=0-case1-I+II+III}\\[-4pt]
&=& \mathrm{I} +\mathrm{II} +\mathrm{III},\notag
\end{eqnarray}
where $\mathrm{I}$ denotes the two terms in the second line, $\mathrm{II}$ denotes the term in the third line, and $\mathrm{III}$ denotes the two terms in the fourth line.

Denote $M=\max|D^2u(x_0,t_0)|$, $M_1=\max_{\alpha\neq1} |u_{1\alpha}(x_0,t_0)|$, $M_2=\max_{\alpha\neq1}|u_{\alpha\alpha}(x_0,t_0)|$.  As in the previous subsection, we can prove the following claim for $v$ and $|a(x,u)|$ sufficiently large:\vspace{4pt}

\textbf{Claim 1.} $\sum_{\alpha\neq1}|u_{\alpha\alpha}|^{2}\geq ca(x,u)^2v^2$.\vspace{4pt}

Moreover, from the equation
$$u_t=\operatorname{div}(|Du|^{p-2}Du)+a(x,u)|Du|^{p-1}+f(x,u),$$
the boundedness of $|u_{t}|$ and the Cauchy inequality, we have
$$
\sum_{\alpha\neq1}|u_{\alpha\alpha}|^2\geq c|\sum_{\alpha\neq1}u_{\alpha\alpha}|^2=c\left|(u_t-f)v^{2-p}-\frac{\tilde{v}^2}{v^2}u_{11}-a(x,u)v\right|^2\geq c|u_{11}|^2-Ca(x,u)^2v^2.
$$
Combining to \textbf{Claim 1}, we further have 
\begin{eqnarray}\label{nc-q=0-case1-uaa}
\sum_{\alpha\neq1}|u_{\alpha\alpha}|^{2}\geq c|u_{11}|^2+ca(x,u)^2v^2.
\end{eqnarray}

Now returning to \eqref{nc-q=0-case1-I+II+III}, by \eqref{nc-q=0-case1-uaa} and direct computation, we have
\begin{eqnarray*}
\mathrm{I}\geq cv^{p-3}M^2+ca(x,u)^2v^{p-1},\quad \mathrm{III}\geq -Cv^{p-2}-Cv^{p-2}M,
\end{eqnarray*}
and
\begin{eqnarray*}
\mathrm{II}&=&\sum_k \left(v^{-1}u_k-\phi h_k\right)\bigl(-f_k-f_u u_k-a_k v^{p-1}-a_u u_k v^{p-1}\\
&&\hspace{100pt}-(p-1)a(x,u)v^{p-3}u_1 u_{1k}-a^{ij}_{~,k}u_{ij}\bigr)\\[6pt]
&\geq& -|Da|v^{p-1}-Cv^{p-1}-\sum_k(v^{-1}u_k-\phi h_k)\left((p-1)a(x,u)v^{p-3}u_1u_{1k}+a^{ij}_{~,k}u_{ij}\right).
\end{eqnarray*}

Recalling the critical equation $0=w_i$ and the computation \eqref{aijkuij}, we have
\begin{eqnarray*}
(p-1)a(x,u)v^{p-3}u_1\sum_k(v^{-1}u_k-\phi h_k)u_{1k}~\leq~ C|a(x,u)|v^{p-1},
\end{eqnarray*}
and
\begin{eqnarray}
\sum_k(v^{-1}u_k-\phi h_k)a^{ij}_{~,k}u_{ij}&=&(p-2)v^{p-4}u_1^2\Delta u(\phi h_1)_1+(p-2)(p-4)v^{p-6}u_1^4u_{11}(\phi h_1)_1\notag\\
&&+2(p-2)v^{p-4}u_1^2\sum_i u_{1i}(\phi h_1)_i~\leq~ Cv^{p-2}M.\label{nc-q=0-case1-aijkuij}
\end{eqnarray}
Therefore, we have
\begin{eqnarray*}
\mathrm{II}\geq-|Da|v^{p-1}-Cv^{p-1}-C|a(x,u)|v^{p-1}-Cv^{p-2}M.
\end{eqnarray*}

Hence, by \textbf{Claim 1}, \eqref{nc-q=0-case1-uaa} and the Cauchy inequality, we derive
\begin{eqnarray*}
0&\geq& a^{ii}w_{ii}(x_0,t_0)-w_t(x_0,t_0)=\mathrm{I}+\mathrm{II}+\mathrm{III}\\[4pt]
&\geq& cv^{p-3}M^{2}+ca(x,u)^2v^{p-1}-|Da|v^{p-1}-C|a(x,u)|v^{p-1}-Cv^{p-2}M-Cv^{p-1}\\[4pt]
&\geq&\left[c\cdot a(x,u)^2-|Da|-C|a(x,u)|-C\right]v^{p-1}.
\end{eqnarray*}
It follows from condition \eqref{a2} that $|Du(x_0,t_0)|\leq C.$\vspace{4pt}

\textbf{Case 2:} $x_0\in \partial\Omega$.

By the similar argument as in \textbf{Case 1} of the proof of Theorem \ref{thm8.1}, we obtain
\begin{equation}\label{nc-q=0-case2-wn}
w_{\nu}\geq -(C_{0}+C)w.
\end{equation}

Let $B = B(x_c, K_0)$ be the open ball with center $x_c \coloneqq x_0 + K_0 \vec{\nu}(x_0)$ such that $B \subseteq \Omega$ and $\overline{B} \cap (\mathbb{R}^n \setminus \Omega) = \{x_0\}$. Denote $\psi=\rho w$, where $\rho$ is a multiplier 
$$
\rho(x) \coloneqq -\frac{L_0}{2K_0} |x - x_c|^2 + \frac{L_0K_0}{2} + 1.
$$
If we choose $L_0>C_0+C\varepsilon_1$, then clearly
$$
\psi_{\nu}(x_0,t_0)>0.
$$
Moreover, the maximum of $\psi$ in $\overline{B}\times [0,T]$ must occur at a point $(x_1,t_1)\in B\times(0,T]$. 

We may assume that $t_1>0$, and choose coordinates such that $|Du|=u_1$ and the matrix $\{u_{\alpha\beta}\}_{\alpha,\beta>1}$ is diagonal at $(x_1,t_1)$. Since $\psi$ attains its local maximum at $(x_1,t_1)$, we have
\begin{eqnarray*}
0&\leq& \psi_t(x_1,t_1)=\rho(x_1)\sum_{k=1}^n\left(v^{-1}u_k-\phi h_k\right)u_{tk},\\
0&=&\psi_i(x_1,t_1)=\rho(x_1)\Bigl[v^{-1}\sum_k u_ku_{ki}-\phi_i\sum_ku_kh_k\\
&&\hspace{100pt}-\phi \sum_k u_{ki}h_k-\phi \sum_k u_kh_{ki}\Bigr]-\frac{L_0}{K_0}(x_{1,i}-x_{c,i})w,
\end{eqnarray*}
and
\begin{eqnarray*}
0&\geq& \psi_{ii}(x_1,t_1)=\rho(x_1)\Bigl[-v^{-3}(u_1u_{1i})^2+v^{-1}\sum_{k}u_{ki}^2+v^{-1}\sum_k u_ku_{kii}\\
&&-\phi_{ii}\sum_k u_kh_k-2\phi_i\sum_k u_{ki}h_k-2\phi_{i}\sum_k u_kh_{ki}-\phi \sum_k u_{kii}h_k\\
&&-2\phi \sum_k u_{ki}h_{ki}-\phi \sum_k u_k h_{kii}\Bigr]-\frac{2L_0}{K_0}w_i(x_{1,i}-x_{c,i})-\frac{L_0}{K_0}w.
\end{eqnarray*}
Hence
\begin{eqnarray}
0&\geq& \rho^{-1}\left(a^{ii}\psi_{ii}(x_1,t_1)-\psi_{t}(x_1,t_1)\right)\notag\\[2pt]
&=&\Big[-v^{-3}u_1^2a^{ii}u_{1i}^2+v^{-1}a^{ii}\sum_ku_{ki}^2+\sum_k \left(v^{-1}u_k-\phi h_k\right)\left(a^{ij}u_{ijk}-u_{kt}\right)\notag\\
&&-u_1a^{ii}(\phi h_1)_{ii}-2a^{ii}\sum_k (\phi h_k)_iu_{ki} \Big]-\frac{L_0}{K_0\rho}\left(v-\phi u_1h_1\right)\sum_ia^{ii}\notag\\
&&-\frac{2L_0}{K_0\rho}\left(x_{1,i}-x_{c,i}\right)a^{ii}\left(v^{-1}u_{1}u_{1i}-(\phi h_1)_iu_1-\phi \sum_k  h_k u_{ki}\right)\label{nc-q=0-case2-I+II+III}\\
&=&\mathrm{I}+\mathrm{II}+\mathrm{III},\notag
\end{eqnarray}
where
$$
\mathrm{I}=-v^{-3}u_1^2a^{ii}u_{1i}^2+v^{-1}a^{ii}\sum_ku_{ki}^2,
$$
$$
\mathrm{II}=\sum_k \left(v^{-1}u_k-\phi h_k\right)\left(a^{ij}u_{ijk}-u_{kt}\right),
$$
and $\mathrm{III}$ denotes all the other terms.

Denote $M=\max|D^2u(x_1,t_1)|$, $M_1=\max_{\alpha\neq1} |u_{1\alpha}(x_1,t_1)|$, $M_2=\max_{\alpha\neq1}|u_{\alpha\alpha}(x_1,t_1)|$. Similar to \textbf{Case 1}, we can prove 
$$
\sum_{\alpha\neq1}|u_{\alpha\alpha}|^2\geq ca(x,u)^2v^2,
$$
and
\begin{eqnarray}\label{nc-q=0-case2-uaa}
\sum_{\alpha\neq1}|u_{\alpha\alpha}|^{2}\geq c|u_{11}|^2+ca(x,u)^2v^2.
\end{eqnarray}
Now returning to \eqref{nc-q=0-case2-I+II+III}, by \eqref{nc-q=0-case2-uaa} and direct computation, we have
$$
\mathrm{I}\geq cv^{p-3}M^2+ca(x,u)^2v^{p-1},\quad \mathrm{III}\geq -Cv^{p-1}-Cv^{p-2}M,
$$
and 
\begin{eqnarray*}
\mathrm{II}&=&\sum_k \left(v^{-1}u_k-\phi h_k\right)\bigl(-f_k-f_u u_k-a_k v^{p-1}-a_u u_k v^{p-1}\\
&&\hspace{100pt}-(p-1)a(x,u)v^{p-3}u_1 u_{1k}-a^{ij}_{~,k}u_{ij}\bigr)\\[6pt]
&\geq&-|Da|v^{p-1}-Cv^{p-1}-\sum_k(v^{-1}u_k-\phi h_k)\left((p-1)a(x,u)v^{p-3}u_1u_{1k}+a^{ij}_{~,k}u_{ij}\right).
\end{eqnarray*}

Recalling the critical equation $0=\psi_i$ and the computation \eqref{aijkuij}, by a computation similar to that in \textbf{Case 1}, we have
\begin{eqnarray*}
(p-1)a(x,u)v^{p-3}u_1\sum_k(v^{-1}u_k-\phi h_k)u_{1k}~\leq~ C|a(x,u)|v^{p-1},
\end{eqnarray*}
and
\begin{eqnarray*}
\sum_k(v^{-1}u_k-\phi h_k)a^{ij}_{~,k}u_{ij}\leq~ Cv^{p-2}M.
\end{eqnarray*}
Therefore, we have
\begin{eqnarray*}
\mathrm{II}\geq-|Da|v^{p-1}-Cv^{p-1}-C|a(x,u)|v^{p-1}-Cv^{p-2}M.
\end{eqnarray*}

Hence, by \eqref{nc-q=0-case2-uaa} and the Cauchy inequality, we derive
\begin{eqnarray*}
0&\geq& \rho^{-1}\left(a^{ii}\psi_{ii}(x_1,t_1)-\psi_t(x_1,t_1)\right)=\mathrm{I}+\mathrm{II}+\mathrm{III}\\[4pt]
&\geq& cv^{p-3}M^{2}+ca(x,u)^2v^{p-1}-|Da|v^{p-1}-C|a(x,u)|v^{p-1}-Cv^{p-2}M-Cv^{p-1}\\[4pt]
&\geq&\left[c\cdot a(x,u)^2-|Da|-C|a(x,u)|-C\right]v^{p-1}.
\end{eqnarray*}
It follows from condition \eqref{a2} that $|Du(x_1,t_1)|\leq C$, and therefore $|Du(x_0,t_0)|\leq C$.

\end{proof}

\begin{proof}[\textbf{Proof of Theorem \ref{thm1.9}}]
Based on the crucial gradient estimates established in Theorem \ref{thm8.1} and Theorem \ref{thm8.2}, the proofs from Theorem \ref{thm1.1} to Theorem \ref{thm1.6} are identical to those in the convex case and are therefore omitted. For Theorem \ref{thm1.7}, according to Hopf's lemma, we have $\|u_{\varepsilon}\|_{L^{\infty}(\overline{\Omega}\times[0,\infty))}\leqslant\|u_{0}\|_{L^{\infty}(\overline{\Omega})}$. Therefore, the same arguments yield $\lim_{t\to\infty}u(x,t)=w(x)$, and $w$ satisfies
\begin{equation*}
\left\{\begin{aligned}
& \operatorname{div}(|Dw|^{p-2}Dw)+a(x,w)|Dw|^{p-1}=0 &&\text{in~}\Omega,\\
& w_\nu=0 &&\text{on~}\partial\Omega.
\end{aligned}\right.
\end{equation*}
By condition \eqref{a2}, we have $|a(x,w)|>0$; integrating this equation then yields that $w$ is constant.
\end{proof}

\section*{Acknowledgements} The authors would like to thank Prof. Xi-Nan Ma for his advice, constant support, and
encouragement. The first author was supported by NSFC grant No. 12301257.

\noindent \textbf{Research ethics:} Not applicable.

\noindent \textbf{Informed consent:} Not applicable.

\noindent \textbf{Author contributions:} All authors have accepted responsibility for the entire content of this manuscript and approved its submission.

\noindent \textbf{Use of Large Language Models, AI and Machine Learning Tools:} None declared.

\noindent \textbf{Conflict of interest:} The authors state no conflict of interest.

\noindent \textbf{Data availability:} Not applicable.


\begin{thebibliography}{00}

\bibitem{AW1994CVPDE} S. Altschuler and L. Wu, Translating surfaces of the non-parametric mean curvature flow with prescribed contact angle, Calc. Var. Partial Differential Equations { 2} (1994), no.~1, 101--111.

\bibitem{APPT2022AIM} P. Andrade, D. Pellegrino, E. Pimentel, E. Teixeira, C1-regularity for degenerate diffusion equations, Adv. Math. 409, part B (2022) 108667, 34 pp.

\bibitem{AS2022CVPDE} P. Andrade and M. Santos, Improved regularity for the parabolic normalized $p$-Laplace equation, Calc. Var. Partial Differential Equations { 61} (2022), no.~5, Paper No. 196, 13 pp.

\bibitem{AC2009IMJ} B. Andrews and J. Clutterbuck, Time-interior gradient estimates for quasilinear parabolic
equations, Indiana Univ. Math. J., 58 (2009), 351--380.

\bibitem{A2026arXiv} C. Antonini, Local and global $C^{1,\beta}$-regularity for uniformly elliptic quasilinear equations of $p$-Laplace and Orlicz-Laplace type, arXiv: 2601.07140.

\bibitem{ACF2023MA} C. Antonini, G. Ciraolo and A. Farina, Interior regularity results for inhomogeneous anisotropic quasilinear equations, Math. Ann. { 387} (2023), no.~3-4, 1745--1776.

\bibitem{ART2015CVPDE} D. Ara\'ujo, G. Ricarte and E. Teixeira, Geometric gradient estimates for solutions to degenerate elliptic equations, Calc. Var. Partial Differential Equations { 53} (2015), no.~3-4, 605--625.

\bibitem{A2016DIE} A. Attouchi, Gradient estimate and a Liouville theorem for a $p$-Laplacian evolution equation with a gradient nonlinearity, Differential Integral Equations { 29} (2016), no.~1-2, 137--150.

\bibitem{A2020NA} A. Attouchi, Local regularity for quasi-linear parabolic equations in non-divergence form, Nonlinear Anal. { 199} (2020), 112051, 28 pp.

\bibitem{AB2015JMPA} A. Attouchi and G. Barles, Global continuation beyond singularities on the boundary for a degenerate diffusive Hamilton-Jacobi equation, J. Math. Pures Appl. (9) { 104} (2015), no.~2, 383--402.

\bibitem{AP2018CCM} A. Attouchi and M. Parviainen, H\"older regularity for the gradient of the inhomogeneous parabolic normalized $p$-Laplacian, Commun. Contemp. Math. { 20} (2018), no.~4, 1750035, 27 pp.

\bibitem{APR2017JMPA} A. Attouchi, M. Parviainen and E. Ruosteenoja, $C^{1,\alpha}$ regularity for the normalized $p$-Poisson problem, J. Math. Pures Appl. (9) { 108} (2017), no.~4, 553--591.

\bibitem{AR2018JDE} A. Attouchi and E. Ruosteenoja, Remarks on regularity for $p$-Laplacian type equations in non-divergence form, J. Differential Equations { 265} (2018), no.~5, 1922--1961.

\bibitem{AR2020DCDS} A. Attouchi and E. Ruosteenoja, Gradient regularity for a singular parabolic equation in non-divergence form, Discrete Contin. Dyn. Syst. { 40} (2020), no.~10, 5955--5972.

\bibitem{BBLL2024} S. Baasandorj, S. Byun, K. Lee and S. Lee, $C^{1,\alpha}$-regularity for a class of degenerate/singular fully nonlinear elliptic equations, Interfaces Free Bound. 26 (2) (2024) 189–215.

\bibitem{BV2022PA} A. Banerjee and R. Verma, $C^{1,\alpha}$ regularity for degenerate fully nonlinear elliptic equations with Neumann boundary conditions, Potential Anal. { 57} (2022), no.~3, 327--365.

\bibitem{BCESS1997} M. Bardi, M. Crandall, L. Evans, H. Soner, and P. Souganidis, { Viscosity solutions and applications}, { Springer-Verlag, Berlin; Centro Internazionale Matematico Estivo (C.I.M.E.), Florence} (1997) x+259 pp.

\bibitem{B1993JDE} G. Barles, Fully nonlinear Neumann type boundary conditions for second-order elliptic and parabolic equations, J. Differential Equations { 106} (1993), no.~1, 90--106.

\bibitem{B1999JDE} G. Barles, Nonlinear Neumann boundary conditions for quasilinear degenerate elliptic equations and applications, J. Differential Equations { 154} (1999), no.~1, 191--224.

\bibitem{BC2026NODEA} G. Barles and E. Chasseigne, Some comparison results for first-order Hamilton-Jacobi equations and second-order fully nonlinear parabolic equations with Ventcell boundary conditions, NoDEA Nonlinear Differential Equations Appl. { 33} (2026), no.~1, Paper No. 30, 50 pp.

\bibitem{BRS2026NA} J. Bessa, G. Ricarte and P. Silva, Optimal gradient regularity to degenerate fully nonlinear elliptic models with oblique boundary condition, Nonlinear Anal. { 262} (2026), Paper No. 113919, 16 pp.

\bibitem{BD2006ADE} I. Birindelli and F. Demengel, First eigenvalue and maximum principle for fully nonlinear singular operators, Adv. Differential Equations { 11} (2006), no.~1, 91--119.

\bibitem{BD2007DCDS} I. Birindelli and F. Demengel, The Dirichlet problem for singular fully nonlinear operators, Discrete Contin. Dyn. Syst. { 2007}, Dynamical systems and differential equations. Proceedings of the 6th AIMS International Conference, 110--121.

\bibitem{BD2010JDE} I. Birindelli and F. Demengel, Regularity and uniqueness of the first eigenfunction for singular fully nonlinear operators, J. Differential Equations { 249} (2010), no.~5, 1089--1110.

\bibitem{BDL2022NA} I. Birindelli, F. Demengel and F. Leoni, Mixed boundary value problems for fully nonlinear degenerate or singular equations, Nonlinear Anal. { 223} (2022), Paper No. 113006, 22 pp.

\bibitem{BKO2025CVPDE} S. Byun, H. Kim and J. Oh, $C^{1,\alpha }$ regularity for degenerate fully nonlinear elliptic equations with oblique boundary conditions on $C^1$ domains, Calc. Var. Partial Differential Equations { 64} (2025), no.~5, Paper No. 174, 20 pp.

\bibitem{CC1995} L. Caffarelli and X. Cabr\'e, { Fully nonlinear elliptic equations}, American Mathematical Society Colloquium Publications, 43, Amer. Math. Soc., Providence, RI, 1995.

\bibitem{CM2021RMI} G. Chatzigeorgiou and E. Milakis, Regularity for fully nonlinear parabolic equations with oblique boundary data, Rev. Mat. Iberoam. { 37} (2021), no.~2, 775--820.

\bibitem{CGG1991JDG} Y. Chen, Y. Giga and S. Goto, Uniqueness and existence of viscosity solutions of generalized mean curvature flow equations, J. Differential Geom. { 33} (1991), no.~3, 749--786.

\bibitem{CIL1992} M. Crandall, H. Ishii and P. Lions, User's guide to viscosity solutions of second order partial differential equations, {Bull. of Amer. Soc.} {27}  1 (1992), 1-67.

\bibitem{DFQ2010CVPDE} G. D\'avila, P. Felmer and A. Quaas, Harnack inequality for singular fully nonlinear operators and some existence results, Calc. Var. Partial Differential Equations { 39} (2010), no.~3-4, 557--578.

\bibitem{D2011PA} F. Demengel, Existence's results for parabolic problems related to fully non linear operators degenerate or singular, Potential Anal. { 35} (2011), no.~1, 1--38.

\bibitem{D1983NA} E. DiBenedetto, {$\mathcal{C}^{1+\alpha}$-local regularity of weak solutions of degenerate elliptic equations}, Nonlinear Anal. {7} (1983), 827-850.

\bibitem{D1993} E. DiBenedetto, { Degenerate parabolic equations}, Universitext, Springer, New York, 1993.

\bibitem{DF1984JRAM} E. DiBenedetto and A. Friedman, Regularity of solutions of nonlinear degenerate parabolic systems, J. Reine Angew. Math. { 349} (1984), 83--128.

\bibitem{DF1985JRAM} E. DiBenedetto and A. Friedman, H\"older estimates for nonlinear degenerate parabolic systems, J. Reine Angew. Math. { 357} (1985), 1--22.

\bibitem{ES1991JDG} L. Evans and J. Spruck, Motion of level sets by mean curvature. I, J. Differential Geom. { 33} (1991), no.~3, 635--681.

\bibitem{FRZ2024MA} Y. Fang, V. R\u adulescu and C. Zhang, Equivalence of weak and viscosity solutions for the nonhomogeneous double phase equation, Math. Ann. { 388} (2024), no.~3, 2519--2559.

\bibitem{FRZ2021BLMS} Y. Fang, V. R\u adulescu and C. Zhang, Regularity of solutions to degenerate fully nonlinear elliptic equations with variable exponent, Bull. Lond. Math. Soc. { 53} (2021), no.~6, 1863--1878.

\bibitem{FZ2021JDE} Y. Fang and C. Zhang, Gradient H\"older regularity for parabolic normalized $p( x, t)$-Laplace equation, J. Differential Equations { 295} (2021), 211--232.

\bibitem{FZ2022MMAS} Y. Fang and C. Zhang, Equivalence between viscosity and weak solutions for the parabolic equations with nonstandard growth, Math. Methods Appl. Sci. { 45} (2022), no.~14, 8430--8449.

\bibitem{FZ2023CVPDE} Y. Fang and C. Zhang, Regularity for quasi-linear parabolic equations with nonhomogeneous degeneracy or singularity, Calc. Var. Partial Differential Equations { 62} (2023), no.~1, Paper No. 2, 46 pp.

\bibitem{F2021RE} C. De~Filippis, Regularity for solutions of fully nonlinear elliptic equations with nonhomogeneous degeneracy, Proc. Roy. Soc. Edinburgh Sect. A { 151} (2021), no.~1, 110--132.

\bibitem{FL2014JDG} A. Fraser and M. Li, Compactness of the space of embedded minimal surfaces with free boundary in three-manifolds with nonnegative Ricci curvature and convex boundary, J. Differential Geom. { 96} (2014), no.~2, 183--200.

\bibitem{FS2016Invention} A. Fraser and R. Schoen, Sharp eigenvalue bounds and minimal surfaces in the ball, Invent. Math. { 203} (2016), no.~3, 823--890.

\bibitem{GLX2024JFA} Z. Gao, B. Lou and J. Xu, Uniform gradient bounds and convergence of mean curvature flows in a cylinder, J. Funct. Anal. { 286} (2024), no.~5, Paper No. 110283, 28 pp.

\bibitem{GMWW2021JMS} Z. Gao, X. Ma, P. Wang and L. Weng, Nonparametric mean curvature flow with nearly vertical contact angle condition, J. Math. Study { 54} (2021), no.~1, 28--55.

\bibitem{G1976Pisa} C. Gerhardt, Global regularity of the solutions to the capillarity problem, Ann. Sci. Norm.
Sup. Piss Ser. (4), 3 (1976), 157--175.

\bibitem{GOS1999JDE} Y. Giga, M. Ohnuma and M. Sato, On the strong maximum principle and the large time behavior of generalized mean curvature flow with the Neumann boundary condition, J. Differential Equations { 154} (1999), no.~1, 107--131.

\bibitem{GS1993DIE} Y. Giga and M. Sato, Neumann problem for singular degenerate parabolic equations, Differential Integral Equations { 6} (1993), no.~6, 1217--1230.

\bibitem{HTZ2019GF} Y. Giga, H. Tran and L. Zhang, On obstacle problem for mean curvature flow with driving force, Geom. Flows { 4} (2019), no.~1, 9--29.

\bibitem{G1996} B. Guan, Mean curvature motion of non-parametric hypersurfaces with contact angle condition, in Elliptic and parabolic methods in geometry (Minneapolis, MN, 1994), AK Peters,
Wellesley, MA,(1996), 47--56.

\bibitem{H1984JDG} G. Huisken, Flow by mean curvature of convex surfaces into spheres, J. Differential Geom.,
20 (1984), 237--266.

\bibitem{H1986Invention} G. Huisken, Contracting convex hypersurfaces in Riemannian manifolds by their mean curvature, Invent. Math., 84 (1986), 463--480.

\bibitem{H1989JDE} G. Huisken, Non-parametric mean curvature evolution with boundary conditions, J. Differ.
Equations., 77 (1989), 369--378.

\bibitem{IJS2019ANA} C. Imbert, T. Jin and L. Silvestre, H\"older gradient estimates for a class of singular or degenerate parabolic equations, Adv. Nonlinear Anal. { 8} (2019), no.~1, 845--867.

\bibitem{IS2013AIM} C. Imbert and L. Silvestre, $C^{1,\alpha}$ regularity of solutions of some degenerate fully non-linear elliptic equations, Adv. Math. { 233} (2013), 196--206.

\bibitem{IL1990JDE} H. Ishii and P. Lions, Viscosity solutions of fully nonlinear second-order elliptic partial differential equations, J. Differential Equations { 83} (1990), no.~1, 26--78.

\bibitem{J2023CVPDE} J. Jang, Capillary-type boundary value problems of mean curvature flows with force and transport terms on a bounded domain, Calc. Var. Partial Differential Equations { 62} (2023), no.~3, Paper No. 108, 55 pp.

\bibitem{JKMT2011JMPA} J.Jang, D.Kwon, H.Mitake and H.Tran, Level-set forced mean curvature flow with the Neumann boundary condition, J. Math. Pures Appl. (9) { 168} (2022), 143--167.

\bibitem{JS2017JMPA} T. Jin and L. Silvestre, H\"older gradient estimates for parabolic homogeneous $p$-Laplacian equations, J. Math. Pures Appl. (9) { 108} (2017), no.~1, 63--87.

\bibitem{JJ2012CPDE} V. Julin and P. Juutinen, {A new proof for the equivalence of weak and viscosity solutions for the $p$-Laplace equation}. Communications in PDE {37} 5 (2012), 934-946.

\bibitem{JLM2001SIAMMA} P. Juutinen, P. Lindqvist, and J. Manfredi, \textit{On the equivalence of viscosity solutions and weak solutions for a quasilinear equation}, SIAM J. Math. Anal. {33} 3 (2001), 699-717.

\bibitem{K1988CPDE} N. Korevaar, Maximum principle gradient estimates for the capillary problem, Comm. in
Partial Differential Equations, 13 (1988), 1--31.

\bibitem{LLY2024JMPA} K. Lee, S. Lee and H. Yun, $C^{1,\alpha}$-regularity for solutions of degenerate/singular fully nonlinear parabolic equations, J. Math. Pures Appl. (9) { 181} (2024), 152--189.

\bibitem{LY2024JDE} S. Lee and H. Yun, $C^{1,\alpha}$-regularity for functions in solution classes and its application to parabolic normalized $p$-Laplace equations, J. Differential Equations { 378} (2024), 539--558.

\bibitem{LY2025CVPDE} K. Lee and H. Yun, Boundary regularity for viscosity solutions of fully nonlinear degenerate/singular parabolic equations, Calc. Var. Partial Differential Equations { 64} (2025), no.~1, Paper No. 25, 32 pp.

\bibitem{L1983IMJ} J. Lewis, Regularity of the derivatives of solutions to certain degenerate elliptic equations, Indiana Univ. Math. J. { 32} (1983), no.~6, 849--858.

\bibitem{LZ2018ARMA} D. Li and K. Zhang, Regularity for fully nonlinear elliptic equations with oblique boundary conditions, Arch. Ration. Mech. Anal. { 228} (2018), no.~3, 923--967.

\bibitem{L1988CPDE} G. Lieberman, Gradient estimates for capillary-type problems via the maximum principle,
Commun. in Partial Differential Equations, 13 (1988), 33--59.

\bibitem{L1988NA} G. Lieberman, Boundary regularity for solutions of degenerate elliptic equations, Nonlinear Anal. { 12} (1988), no.~11, 1203--1219.

\bibitem{L1990NA} G. Lieberman, Boundary regularity for solutions of degenerate parabolic equations, Nonlinear Anal. { 14} (1990), no.~6, 501--524.

\bibitem{L2013} G. Lieberman, Oblique Boundary Value Problems for Elliptic Equations, World Scientific
Publishing Co. Pte. Ltd., Hackensack, NJ, 2013.

\bibitem{LT1986TAMS} G. Lieberman and N. Trudinger, Nonlinear oblique boundary value problems for nonlinear elliptic equations, Trans. Amer. Math. Soc. { 295} (1986), no.~2, 509--546.

\bibitem{MWW2018JFA} X. Ma, P. Wang and W. Wei, Constant mean curvature surfaces and mean curvature flow with non-zero Neumann boundary conditions on strictly convex domains, J. Funct. Anal. { 274} (2018), no.~1, 252--277.

\bibitem{MX2016CPAAAIM} X. Ma and J. Xu, Gradient estimates of mean curvature equations with Neumann
boundary condition, Advances in Mathematics, 290 (2016), 1010--1039.

\bibitem{MO2019ANA} M. Medina and P. Ochoa, On viscosity and weak solutions for non-homogeneous $p$-Laplace equations, Adv. Nonlinear Anal. { 8} (2019), no.~1, 468--481.

\bibitem{MWW2025AIM} X. Mei, G. Wang and L. Weng, The capillary Minkowski problem, Adv. Math. { 469} (2025), Paper No. 110230, 29 pp.

\bibitem{MWWX2025MathZ} X. Mei, G. Wang, L. Weng, C. Xia, Alexandrov-Fenchel inequalities for convex hypersurfaces in the half-space with capillary boundary II, Math. Z. { 310} (2025), no.~4, Paper No. 71, 17 pp.

\bibitem{MW2023JGA} X. Mei and L. Weng, A constrained mean curvature type flow for capillary boundary hypersurfaces in space forms, J. Geom. Anal. { 33} (2023), no.~6, Paper No. 195, 28 pp.

\bibitem{MS2006CPDE} E. Milakis and L. Silvestre, Regularity for fully nonlinear elliptic equations with Neumann boundary data, Comm. Partial Differential Equations { 31} (2006), no.~7-9, 1227--1252.

\bibitem{MT2017NODEA} M. Mizuno and K. Takasao, Gradient estimates for mean curvature flow with Neumann boundary conditions, NoDEA Nonlinear Differential Equations Appl. { 24} (2017), no.~4, Paper No. 32, 24 pp.

\bibitem{MV2023BMS} A. Mohammed and A. Vitolo, The effects of nonlinear perturbation terms on comparison principles for the $p$-Laplacian, Bull. Math. Sci. { 14} (2024), no.~2, Paper No. 2450005, 30 pp.

\bibitem{MV2024BMS} A. Mohammed and A. Vitolo, Remarks on comparison principles for $p$-Laplacian with extension to $(p,q)$-Laplacian, Bull. Math. Sci. { 14} (2024), no.~3, Paper No. 2450011, 18 pp.

\bibitem{OS1997CPDE} M. Ohnuma and K. Sato, Singular degenerate parabolic equations with applications to the $p$-Laplace diffusion equation, Comm. Partial Differential Equations { 22} (1997), no.~3-4, 381--411.

\bibitem{P2008JMPA} S. Patrizi, The Neumann problem for singular fully nonlinear operators, J. Math. Pures Appl. (9) { 90} (2008), no.~3, 286--311.

\bibitem{PT2021JDE} D. Prazeres and E. Topp, Interior regularity results for fractional elliptic equations that degenerate with the gradient, J. Differential Equations { 300} (2021), 814--829.

\bibitem{PS2007} P. Pucci and J. Serrin, Maximum principles for elliptic partial differential equations, in { Handbook of differential equations: stationary partial differential equations. Vol. IV}, 355--483, Handb. Differ. Equ., Elsevier/North-Holland, Amsterdam.

\bibitem{R2020NA} G. Ricarte, Optimal $C^{1,\alpha}$ regularity for degenerate fully nonlinear elliptic equations with Neumann boundary condition, Nonlinear Anal. { 198} (2020), 111867, 13 pp. 

\bibitem{SWX2022JDG} J. Scheuer, G. Wang and C. Xia, Alexandrov-Fenchel inequalities for convex hypersurfaces with free boundary in a ball, J. Differential Geom. { 120} (2022), no.~2, 345--373.

\bibitem{S2021JEE} J. Siltakoski, Equivalence of viscosity and weak solutions for a $p$-parabolic equation, J. Evol. Equ. { 21} (2021), no.~2, 2047--2080.

\bibitem{S2022JMAA} J. Siltakoski, H\"older gradient regularity for the inhomogeneous normalized $p(x)$-Laplace equation, J. Math. Anal. Appl. { 513} (2022), no.~1, Paper No. 126187, 27 pp.

\bibitem{SR2020CVPDE} J. da~Silva and G. Ricarte, Geometric gradient estimates for fully nonlinear models with non-homogeneous degeneracy and applications, Calc. Var. Partial Differential Equations { 59} (2020), no.~5, Paper No. 161, 33 pp.

\bibitem{SS1976ARMA} L. Simon and J. Spruck, Existence and regularity of a capillary surface with prescribed contact
angle, Arch. Rational Mech. Anal., 61 (1976), 19--34.

\bibitem{S1975CPAM} J. Spruck, On the existence of a capillary surface with prescribed contact angle, Comm. Pure
Appl. Math., 28 (1975), 189--200.

\bibitem{T1984JDE} P. Tolksdorf, Regularity for a more general class of quasilinear elliptic equations, J. Differential Equations { 51} (1984), no.~1, 126--15.

\bibitem{U1973} N. Ural’tseva, The solvability of the capillary problem, (Russian) Vestnik Leningrad. Univ.
No. 19 Mat. Meh. Astronom.Vyp., 4 (1973), 54--64.

\bibitem{WWX2024MA} G. Wang, L. Weng and C. Xia, Alexandrov-Fenchel inequalities for convex hypersurfaces in the half-space with capillary boundary, Math. Ann. { 388} (2024), no.~2, 2121--2154.

\bibitem{WWX2024JFA} G. Wang, L. Weng and C. Xia, A Minkowski-type inequality for capillary hypersurfaces in a half-space, J. Funct. Anal. { 287} (2024), no.~4, Paper No. 110496, 22 pp.

\bibitem{WWX2019CPAA} J. Wang, W. Wei and J. Xu, Translating solutions of non-parametric mean curvature flows with capillary-type boundary value problems, Commun. Pure Appl. Anal. { 18} (2019), no.~6, 3243--3265.

\bibitem{WYJ2025PA} J. Wang, Y. Yin and F. Jiang, Regularity of solutions to degenerate normalized $p$-Laplacian equation with general variable exponents, Potential Anal. { 63} (2025), no.~4, 1963--2000.

\bibitem{X2016CPAA} J. Xu, A new proof of gradient estimates for mean curvature equations with oblique boundary conditions, Commun. Pure Appl. Anal., 15 (2016), 1719--1742.














\end{thebibliography}
\end{document}